\def\maketag@@@#1{\hbox{\m@th\normalfont\normalsize#1}}
\newtheorem*{thm-no-num}{Theorem}
\newtheorem*{df-no-num}{Definition}
\newtheorem{thm}{Theorem} [section]
\newtheorem{prop}[thm]{Proposition} 
\newtheorem{lm}[thm]{Lemma} 
\newtheorem{cor}[thm]{Corollary} 
\newtheorem {df}[thm]{Definition}
\theoremstyle{remark}
\newtheorem{rmk}[thm]{Remark}
\newcommand{\bA}{\mathbb{A}}
\newcommand{\Brm}{{\rm B}}
\newcommand{\PP}{\mathbb{P}}
\newcommand{\ZZ}{\mathbb{Z}}
\newcommand{\cl}[1]{\mathcal{#1}}
\newcommand*{\sheafhom}{\mathcal{H}\kern -.5pt om}
\newcommand{\Bcal}{\cl{B}}
\newcommand{\Mcal}{\cl{M}}
\newcommand{\Hcal}{\cl{H}}
\newcommand{\Xcal}{\cl{X}}
\newcommand{\Ycal}{\cl{Y}}
\newcommand{\Kcal}{\cl{K}}
\newcommand{\Gm}{\mathbb{G}_{\rm m}}
\newcommand{\Ga}{\mathbb{G}_{\rm a}}
\newcommand{\bfk}{\mathbf{k}}
\newcommand{\pr}{{\rm pr}}
\newcommand{\K}{{\rm K}}
\newcommand{\Inv}{{\rm Inv}^{\bullet}}
\newcommand{\Spec}{{\rm Spec}}
\begin{document}
\title[Invariants of algebraic stacks in positive characteristic]{Cohomological invariants and Brauer groups of algebraic stacks in positive characteristic}
	\author[A. Di Lorenzo]{Andrea Di Lorenzo}
	\email{andrea.dilorenzo@unipi.it}
	\address{Università di Pisa, Dipartimento di Matematica, Largo Bruno Pontecorvo 5, 56127 Pisa}
	\author[R. Pirisi]{Roberto Pirisi}
	\email{roberto.pirisi@unina.it}
	\address{Università degli studi di Napoli Federico II, Dipartimento di Matematica e Applicazioni, Via Cintia, Monte S. Angelo, 80126 Napoli}
\begin{abstract}
We introduce a theory of cohomological invariants with mod $p^r$ coefficients for algebraic stacks in characteristic $p$. Using these new tools we complete the computation of the Brauer group and cohomological invariants of the stack of elliptic curves over any field.    
\end{abstract}
\maketitle

\section*{Introduction}
\subsection*{Background on cohomological invariants}

Given an algebraic group $G$ over a base field $\bfk$, Serre \cite{GMS} defined the group of cohomological invariants of $G$ as the group of natural transformations from the functor
\[
{\rm T}_G : ({\rm field}/\bfk) \to ({\rm set}), \quad F \mapsto \lbrace G\textnormal{-torsors}/F \rbrace /\simeq
\]
to Galois cohomology with coefficients in some torsion Galois module.  Cohomological invariants were, and are, studied by Garibaldi, Merkurjev, Rost, Totaro and many others. 
A cohomological invariant of $G$ can be thought as an arithmetic equivalent to a characteristic class, functorially assigning to each torsor $E \to \Spec(F)$ an element in the cohomology of $F$. 

The functor ${\rm T}_G$ of $G$-torsors modulo isomorphism can be seen as the functor of points of the classifying stack ${\Bcal}G$. From this point of view, the group of cohomological invariants appears naturally as an invariant of the stack ${\Bcal}G$ rather than the group $G$. Based on this idea the second named author \cite{PirAlgStack} extended the concept to define cohomological invariants of algebraic stacks. When restricted to smooth schemes, this theory recovers the theory of unramified cohomology (see \cites{Salt, CTO}). 
Moreover, the authors showed in \cite{DilPirBr} that for a smooth quotient stack $\Xcal$ and any positive integer $\ell$ coprime to the characteristic of the base field, cohomological invariants in degree two compute the $\ell$-torsion ${\rm Br}'(\Xcal)_{\ell}$ of the cohomological Brauer group.

In \cites{PirCohHypEven, PirCohHypThree, DilCohHypOdd, DilPir, DilPirBr, DilPirRS} the authors investigated the cohomological invariants of the stacks of smooth hyperelliptic curves and their compactifications; using these computations, they obtained an explicit presentation of the Brauer group of the stacks of smooth hyperelliptic curves over any field of characteristic zero, and the prime-to-$p$ part in characteristic $p$.

\subsection*{In positive characteristic}
Most results on classical cohomological invariants (with some notable exceptions, such as \cite{EKLV}*{Appendix B}) use mod $\ell$ Galois cohomology, where $\ell$ is a positive integer coprime to the characteristic of the base field. The assumption is crucial to the theory, most notably providing homotopy invariance. This changed with some recent works of Blinstein-Merkurjev \cite{BlinMerk}, Lourdeaux \cite{Lou} and Totaro \cite{Totp} who explored classical cohomological invariants with $p$-torsion coefficients in characteristic $p$. Totaro in particular developed a complete theory and provided the first examples of full computations of mod $p$ cohomological invariants of a group $G$ in characteristic $p$. The key idea is that the coefficients should be in the ``motivic'' groups ${\rm H}^n(F,\underline{\ZZ}/p^r(j))$, where $\underline{\ZZ}/p^r(j)$ is Voevodsky's complex in the derived category of \'etale sheaves on schemes smooth over $\Spec(\bfk)$.

In this paper we extend Totaro's ideas in the following direction.
\begin{thm-no-num}
There is a theory of mod $p^r$ cohomological invariants for algebraic stacks in characteristic $p$ extending the classical theory, and moreover for a smooth quotient stack $\Xcal$ we have:
\[
{\rm Br}'(\Xcal)_{p^r} = {\rm Inv}(\Xcal, {\rm H}^2(-,\underline{\ZZ}/p^r(1)))
\]
\end{thm-no-num}
Developing the theory requires some non-trivial preliminary work, mostly to extend known results on \'etale motivic cohomology of fields and discrete valuation rings from mod $p$ to mod $p^r$ coefficients. 

We then study mod $p^r$ cohomological invariants of the moduli stack $\Mcal_{1,1}$ of elliptic curves over a field $\bfk$, completing the classification from \cite{DilPirBr}*{Theorem 3.1}. The most interesting case is the following, which we state in the notation of \Cref{sec:explicit}: 
\[
{\rm H}^{n}_{p^r}(F)={\rm H}^n(F,\underline{\ZZ}/p^r(n-1)), \quad \K^n_{p^r}(F)={\rm H}^n(F,\underline{\ZZ}/p^r(n)).
\]
\begin{thm-no-num}
Let $\bfk$ be a field of characteristic $p$. Then
\[
\begin{cases}
\Inv(\Mcal_{1,1},{\rm H}_{2^r}) = \Inv(\bA^1,{\rm H}_{2^r}) \oplus {\rm J}^{\bullet-1}_{2^r}(\bfk) & \mbox{if p}=2,\\
\Inv(\Mcal_{1,1},{\rm H}_{3^r}) = \Inv(\bA^1,{\rm H}_{3^r}) \oplus {\rm H}^{\bullet-1}_{3^r}(\bfk) & \mbox{if p}=3,\\
\Inv(\Mcal_{1,1},{\rm H}_{p^r}) = \Inv(\bA^1,{\rm H}_{p^r}) & \mbox{if p}>3.\\
\end{cases}
\]
The group ${\rm J}_{2^r}^\bullet$ above is a fiber product 
\[
{\rm J}_{2^r}^{\bullet} = {\rm H}^{\bullet}_{2^r}(\bfk) \times_{{\rm H}^{\bullet}_{2^r}(\bfk)} \K_2^{\bullet-1}(\bfk)
\]
where the first map is multiplication by $4$, and the second map comes from the natural morphism 
\[\K_2^{n-1}(\bfk) = \Omega^{n-1}_{\log,\bfk} \to \Omega^{n-1}_{\bfk} \to {\rm H}^{n}_{2^r}(\bfk).\]
\end{thm-no-num}

We remark that all the generators in the groups above are described constructively, meaning that given a smooth irreducible scheme $S/\bfk$ and a family of elliptic curves $E:S \rightarrow \Mcal_{1,1}$ their pullbacks to $\Inv(S,{\rm H}_{p^r})$ can be obtained explicitly from a Weierstrass equation for $E_\xi$, where $\xi$ is the generic point of $S$.

\subsection*{The Brauer group of $\Mcal_{1,1}$}
We apply our study of mod $p^r$ cohomological invariants of $\Mcal_{1,1}$ to the computation of its Brauer group.

While the Brauer group is a fundamental and highly studied invariant, computations of the Brauer group of moduli stacks have appeared only recently. 
The breakthrough result in this direction is Antieau and Meier's paper \cite{AntMeiEll} where they compute the Brauer group of the stack $\Mcal_{1,1}$ of elliptic curves over various bases, including $\mathbb{Q}$, all finite fields of characteristic different from $2$ and, most notably, $\ZZ\left[1/2\right]$ and $\ZZ$. The authors contributed to the topic in \cite{DilPirBr}*{Corollary 3.2}, though it should be noted that the corollary is implied by the stronger result in Meier's unpublished draft \cite{Mei}, and finally Shin obtained a somewhat surprising result in \cite{Shi}, proving that differently from all other characteristics the Brauer group of $\Mcal_{1,1}$ is not trivial over an algebraically closed field of characteristic $2$, and moreover computing it for all finite fields of characteristic $2$.

Our computation of mod $p^r$ cohomological invariants of $\Mcal_{1,1}$ over fields of characteristic $p$, together with \cite{DilPirBr}*{Corollary 3.2} and some extra mod $\ell$ computations, gives us the following description of the Brauer group of $\Mcal_{1,1}$ which holds over any field, with no assumptions on characteristic, perfection or algebraic closure.

\begin{thm-no-num}
Let $\mathcal{M}_{1,1}$ be the stack over $\Spec(\bfk)$ parametrizing elliptic curves. If ${\rm char}(\bfk)=c$ the group ${\rm Br}(\mathcal{M}_{1,1})$ is:
\[
\begin{cases}
{\rm Br}(\mathbb{A}^1_\bfk) \oplus {\rm H}^{1}(\bfk,\ZZ/12\ZZ) & \mbox{if } c \neq 2,\\
{\rm Br}(\mathbb{A}^1_\bfk) \oplus {\rm H}^{1}(\bfk,\ZZ/3\ZZ) \oplus {\rm J} & \mbox{if } c=2, \, x^2+x+1 \, \mbox{irreducible over} \, \bfk, \\
{\rm Br}(\mathbb{A}^1_\bfk) \oplus {\rm H}^{1}(\bfk,\ZZ/12\ZZ) \oplus \ZZ/2\ZZ & \mbox{if } c=2, \, x^2+x+1 \, \mbox{has a root in} \, \bfk
\end{cases}
\]
where ${\rm H}^1(\bfk,\ZZ/4) \subset {\rm J} \subseteq {\rm H}^1(\bfk,\ZZ/8)$ sits in an exact sequence
\[
0 \to  {\rm H}^1(\bfk,\ZZ/4) \to {\rm J} \to \ZZ/2 \to 0. 
\]
\end{thm-no-num}
As for cohomological invariants, our construction provides an explicit descrition of each generator.

\subsection*{Content of the paper}
In Section \ref{sec:motivic cohomology} we recall some basic facts on \'{e}tale motivic cohomology, in particular its relation to logarithmic differential forms.

In Section \ref{sec:inv p} we define mod $p^r$ cohomological invariants for smooth algebraic stacks over a base field of characteristic $p$. After proving some general properties, we restrict to invariants with coefficients in the \'etale motivic cohomology and characterize them as the sheafification of unramified cohomology.

In Section \ref{sec:low degree} we show that cohomological invariants of degree $2$ can be used to compute the cohomological Brauer group. We also give an interpretation of cohomological invariants of degree one in terms of first cohomology groups of the sheaves $\ZZ/p^r$ and $\mu_{p^r}$ (the latter regarded as a sheaf in the flat topology). 

The next two sections are the technical core of the paper. In Section \ref{sec:explicit}  we study the \'{e}tale motivic cohomology groups of fields, in particular their explicit description in terms of log differential forms and symbols. We establish several useful facts, and we recall Izhboldin's results on the tamely ramified and wild subgroups.

In Section \ref{sub:CohDVR} we study cohomology groups of DVR. The results contained here are necessary for the definitions in \Cref{sec:inv p}.

Section \ref{sec:some comp} is devoted to the computation of the cohomological invariants of stacks of the form $X\times\Brm\ZZ/n$, which will be necessary later.

In Section \ref{sec:inv M11} we compute the mod $p^r$ cohomological invariants of $\mathcal{M}_{1,1}$, dividing our analysis into three parts, depending on the characteristic of the base field.

In Section \ref{sec:mod l} we take a little detour to compute the cohomological invariants of $\mathcal{M}_{1,1}$ with coefficients in cycle modules of $\ell$-torsion, with $\ell$ coprime to the characteristic of the base field. These have already been computed in the previous works by the authors except when the characteristic of the base field is $2$ or $3$, which is what is done here.

Finally, in Section \ref{sec:brauer} we compute the Brauer group of $\mathcal{M}_{1,1}$ over any field and we describe its generators.

\subsection*{Notation}

Throughout the paper $\bfk$ will be a field characteristic $p>0$ unless stated otherwise. All schemes and algebraic stacks will be assumed to be of finite type over $\bfk$ or localizations thereof unless stated otherwise. With the notation ${\rm H}^i(\Xcal,A)$ we always mean \'etale cohomology with coefficients in $A$, or lisse-\'etale if $\Xcal$ is not a Deligne-Mumford stack. If $R$ is a $\bfk$-algebra we will often write ${\rm H}^i(R,A)$ for ${\rm H}^i(\Spec(R),A)$. In general for a functor $\rm F$ when no confusion is possible we will often write ${\rm F}(A)$ for ${\rm F}(\Spec(A))$.

\subsection*{Acknowledgements}

We are indebted to Burt Totaro for patiently explaining to us much of the theory on étale motivic cohomology that we use. This paper would not have been possible without his help. We thank Angelo Vistoli for some useful comments and Zinovy Reichstein for a conversation which inspired \Cref{prop:essential} and \Cref{rmk:essM11}.

\section{\'Etale motivic  cohomology}\label{sec:motivic cohomology}

\'Etale cohomology with torsion coefficients behaves rather differently when the coefficients are of $\ell$-torsion, with $\ell$ coprime to the characteristic of our base field, and when they are instead of $p={\rm char}(\bfk)$-primary torsion, with the latter case being significantly harder to deal with. The most basic example is the following: consider the map $\mathcal{P}:\Ga \to \Ga$ given by $a \mapsto a^p-a$. It is a surjection of \'etale sheaves with kernel the constant sheaf $\ZZ/p$. Consequently we have an exact sequence
\[0 \to \ZZ/p \to \Ga \xrightarrow{\mathcal{P}} \Ga \to 0 \]
which induces a long exact sequence in cohomology
\[0 \to {\rm H}^0(-,\ZZ/p) \to {\rm H}^0(-,\Ga) \xrightarrow{\mathcal{P}} {\rm H}^0(-,\Ga) \to {\rm H}^1(-,\ZZ/p) \to {\rm H}^1(-,\Ga) \ldots
\]
If we apply it to $\bA^1$, as it is an affine scheme we get
\[0 \to {\rm H}^0(\bA^1,\ZZ/p) \to {\rm H}^0(\bA^1,\Ga) \xrightarrow{\mathcal{P}} {\rm H}^0(\bA^1,\Ga) \to {\rm H}^1(\bA^1,\ZZ/p) \to 0
\]
which implies that ${\rm H}^1(\bA^1,\ZZ/p)=\bfk\left[t\right]/\mathcal{P}(\bfk\left[t\right])$, a non finitely generated group, even if the base field $\bfk$ is algebraically closed. This shows that mod $p$ \'etale cohomology is not homotopy invariant, thus lacking a fundamental property of its mod $\ell$ counterpart.

Nonetheless, there are many tools at our disposal in this situation as well. At the beginning of the '80s Kato \cite{Kato}*{Pg. 219} used differential forms to define cohomology groups ${\rm H}^i(\bfk,\ZZ/p^r(j))$, which turned out to work impressively well: when $i=1, j=0$ we have ${\rm H}^1(\bfk,\ZZ/p^r(0))={\rm H}_{\textnormal{ \'et}}^1(\bfk,\ZZ/p^r)$ and when $i=2, j=1$ we have ${\rm H}^2(\bfk,\ZZ/p^r(1))={\rm Br}(\bfk)_{p^r}$. To explain why this happens we need more sophisticated technology: let $\ZZ_{\rm tr}$ be Voevodsky's sheaf with transfers. Define \cite{VoeMot}*{Definition 3.1}
\[
\underline{\ZZ}(j)={\rm C}_*\ZZ_{\rm tr}(\Gm^{\wedge j})\left[-j\right]
\]
in the derived category of \'etale sheaves over schemes smooth over $\bfk$. Then $\underline{\ZZ}(0)$ is quasi-isomorphic to $\ZZ$, and $\underline{\ZZ}(1)$ is quasi-isomorphic to $\Gm\left[-1\right]$ \cite{VoeMot}*{Theorem 4.1}.

There is a natural map $\underline{\ZZ}(j) \xrightarrow{\cdot n} \underline{\ZZ}(j)$ and an exact sequence
\[
0 \to \underline{\ZZ}(j) \xrightarrow{\cdot n} \underline{\ZZ}(j) \to \underline{\ZZ}/n(j) \to 0
\]
which in particular for $j=1$ gives us the familiar looking
\[
\xymatrix {
{\rm H}^2(X, \underline{\ZZ}(1)) \ar[r]^{\cdot n} & {\rm H}^2(X, \underline{\ZZ}(1)) \ar[r] & {\rm H}^2(X, \underline{\ZZ}/n(1)) \ar[r] &  {\rm H}^3(X,\underline{\ZZ}(1))_n \ar[r] & 0\\
{\rm H}^1(X,\Gm) \ar@{=}[u] \ar[r]^{\cdot n} & {\rm H}^1(X,\Gm) \ar@{=}[u] \ar@{.>}[r] & (?) \ar@{.>}[r] \ar@{=}[u] & {\rm H}^2(X,\Gm)_n \ar@{=}[u]
}
\]
When $n$ is coprime to the ${\rm char}(\bfk)$ we know that ${\rm H}^2(X,\underline{\ZZ}/n(1))={\rm H}^2(X, \ZZ/n(1))={\rm H}^2(X, \mu_n)$, retrieving the usual Kummer exact sequence. In general if ${\rm char}(\bfk)$ does not divide $n$ then $\underline{\ZZ}/n(j)$ is quasi-isomorphic to $\ZZ/n(j)=\ZZ/n\otimes \mu_n^{\otimes j}$ by \cite{VoeMot}*{Theorem 10.2}. It remains to understand what the groups ${\rm H}^n(X, \underline{\ZZ}/p^r(j))$ are when $p = {\rm char}(\bfk)$.

Let $X/\bfk$ be a smooth scheme, and write $\Omega^n_X$ for the sheaf of differentials of $X$ over $\Spec(\ZZ)$. Inside this sheaf consider the subsheaf (of $\ZZ/p$-modules) $\Omega^n_{\rm log}$ of logarithmic differentials, i.e. the subsheaf generated by elements in the form 
\[
\frac{{\rm d}b_1}{b_1}\wedge \ldots \wedge \frac{{\rm d}b_n}{b_n}
\]
with $b_1, \ldots , b_n$ units. There are corresponding notions of de Rham-Witt differentials ${\rm W}_r\Omega^n$ and logarithmic de Rham-Witt differentials ${\rm W}_r\Omega_{\rm log}^n$. Geisser and Levine \cite{GeisLev}*{Proposition 3.1, Theorem 8.3, Theorem 8.5} proved that in the derived category of Zariski or \'etale sheaves on smooth schemes over a \emph{perfect} field of characteristic $p$ there is an isomorphism
\[
\underline{\ZZ}/p^r(j) = {\rm W}_r\Omega^q_{\rm log}\left[-j\right]
\]
so that ${\rm H}^i(X,\underline{\ZZ}/p^r(j))={\rm H}^{i-j}(X,{\rm W}_r\Omega^j_{\rm log})$. Note that their Theorem 8.5 is stated for $r=1$ but the proof works for general $r$.

Geisser and Levine's result also computes the \'etale motivc cohomology of non perfect fields thanks to Quillen's method (see the proof of \Cref{prop:KMH} or \cite{GeisLev}*{Proposition 3.1}). 

Fields of characteristic $p$ have cohomological dimension $1$ \cite{SerGal}*{Section II, 2.2}, implying that ${\rm H}^i(\bfk,\underline{\ZZ}/p^r(j))=0$ unless $i$ is either $j$ or $j+1$. When $i = j$ we have
\[
{\rm H}^j(\bfk, \underline{\ZZ}/p^r(j))={\rm W}_r\Omega^j_{\bfk,  {\rm log}}=\K^j_{\textnormal{Mil}}(\bfk)/p^r
\]
where $\K^{\bullet}_{\textnormal{Mil}}$ is Milnor's K-theory \cite{GilSza}*{Ch. 7} and the identification is originally by Bloch and Kato \cite{BlKaP}. For $i = j+1$ the description by Geisser and Levine gives
\[
{\rm H}^{j+1}(\bfk, \underline{\ZZ}/p^r(j))={\rm H}^1_{\rm Gal}(\bfk, {\rm W}_r\Omega^j_{\bfk^s, {\rm log}})
\]
where $\bfk^s$ is the separable closure of $\bfk$. This description is not particularly enlightening, but it agrees with Kato's original definition (thus explaining why his groups worked so well), and when $r=1$ the latter is rather simple. 

The group $\Omega^n_{\bfk}$ is generated additively by elements in the form $a\frac{{\rm d}b_1}{b_1}\wedge \ldots \wedge \frac{{\rm d}b_n}{b_n}$. We have a well defined additive homomorphism $\mathcal{P}=\Omega^{n}_{\bfk} \to \Omega_{\bfk}^n$ given by
\[
\mathcal{P}(a\frac{{\rm d}b_1}{b_1}\wedge \ldots \wedge \frac{{\rm d}b_n}{b_n}) = \mathcal{P}(a)\frac{{\rm d}b_1}{b_1}\wedge \ldots \wedge \frac{{\rm d}b_n}{b_n} = (a^p-a)\frac{{\rm d}b_1}{b_1}\wedge \ldots \wedge \frac{{\rm d}b_n}{b_n}
\]
and there is an exact sequence \cite{IzhK}*{Corollary 6.5}:
\begin{equation}\label{eq:omega to H}
0 \to \Omega^n_{\bfk, {\rm log}} \to \Omega^n_{\bfk} \xrightarrow{\mathcal{P}} \Omega^n_{\bfk}/{\rm d}\Omega^{n-1}_{\bfk} \to {\rm H}^{n+1}(\bfk,\underline{\ZZ}/p(n)) \to 0
\end{equation}
In other words we can see ${\rm H}^{n+1}(\bfk,\underline{\ZZ}/p(n))$ as $\Omega^n_\bfk/N$, where $N$ is the subgroup generated by exact differentials and elements in the form $(a^p-a)\frac{{\rm d}b_1}{b_1}\wedge \ldots \wedge\frac{{\rm d}b_n}{b_n}$, with $b_1,\ldots,b_n \in \bfk^*$.

The last tool we need to recall from motivic cohomology is Gros and Suwa's resolution of the sheaf of logarithmic differentials on a scheme $X$ smooth over a perfect field \cite{GrSu}*{Theorem 1.4}. Write
\[
\Hcal^n(X,\underline{\ZZ}/p^r(j)) = {\rm H}^0_{\rm Zar}(X,{\rm H}^n(-,\underline{\ZZ}/p^r(j))).
\]

Gros and Suwa construct a resolution \cite{GrSu}*{Corollary 1.6}:

\begin{equation}\label{eq:Omega}
\small
 0 \rightarrow \Hcal^n(X,\underline{\ZZ}/p^r(n))={\rm W}_r\Omega_{X,{\rm log}} \rightarrow {\rm W}_r\Omega^n_{\bfk(X), {\rm log}} \xrightarrow{\partial} \bigoplus_{x \in X^{(1)}} {\rm W}_r\Omega^{n-1}_{\bfk(x), {\rm log}} \xrightarrow{\partial} \ldots
\end{equation}

Using Gros and Suwa's results together with Quillen's method we get, over any field of characteristic $p$, the following exact sequence, which appears in appendix A of \cite{BlinMerk}:
\begin{equation}\label{eq:ramification}
 0 \rightarrow \Hcal^n(X,\underline{\ZZ}/p^r(j)) \rightarrow {\rm H}^n(k(X),\underline{\ZZ}/p^r(j)) \xrightarrow{\partial} \bigoplus_{x \in X^{(1)}} {\rm H}_{{\textnormal{\'et}}, x}^{n+1}(X,\underline{\ZZ}/p^r(j))   
\end{equation}
where the groups appearing at the end of the sequence above are defined as
\[ 
{\rm H}_{{\textnormal{\'et}}, x}^{\bullet}(X,\underline{\ZZ}/p^r(j))\overset{\textnormal{def}}{=}\varprojlim_{x\in U}  {\rm H}_{{\textnormal{\'et}},U\cap \{\overline{x}\}}^{\bullet}(X,\underline{\ZZ}/p^r(j)).
\]
The colimit runs over all the open subset containing $x$, and the groups on the right are the cohomology groups with support on a closed subscheme.

The sequence, importantly, works at the level of local rings, giving us:
\begin{equation}\label{eq:local ramification}
    \small
    0 \rightarrow \Hcal^n(\mathcal{O}_{X,s},\underline{\ZZ}/p^r(j)) \rightarrow {\rm H}^n(k(X),\underline{\ZZ}/p^r(j)) \xrightarrow{\partial} \bigoplus_{x \in X^{(1)}, s \in \overline{x}} {\rm H}_{{\textnormal{\'et}}, x}^{n+1}(X,\underline{\ZZ}/p^r(j))
\end{equation} 

\section{Invariants in characteristic $p$}\label{sec:inv p}

In this section we generalize the definition of cohomological invariants from \cite{DilPirBr}*{Definition 2.3} (which is already a more general version of the original one from \cite{PirAlgStack}*{Definition 2.2}) to include coefficients of $p^r$-torsion even when our base field $\bfk$ has characteristic $p$. While the functors we will be interested in for the rest of the paper are specifically those in the form ${\rm H}^n(-,\underline{\ZZ}/p(j))$ for $j \in \lbrace n-1,n\rbrace$, allowing for general functors still has value, at least in terms of exposition. The following is the minimun we can ask of our coefficients functor for our theory to make sense. 

\begin{df}
Let $(F,v)$ be a discretely valued field (resp. let $(R,v)$ be a DVR), let $\mathcal{O}_F=\lbrace a \in F \mid v(a) \geq 0 \rbrace$ and let $\bfk_v = \mathcal{O}_F/\lbrace a \mid v(a) > 0 \rbrace = R/{\rm m}_v$.

We say $v$ is a geometric discrete valuation if:
\begin{itemize}
    \item $F$ and $\bfk_v$ are finitely generated over $\bfk$.
    \item $v$ is trivial on $\bfk$.
    \item The transcendence degree of $F$ over $\bfk$ is one more than the transcendence degree of $\bfk_v$ over $\bfk$.
\end{itemize}
Equivalently, $(R,v)$ is a geometric DVR if it is the localization of a regular variety over $\bfk$ at a point of codimension one.
\end{df}

From now on, coherently with our notation choices, we will always assume valuations and DVRs are geometric unless stated otherwise.

Write $(\mathcal{F}/\bfk)$ for the category of finitely generated extensions of $\bfk$, and $({\rm gv}\mathcal{F}/\bfk)$ for the category whose objects are $(F,v)$, where $F$ is a finitely generated extension of $\bfk$ and $v$ is a geometric discrete valuation. 

Finally, write $(\textnormal{Ab})$ for the category of Abelian groups and $(\textnormal{gr-Ab})$ for graded Abelian groups.

\begin{df}
A $v$-functor is the data of two functors
\[\Theta:(\mathcal{F}/\bfk) \to (\textnormal{(gr-)Ab}), \quad \Theta':({\rm gv}\mathcal{F}/\bfk) \to (\textnormal{(gr-)Ab})
\]
with, for each object $(F,v)\in ({\rm gv}\mathcal{F}/\bfk)$ compatible maps
\[{\rm j}_v:\Theta(\bfk_v) \hookrightarrow \Theta'(F,v), \quad {\rm p}_v: \Theta(F) \to \Theta'(F,v) . \]
where ${\rm j}_v$ is injective. We will often write $\Theta$ for the $v$-functor $(\Theta, \Theta', j, p)$.
\end{df}

\begin{rmk}\label{rmk:ellcomparison}
This definition includes all the functors we picked as coefficients in the mod-$\ell$ case, i.e. cycle modules, as we can pick the functors $\Theta, \Theta'$ to be respectively the cycle module ${\rm M}$ and the group ${\rm M}_v$ described in \cite{Rost}*{Remark 1.6}. 

More specifically, when using \'etale cohomology with coefficients in a locally constant $\ell$-torsion Galois module $D$ we can define a $v$-functor using Gabber's theorem \cite{StPr}*{Tag 09ZI}. Define $\Theta(F)={\rm H}^n(F,D)$ and for any DVR $(R,v)$ 
\[ 
 \Theta'(\bfk(R),v)={\rm H}^{n}(\bfk(R^{\rm h}),D), \, {\rm p}_v=\pi_{\rm h}^*, \, {\rm j}_v=(i')^* \circ (i^*)^{-1}
\]
where $R^{\rm h}$ is the Henselization, $\pi_{\rm h}:\Spec(R^{\rm h}) \to \Spec(R)$ is the projection, the map $i: \Spec(\bfk_v) \to \Spec(R)$ is the inclusion of the closed point, which induces an isomorphism on cohomology, and $i': \Spec(\bfk(R^{\rm h})) \to \Spec(R^{\rm h})$ is the inclusion of the generic point.
\end{rmk}

Using the notion of a $v$-functor we can give a broad definition of cohomological invariants. Given an algebraic stack $\Xcal/\bfk$, let ${\rm Pt}_{\Xcal}$ be the functor 
\[
{\rm Pt}_{\Xcal}: ({\rm fields}/\bfk) \to (\mbox{sets}), \quad {\rm Pt}_{\Xcal}(F) = \Xcal(\Spec(F))/{\simeq}.
\]

\begin{df}
Let $\Theta$ be a $v$-functor. A cohomological invariant with coefficients in $\Theta$ of an algebraic stack $\Xcal/\bfk$ is a natural transformation
\[
\alpha: {\rm Pt}_\Xcal \longrightarrow \Theta
\]
satisfying the following continuity condition: for any DVR $(R,v)$ with a map $\Spec(R) \to \Xcal$ there exists a finite \'etale extension 
$(R', v') \to (R,v)$
such that $\bfk_{v'}=\bfk_v$ and
\[{\rm p}_{v'}(\alpha(\bfk(R)))={\rm j}_{v'}(\alpha(\bfk_{v'})).\]
\end{df}

\begin{rmk}
Using \Cref{rmk:ellcomparison} it's easy to see that this definition retrieves the general definition for mod $\ell$ coefficients given in \cite{DilPirBr}*{Definition 2.3}
\end{rmk}

The more practical way to see a cohomological invariant $\alpha$ is as a way to functorially assign to each point $x: \Spec(F) \to \Xcal$ an element $\alpha(x) \in \Theta(F)$, with the continuity condition requiring that it behaves well with respect to specialization.

Cohomological invariants of $\Xcal$ with coefficients in $\Theta$ clearly form a group, graded if $\Theta$ is, i.e. if it maps fields (resp. valued fields) to graded Abelian groups. We call it ${\rm Inv}(\Xcal,\Theta)$. If $\Theta$ is graded we write $\Inv(\Xcal,\Theta)$ for the whole group and ${\rm Inv}^n(\Xcal,\Theta)$ for ${\rm Inv}(\Xcal,\Theta^n)$.

There is an obvious pullback map: a morphism $f:\Ycal \to \Xcal$ induces a natural transformation $f:{\rm Pt}_{\Ycal} \to {\rm Pt}_\Xcal$ and given an element $\alpha \in \Inv(\Xcal,\Theta)$ we compose to obtain $f^*\alpha \in \Inv(\Ycal, \Theta)$.

In other words, given $y: \Spec(F) \to \Ycal$ we define
\[
(f^*\alpha)(y)=\alpha(f\circ y).
\]

Now recall the following definition \cite{PirAlgStack}*{Definition 3.2}: a \emph{smooth-Nisnevich} morphism $\Ycal \to \Xcal$ is a smooth, representable map of algebraic stacks such that for any field $F$ and any morphism $\Spec(F) \to \Xcal$ we have a lifting

\[
\xymatrix {
 & \Ycal \ar[d]^{f} \\
 {\rm{Spec}}(F) \ar[ur] \ar[r] & \Xcal.
}
\]

Typical examples are a quotient map $X \to \left[X/G\right]$ where $G$ is a \emph{special} smooth affine algebraic group, in the sense that $G$-torsors are always Zariski locally trivial (e.g. ${\rm GL}_n, \Ga$), as well as vector or affine bundles.

Smooth-Nisnevich morphisms define a Grothendieck topology, which we refer to as the smooth-Nisnevich topology, on the category of representable smooth morphisms $\Ycal \to \Xcal$. We call the resulting site the smooth-Nisnevich site of $\Xcal$. If $\Xcal$ is a scheme then any smooth-Nisnevich morphism has a (\'etale) Nisnevich section, so on schemes the smooth-Nisnevich site is equivalent to the ordinary Nisnevich one \cite{PirAlgStack}*{Proposition 3.3}.

In \cite{PirAlgStack}*{Proposition 3.6} the second author proves that an algebraic stack with affine stabilizers has a smooth-Nisnevich cover if the base field is infinite, and relies on a finer topology, the $m$-smooth Nisnevich topology, for finite fields. Using result by Aizenbud and Avni we can easily show that smooth-Nisnevich covers of finite type exist over any base field.

\begin{prop}
Let $\Xcal/\bfk$ be an algebraic stack with affine geometric stabilizers. Then there exists a scheme $X$ and a smooth-Nisnevich cover $X \to \Xcal$ of finite type.
\end{prop}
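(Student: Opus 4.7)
The strategy will be to combine the existence of an ordinary smooth atlas with a theorem of Aizenbud and Avni on pointwise surjective presentations of stacks. Concretely, I would first choose a smooth atlas $V \to \Xcal$ of finite type from a scheme; this is representable because $\Xcal$ has affine stabilizers. The only obstruction to $V \to \Xcal$ already being smooth-Nisnevich is that over a field $F$ the smooth $F$-scheme $V_x = V \times_\Xcal \Spec(F)$ may fail to have an $F$-rational point, which can happen over finite residue fields where the Bertini-type argument used in \cite{PirAlgStack}*{Proposition 3.6} breaks down. The Aizenbud-Avni result would then refine $V$ to a smooth atlas $X \to \Xcal$ of finite type from a scheme in which this failure is eliminated, i.e.\ every field-valued point of $\Xcal$ lifts to $X$.

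Once such an $X$ is in hand the verification is immediate: by construction $X$ is a scheme of finite type, $X \to \Xcal$ is representable and smooth, and every $\Spec(F) \to \Xcal$ lifts to $X$. These three properties are precisely the definition of a smooth-Nisnevich cover of finite type, so nothing further needs to be checked.

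The hard part is thus concentrated entirely in the invocation of Aizenbud and Avni's result, which relies on nontrivial facts about existence of rational points on smooth varieties over finite fields; everything on the stack-theoretic side is formal. The earlier argument in \cite{PirAlgStack}*{Proposition 3.6} avoided this input at the cost of replacing the smooth-Nisnevich topology with the coarser $m$-smooth Nisnevich topology whenever the base field was finite, and the purpose of the present proposition is precisely to remove that detour.
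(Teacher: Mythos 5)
Your proposal rests on the same two ingredients as the paper's proof --- the Bertini-type construction of \cite{PirAlgStack}*{Proposition 3.6} and the theorem of Aizenbud and Avni --- but two steps need tightening. First, an arbitrarily chosen smooth finite-type atlas $V \to \Xcal$ does \emph{not} lift all points over infinite fields: the fibre $V \times_\Xcal \Spec(F)$ is merely a nonempty smooth $F$-scheme, which may have no $F$-rational point even when $F$ is infinite (already for $\Xcal = \Bcal G$ and the atlas $\Spec(\bfk) \to \Bcal G$, the fibre over a point classifying a nontrivial torsor is that torsor). So the infinite-field case is not automatic from having a smooth atlas; the paper handles it by running the proof of \cite{PirAlgStack}*{Proposition 3.6} verbatim to produce a \emph{specific} atlas $X_1$ whose fibres over infinite fields are guaranteed to contain rational points. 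Second, \cite{AizAvn}*{Theorem A} does not refine a given atlas so that every field-valued point lifts; as invoked in the paper it supplies a second smooth finite-type atlas $X_2$ lifting all points over \emph{finite} fields. The desired smooth-Nisnevich cover is then the disjoint union $X_1 \sqcup X_2$, each summand taking care of one class of fields. With these two corrections your argument coincides with the paper's.
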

\begin{proof}
The proof of \cite{PirAlgStack}*{Proposition 3.6} can be applied verbatim to show that there is a scheme $X_1$ with a smooth, finite type map $X_1\to \Xcal$ lifting all points over infinite fields. On the other hand, \cite{AizAvn}*{Theorem A} shows that there exists a scheme $X_2$ with a smooth, finite type map $X_2 \to \Xcal$ lifting all points over finite fields. Then $X_1 \sqcup X_2 \to \Xcal$ is the smooth-Nisnevich cover we are looking for.
\end{proof}

Given an algebraic stack $\Xcal$ we can consider the functor $\Inv(-,\Theta)$ on the smooth-Nisnevich site of $\Xcal$.

\begin{thm}\label{thm:sheaf}
The functor $\Inv(-,\Theta)$ is a smooth-Nisnevich sheaf.
\end{thm}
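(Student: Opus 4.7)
The plan is to adapt the proof of \cite{PirAlgStack}*{Theorem 4.3} and \cite{DilPirBr}*{Theorem 2.8} from the cycle module setting to the broader $v$-functor setting; the structural role played there by the residue and corestriction maps of a cycle module is here played by the maps $p_v$ and $j_v$ attached to $\Theta$. Let $\lbrace f_i:\Ycal_i\to \Ycal\rbrace$ be a smooth-Nisnevich covering. Separatedness is immediate: by the very definition of a smooth-Nisnevich cover any field-valued point $y:\Spec(F)\to\Ycal$ lifts to some $\tilde y:\Spec(F)\to \Ycal_i$, so an $\alpha \in \Inv(\Ycal,\Theta)$ with $f_i^*\alpha=0$ for all $i$ satisfies $\alpha(y) = (f_i^*\alpha)(\tilde y)=0$.

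For gluing, given $\alpha_i\in \Inv(\Ycal_i,\Theta)$ that agree on each $\Ycal_i\times_\Ycal \Ycal_j$, I would construct $\alpha$ by pointwise lifting: for $y:\Spec(F)\to \Ycal$, choose any lift $\tilde y_i$ and set $\alpha(y):=\alpha_i(\tilde y_i)$. Independence of the choice of lift follows from the compatibility assumption applied to the induced map into the fibered product; naturality in $F$ is automatic.

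The hard part is the continuity condition for the glued $\alpha$. Given $\phi:\Spec(R)\to \Ycal$ with $R$ a geometric DVR, I would proceed in three steps: (1) lift the closed point $\Spec(\bfk_v)\to \Ycal$ along the smooth-Nisnevich cover to some $\tilde s:\Spec(\bfk_v)\to \Ycal_i$; (2) use smoothness of $f_i$ together with the formal smoothness criterion to extend $\tilde s$ to a map from the Henselization $\Spec(R^{\rm h})\to \Ycal_i$ over $\phi$, which therefore already factors through some finite étale neighborhood $(R_1,v_1)\to (R,v)$ with $\bfk_{v_1}=\bfk_v$; (3) apply the continuity of $\alpha_i$ to this lifted map, obtaining a further finite étale $(R_2,v_2)\to (R_1,v_1)$ with $\bfk_{v_2}=\bfk_{v_1}$ witnessing $p_{v_2}(\alpha_i(\bfk(R_1)))=j_{v_2}(\alpha_i(\bfk_{v_2}))$. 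By construction of $\alpha$ this equation transfers to $\alpha$ over the composite extension $(R_2,v_2)\to (R,v)$, which is finite étale with trivial residue extension.

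The main obstacle is step (2): one must ensure that extending the closed-point lift through a smooth cover really does give a finite étale extension of the DVR with unchanged residue field, rather than merely a general étale or ind-étale extension. This is where the Nisnevich aspect of the cover is essential — it guarantees that the starting lift on the residue field exists, so that Henselian lifting applies and produces an extension realizable at finite level. Once this step is in place, both the well-definedness of $\alpha$ and the verification of the continuity identity reduce to routine diagram chases against the naturality of $p_v, j_v$.
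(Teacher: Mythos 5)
Your proposal is correct and follows essentially the same route as the paper's proof: pointwise gluing via liftings with well-definedness checked on the fibered product, and continuity verified by lifting the Henselization of the DVR along the smooth-Nisnevich cover and factoring through a finite étale neighborhood with trivial residue extension. Your explicit composition of the two finite étale extensions in step (3) is a detail the paper leaves implicit, but the argument is the same.
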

\begin{proof}
Let $\Ycal \to \Xcal$ be a smooth-Nisnevich covering, and assume an element $\alpha \in \Inv(\Ycal,\Theta)$ satisfies ${\rm pr}_1^* \alpha= {\rm pr}_2^* \alpha$. For each morphism $p: \Spec(F) \to \Xcal$ define $\beta(p)$ to be $\alpha(p')$ where $p'$ is any lifting of $p$. This is well defined as given a second lifting $p''$ there is a map $q: \Spec(F) \to \Ycal \times_{\Xcal} \Ycal$ such that ${\rm pr}_1 \circ q = p', {\rm pr}_2 \circ q = p''$, so the values of $\alpha$ at the two points must be the same. Functoriality is immediate.

Finally, given a map from the spectrum of a DVR $(R,v)$ to $\Xcal$ we can always find a finite extension $(R',v')$ with $\bfk_{v'}=\bfk_v$ which lifts to $\Ycal$. This is because the map from the Henselization $(R^{\rm h},v)$ always lifts. This shows that we can check the continuity condition ${\rm p}_{v'}\beta(\bfk(R'))={\rm j}_{v'}\beta(j(\bfk_{v'}))$ on the lifting $\Spec(R') \to \Ycal$, concluding our proof.
\end{proof}

A direct consequence of the continuity condition is that a cohomological invariant of an irreducible smooth stack is determined by its ``generic'' value.

\begin{lm}\label{lm:injreg}
Let $\Xcal/\bfk$ be an algebraic stack and let $(A,m)$ be a regular local ring with a map $\Spec(A) \to \Xcal$, and let $\alpha\in\Inv(\Xcal,\Theta)$. Let $\xi, x$ be respectively the image of the generic and closed point of $\Spec(A)$. Then if $\alpha(\xi)=0$ we have $\alpha(x)=0$.
\end{lm}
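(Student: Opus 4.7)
The plan is to argue by induction on $n=\dim A$. The base case is $n=1$, where $A$ is a (geometric) DVR; the inductive step will cut $\Spec A$ with the vanishing locus of a regular parameter, strictly decreasing the dimension while controlling the value of $\alpha$ at the new generic point.

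For the case $n=1$ I would invoke the continuity condition directly. It provides a finite étale extension $(A',v')\to (A,v)$ with $\bfk_{v'}=\bfk_v$ and the equality
\[
p_{v'}(\alpha(\bfk(A')))=j_{v'}(\alpha(\bfk_{v'})).
\]
Here I use naturality of $\alpha$ as a transformation of functors on fields: the composition $\Spec(\bfk(A'))\to\Spec(\bfk(A))\to\Xcal$ shows that $\alpha$ at the generic point of $\Spec A'$ is the image of $\alpha(\xi)=0$ under $\Theta(\bfk(A))\to\Theta(\bfk(A'))$, hence is zero. The continuity relation then forces $j_{v'}(\alpha(\bfk_{v'}))=0$, and since $j_{v'}$ is injective by definition of a $v$-functor we conclude $\alpha(\bfk_{v'})=0$. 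Because $\bfk_{v'}=\bfk_v=\bfk(x)$, this is the desired vanishing $\alpha(x)=0$.

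For the inductive step with $n>1$, I pick a regular parameter $t\in\mathfrak m\setminus\mathfrak m^2$. Then $A_{(t)}$ is a (geometric) DVR whose generic point is still $\xi$ and whose closed point $\eta$ is the generic point of $V(t)\subset\Spec A$, while $B:=A/(t)$ is a regular local ring of dimension $n-1$. Composing $\Spec A_{(t)}\to\Spec A\to\Xcal$ and applying the $n=1$ case to the DVR $A_{(t)}$ gives $\alpha(\eta)=0$. Composing $\Spec B\to\Spec A\to\Xcal$, the generic point of $B$ is $\eta$ and the closed point is $x$, so the inductive hypothesis applied to $B$ yields $\alpha(x)=0$.

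The only delicate point is checking that the auxiliary rings encountered really are \emph{geometric} in the sense required by the continuity condition. This is harmless in our setting: the global hypotheses of the paper have $A$ a localization of a finite type $\bfk$-scheme, so $A_{(t)}$ is the localization of such a regular variety at a codimension-one point, and $B$ is a similar localization of $V(t)$, which is again a regular $\bfk$-variety by the choice of $t$. Everything else is bookkeeping with naturality of $\alpha$ under field extensions, so I expect no further obstacle.
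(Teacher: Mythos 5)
Your proposal is correct and follows essentially the same route as the paper: the base case is handled by the continuity condition together with injectivity of $j_{v'}$, and the inductive step localizes at a regular parameter to pass from the generic point of $\Spec A$ to the generic point of $V(t)$, then applies induction to $A/(t)$. The extra remark verifying that the auxiliary rings are geometric is a sensible addition that the paper leaves implicit.
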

\begin{proof}
First assume that $A$ is a DVR. We may pass to an extension $A'$ such that $k_{v'}=k_v$ and ${\rm j}_{v'}$ is injective. Then the fact that $\alpha(\xi)=0$ implies that $\alpha(\bfk(A'))=0$ which in turn implies that ${\rm j}_{v'}(\alpha(x))={\rm p}_{v'}(0)=0$. As ${\rm j}_{v'}$ is injective we conclude that $\alpha(x)=0$.

Now assume that the dimension $d$ of $A$ is greater than $1$, an let $a_1,\ldots,a_d$ be a regular sequence defining the maximal ideal $m$. By the one-dimensional case we know that the statement holds for the DVR $A_{(a_1)}$. This shows that if $\alpha$ is zero at $\xi$ it is zero at the generic point $\xi'$ of $\Spec(A/(a_1))$. As $A/(a_1)$ is a regular local ring of dimension $d-1$ by induction we have that $\alpha(\xi')=0$ implies $\alpha(x)=0$.
\end{proof}

\begin{prop}\label{prop:generic}
Let $\Xcal/\bfk$ be an irreducible smooth algebraic stack, and let $\mathcal{U} \subset \Xcal$ be a non-empty open substack. Then the pullback 
\[
\Inv(\Xcal, \Theta) \to \Inv(\mathcal{U},\Theta)
\]
is injective.
\end{prop}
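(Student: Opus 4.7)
The plan is to reduce the claim to a scheme-theoretic specialization argument via a smooth-Nisnevich cover of $\Xcal$ and then invoke \Cref{lm:injreg}. Suppose $\alpha \in \Inv(\Xcal,\Theta)$ satisfies $\alpha|_\mathcal{U} = 0$; since an invariant is determined by its values on field-valued points, it suffices to prove $\alpha(x) = 0$ for every $x : \Spec(F) \to \Xcal$.

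By the previous proposition, choose a smooth-Nisnevich cover $V \to \Xcal$ by a scheme $V$, and lift $x$ to $\tilde x : \Spec(F) \to V$ without extending the residue field. Let $\eta \in V$ be the scheme-theoretic image of $\tilde x$; by naturality, $\alpha(x)$ is the pullback of $\alpha(\Spec \bfk(\eta) \to \Xcal)$ along $\bfk(\eta)\to F$, so it is enough to show that $\alpha$ vanishes on $\Spec \bfk(\eta) \to \Xcal$. Since $V \to \Xcal$ is smooth and representable and $\Xcal$ is smooth, $V$ is smooth over $\bfk$; in particular its local rings are regular, and its connected and irreducible components coincide. Let $V_0$ denote the open irreducible component through $\eta$. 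The smooth morphism $V_0 \to \Xcal$ is open with non-empty image, and since $\Xcal$ is irreducible this image is dense in $\Xcal$, hence contains the generic point of $\Xcal$, which belongs to the dense open $\mathcal{U}$. Consequently the generic point of $V_0$ maps into $\mathcal{U}$.

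Now apply \Cref{lm:injreg} to the regular local ring $A = \OO_{V_0,\eta}$ equipped with the composition $\Spec(A) \to V_0 \to \Xcal$. The generic point of $\Spec(A)$ lies above $\mathcal{U}$, so $\alpha$ vanishes there by hypothesis; the lemma then forces $\alpha$ to vanish at the closed point of $\Spec(A)$ as well, which is precisely $\Spec \bfk(\eta) \to \Xcal$, finishing the proof.

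The main obstacle is handling field points that do not factor through $\mathcal{U}$: the smooth-Nisnevich cover is essential because it produces a scheme-theoretic model in which local rings are regular and residue fields are preserved, while the irreducibility of $\Xcal$ ensures that the generic point of the relevant component of $V$ lies over $\mathcal{U}$, so that \Cref{lm:injreg} can propagate the vanishing from the generic fiber to every point of $\Xcal$.
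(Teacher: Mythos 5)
Your proof is correct and follows essentially the same route as the paper: pass to a smooth-Nisnevich cover, observe that the generic point of the relevant irreducible component lies over $\mathcal{U}$ (by dominance/openness and irreducibility of $\Xcal$), and then propagate the vanishing down to the given point via \Cref{lm:injreg}. The only difference is that you make explicit the application of \Cref{lm:injreg} to the regular local ring $\OO_{V_0,\eta}$, a step the paper leaves implicit.
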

\begin{proof}
Consider a smooth-Nisnevich covering $X \to \Xcal$, and let $U \to \mathcal{U}$ be the pullback to $\mathcal{U}$. Let $\alpha$ be a cohomological invariant of $\Xcal$. Given $p: \Spec(F) \to \Xcal$, take a lifting $p': \Spec(F) \to X$ and let $\xi$ be the generic point of the corresponding connected component of $X$. Then $\xi$ belongs to the image of $U$ as $U \to X$ is dominant on every component. Now if $\alpha_U = 0$ we must have $\alpha(\xi)=0$ which implies $\alpha(p')=\alpha(p)=0$.
\end{proof}

\subsection{Invariants with coefficients in \'etale motivic cohomology}

For the rest of this section we will restrict our attention to $\Theta = {\rm H}^n(-,\underline{\ZZ}/p^r(j))$. For now we will not see these as graded functors: note that as we observed that only the cases $j=n-1,n$ produce nonzero groups the obvious grading coming from cohomological degree does not produce an interesting functor. 

First we need to show that they are $v$-functors. This will require some facts that will be proven in \Cref{sec:explicit} and \Cref{sub:CohDVR}, which we postpone as they require some work. In \Cref{sec:explicit} we will also introduce the correct graded functors, and describe them explicitly.

We can define a $v$-functor structure for $\Theta = {\rm H}^n(-,\underline{\ZZ}/p^r(n))$ by following the idea in \cite{Rost}*{Remark 1.6}.  Recall that for a field $F/\bfk$ we have 
\[{\rm H}^n(F,\underline{\ZZ}/p^r(n))=\K^n_{\textnormal{Mil}}(F)/{p^r}.\]

For a DVR $(R,v)$ define $\K^{\bullet}_{v}=\K^{\bullet}_{\textnormal{Mil}}(F)/\lbrace 1 + m_v\rbrace \K^{\bullet}_{\textnormal{Mil}}(F)$. We set $\Theta'(\bfk(R),v) = \K^{n}_v/p^r$. There are maps 
\[{\rm p}_v: {\rm H}^n(\bfk(R),\underline{\ZZ}/p^r(n)) \to \K_v^n/p^r,\quad {\rm j}_v: {\rm H}^n(\bfk_v,\underline{\ZZ}/p^r(n)) \to \K_v^n/p^r\]
given respectively by the projection and
\[
{\rm j}_v\lbrace b_1,\ldots, b_n \rbrace \to {\rm p}_v \lbrace \tilde{b}_1, \ldots \tilde{b}_n \rbrace
\]
where $\tilde{b}_1, \ldots, \tilde{b}_n$ are liftings of $b_1, \ldots, b_n$. We have an exact sequence
\[
0 \to {\rm H}^n(\bfk_v,\underline{\ZZ}/p^r(n)) \xrightarrow{{\rm j}_v} \K^{n}_v/p^r \xrightarrow{\partial} {\rm H}^n(\bfk_v,\underline{\ZZ}/p^r(n)) \to 0 
\]
where $\partial$ is the ramification map defined in \Cref{subsec:ramif}.

To obtain a structure of $v$-functor on $\Theta={\rm H}^{n+1}(-,\underline{\ZZ}/p^r(n))$ we use \Cref{prop:DVR} and the identification obtained in Section \ref{sub:CohDVR} of the unramified subgroup of ${\rm H}^{n+1}(\bfk(R),\underline{\ZZ}/p^r(n))$ with the cohomology group ${\rm H}^{n+1}(R,\underline{\ZZ}/p^r(n))$ (see \Cref{prop:Hunr}).

Consider a DVR $(R,v)$. If we take the Heselization $(R^{\rm h},v)$ we get
\[{\rm H}^{n+1}(R^{\rm h},\underline{\ZZ}/p^r(n))={\rm H}^{n+1}(\bfk_v,\underline{\ZZ}/p^r(n)) \xrightarrow{(i')^* \circ (i^*)^{-1}} {\rm H}^{n+1}(\bfk(R^{\rm h}),\underline{\ZZ}/p^r(n))\] 
where $i$ is the inclusion of the closed point of $\Spec(R^{\rm h})$ and $i'$ is the inclusion of the generic point. This shows that we can set 
\[
\Theta'(\bfk(R),v)={\rm H}^{n+1}(\bfk(R^{\rm h}),\underline{\ZZ}/p^r(n)), \quad
{\rm p}_v = \pi_{\rm h}^*, \quad {\rm j}_v=(i')^* \circ (i^*)^{-1}.
\]

Note that we have a map
\[
{\rm H}^{n}(\Xcal,\underline{\ZZ}/p^r(j)) \to {\rm Inv}(\Xcal, {\rm H}^{n}(-,\underline{\ZZ}/p^r(j)))
\]
given by restriction, i.e. $h \in {\rm H}^{n}(\Xcal,\underline{\ZZ}/p^r(j))$ maps to the element 
\[\tilde{h} \in {\rm Inv}(\Xcal, {\rm H}^{n}(-,\underline{\ZZ}/p^r(j)))\]
defined by setting, for a point $x: \Spec(F) \to \Xcal$
\[\tilde{h}(x)=x^*h \in {\rm H}^{n}(F,\underline{\ZZ}/p^r(j))
\]
It's immediate that $\tilde{h}$ is functorial and satisfies the continuity condition. The following lemma shows that thanks to the continuity condition, when we restrict to DVRs this map is \emph{Nisnevich-locally} surjective.

\begin{lm}\label{lm:local}
Let $j \in \lbrace n-1,n \rbrace$, and let $(R,v)$ be a DVR. In both cases, we have 
\[{\rm p}_v^{-1}( {\rm j}_v({\rm H}^{n}(\bfk_v,\underline{\ZZ}/p^r(j))))=(i')^*({\rm H}^{n}(R,\underline{\ZZ}/p^r(j)).\]
\end{lm}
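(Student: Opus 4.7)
The strategy in both cases is to identify the left-hand side with the kernel of the ramification map on ${\rm H}^n(\bfk(R),\underline{\ZZ}/p^r(j))$, i.e.\ the \emph{unramified subgroup}, and then invoke \Cref{prop:Hunr} (to be proved in \Cref{sub:CohDVR}), which identifies this kernel with the image of $(i')^*\colon {\rm H}^n(R,\underline{\ZZ}/p^r(j))\to {\rm H}^n(\bfk(R),\underline{\ZZ}/p^r(j))$. The inclusion $(i')^*({\rm H}^n(R,\underline{\ZZ}/p^r(j)))\subseteq p_v^{-1}(j_v({\rm H}^n(\bfk_v,\underline{\ZZ}/p^r(j))))$ is easy in both cases, essentially by functoriality of pullback, so the real content is the reverse inclusion.

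For $j=n$, the construction of the $v$-functor structure gives an exact sequence
\[0\to {\rm H}^n(\bfk_v,\underline{\ZZ}/p^r(n))\xrightarrow{j_v}\K^n_v/p^r\xrightarrow{\partial}{\rm H}^{n-1}(\bfk_v,\underline{\ZZ}/p^r(n-1))\to 0,\]
so $j_v({\rm H}^n(\bfk_v,\underline{\ZZ}/p^r(n)))=\ker(\partial)$. Composing with $p_v$ (projection) gives exactly the classical Bass--Tate tame ramification map on $\K^n_{\rm M}(\bfk(R))/p^r={\rm H}^n(\bfk(R),\underline{\ZZ}/p^r(n))$. Hence $p_v^{-1}(j_v(\ldots))$ coincides with the unramified subgroup at $v$, and \Cref{prop:Hunr} finishes the argument.

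For $j=n-1$, since $i^*$ is an isomorphism we have $j_v({\rm H}^n(\bfk_v,\underline{\ZZ}/p^r(n-1)))=(i')^*({\rm H}^n(R^{\rm h},\underline{\ZZ}/p^r(n-1)))$. Applying \Cref{prop:Hunr} to the Henselian DVR $R^{\rm h}$ (whose cohomology equals that of its residue field, which is $\bfk_v$), this image is precisely the unramified subgroup of ${\rm H}^n(\bfk(R^{\rm h}),\underline{\ZZ}/p^r(n-1))$. Thus $\alpha\in p_v^{-1}(j_v(\ldots))$ if and only if $\pi_{\rm h}^*\alpha$ is unramified. Combining this with the fact that the ramification map commutes with Henselization --- which follows from the naturality of the local ramification sequence \eqref{eq:local ramification} under the flat map $R\to R^{\rm h}$, together with the identification of cohomology with support along the closed point of $R$ with that of $R^{\rm h}$ --- we conclude that $\alpha$ is unramified at $v$ iff $\pi_{\rm h}^*\alpha$ is, and a second application of \Cref{prop:Hunr} to $R$ itself yields the claim.

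The main obstacle is the careful handling of the ramification formalism invoked above: one must know that \eqref{eq:local ramification} is contravariantly functorial for pullback along $R\to R^{\rm h}$, that the Henselization induces an isomorphism on cohomology with support, and most crucially that \Cref{prop:Hunr} does identify ${\rm H}^n(R,\underline{\ZZ}/p^r(j))$ with $\Hcal^n(R,\underline{\ZZ}/p^r(j))$, so that one can genuinely reach ${\rm H}^n(R,\ldots)$ rather than just its Zariski sheaf version. These are precisely the technical points that the authors defer to \Cref{sub:CohDVR}.
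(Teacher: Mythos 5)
Your proposal is correct and follows essentially the same route as the paper, whose proof is itself just an assembly of the results from \Cref{sec:explicit} and \Cref{sub:CohDVR}: identify $p_v^{-1}(j_v(\cdot))$ with the unramified subgroup of ${\rm H}^n(\bfk(R),\underline{\ZZ}/p^r(j))$ and then quote the identification of that subgroup with the image of the cohomology of $R$. One correction: for $j=n$ the proposition that finishes the argument is \Cref{prop:KMH} (the identification ${\rm H}^n(R,\underline{\ZZ}/p^r(n))\simeq\Kcal^n_{p^r}(R)=\ker(\partial_v)$ in Milnor $K$-theory mod $p^r$), not \Cref{prop:Hunr}, which concerns the degree-$(n{+}1)$, twist-$n$ case only; your argument for $j=n$ is otherwise exactly the paper's. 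For $j=n-1$, where you descend unramifiedness from $R^{\rm h}$ to $R$ via functoriality of \eqref{eq:local ramification} and excision for cohomology with support, the paper instead invokes \Cref{cor:unramcompl}, which works at the level of Izhboldin's symbols and also handles the descent of \emph{tameness}; your route is viable but implicitly uses the compatibility between the symbol-level residue of \Cref{prop:Hunr} and the cohomological residue of \eqref{eq:local ramification}, which the paper establishes in the proposition immediately following \Cref{prop:Hunr} — you correctly flag this compatibility as the technical crux, but you should cite it (or \Cref{cor:unramcompl}) explicitly rather than leaving it folded into "naturality".
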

\begin{proof}
The case $j=n$ is a direct consequence of \Cref{prop:KMH}. The case $j=n-1$ is shown in \Cref{prop:DVR}, \Cref{cor:unramcompl} and \Cref{prop:Hunr}.
\end{proof}

Consider a smooth irreducible scheme $X/\bfk$, with generic point $\xi$. As a consequence of \Cref{prop:generic} we have an inclusion 
\[
{\rm Inv}(X,{\rm H}^n(-,\underline{\ZZ}/p^r(j))) \subseteq {\rm H}^n(\xi,\underline{\ZZ}/p^r(j)).
\]
Recall from \Cref{eq:ramification} that for any point $x \in X^{(1)}$ we have a ramification map
\[
\partial_x:{\rm H}^n(\xi,\underline{\ZZ}/p^r(j)) \to {\rm H}^{n+1}_{x}(X,\underline{\ZZ}/p^r(j)).
\]

We claim that for any $\alpha \in {\rm Inv}(X,{\rm H}^n(-,\underline{\ZZ}/p^r(j)))$ the element $\alpha(\xi)$, which belongs to ${\rm H}^n(\xi,\underline{\ZZ}/p^r(j))$, must lie in the kernel of $\partial_x$.

\begin{lm}\label{lm:unraMerk}
Let $X/\bfk$ be a smooth irreducible scheme with generic point $\xi$. Then for any $\alpha\in {\rm Inv}(X, {\rm H}^n(-,\ZZ/p^r(j)))$ and any $x \in X^{(1)}$ we have
\[
\partial_x(\alpha(\xi))=0
\]
\end{lm}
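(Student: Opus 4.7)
My approach is to reduce the statement, via the continuity condition on $\alpha$ and \Cref{lm:local}, to a statement about unramified cohomology, and then to descend along an étale base change with trivial residue field extension.

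First I would fix $x \in X^{(1)}$ and localize: the local ring $(R,v) = (\mathcal{O}_{X,x}, v_x)$ is a geometric DVR by smoothness of $X$, and $\bfk(R) = \bfk(X) = \bfk(\xi)$. The map $\Spec(R) \to X$ combined with the continuity condition on $\alpha$ produces a finite étale extension $(R',v') \to (R,v)$ with $\bfk_{v'} = \bfk_v$ satisfying
\[
p_{v'}(\alpha(\bfk(R'))) = j_{v'}(\alpha(\bfk_{v'})).
\]
Applying \Cref{lm:local} to $(R',v')$ identifies this with the condition that $\alpha(\bfk(R'))$ lies in the image of ${\rm H}^n(R',\underline{\ZZ}/p^r(j)) \to {\rm H}^n(\bfk(R'),\underline{\ZZ}/p^r(j))$, which by the local ramification sequence \Cref{eq:local ramification} is equivalent to $\partial_{x'}(\alpha(\bfk(R'))) = 0$, where $x'$ is the closed point of $\Spec(R')$.

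Next I would invoke functoriality of $\alpha$, which yields $\alpha(\bfk(R')) = \pi^*(\alpha(\bfk(R)))$ along $\pi : \Spec(\bfk(R')) \to \Spec(\bfk(R))$, together with the naturality of the ramification map under the étale base change $\Spec(R') \to \Spec(R)$. These combine into a commutative square in which $\alpha(\bfk(R))$ maps diagonally to $0$, so $\partial_x(\alpha(\bfk(R)))$ lies in the kernel of the pullback
\[
{\rm H}^{n+1}_x(\Spec(R), \underline{\ZZ}/p^r(j)) \longrightarrow {\rm H}^{n+1}_{x'}(\Spec(R'), \underline{\ZZ}/p^r(j)).
\]

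The final step — and the main technical point — is the injectivity of this pullback. Since $R \to R'$ is finite étale with trivial residue field extension, we have $R^{\rm h} = (R')^{\rm h}$, so after passing to the Henselization both groups are identified and the pullback becomes an isomorphism; equivalently, one can invoke purity (available in this setting through the Geisser--Levine/Gros--Suwa identifications recalled in \Cref{sec:motivic cohomology}) to rewrite both cohomology-with-support groups as cohomology of the common residue field $\bfk_v = \bfk_{v'}$, under which $\pi^*$ becomes the identity. Past this hurdle, the whole argument is a formal consequence of continuity and \Cref{lm:local}.
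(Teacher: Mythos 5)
Your proposal is correct and follows essentially the same route as the paper's proof: continuity plus \Cref{lm:local} reduce the claim to unramifiedness over an étale neighbourhood $(R',v')$ with trivial residue field extension, and the key step you correctly isolate — that the pullback on cohomology with supports is an isomorphism because both groups only depend on the common Henselization — is exactly the excision argument the paper draws from Milne. No gaps.
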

\begin{proof}
By \Cref{lm:local} the continuity condition implies that after passing to a Nisnevich neighbourhood $(X',x')$ of $x$ the element $\alpha(\xi')$ belongs to $ {\rm H}^n(\mathcal{O}^{\rm h}_{X',x'},\underline{\ZZ}/p^r(j))$. But e.g. \cite{Mil}*{Lemma 1.16, Proposition 1.27} implies that 
\[
{\rm H}_{x}^{\bullet}(X,\underline{\ZZ}/p^r(j))={\rm H}_{x'}^{\bullet}(X',\underline{\ZZ}/p^r(j))={\rm H}_{x}^{\bullet}(\mathcal{O}^{\rm h}_{X,x},\underline{\ZZ}/p^r(j)).
\]
Thus we have a commutative diagram
   \[
     \xymatrix{
     {\rm H}^n(\bfk(X),\ZZ/p^r(j)) \ar[r]^{\partial_x} \ar[d] & {\rm H}_{x}^{n+1}(X,\ZZ/p^r(j)) \ar[d]^{\simeq} \\
     {\rm H}^n(\bfk(X'),\underline{\ZZ}/p^r(j)) \ar[r]^{\partial_{x'}} & {\rm H}_{x'}^{n+1}(X',\underline{\ZZ}/p^r(j))
     }
   \]
and applying the local exact sequence \Cref{eq:local ramification} at $(X',x')$ we conclude that $\partial_{x'}(\alpha(\xi'))=0$, proving our claim.
\end{proof}

We are ready to extend the description in \cite{PirAlgStack}*{Theorem 4.4} to invariants with coefficients in ${\rm H}^n(-,\ZZ/p^r(j))$; we will do it in two steps. First we describe the cohomological invariants of a smooth, irreducible scheme.

\begin{prop}\label{prop:SheafHSch}
Let $X/\bfk$ be a smooth, irreducible scheme. Then 
\[
{\rm Inv}(X,{\rm H}^n(-,\underline{\ZZ}/p^r(j)))={\rm H}^0_{\rm Zar}(X,{\rm H}^n(-,\underline{\ZZ}/p^r(j))).
\]
\end{prop}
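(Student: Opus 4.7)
The plan is to establish the isomorphism via the evaluation-at-the-generic-point map
\[
\Phi: \Inv(X, {\rm H}^n_{\rm ét}(-, \underline{\ZZ}/p^r(j))) \longrightarrow \Hcal^n(X, \underline{\ZZ}/p^r(j)), \quad \alpha \mapsto \alpha(\xi),
\]
where $\xi$ is the generic point of $X$. The ``$\subseteq$'' direction is essentially assembled from prior results: \Cref{prop:generic} applied to any dense open $\mathcal{U} \subseteq X$ (in particular to $\mathcal{U} = \xi$) shows $\Phi$ is injective into ${\rm H}^n_{\rm ét}(\bfk(X), \underline{\ZZ}/p^r(j))$, while \Cref{lm:unraMerk} shows the image is annihilated by every ramification map $\partial_x$ for $x \in X^{(1)}$, which by the ramification sequence \eqref{eq:ramification} is precisely the condition for landing in $\Hcal^n(X, \underline{\ZZ}/p^r(j))$.

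For surjectivity, given $\beta \in \Hcal^n(X, \underline{\ZZ}/p^r(j))$, I would construct an invariant $\tilde\beta$ with $\tilde\beta(\xi) = \beta$ by pulling back $\beta$ along each point $x: \Spec F \to X$. Concretely, if $y$ is the image of $x$, then $\beta$ restricts to $\Hcal^n(\mathcal{O}_{X,y}, \underline{\ZZ}/p^r(j))$ by the local ramification sequence \eqref{eq:local ramification}; and since $\beta$ is a section of the Zariski sheafification of étale cohomology, on a small enough open neighbourhood $U$ of $y$ it is represented by an actual class $\tilde\beta_U \in {\rm H}^n_{\rm ét}(U, \underline{\ZZ}/p^r(j))$. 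Define $\tilde\beta(x) := x^* \tilde\beta_U \in {\rm H}^n_{\rm ét}(F, \underline{\ZZ}/p^r(j))$, which is independent of the choice of $U$ by the sheaf condition. Functoriality of $\tilde\beta$ is then immediate from the naturality of pullback.

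The technical heart of the proof is verifying the continuity condition: given a DVR $(R, v)$ with a map $\Spec R \to X$, we must exhibit an étale extension $(R', v') \to (R, v)$ with $\bfk_{v'} = \bfk_v$ such that $p_{v'}(\tilde\beta(\bfk(R'))) = j_{v'}(\tilde\beta(\bfk_{v'}))$. This is exactly what \Cref{lm:local} delivers: after a suitable Nisnevich (equivalently, pointed étale with the same residue field) extension approximating the Henselization, the unramified pullback of $\beta$ arises as the restriction to the generic point of an element of ${\rm H}^n(R', \underline{\ZZ}/p^r(j))$, which then specialises compatibly to ${\rm H}^n(\bfk_{v'}, \underline{\ZZ}/p^r(j))$. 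The main obstacle is therefore not any single step but the careful assembly of \Cref{lm:local} with the local ramification sequence \eqref{eq:local ramification} and the Zariski sheaf structure, all of which rely essentially on the smoothness of $X$; once this is in place, $\Psi \circ \Phi = \mathrm{id}$ follows tautologically since $\tilde\beta(\xi) = \beta$ by construction, and $\Phi \circ \Psi = \mathrm{id}$ follows from the injectivity of $\Phi$.
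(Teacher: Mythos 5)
Your proof is correct, but it reaches surjectivity by a different route than the paper. The paper never constructs the inverse map by hand: it observes that the restriction map ${\rm H}^n(X,\underline{\ZZ}/p^r(j)) \to {\rm Inv}(X,{\rm H}^n_{\rm ét}(-,\underline{\ZZ}/p^r(j)))$ already exists and that ${\rm Inv}$ is a smooth-Nisnevich (hence Zariski) sheaf by \Cref{thm:sheaf}, so the map factors through the Zariski sheafification for free, giving a map $h:\Hcal^n(X,\underline{\ZZ}/p^r(j)) \to {\rm Inv}(X,\ldots)$; it then concludes by the purely formal observation that $f\circ h=g$ is an isomorphism (via \Cref{eq:ramification}) and $f$ is injective, so both $f$ and $h$ are isomorphisms. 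You instead build the inverse $\Psi$ pointwise from local representatives and verify functoriality and continuity directly. Both work; the paper's argument buys brevity by outsourcing the continuity check to the already-proven sheaf theorem, while yours is more self-contained and makes the inverse explicit. Two small remarks on your version: (i) the continuity check is even easier than you make it sound --- since the closed point of $\Spec(R)$ lands in some open $U$ on which $\beta$ is an honest class $\tilde\beta_U$, that class pulls back to ${\rm H}^n(R,\underline{\ZZ}/p^r(j))$ and then to ${\rm H}^n(R^{\rm h},\underline{\ZZ}/p^r(j))$, and the $v$-functor structure is defined precisely so that such classes satisfy $p_v(\cdot)=j_v(\cdot)$ with $R'=R$; \Cref{lm:local} is not really needed here (it is needed for the converse direction, i.e.\ \Cref{lm:unraMerk}); (ii) you have the two composites swapped at the end: $\tilde\beta(\xi)=\beta$ gives $\Phi\circ\Psi=\mathrm{id}$, and injectivity of $\Phi$ then yields $\Psi\circ\Phi=\mathrm{id}$.
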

\begin{proof}
There is an obvious map 
\[
h:{\rm H}^0_{\rm Zar}(X,{\rm H}^n(-,\underline{\ZZ}/p^r(j))) \to {\rm Inv}(X,{\rm H}^n(-,\underline{\ZZ}/p^r(j)))
\]
as the map ${\rm H^i}(X, \underline{\ZZ}/p^r(j)) \to {\rm Inv}^i(X,{\rm H}^n(-,\underline{\ZZ}/p^r(j))) $ factors through the smooth-Nisnevich sheafification, which in turn factors through the Zariski sheafification. Now note that that the map ${\rm Inv}(X,{\rm H}^n(-,\underline{\ZZ}/p^r(j))) \to {\rm H}^n(\bfk(X),\underline{\ZZ}/p^r(j))$ is injective by \Cref{lm:injreg} and factors through the subgroup of unramified elements by \Cref{lm:unraMerk}. Now we have maps

\[
\xymatrix {
 &  {\rm Inv}(X,{\rm H}^n(-,\underline{\ZZ}/p^r(j))) \ar[d]^{f} \\
 {\rm H}^0_{\rm Zar}(X,{\rm H}^n(-,\underline{\ZZ}/p^r(j))) \ar[ur]^{h} \ar[r]^{g} & {\rm Ker}(\partial) \subseteq {\rm H}^n(\bfk(X),\underline{\ZZ}/p^r(j)) 
}
\]

where $f$ is the value at the generic point and $g$ is the pullback to the generic point, an isomorphism thanks to \Cref{eq:ramification}. Now $f \circ h = g$ is an isomorphism and $f$ is injective, so we conclude that $h$ and $f$ must both be isomorphisms as well.

\end{proof}

As an immediate consequence we obtain:

\begin{thm}\label{thm:SheafHStack}
Let $\Xcal/\bfk$ be a smooth algebraic stack. Then the functor 
\[
{\rm Inv}(-,{\rm H}^n(-,\underline{\ZZ}/p^r(j))): ({\rm Sm}/\Xcal) \to (\mbox{abelian groups})
\]
is the smooth-Nisnevich sheafification of ${\rm H}^{n}(-,\underline{\ZZ}/p^r(j))$.
\end{thm}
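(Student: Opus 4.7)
The plan is to combine \Cref{thm:sheaf} and \Cref{prop:SheafHSch} via the universal property of sheafification. The pullback construction $h \mapsto \tilde h$ introduced before \Cref{lm:local} defines a morphism of presheaves on $({\rm Sm}/\Xcal)$
\[
{\rm H}^n_{\rm ét}(-,\underline{\ZZ}/p^r(j)) \longrightarrow {\rm Inv}(-,{\rm H}^n_{\rm ét}(-,\underline{\ZZ}/p^r(j))).
\]
Since the target is a smooth-Nisnevich sheaf by \Cref{thm:sheaf}, this factors uniquely through the smooth-Nisnevich sheafification, yielding a natural map from the latter into ${\rm Inv}(-,{\rm H}^n_{\rm ét}(-,\underline{\ZZ}/p^r(j)))$. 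The content of the theorem is that this map is an isomorphism.

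To verify this I would reduce to the case of schemes. Both sides are smooth-Nisnevich sheaves, so it suffices to check the comparison on a smooth-Nisnevich cover of $\Xcal$ by a scheme $X$, as furnished by the proposition on the existence of finite-type smooth-Nisnevich covers established earlier in the section. On the scheme $X$ the smooth-Nisnevich site is equivalent to the ordinary Nisnevich site, reducing the problem to comparing two Nisnevich sheaves on $({\rm Sm}/X)$. I would then compare Nisnevich stalks at a point $x \in X$. On the left, the stalk of ${\rm H}^n_{\rm ét}(-,\underline{\ZZ}/p^r(j))$ is the filtered colimit over Nisnevich neighbourhoods $(V,v) \to (X,x)$, which equals ${\rm H}^n_{\rm ét}(\OO^{\rm h}_{X,x},\underline{\ZZ}/p^r(j))$ because étale cohomology commutes with cofiltered limits of qcqs schemes. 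On the right, \Cref{prop:SheafHSch} identifies ${\rm Inv}(V,{\rm H}^n_{\rm ét}(-,\underline{\ZZ}/p^r(j)))$ with $\Hcal^n(V,\underline{\ZZ}/p^r(j))$ for smooth irreducible $V$, so the stalk at $x$ is a filtered colimit of $\Hcal^n$, which by the local-ring formulation \Cref{eq:local ramification} equals $\Hcal^n(\OO^{\rm h}_{X,x},\underline{\ZZ}/p^r(j))$.

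The remaining point is to identify these two stalks, and for this I would use that $\OO^{\rm h}_{X,x}$ is local: the only Zariski open neighbourhood of its closed point is $\Spec \OO^{\rm h}_{X,x}$ itself, so the Zariski sheafification implicit in the definition of $\Hcal^n$ collapses and $\Hcal^n(\OO^{\rm h}_{X,x},\underline{\ZZ}/p^r(j)) = {\rm H}^n_{\rm ét}(\OO^{\rm h}_{X,x},\underline{\ZZ}/p^r(j))$, matching the stalk on the other side. Overall the argument is essentially formal; the main obstacle is pure bookkeeping around the three topologies (smooth-Nisnevich, Nisnevich, Zariski) at play, and the deduction genuinely is an immediate consequence of \Cref{prop:SheafHSch} together with \Cref{thm:sheaf}, as the author asserts.
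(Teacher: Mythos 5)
Your proposal is correct and follows the same route as the paper: the map from the presheaf into the sheaf ${\rm Inv}(-,\cdot)$, reduction to schemes via a smooth-Nisnevich cover, and \Cref{prop:SheafHSch} to identify the two sides. Your stalk comparison merely spells out the point the paper leaves implicit (that the Zariski and Nisnevich sheafifications of ${\rm H}^n_{\rm ét}(-,\underline{\ZZ}/p^r(j))$ agree on smooth schemes, via $\Hcal^n(\OO^{\rm h}_{X,x})={\rm H}^n_{\rm ét}(\OO^{\rm h}_{X,x})$ and continuity of étale cohomology along the cofiltered limit defining the henselization), and that identification is valid as you argue.
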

\begin{proof}
This is an immediate consequence of the fact that ${\rm Inv}(-,{\rm H}^n(-,\underline{\ZZ}/p^r(j)))$ is a Nisnevich sheaf (\Cref{thm:sheaf}), the existence of a map ${\rm H}^n(-,\underline{\ZZ}/p^r(j)) \to {\rm Inv}(-,{\rm H}^n(-,\underline{\ZZ}/p^r(j)))$ and \Cref{prop:SheafHSch}.
\end{proof}

In the next section we will show how cohomological invariants with coefficients in \'etale motivic cohomology retrieve some standard cohomology groups and invariants; while we don't have such a natural interpretation for higher degree invariants of a stack it is easy to see that they provide a lower bound for its \emph{essential dimension} \cites{BuRe, BeFa}, here conjugated in the sense of \cite{BrReVi}, which roughly speaking, measures the minimum number of independent variables necessary to define all the objects parametrized by it. 

Recall (e.g. \cite{Mil}*{Chapter V, Lemma 1.12}) that if $f:Y \to X$ is a finite \'etale map of constant degree $d$ and $A$ is an \'etale sheaf on $X$ there is a transfer morphism $f_*:{\rm H}^{n}(Y,A_{\mid Y}) \to {\rm H}^{n}(X,A)$ and $f_*f^*\alpha = d \alpha$.

\begin{prop}\label{prop:essential}
Let $\Xcal$ be a smooth algebraic stack, and $\alpha \in {\rm Inv}(\Xcal,{\rm H}^n(-,\underline{\ZZ}/p(j)))$. Assume that $\alpha$ is not zero when pulled back to $\Xcal_{\overline{\bfk}}$. Then we have
\[
{\rm ed}(\Xcal) \geq {\rm ed}_p(\Xcal) \geq n.
\]
\end{prop}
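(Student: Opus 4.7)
The inequality ${\rm ed}(\Xcal)\ge {\rm ed}_p(\Xcal)$ is formal, so I will focus on showing ${\rm ed}_p(\Xcal)\ge n$ by contradiction. Suppose ${\rm ed}_p(\Xcal)<n$. Applying \Cref{prop:generic} to a smooth-Nisnevich cover of $\Xcal_{\overline{\bfk}}$, the nonvanishing hypothesis furnishes a point $\xi:\Spec(F)\to \Xcal_{\overline{\bfk}}\to \Xcal$, with $F/\overline{\bfk}$ finitely generated, at which $\alpha(\xi)\ne 0$ in ${\rm H}^n(F,\underline{\ZZ}/p(j))$. By the definition of essential dimension at $p$, I can then find a finite extension $F'/F$ of degree coprime to $p$, a subfield $L\subseteq F'$ with $\mathrm{trdeg}_\bfk L<n$, and a morphism $\xi_0:\Spec(L)\to \Xcal$ such that $(\xi_0)_{F'}\simeq \xi_{F'}$.

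Next I propagate the nonvanishing of $\alpha$ down to $L$. The presheaf-with-transfers structure on Voevodsky's complex $\underline{\ZZ}/p(j)$ furnishes a transfer ${\rm tr}:{\rm H}^n(F',\underline{\ZZ}/p(j))\to {\rm H}^n(F,\underline{\ZZ}/p(j))$ with ${\rm tr}\circ{\rm res}$ equal to multiplication by $[F':F]$; since $[F':F]$ is invertible modulo $p$, the restriction is injective on $p$-torsion, so $\alpha(\xi_{F'})\ne 0$. By functoriality of the invariant, $\alpha(\xi_{F'})$ is the restriction of $\alpha(\xi_0)\in {\rm H}^n(L,\underline{\ZZ}/p(j))$. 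Enlarging inside $F'$, set $M:=L\cdot \overline{\bfk}$: this is finitely generated over the perfect field $\overline{\bfk}$ with $\mathrm{trdeg}_{\overline{\bfk}}M\le \mathrm{trdeg}_\bfk L<n$, and the image of $\alpha(\xi_0)$ in ${\rm H}^n(M,\underline{\ZZ}/p(j))$ remains nonzero because it further restricts to the nonzero class $\alpha(\xi_{F'})$.

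The contradiction then reduces to proving ${\rm H}^n(M,\underline{\ZZ}/p(j))=0$ for the two admissible values of $j$. For $j=n$ the identification ${\rm H}^n(M,\underline{\ZZ}/p(n))=\Omega^n_{M,\log}\subseteq \Omega^n_M$ recalled in \Cref{sec:motivic cohomology} reduces the vanishing to $\Omega^n_M=0$, which holds since a $p$-basis of $M$ over $\overline{\bfk}$ has size $\mathrm{trdeg}_{\overline{\bfk}}M<n$. For $j=n-1$ the vanishing ${\rm H}^n(M,\underline{\ZZ}/p(n-1))=0$ is Kato's theorem on the $p$-cohomological dimension of function fields of varieties of dimension at most $n-1$ over an algebraically closed field of characteristic $p$. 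The main obstacle is precisely this $j=n-1$ step: the vanishing cannot be read off from the explicit differential-form exact sequence of \Cref{sec:motivic cohomology} and must be imported from Kato's deeper result, whereas everything else is a standard specialization-plus-transfer argument.
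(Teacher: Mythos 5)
Your proof is correct and follows essentially the same route as the paper: both rest on (i) the vanishing of ${\rm H}^n(F,\underline{\ZZ}/p(j))$ for fields of transcendence degree less than $n$ over an algebraically closed field — via $\Omega^n=0$ (equivalently Geisser–Levine's $\nu^n$-vanishing) for $j=n$ and via Kato–Kuzumaki for $j=n-1$ — and (ii) the transfer argument making restriction along prime-to-$p$ (hence separable) extensions injective on $p$-torsion classes. Your handling of the base change, forming the compositum $L\cdot\overline{\bfk}$ inside $F'$, is a slightly more explicit version of the paper's opening reduction to $\bfk=\overline{\bfk}$, and your identification of the $j=n-1$ case as the step requiring Kato's deeper input matches the paper's citation.
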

\begin{proof}
We may assume that $\bfk = \overline{\bfk}$ as passing to $\overline{\bfk}$ cannot increase the essential dimension and $p$-essential dimension.

We will follow the idea in \cite{TotSp}*{Lemma 3.1}. The two relevant cases are $j=n$ and $j=n-1$. We know, respectively by \cite{GeisLev}*{Theorem 8.3} and the fact that, in Geisser and Levine's notation, we have $\nu^n(X)=0$ for $n > {\rm dim}(X)$ for $j=n$ and by \cite{KaKu}*{12, Section 3, Corollary 2} for $j=n-1$, that in both cases the fact that $\alpha(x) \neq 0$, where $x: \Spec(F) \to \Xcal$, implies that the transcendence degree of $F$ over $k$ is at least $n$. This proves that ${\rm ed}(\Xcal) \geq n$.

To show the inequality for ${\rm ed}_p(\Xcal)$ observe that if $d=\left[ L:F \right]$ is coprime to $p$, and $f:\Spec(L) \to \Spec(F)$ then for an element $\alpha \in {\rm H}^n(F,\underline{\ZZ}/p(j))$ we must have $f_*f^*\alpha=d \alpha$, which shows that if $\alpha$ is nonzero it must stay so after pulling back to $L$. So if we have an object $x':\Spec(E) \to \Xcal$ such that $x, x'$ are equal after passing to an extension $F'/F$ that is finite of degree coprime to $p$ we must have that $\alpha(x') \neq 0$, which shows that the transcendence degree of $E$ over $\bfk$ must be at least $n$.
\end{proof}

\section{Invariants in low degrees}\label{sec:low degree}

The purpose of this section is to prove that, in analogy to their mod $\ell$ counterparts, when $\Xcal/\bfk$ is a smooth quotient stack its cohomological invariants compute the ordinary \'etale cohomology groups ${\rm H}^1(\Xcal,\ZZ/p^r)$ and the flat cohomology group ${\rm H}_{\rm fl}^1(\Xcal,\mu_p)$ in degree one, and the $p$-primary torsion in the cohomological Brauer group in degree two. The ideas are the same as for the mod $\ell$ invariants, with some updates to make up for the lack of homotopy invariance. We begin with the easier case of degree one invariants. In this case we get a more complete result than what we have in \cite{DilPirBr}*{Lemma 2.18}, as we have no requirement on the group action. First we show that for smooth schemes the Nisnevich cohomology is trivial when the coefficients are constant.

\begin{lm}\label{lm:NisConstant}
Let $A$ be a finite abelian group, and let $X/\bfk$ be a smooth connected scheme. Then ${\rm H}^i_{\rm Nis}(X,A)=0$ for $i >0$.
\end{lm}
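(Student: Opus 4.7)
The cleanest approach is to reduce the computation to Zariski cohomology, then use irreducibility. Consider the canonical morphism of sites $\pi\colon X_{\rm Nis}\to X_{\rm Zar}$ and its Leray spectral sequence
\[
E_2^{p,q}= H^p_{\rm Zar}(X,R^q\pi_* A)\Rightarrow H^{p+q}_{\rm Nis}(X,A).
\]
The first task is to identify $\pi_* A$ and to kill the higher direct images. Since for any Zariski open $U\subset X$ both the Zariski and the Nisnevich constant sheaves with value $A$ are given by $U\mapsto A^{\pi_0(U)}$, we get $\pi_* A = A$ as Zariski sheaves.

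The crucial step is to show $R^q\pi_* A=0$ for $q>0$. I would argue this on Zariski stalks: at a point $x \in X$ the stalk computes $H^q_{\rm Nis}(\Spec\mathcal{O}_{X,x},A)$. A standard analysis exploiting the lifting property of Nisnevich covers, applied to constant coefficients, shows that these cohomology groups vanish in positive degrees. Once $R^q\pi_* A=0$ for $q>0$ is established, the Leray spectral sequence collapses to give an isomorphism $H^i_{\rm Nis}(X,A)=H^i_{\rm Zar}(X,A)$ for all $i$.

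The final step is easy: since $X$ is smooth and connected it is irreducible, so every nonempty Zariski open is connected and dense in $X$. The constant Zariski sheaf $A$ therefore has identity restriction maps between nonempty opens, making it flasque, and flasque sheaves have vanishing higher cohomology, so $H^i_{\rm Zar}(X,A)=0$ for $i>0$.

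The main technical point — and the place where the specific nature of the Nisnevich topology really enters — is the stalk-level vanishing of the higher direct images $R^q\pi_* A$. Everything else is formal: the identification $\pi_* A = A$ is tautological, and the flasqueness argument for the Zariski side is standard for constant sheaves on irreducible topological spaces.
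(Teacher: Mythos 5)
Your reduction to Zariski cohomology has a genuine gap at exactly the step you flag as ``the main technical point'': the vanishing $R^q\pi_*A=0$ for $q>0$. The Zariski stalk of $R^q\pi_*A$ at $x$ is $\varinjlim_{x\in U}{\rm H}^q_{\rm Nis}(U,A)={\rm H}^q_{\rm Nis}(\Spec\OO_{X,x},A)$, and the Nisnevich topology is \emph{not} trivial on the spectrum of a local ring -- only on the spectrum of a \emph{henselian} local ring. (The points of the Nisnevich topos are henselizations, and the Nisnevich cohomological dimension of a scheme is bounded by its Krull dimension, not by zero for local schemes.) So the group ${\rm H}^q_{\rm Nis}(\Spec\OO_{X,x},A)$ is not zero for formal reasons; proving it vanishes is essentially the lemma itself, applied to the local scheme $\Spec\OO_{X,x}$, and ``the lifting property of Nisnevich covers applied to constant coefficients'' does not by itself give this. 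An actual argument is needed, and the natural one works just as well on $X$ directly, making the detour through the Zariski site unnecessary.

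The paper's proof supplies precisely that missing argument: let $\xi:\Spec(\bfk(X))\to X$ be the generic point. Because $X$ is smooth (hence normal) and connected, every connected \'etale $X$-scheme is irreducible, so $\xi_*\xi^*A=A$ as Nisnevich sheaves, and the stalks of $R^q\xi_*A$ are Nisnevich cohomology groups of fields, which vanish for $q>0$ since the Nisnevich site of a field is trivial. The Leray spectral sequence for $\xi$ then identifies ${\rm H}^p_{\rm Nis}(X,A)$ with ${\rm H}^p_{\rm Nis}(\Spec(\bfk(X)),A)=0$ for $p>0$. If you want to keep your structure, you could prove the stalk vanishing ${\rm H}^q_{\rm Nis}(\Spec\OO_{X,x},A)=0$ by running this same generic-point argument on $\Spec\OO_{X,x}$ -- but then you should do so explicitly rather than cite it as standard, and at that point the Zariski comparison and the flasqueness discussion become redundant. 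The final Zariski step of your write-up (constant sheaves on irreducible spaces are flasque) is correct but is not where the content lies.
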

\begin{proof}
Let $\xi:\Spec(\bfk(X)) \to X$ be the generic point of $X$. Then as $X$ is smooth we have $\xi_*\xi^*\ZZ/p^r = \ZZ/p^r$. The Leray-Cartan spectral sequence applied to the map $\xi$ reads
\[
{\rm H}^i_{\rm Nis}(X,{\rm R}^j\xi_*\ZZ/p^r) \Rightarrow {\rm H}^{i+j}_{\rm Nis}(\Spec(\bfk(X)),\ZZ/p^r)
\]
but the Nisnevich site of the spectrum of a field is trivial, so all the terms in the abutment except the first and all the ${\rm R}^i$ for $i>0$ vanish, which shows that ${\rm H}^i_{\rm Nis}(X,\ZZ/p^r)=0$ for $i>0$.
\end{proof}

On smooth schemes we can compare the flat cohomology group ${\rm H}^1_{\rm fl}(X,\mu_p)$ with ${\rm Inv}(X,{\rm H}^1(-,\underline{\ZZ}/p(1))$ thanks to results by Illusie, Gros and Suwa.

\begin{lm}\label{lm:mupflat}
Let $X/\bfk$ be a smooth scheme. Then 
\[{\rm H}^1_{\rm fl}(X,\mu_{p^r}) = {\rm H}^1(X,\underline{\ZZ}/p^r(1))\]
As a consequence, if $\Xcal$ is a smooth algebraic stack an element $\alpha \in {\rm H}^1_{\rm fl}(\Xcal,\mu_{p^r})$ induces a cohomological invariant by pullback.
\end{lm}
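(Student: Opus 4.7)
The strategy is to realize both cohomology groups as extensions of $\Pic(X)[p^r]$ by $\OO_X(X)^*/p^r$ and exhibit a natural map of extensions, so that the five lemma concludes. The consequence for stacks is then a formal pointwise construction.

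First I would produce the two short exact sequences. On the étale side, the distinguished triangle
\[\underline{\ZZ}(1) \xrightarrow{\cdot p^r} \underline{\ZZ}(1) \to \underline{\ZZ}/p^r(1)\]
combined with the quasi-isomorphism $\underline{\ZZ}(1) \simeq \Gm[-1]$ recalled in Section \ref{sec:motivic cohomology} gives, exactly as in the displayed diagram there, the short exact sequence
\[0 \to \OO_X(X)^*/p^r \to {\rm H}^1(X,\underline{\ZZ}/p^r(1)) \to \Pic(X)[p^r] \to 0.\]
On the fppf side, the Kummer sequence $0 \to \mu_{p^r} \to \Gm \xrightarrow{\cdot p^r} \Gm \to 0$ is exact in the flat topology (even though it fails to be so étale-locally in characteristic $p$) and, using ${\rm H}^1_{\rm fl}(X,\Gm)=\Pic(X)$ by fppf descent for $\Gm$-torsors, yields
\[0 \to \OO_X(X)^*/p^r \to {\rm H}^1_{\rm fl}(X,\mu_{p^r}) \to \Pic(X)[p^r] \to 0.\]

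Next I would construct a morphism between these two extensions. When $\bfk$ is perfect, Geisser-Levine identifies $\underline{\ZZ}/p^r(1)$ with ${\rm W}_r\Omega^1_{\log}[-1]$ on the smooth-étale site, and Illusie's theorem (in the form exploited by Gros-Suwa) identifies $\mu_{p^r}$ on the smooth-fppf site with the same complex; the comparison map between flat and étale cohomology then furnishes a natural morphism ${\rm H}^1_{\rm fl}(X,\mu_{p^r}) \to {\rm H}^1(X,\underline{\ZZ}/p^r(1))$ compatible with the boundary maps of both sequences and the identity on the outer terms. The non-perfect case reduces to the perfect one by writing $\bfk$ as a filtered colimit of its finitely generated perfect subextensions (Quillen's method, as used in the proof of \Cref{prop:KMH}) and commuting cohomology of finitely presented objects with the colimit. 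The five lemma then gives the isomorphism, naturally in $X$.

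For the consequence, given $\alpha \in {\rm H}^1_{\rm fl}(\Xcal,\mu_{p^r})$ and a point $x:\Spec(F) \to \Xcal$, set $\tilde{\alpha}(x) := x^*\alpha \in {\rm H}^1_{\rm fl}(F,\mu_{p^r}) = {\rm H}^1(F,\underline{\ZZ}/p^r(1))$ using the scheme-level isomorphism. Functoriality in $F$ is automatic. The continuity condition is checked as follows: for a map $\Spec(R) \to \Xcal$ from a DVR $(R,v)$, pull $\alpha$ back to ${\rm H}^1_{\rm fl}(R,\mu_{p^r})$, pass to the Henselization $R^{\rm h}$, and use the isomorphism at the level of $R^{\rm h}$ together with the compatibility of $j_v$ and $p_v$ with the inclusions of the closed and generic points of $\Spec(R^{\rm h})$, exactly as in the definition of the $v$-functor structure on ${\rm H}^{n+1}(-,\underline{\ZZ}/p^r(n))$ given earlier in Section \ref{sec:inv p}.

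The principal technical difficulty is the construction of the natural comparison map in the non-perfect case: one must know that ${\rm H}^1_{\rm fl}(X,\mu_{p^r})$ and ${\rm H}^1(X,\underline{\ZZ}/p^r(1))$ commute with filtered colimits of the ground field in a way that is compatible on both sides, which is where the cited results of Illusie, Gros and Suwa do the essential work.
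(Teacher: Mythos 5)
Your argument is correct in outline, but it takes a longer road than the paper does for the first statement: the paper simply invokes \cite{VoeMot}*{Remark 4.10}, which identifies $\underline{\ZZ}/p^r(1)$ with $\mu_{p^r}$ in the flat topology and hence gives ${\rm H}^1_{\rm fl}(X,\mu_{p^r})={\rm H}^1(X,\underline{\ZZ}/p^r(1))$ in one stroke. You instead realize both groups as extensions of ${\rm Pic}(X)[p^r]$ by $\mathcal{O}_X(X)^*/p^r$ and apply the five lemma. The two short exact sequences are set up correctly, and the plan is workable, but notice that all the actual content of your proof sits in the construction of the comparison map: the statement you need there is Illusie's computation $R\epsilon_*\mu_{p^r}\simeq {\rm W}_r\Omega^1_{\log}[-1]=\underline{\ZZ}/p^r(1)$ for $\epsilon$ the projection from the flat to the étale site. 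Once you have that, the Leray spectral sequence already gives the isomorphism on ${\rm H}^1$ (indeed in every degree), so the five-lemma scaffolding and the verification of compatibility with the boundary maps become redundant; and without that sheaf-level input, the compatibility you assert is exactly what would need proving. So your approach buys nothing over the direct citation except making explicit which classical theorem is being used. One small correction: a field of characteristic $p$ is not a filtered colimit of \emph{perfect} finitely generated subextensions (a finitely generated extension of $\FF_p$ of positive transcendence degree is never perfect). Quillen's method, as used in the proof of \Cref{prop:KMH}, writes $\bfk$ as a colimit of finitely generated extensions of $\FF_p$ and then regards the relevant schemes as smooth over the perfect field $\FF_p$ itself. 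Your treatment of the second assertion (pointwise pullback plus the continuity check via the Henselization) coincides with the paper's.
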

\begin{proof}
The first statement is \cite{VoeMot}*{Remark 4.10}. Now given $\alpha \in {\rm H}^1_{\rm fl}(\Xcal,\mu_{p^r})$ all we have to check is that the association 
\[
(x:\Spec(F) \to \Xcal) \mapsto x^*\alpha \in {\rm H}^1_{\rm fl}(\Spec(F),\mu_{p^r})={\rm H}^1(F,\underline{\ZZ}(p^r(1))
\]
satisfies the continuity condition, but this is clear as given any Henselian DVR $R, v$ with a map $\Spec(R) \to \Xcal$ we can see the pullback of $\alpha$ to $\Spec(R)$ as an element of ${\rm H}^1(R,\underline{\ZZ}/p^r(1))$.
\end{proof}

Recall \cites{Tot, EG} that, given a scheme $X/\bfk$ with an action by an affine smooth group scheme $G/\bfk$, an \emph{equivariant approximation} $X'$ of $\left[ X/G\right]$ is obtained by taking a representation $V$ of $G$ such that $G$ acts freely on an open subset $U \subset V$ and ${\rm codim}(V \smallsetminus U, V) > 1$. Such a representation can always be found under these hypotheses \cite{EG}*{Lemma 9}.

\begin{lm}\label{lm:smNisconstant}
Let $X/\bfk$ be a smooth scheme with an action of a smooth algebraic group $G/\bfk$, and write $\Xcal=\left[X/G\right]$. Then 
\begin{itemize}
    \item ${\rm H}^i_{\textnormal{sm-Nis}}(\Xcal,\ZZ/p^r) = 0$ for $i >0$.
    \item ${\rm H}^i_{\textnormal{sm-Nis}}(\Xcal, \mu_{p^r})=0$ for $i>0$.
\end{itemize}
\end{lm}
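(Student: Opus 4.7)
I will handle the two statements separately, beginning with the easier $\mu_{p^r}$ case.

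For $\mu_{p^r}$: the smooth-Nisnevich site of $\Xcal$ consists of smooth representable maps $\Ycal\to\Xcal$. Since smooth maps compose and $\Xcal$ is smooth over $\bfk$, every object $\Ycal$ of the site is smooth, hence reduced, over $\bfk$. If $u\in\OO(\Ycal)^{\times}$ satisfies $u^{p^r}=1$, then in characteristic $p$ we get $(u-1)^{p^r}=u^{p^r}-1=0$, so $u=1$. Thus the representable sheaf $\mu_{p^r}$ is identically zero on the site, and its cohomology vanishes in every positive degree.

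For $\ZZ/p^r$: use the equivariant approximation construction recalled above. Pick a representation $V$ of $G$ with an open $U\subseteq V$ on which $G$ acts freely and with $\mathrm{codim}(V\smallsetminus U,V)\geq 2$, and set $X':=(X\times U)/G$, a smooth finite-type scheme over $\bfk$. Let $\pi\colon X'\to\Xcal$ be the natural morphism; it is representable, smooth, and its geometric fibers are all isomorphic to $U$ (in particular geometrically connected, since $U$ is open in the affine space $V$ with complement of codimension $\geq 2$). I plan to analyze the Leray spectral sequence
\[
E_2^{p,q}={\rm H}^p_{\textnormal{Sm-Nis}}(\Xcal,R^q\pi_*\ZZ/p^r)\Rightarrow {\rm H}^{p+q}_{\textnormal{Sm-Nis}}(X',\ZZ/p^r).
\]
For every $\Ycal\to\Xcal$ in the site, representability of $\pi$ ensures that $\Ycal\times_\Xcal X'$ is a scheme, smooth over $\bfk$. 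Since smooth-Nisnevich cohomology agrees with Nisnevich cohomology on schemes (\cite{PirAlgStack}*{Proposition 3.3}) and \Cref{lm:NisConstant} applies to each connected component, the presheaf $\Ycal\mapsto {\rm H}^q_{\rm Nis}(\Ycal\times_\Xcal X',\ZZ/p^r)$ vanishes for $q>0$, so $R^q\pi_*\ZZ/p^r=0$ for $q>0$. Because $\pi$ is smooth surjective with geometrically connected fibers, $R^0\pi_*\ZZ/p^r$ equals the constant sheaf $\ZZ/p^r$. The spectral sequence collapses to
\[
{\rm H}^i_{\textnormal{Sm-Nis}}(\Xcal,\ZZ/p^r)\cong {\rm H}^i_{\textnormal{Sm-Nis}}(X',\ZZ/p^r)={\rm H}^i_{\rm Nis}(X',\ZZ/p^r),
\]
and the right-hand side vanishes for $i>0$ by \Cref{lm:NisConstant} applied componentwise to the smooth scheme $X'$.

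The main technical point will be the vanishing $R^q\pi_*\ZZ/p^r=0$ for $q>0$; it is crucial that $\pi$ is representable so that every base change along an object of the smooth-Nisnevich site produces a scheme smooth over $\bfk$, where \Cref{lm:NisConstant} is available. Identifying $R^0\pi_*\ZZ/p^r$ with the constant sheaf is the companion point and uses only that $\pi$ is smooth, surjective and open with geometrically connected fibers.
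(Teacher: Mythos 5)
Your proof is correct and follows essentially the same route as the paper: the authors also observe that $\mu_{p^r}$ is (the trivial) constant sheaf on the Nisnevich site of a smooth scheme, and for the constant coefficients they run the Leray spectral sequence for the equivariant approximation $X'=(X\times U)/G\to\Xcal$, using \Cref{lm:NisConstant} together with the agreement of smooth-Nisnevich and Nisnevich cohomology on schemes to kill both the higher direct images and the positive-degree terms of the abutment. Your write-up merely makes explicit the role of representability in the vanishing of $R^q\pi_*$ and dispatches the $\mu_{p^r}$ case directly rather than via the spectral sequence; both are harmless refinements of the paper's argument.
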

\begin{proof}
First, observe that $\mu_{p^r}$ is a constant sheaf on the Nisnevich site of any smooth scheme. We write $A$ for either $\ZZ/p^r$ or $\mu_{p^r}$.

Pick an equivariant approximation $X'=(X\times U)/G \xrightarrow{\pi} \Xcal$, and consider the Leray spectral sequence associated to the morphism $f:X'\to \Xcal$:
\[
{\rm H}^i_{\textnormal{sm-Nis}}(\Xcal,{\rm R}^j f_*A) \Rightarrow {\rm H}^{i+j}_{\textnormal{sm-Nis}}(X',A).
\]
We claim that the sequence collapses on the first row and all the terms in the abutment are zero except for the first. To see this, first note that on schemes the smooth-Nisnevich and Nisnevich cohomology are equal \cite{PirAlgStack}*{Proposition 3.3}. 
Then by \Cref{lm:NisConstant} all the terms in the abutment except for the first are zero. Moreover, the fibers of $X' \to \Xcal$ are smooth and connected, which means all the ${\rm R}^q$ are zero for $q > 0$. Finally, ${\rm R}^0 f_*A = A$, concluding our proof.
\end{proof}

With this, all that is left is to apply the appropriate spectral sequence.

\begin{prop}\label{prop:H1}
Let $X/\bfk$ be a smooth scheme with an action of a smooth algebraic group $G/\bfk$, and write $\Xcal=\left[X/G\right]$. We have
\begin{align*}
{\rm Inv}(\Xcal, {\rm H}^1(-,\underline{\ZZ}/p^r(0))) = {\rm H}^1(\Xcal,\ZZ/p^r) \\
{\rm Inv}(\Xcal, {\rm H}^1(-,\underline{\ZZ}/p^r(1))) = {\rm H}_{\rm fl}^1(\Xcal,\mu_{p^r})
\end{align*}
\end{prop}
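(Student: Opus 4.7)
The plan is to identify both groups on each side of the equality with ${\rm H}^1_{\rm ét}(\Xcal,\underline{\ZZ}/p^r(j))$ for $j=0,1$ respectively, using \Cref{thm:SheafHStack} on one side and the change-of-sites Leray spectral sequence together with \Cref{lm:mupflat} on the other. Concretely, by \Cref{thm:SheafHStack} we have ${\rm Inv}(\Xcal,{\rm H}^1(-,\underline{\ZZ}/p^r(j))) = {\rm H}^0_{\rm sm\textnormal{-}Nis}(\Xcal,\mathcal{H}^1_j)$, where $\mathcal{H}^q_j$ denotes the smooth-Nisnevich sheafification of $U\mapsto {\rm H}^q_{\rm ét}(U,\underline{\ZZ}/p^r(j))$.

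Next, apply the Leray spectral sequence for the morphism of sites $\pi:\Xcal_{\rm ét}\to\Xcal_{\rm sm\textnormal{-}Nis}$:
\[
E_2^{p,q} = {\rm H}^p_{\rm sm\textnormal{-}Nis}(\Xcal,\mathcal{H}^q_j) \Rightarrow {\rm H}^{p+q}_{\rm ét}(\Xcal,\underline{\ZZ}/p^r(j)),
\]
whose edge sequence in degree $1$ reads $0 \to E_2^{1,0} \to {\rm H}^1_{\rm ét}(\Xcal,\underline{\ZZ}/p^r(j)) \to E_2^{0,1}\to E_2^{2,0}$. It remains to show that $E_2^{p,0}=0$ for $p>0$ in both cases. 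For $j=0$, $\underline{\ZZ}/p^r(0)=\ZZ/p^r$, so $\mathcal{H}^0_0=\ZZ/p^r$ and the vanishing is precisely the first item of \Cref{lm:smNisconstant}. For $j=1$, from $\underline{\ZZ}(1)=\Gm[-1]$ we get ${\rm H}^0_{\rm ét}(U,\underline{\ZZ}/p^r(1))=\mu_{p^r}(U)$, which for a reduced scheme over a field of characteristic $p$ is trivial (since $u^{p^r}=1$ forces $u=1$), so $\mathcal{H}^0_1=0$ and the vanishing is automatic. Hence the edge map gives ${\rm H}^1_{\rm ét}(\Xcal,\underline{\ZZ}/p^r(j)) \cong {\rm H}^0_{\rm sm\textnormal{-}Nis}(\Xcal,\mathcal{H}^1_j) = {\rm Inv}(\Xcal,{\rm H}^1(-,\underline{\ZZ}/p^r(j)))$.

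Finally, identify ${\rm H}^1_{\rm ét}(\Xcal,\underline{\ZZ}/p^r(j))$ with the claimed group. For $j=0$ this is immediate since $\underline{\ZZ}/p^r(0)=\ZZ/p^r$. For $j=1$ we invoke \Cref{lm:mupflat}, which gives the natural isomorphism ${\rm H}^1_{\rm fl}(U,\mu_{p^r})={\rm H}^1_{\rm ét}(U,\underline{\ZZ}/p^r(1))$ for any smooth scheme $U$; taking an equivariant approximation $X'\to\Xcal$ and applying smooth-Nisnevich (equivalently Čech) descent propagates the identification from $X'$ and its self-intersections to $\Xcal$ itself. The main obstacle is precisely this last descent step: one needs to check that both flat cohomology of $\mu_{p^r}$ and étale hypercohomology of $\underline{\ZZ}/p^r(1)$ can be computed by the same Čech complex attached to the smooth-Nisnevich cover, so that the scheme-level comparison of \Cref{lm:mupflat} assembles into a stack-level isomorphism. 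The rest of the argument is a formal consequence of \Cref{thm:SheafHStack}, \Cref{lm:smNisconstant}, and the change-of-sites spectral sequence.
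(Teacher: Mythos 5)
Your $j=0$ argument is the paper's own: the change-of-sites Leray spectral sequence, the vanishing of $E_2^{p,0}$ for $p>0$ from the first item of \Cref{lm:smNisconstant}, and the identification of $E_2^{0,1}$ with the invariants via \Cref{thm:SheafHStack} give ${\rm Inv}(\Xcal,{\rm H}^1(-,\underline{\ZZ}/p^r(0)))={\rm H}^1(\Xcal,\ZZ/p^r)$. Your observation that $R^0\pi_*\underline{\ZZ}/p^r(1)=\mu_{p^r}$ vanishes as an étale sheaf on reduced schemes in characteristic $p$ is also correct.

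The gap is in the last step for $j=1$, which you flag as ``the main obstacle'' and then do not close. Your route requires first making sense of ${\rm H}^1_{\rm ét}(\Xcal,\underline{\ZZ}/p^r(1))$ for the stack itself, and then identifying it with ${\rm H}^1_{\rm fl}(\Xcal,\mu_{p^r})$; for the latter you invoke Čech descent along an equivariant approximation, but an equivariant approximation is not a smooth-Nisnevich cover in the sense of the paper, and, more importantly, ${\rm H}^1$ of a sheaf does not satisfy Čech descent without a vanishing input — the assertion that both sides are computed by the same Čech complex is precisely what must be proved, not a formality. The paper avoids the comparison entirely: it runs the same change-of-sites Leray spectral sequence a second time, now for the morphism from the \emph{flat} site to the smooth-Nisnevich site of $\Xcal$ with coefficients $\mu_{p^r}$, and uses the second item of \Cref{lm:smNisconstant} to kill $E_2^{p,0}$ for $p>0$. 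This yields ${\rm H}^1_{\rm fl}(\Xcal,\mu_{p^r})={\rm H}^0_{\textnormal{Sm-Nis}}(\Xcal,R^1\pi_*\mu_{p^r})$, i.e. ${\rm H}^1_{\rm fl}(-,\mu_{p^r})$ is a smooth-Nisnevich sheaf on $({\rm Sm}/\Xcal)$. Since ${\rm Inv}(-,{\rm H}^1(-,\underline{\ZZ}/p^r(1)))$ is also a smooth-Nisnevich sheaf by \Cref{thm:sheaf}, and the two agree on smooth schemes by \Cref{lm:mupflat} combined with \Cref{prop:SheafHSch}, they agree on $\Xcal$. To salvage your version you would need to supply exactly this sheaf-theoretic input; the scheme-level isomorphism of \Cref{lm:mupflat} does not assemble by itself.
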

\begin{proof}
We begin by proving the first equality. Consider the category ${\rm Sm}/\Xcal$ whose objects are representable smooth morphisms $\Ycal \to \Xcal$, with morphisms the commutative squares. On this category we can consider both the smooth and smooth-Nisnevich topology, and we write $({\rm Sm}/\Xcal)_{\rm Sm}$ and $({\rm Sm}/\Xcal)_{\textnormal{sm-Nis}}$ for the respective Grothendieck sites. The identity $({\rm Sm}/\Xcal) \to ({\rm Sm}/\Xcal)$ is continuous from the smooth-Nisnevich to the smooth sites, and by \cite{StPr}*{TAG 00X6} it induces a morphism of sites $\pi:({\rm Sm}/\Xcal)_{\rm sm} \to ({\rm Sm}/\Xcal)_{\textnormal{sm-Nis}}$. Consequently there is a Leray spectral sequence
\[
{\rm H}^i_{\textnormal{sm-Nis}}(\Xcal,{\rm R}^j\pi_*\ZZ/p^r) \Rightarrow {\rm H}^{i+j}_{\rm sm}(\Xcal,\ZZ/p^r).
\]
By \Cref{lm:smNisconstant} the first row of the page is zero except for the first group, showing that 
\[{\rm H}^0_{\textnormal{sm-Nis}}(\Xcal,{\rm R}^1\pi_*\ZZ/p^r)={\rm H}^1_{\rm sm}(\Xcal,\ZZ/p^r).\]

Now, smooth cohomology with coefficients in $\ZZ/p^r$ is equal to \'etale cohomology, and $\underline{\ZZ}/p^r(0)$ is quasi-isomorphic to $\ZZ/p^r$, so the left term is the same as ${\rm H}^0_{\textnormal{sm-Nis}}(\Xcal,{\rm H}^1(-,\underline{\ZZ}/p^r(0)))$, and the second is equal to ${\rm H}^1(\Xcal,\ZZ/p^r)$, allowing us to conclude.

The second equality is proven exactly in the same way: using the Leray spectral sequence for the inclusion of the smooth-Nisnevich site into the flat site we show that 
\[
{\rm H}_{\rm fl}^1(\Xcal,\mu_{p^r})={\rm H}^0_{\textnormal{sm-Nis}}(\Xcal,{\rm R}^1\pi_*\mu_{p^r})
\]
which shows that ${\rm H}_{\rm fl}^1(\Xcal,\mu_{p^r})$ is a smooth-Nisnevich sheaf for smooth quotient stacks. But by \Cref{lm:mupflat} we know it is equal to cohomological invariants on smooth schemes, showing that the two are equal on all smooth quotient stacks.
\end{proof}

Now we deal with degree $2$. First we show that ${\rm Br}'$, and in fact the whole group ${\rm H}^2(-,\Gm)$, is a smooth-Nisnevich sheaf on smooth stacks.

\begin{lm}\label{lm:Brsheaf}
Let $X/\bfk$ be a smooth scheme, and let $G/\bfk$ be an affine smooth algebraic group acting on it, and set $\Xcal=\left[X/G\right]$. Then 
\[
{\rm H}^2(\Xcal, \Gm)={\rm H}^0_{\textnormal{sm-Nis}}(\Xcal, {\rm H}^2(-, \Gm)).
\]
In other words, ${\rm H}^2(-, \Gm)$ is a smooth-Nisnevich sheaf over $\Xcal$.
\end{lm}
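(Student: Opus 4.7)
The plan is to apply the Leray spectral sequence for the morphism of sites $\pi : (\mathrm{Sm}/\Xcal)_{\mathrm{sm}} \to (\mathrm{Sm}/\Xcal)_{\textnormal{Sm-Nis}}$ given by the identity on the underlying category (passing from the finer smooth topology to the coarser smooth-Nisnevich one). This produces
\[
E_2^{p,q} = \mathrm{H}^p_{\textnormal{Sm-Nis}}(\Xcal, \mathrm{R}^q\pi_*\Gm) \Longrightarrow \mathrm{H}^{p+q}_{\mathrm{sm}}(\Xcal, \Gm) = \mathrm{H}^{p+q}(\Xcal, \Gm),
\]
where the last identification uses the standard comparison of smooth with lisse-étale cohomology of $\Gm$ on Artin stacks with affine stabilizers. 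The sheaf $\mathrm{R}^q\pi_*\Gm$ is the smooth-Nisnevich sheafification of the presheaf $Y \mapsto \mathrm{H}^q_{\mathrm{\acute{e}t}}(Y, \Gm)$, whose Sm-Nis stalks are cohomologies of Henselian local rings. For $q = 1$ this is the Picard presheaf, which vanishes on Henselian local rings; hence $\mathrm{R}^1\pi_*\Gm = 0$. For $q = 2$, the sheaf $\mathrm{R}^2\pi_*\Gm$ is exactly the sheaf ${\rm H}^2(-, \Gm)$ appearing in the statement.

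The degree-two piece of the spectral sequence then yields the exact sequence
\[
0 \to \mathrm{H}^2_{\textnormal{Sm-Nis}}(\Xcal, \Gm) \to \mathrm{H}^2(\Xcal, \Gm) \to \mathrm{H}^0_{\textnormal{Sm-Nis}}(\Xcal, {\rm H}^2(-, \Gm)) \xrightarrow{d_3} \mathrm{H}^3_{\textnormal{Sm-Nis}}(\Xcal, \Gm),
\]
so it suffices to establish the vanishing of the two outer groups. To this end I would pick a smooth-Nisnevich atlas $f : X' \to \Xcal$ by a scheme (via the proposition established earlier in this section) and analyze the Čech-to-derived functor spectral sequence for $f$. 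On each level $X'^{(n)}$ of the Čech nerve, smooth over $\bfk$, the Sm-Nis cohomology of $\Gm$ coincides with Nisnevich cohomology, and then with Zariski cohomology by Hilbert 90; Zariski cohomology of $\Gm$ on a smooth variety vanishes in degrees $\geq 2$ by the flasque divisor resolution
\[
0 \to \Gm \to \eta_*\Gm \to \bigoplus_{x \in X'^{(n),\,(1)}} i_{x,*}\ZZ \to 0.
\]
Together with the fact that the Picard presheaf sheafifies to zero in the Sm-Nis topology, this reduces the outer-term vanishing to checking that the Čech cocycle contributions on the nerve of $f$ are all killed.

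The main obstacle is precisely this final step: the Čech-type contributions $\check{\mathrm{H}}^p(X'^{\bullet+1}/\Xcal, \Gm)$ and $\check{\mathrm{H}}^{p-1}(X'^{\bullet+1}/\Xcal, {\rm Pic})$ do not vanish a priori on the nerve of a smooth cover of a quotient stack $[X/G]$, and ruling them out will require carefully combining the Henselian triviality of Pic with a descent argument adapted to the atlas structure, so as to force $\mathrm{H}^2_{\textnormal{Sm-Nis}}(\Xcal, \Gm) = 0$ and the vanishing of the differential $d_3$ landing in $\mathrm{H}^3_{\textnormal{Sm-Nis}}(\Xcal, \Gm)$.
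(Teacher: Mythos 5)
Your first half is exactly the paper's argument: the Leray spectral sequence for $\pi:({\rm Sm}/\Xcal)_{\rm sm}\to({\rm Sm}/\Xcal)_{\textnormal{Sm-Nis}}$, the vanishing of $R^1\pi_*\Gm$ because the Picard presheaf dies on local rings, and the reduction of the lemma to the vanishing of ${\rm H}^i_{\textnormal{Sm-Nis}}(\Xcal,\Gm)$ for $i>1$. But the step you flag as the "main obstacle" is a genuine gap, and the Čech-nerve strategy you sketch for closing it is the wrong tool: for an arbitrary smooth-Nisnevich atlas the Čech contributions $\check{\rm H}^p(X'^{\bullet+1}/\Xcal,\Gm)$ really need not vanish, and nothing in your setup uses the hypothesis that $\Xcal$ is a quotient stack — which is where the lemma actually gets its traction.

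The missing idea is to choose the atlas to be an \emph{equivariant approximation}: take a representation $V$ of $G$ with an open $U\subset V$ on which $G$ acts freely and ${\rm codim}(V\smallsetminus U,V)>1$, and set $X'=(X\times U)/G$. This $X'$ is a smooth quasi-separated algebraic space, and the projection $f:X'\to\Xcal$ is (an open dense piece of) a vector bundle. Running the Leray spectral sequence for $f$ in the Sm-Nis topology, one has $R^0f_*\Gm=\Gm$ (a vector bundle induces an isomorphism on $\mathcal{O}^*$), $R^1f_*\Gm=0$ (Picard groups of local regular rings vanish), and $R^qf_*\Gm=0$ for $q>1$ together with ${\rm H}^i_{\textnormal{Sm-Nis}}(X',\Gm)=0$ for $i>1$, both by \cite{PirAlgStack}*{Proposition 7.5} (on a smooth quasi-separated algebraic space Sm-Nis agrees with Nisnevich, and Nisnevich cohomology of $\Gm$ vanishes in degrees $\geq 2$ — essentially the flasque divisor resolution you already invoke). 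The spectral sequence therefore collapses and gives ${\rm H}^i_{\textnormal{Sm-Nis}}(\Xcal,\Gm)\simeq{\rm H}^i_{\textnormal{Sm-Nis}}(X',\Gm)=0$ for $i>1$, which kills both outer terms of your five-term exact sequence. No Čech descent on the nerve is needed once the atlas is chosen this way.
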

\begin{proof}
Consider again the morphism of sites $\pi:({\rm Sm}/\Xcal)_{\rm sm} \to ({\rm Sm}/\Xcal)_{\textnormal{sm-Nis}}$. it induces a Leray-Grothendieck spectral sequence
\[
{\rm H}^i_{\textnormal{sm-Nis}}(\Xcal, R^j \pi_*\Gm) \Rightarrow {\rm H}^{i+j}_{\rm sm}(\Xcal, \Gm).
\]

Note that $R^1 i\pi_* \Gm$ is the smooth-Nisnevich sheafification of the Picard group, which is zero on a smooth stack because the Picard group of a local regular ring is zero. Then if we prove that ${\rm H}^i_{\textnormal{sm-Nis}}(\Xcal, \Gm)=0$ for $i>1$ we have proven our claim, as the smooth and \'etale cohomology of $\Gm$ coincide on $\Xcal$.

Now we use the fact that $\Xcal = \left[X/G\right]$. Let $X'=(X \times U)/G$ be an equivariant approximation of $\Xcal$, and consider the Leray-Grothendieck spectral sequence induced by the projection $X' \xrightarrow{f} \Xcal$
\[
{\rm H}^i_{\textnormal{sm-Nis}}(\Xcal, R^j f_*\Gm) \Rightarrow {\rm H}^{i+j}_{\textnormal{sm-Nis}}(X', \Gm).
\]

For any smooth quasi-separated algebraic space $Y$ we have $H^n_{\textnormal{sm-Nis}}(Y,\Gm)=H^n_{\rm Nis}(Y,\Gm)=0$ when $n>1$ \cite{PirAlgStack}*{Proposition 7.5}. This necessarily implies that ${\rm H}^{i}_{\textnormal{sm-Nis}}(X', \Gm)=0$ for $i>1$ and $R^q f_* \Gm = 0$ for $q > 1$. Now observe that $R^1 f_* \Gm = 0$ for the same reason as above, and $R^0 f_* \Gm = \Gm$ as a vector bundle induces an isomorphism on $\mathcal{O}^*$. Thus the spectral sequence collapses and we can conclude.
\end{proof}

Now that we know the cohomological Brauer group is a sheaf in the right topology, all we need is a morphism from the correct cohomology group onto it to conclude.

\begin{thm}
Let $\Xcal=\left[X/G\right]$ be as above. Then we have 
\[
{\rm Br}'(\Xcal)_{p^r}={\rm Inv}^2(\Xcal, {\rm H}^n(-,\underline{\ZZ}/p^r(1))).
\]
\end{thm}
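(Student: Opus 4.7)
The plan is to deduce the theorem by sheafifying the Bockstein-type long exact sequence on the smooth-Nisnevich site of $\Xcal$. From the distinguished triangle
\[
\underline{\ZZ}(1) \xrightarrow{\cdot p^r} \underline{\ZZ}(1) \to \underline{\ZZ}/p^r(1)
\]
together with the quasi-isomorphism $\underline{\ZZ}(1) \simeq \Gm[-1]$ recalled in \Cref{sec:motivic cohomology}, one obtains for every smooth scheme $Y/\bfk$ an exact sequence of abelian groups
\[
\mathrm{Pic}(Y) \xrightarrow{\cdot p^r} \mathrm{Pic}(Y) \to {\rm H}^2(Y,\underline{\ZZ}/p^r(1)) \to {\rm Br}'(Y)_{p^r} \to 0,
\]
functorial in $Y$. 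In particular this is an exact sequence of presheaves on $({\rm Sm}/\Xcal)_{\textnormal{Sm-Nis}}$.

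The next step is to sheafify. Since sheafification is exact, it suffices to understand what each term becomes. For the Picard group, recall from \cite{PirAlgStack}*{Proposition 3.3} that on schemes the smooth-Nisnevich topology coincides with the Nisnevich topology; as $\mathrm{Pic}$ vanishes on every local ring, its Nisnevich (equivalently, smooth-Nisnevich) sheafification on $\Xcal$ is zero. By \Cref{thm:SheafHStack}, the smooth-Nisnevich sheafification of $Y\mapsto {\rm H}^2(Y,\underline{\ZZ}/p^r(1))$ is exactly the functor ${\rm Inv}^2(-,{\rm H}^2_{\rm ét}(-,\underline{\ZZ}/p^r(1)))$. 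Finally, by \Cref{lm:Brsheaf} the presheaf ${\rm H}^2(-,\Gm)$ is already a smooth-Nisnevich sheaf on $\Xcal$, and the $p^r$-torsion subfunctor ${\rm Br}'(-)_{p^r}$ is the kernel of multiplication by $p^r$, hence is itself a sheaf.

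Putting these observations together, after sheafifying one obtains a short exact sequence of sheaves
\[
0 \to {\rm Inv}^2(-,{\rm H}^2_{\rm ét}(-,\underline{\ZZ}/p^r(1))) \to {\rm Br}'(-)_{p^r} \to 0,
\]
i.e.\ an isomorphism of smooth-Nisnevich sheaves on $\Xcal$. Evaluating at $\Xcal$ itself gives the claimed identification
\[
{\rm Inv}^2(\Xcal,{\rm H}^2_{\rm ét}(-,\underline{\ZZ}/p^r(1))) = {\rm Br}'(\Xcal)_{p^r}.
\]

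The main technical point, and the step one must be careful about, is the sheaf-theoretic statement that ${\rm Br}'(-)_{p^r}$ is already a sheaf on the smooth-Nisnevich site of the quotient stack $\Xcal=[X/G]$; this is where the assumption that $\Xcal$ is a smooth quotient stack is used, and it is exactly what \Cref{lm:Brsheaf} (combined with the fact that the $p^r$-torsion of a sheaf is a sheaf) provides. The rest of the argument is a formal manipulation of the Bockstein sequence combined with the sheafification theorem for cohomological invariants proved in the previous subsection.
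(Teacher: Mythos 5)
Your proof is correct and follows essentially the same route as the paper: both arguments combine the Bockstein sequence coming from $\underline{\ZZ}(1)\simeq\Gm[-1]$, the local vanishing of the Picard group, \Cref{lm:Brsheaf}, and \Cref{thm:SheafHStack}. The only difference is organizational — you sheafify the whole presheaf sequence on the smooth-Nisnevich site at once, whereas the paper first reduces to a smooth scheme via the sheaf property and then passes to the Zariski sheafification there.
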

\begin{proof}
Observe that thanks to \Cref{lm:Brsheaf} we have a morphism 
\[
{\rm Inv}^2(\Xcal, {\rm H}^n(-,\underline{\ZZ}/p^r(1)))={\rm H}^0_{\textnormal{sm-Nis}}(\Xcal,{\rm H}^n(-,\underline{\ZZ}/p^r(1))) \to {\rm Br}'(\Xcal)_{p^r}.
\]
As this is a map of sheaves it suffices to prove our claim when $X$ is a smooth scheme. Due to the identification $\underline{\ZZ}(1) = \Gm\left[ -1 \right]$ in the \'etale derived category of $X$ and the exact sequence $0 \to \underline{\ZZ}(1) \xrightarrow{\cdot p^r} \underline{\ZZ}(1) \to \underline{\ZZ}/p^r \to 0$ we get the functorial exact sequence
\[ 
{\rm H}^2(X, \underline{\ZZ}(1))={\rm Pic}(X) \xrightarrow{\cdot p^r} {\rm Pic}(X) \to {\rm H}^2(X, \underline{\ZZ}(1)/p^r)
\]
\[\to {\rm H}^2(X, \Gm) \xrightarrow{\cdot p^r} {\rm H}^2(X, \Gm)={\rm H}^3(X, \underline{\ZZ}(1)).
\]

Passing to the Zariski sheafification allows us to conclude immediately as the Picard group of a smooth scheme is Zariski-locally trivial.
\end{proof}

\section{\'{E}tale motivic cohomology of fields}\label{sec:explicit}
This section is devoted to the study of \'{e}tale motivic cohomology of fields. In particular, we are interested in the explicit description of these groups via (log) differential forms and symbols. We also recall the definition of ramified, tamely ramified and unramified elements in these cohomology groups, and we discuss some related results.
\subsection{Motivic cohomology, log differentials and symbols}
As explained earlier, the functor $F \mapsto {\rm H}^{n}(F,\ZZ/p^r(j))$ is nonzero only for $j = n,n-1$. When $r=1$ we have an explicit description of the functor, but in general the descriptions we gave for $r>1$ are not quite simple enough for computations.

We will introduce another description of the functors ${\rm H}^{n}(F,\ZZ/p^r(j))$ for $j=n,n+1$ in terms of symbols that allow us to do (somewhat) explicit computations. We begin with the simpler case $j=n$ and $r=1$. In this case we have ${\rm H}^{n}(F,\ZZ/p(n)) \simeq \Omega^n_{F,\rm log} \subset \Omega^n_F$, i.e. the subsheaf of forms on $F$ that can be written as

\[
\frac{{\rm d}b_1}{b_1}\wedge\frac{{\rm d}b_2}{b_2}\wedge \ldots \wedge\frac{{\rm d}b_n}{b_n}
\]
with $b_1,\ldots,b_n \in F^*$. 

Let $\lbrace b_1, \ldots, b_n\rbrace$ denote the element $b_1 \otimes \ldots \otimes b_n \in \K_{\textnormal{Mil}}^n$. Recall that Milnor's $n$-th $\K$-theory group $\K^F_{\textnormal{Mil}}(F)$ is defined as the quotient of $(F^*)^{\otimes n}$ by the subgroup generated by elements $\lbrace b_1, \ldots, b_n \rbrace$ such that $b_i + b_j =1$ for some $i \neq j$. Define $\K^n_{p^r}=\K^n_{\textnormal{Mil}}/p^r\K^n_{\textnormal{Mil}}$. Bloch and Kato \cite{BlKaP}*{Corollary 2.8} proved that there is an isomorphism
\[
\K^n_{p^r}(F) \to W_r\Omega^n_{F,{\rm log}} = {\rm H}^{n}(F,\ZZ/p^r(n)) 
\]
When $r=1$ we can explicitly see the map as
\[
\lbrace b_1,\ldots,b_n\rbrace \mapsto \frac{{\rm d}b_1}{b_1}\wedge\frac{{\rm d}b_2}{b_2}\wedge \ldots \wedge\frac{{\rm d}b_n}{b_n}
\]
Note that the operation on the left is multiplication while on the right it is addition.
We will sometimes shorten the notation $\lbrace b_1,\ldots, b_n\rbrace$ to $\lbrace \underline{b} \rbrace$ when $n$ is clear from context.

We have the following description by Izhboldin for ${\rm H}^{n+1}(F,\ZZ/p(n))$: there is an exact sequence
\[
\Omega^n_{F,\rm log} \to \Omega_F^{n} \xrightarrow{\mathcal{P}} \Omega_F^{n}/{\rm d}\Omega_F^{n-1} \to {\rm H}^{n+1}(F,\ZZ/p(n)) \to 0
\]
where 
\[
\mathcal{P}(a\frac{{\rm d}b_1}{b_1}\wedge\frac{{\rm d}b_2}{b_2}\wedge \ldots \wedge\frac{{\rm d}b_n}{b_n})=(a^p-a)\frac{{\rm d}b_1}{b_1}\wedge\frac{{\rm d}b_2}{b_2}\wedge \ldots \wedge\frac{{\rm d}b_n}{b_n}.
\]
The group of forms $\Omega^{n}_F$ has a description in symbols given by
\[
\Omega^{n}_F = \left[a, b_1, \ldots, b_n\right\rbrace, a \in F, b_i \in F^*
\]
given by the map 
\[
\left[a, b_1,\ldots,b_n\right\rbrace \mapsto a\frac{{\rm d}b_1}{b_1}\wedge\frac{{\rm d}b_2}{b_2}\wedge \ldots \wedge\frac{{\rm d}b_n}{b_n}
\]
subject to the relations that $\left[a, b_1, \ldots, b_i \right\rbrace=0$ if $b_i=b_j$ for some $i\neq j$ and 
\[
\left[u+v, u+v, \ldots, b_n \right\rbrace = \left[u, u, \ldots, b_n \right\rbrace + \left[v, v, \ldots, b_n \right\rbrace
\]
for $u, v, u+v \in F^*$. With the extra relations coming from the exact sequence above, we get the following description of ${\rm H}^{n+1}(F,\ZZ/p(n))$:
\[
{\rm H}^{n+1}(F,\ZZ/p(n)) = F \otimes (F^*)^{\otimes n} / J
\]
where $J$ is the subgroup generated by elements in the forms:
\begin{itemize}
    \item $\left[a, b_1,\ldots,b_n\right\rbrace$ where $b_i=b_j$ for some $i\neq j$
    \item $\left[a^p-a, b_1,\ldots,b_n\right\rbrace $
    \item $\left[a, a, b_2, \ldots,b_n\right\rbrace$ where $a \in F^*$
\end{itemize}

Following Izhboldin's notation we well write ${\rm H}^{n+1}_p$ for this functor. There are natural pairings ${\rm H}_p^{j+1}(F) \otimes \K_p^i(F) \to {\rm H}_p^{i+j+1}(F) $ given by
\[ 
\left[a, b'_1,\ldots,b'_j\right\rbrace \otimes \lbrace b_1,\ldots,b_i\rbrace \mapsto \left[a, b'_1,\ldots,b'_j, b_1, \ldots, b_i\right\rbrace.
\]
A description of the cohomology groups and ${\rm H}^{n+1}(F,\ZZ/p^r(n))$ for $r>1$ requires some additional work. Given an $\mathbb{F}_p$-algebra $A$, the group of Witt vectors \cite{Ill}*{Section 1} of length $r$ on $A$, denoted $W_r(A)$, is a group whose underlying set is $A^r$, and whose group structure is defined in terms of a sequence of \emph{universal polynomials} depending on the characteristic. A few things to keep in mind are:

\begin{itemize}
    \item $W_1(A) = A$
    \item The maps $(a_1,\ldots, a_{r}) \mapsto (0,\ldots,0,a_1,\ldots, a_r)$, $(a_1, \ldots, a_r) \mapsto (a_1, \ldots, a_{r-i})$ are group homomorphisms.
    \item We have $p(a_1,\ldots, a_r)=(0,a_1^p,\ldots, a_{r-1}^p)$
\end{itemize}

Using Witt vectors Izhboldin gives the following description of ${\rm H}^n(F,\ZZ/p^r(n))$ and ${\rm H}^{n+1}(F,\underline{\ZZ}/p^r(n))$ (see \cite{IzhK}*{Corollary 6.5}): write $\left[ a_1,\ldots, a_r, b_1, \ldots, b_n \right\rbrace$ for the element $(a_1,\ldots a_r) \otimes b_1 \otimes \ldots \otimes b_n$ in $W_r(F)\otimes (F^*)^{\otimes n}$, and define
\[
Q^n(F, r) = W_r(F) \otimes (F^*)^{\otimes n} / I
\]
where $I$ is the ideal generated by elements of the form
\begin{itemize}
    \item $\left[a_1, \ldots a_r, b_1,\ldots,b_n\right\rbrace$ where $b_i=b_j$ for some $i\neq j$,
    \item $\left[0, \ldots, 0, a, 0, \ldots, 0, a, b_2, \ldots,b_n\right\rbrace$ where $a \in F^*$.
\end{itemize}
By \cite{IzhK}*{Theorem C} we have an isomorphism
\[ \K^n_{p^r}(F) \simeq \ker(Q^n(F,r) \overset{\mathcal{P}}{\longrightarrow} Q^n(F, r)) \]
where
\[\mathcal{P}(\left[a_1, \ldots a_r, b_1,\ldots,b_n\right\rbrace)\overset{\textnormal{def}}{=}\left[a_1^p, \ldots a_r^p, b_1,\ldots,b_n\right\rbrace - \left[a_1, \ldots a_r, b_1,\ldots,b_n\right\rbrace.\]
As $\K^n_{p^r}(F)\simeq {\rm H}^n(F,\ZZ/p^r(n))$, we can regard the latter as a subgroup of $Q^n(F,r)$. The group ${\rm H}^{n+1}(F,\ZZ/p^r(n))$ can be described as the cokernel of this map, that is
\[
{\rm H}^{n+1}(F,\underline{\ZZ}/p^r(n)) \simeq W_r(F) \otimes (F^*)^{\otimes n} / J
\]
where $J$ is the subgroup generated by elements in the forms:
\begin{itemize}
    \item $\left[a_1, \ldots a_r, b_1,\ldots,b_n\right\rbrace$ where $b_i=b_j$ for some $i\neq j$,
    \item $\left[a_1^p, \ldots a_r^p, b_1,\ldots,b_n\right\rbrace - \left[a_1, \ldots a_r, b_1,\ldots,b_n\right\rbrace$,
    \item $\left[0, \ldots, 0, a, 0, \ldots, 0, a, b_2, \ldots,b_n\right\rbrace$ where $a \in F^*$.
\end{itemize}

We will call these functors ${\rm H}^{n+1}_{p^r}$. We will sometimes shorten the notation for a Witt vector $\left[ a_1, \ldots, a_r \right]$ to $\left[ \underline{a} \right]$, and write $\left[ \underline{a}, \underline{b} \right\rbrace$ for $\left[ a_1,\ldots,a_r,b_1,\ldots,b_n\right\rbrace$.

\subsection{Torsion in ${\rm H}^{n+1}_{p^r}(F)$} 
We prove here some technical results on ${\rm H}^{n+1}_{p^r}(F)$ and his torsion subgroups. The main ingredient will be the following exact sequence due to Izhboldin.

\begin{lm}[\cite{Izh}*{Lemma 6.2}]\label{lm:truncation}
For $1\leq s < r$ there is a short exact sequence
\[ 0\longrightarrow {\rm H}^{n+1}_{p^s}(F) \overset{\iota_s}{\longrightarrow} {\rm H}^{n+1}_{p^r}(F) \overset{\pi_s}{\longrightarrow} {\rm H}^{n+1}_{p^{r-s}}(F)\longrightarrow 0, \]
where the homomorphisms are defined as 
\begin{align*}
    \iota_{s}\left[a_1,\ldots,a_s,b_1,\ldots,b_n\right\rbrace &= \left[0,\ldots,0,a_1,\ldots,a_s,b_1,\ldots,b_n\right\rbrace, \\
    \pi_{s}\left[a_1,\ldots,a_r,b_1,\ldots,b_n\right\rbrace &= \left[a_1,\ldots,a_{r-s},b_1,\ldots,b_n\right\rbrace.
\end{align*}
\end{lm}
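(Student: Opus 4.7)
The plan is to deduce the short exact sequence from a sheaf-theoretic incarnation, rather than manipulate Izhboldin's symbol presentation directly. The advantage is that the two substantive assertions — injectivity of $\iota_s$ and exactness in the middle — become formal consequences of the $p$-cohomological dimension of $F$ being at most $1$, once things are translated into Galois cohomology.

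First I would check that $\iota_s$ and $\pi_s$ are well-defined as group homomorphisms ${\rm H}^{n+1}_{p^s}(F) \to {\rm H}^{n+1}_{p^r}(F)$ and ${\rm H}^{n+1}_{p^r}(F) \to {\rm H}^{n+1}_{p^{r-s}}(F)$. This amounts to checking that each of the three relations generating the ideal $J$ is sent by $\iota_s$ and by $\pi_s$ into a relation of the same form. The ``repeated $b_i$'' and ``repeated $a$'' relations are preserved on the nose, since neither map touches the $b$-factors and both preserve the relative positions of entries in the Witt coordinate. For the Frobenius relation, one uses that the shift $(a_1,\ldots,a_s) \mapsto (0,\ldots,0,a_1,\ldots,a_s)$ commutes with the componentwise $p$-th power and that truncation is a ring homomorphism of Witt vectors. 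The equality $\pi_s \circ \iota_s = 0$ and surjectivity of $\pi_s$ then follow immediately from the formulas.

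For the two nontrivial parts I would invoke the Geisser--Levine identification
\[ {\rm H}^{n+1}_{p^r}(F) = {\rm H}^1_{\rm Gal}(F, {\rm W}_r\Omega^n_{F^s,\rm log}) \]
recalled in Section~\ref{sec:motivic cohomology}, together with the classical Verschiebung--Restriction short exact sequence of étale sheaves
\[ 0 \to {\rm W}_s\Omega^n_{\rm log} \xrightarrow{V^{r-s}} {\rm W}_r\Omega^n_{\rm log} \xrightarrow{R^s} {\rm W}_{r-s}\Omega^n_{\rm log} \to 0 \]
(implicit in the Gros--Suwa resolution recalled in \Cref{eq:Omega}). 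Taking Galois cohomology of $F$ and using that ${\rm H}^i_{\rm Gal}(F,-) = 0$ for $i \geq 2$ on $p$-primary coefficients \cite{SerGal}, the long exact sequence truncates to the desired four-term sequence, provided one also checks that the connecting map ${\rm H}^0(F, {\rm W}_{r-s}\Omega^n_{\rm log}) \to {\rm H}^1(F, {\rm W}_s\Omega^n_{\rm log})$ vanishes. That vanishing reduces to the surjectivity of $R^s$ on $G_F$-invariants, which follows from surjectivity of restriction on Witt vectors combined with the definition of logarithmic de Rham--Witt forms.

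The main obstacle I foresee is the final matching step: verifying that the maps on Galois ${\rm H}^1$ induced by $V^{r-s}$ and $R^s$ coincide with $\iota_s$ and $\pi_s$ as defined on symbols. This is a compatibility check through the Geisser--Levine isomorphism, hinging on the explicit action of the Verschiebung and restriction on log symbols of the form $a\,\frac{db_1}{b_1}\wedge\cdots\wedge\frac{db_n}{b_n}$ at finite truncation level. It is essentially formal but does require careful bookkeeping.
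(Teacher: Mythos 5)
Your argument is essentially correct, but it is a genuinely different route from the one the paper takes: the paper does not prove this lemma at all, it simply quotes Izhboldin's Lemma 6.2, whose original proof is a direct algebraic argument with the symbol presentation (a diagram chase built on the Witt-vector exact sequence $0 \to W_s(F) \to W_r(F) \to W_{r-s}(F) \to 0$, the corresponding sequence for $\K^{\bullet}_{\rm M}/p^m$, and the absence of $p$-torsion in Milnor K-theory). Your sheaf-theoretic derivation — the short exact sequence of logarithmic Hodge--Witt sheaves, the vanishing of ${\rm H}^2_{\rm Gal}$ of a characteristic-$p$ field on $p$-primary coefficients \cite{SerGal}, and the surjectivity of ${\rm H}^0(F,{\rm W}_r\Omega^n_{\rm log}) = \K^n_{\rm M}(F)/p^r \to \K^n_{\rm M}(F)/p^{r-s}$ killing the connecting map (this last identification being Bloch--Kato--Gabber \cite{BlKaP}, which is the Galois-descent input you are implicitly using) — is a valid alternative and arguably more conceptual, at the cost of routing everything through the Geisser--Levine/Izhboldin identifications. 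One point of care: the map in the standard exact sequence of sheaves
\[ 0 \to {\rm W}_s\Omega^n_{\rm log} \to {\rm W}_r\Omega^n_{\rm log} \xrightarrow{R^s} {\rm W}_{r-s}\Omega^n_{\rm log} \to 0 \]
is $\underline{p}^{\,r-s}$ rather than the Verschiebung $V^{r-s}$ (Verschiebung does not preserve logarithmic forms), but since $\underline{p} = V\circ F$ and the Frobenius relation $\left[a_1^p,\ldots,a_r^p,\underline{b}\right\rbrace = \left[a_1,\ldots,a_r,\underline{b}\right\rbrace$ is one of the defining relations of ${\rm H}^{n+1}_{p^r}(F)$, the two induce the same map $\iota_s$ on Izhboldin's presentation, so your final matching step goes through as you anticipate. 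What your approach buys is that injectivity and middle exactness come for free from cohomological dimension; what Izhboldin's buys is that it stays entirely inside the explicit symbol calculus that the rest of the paper relies on.
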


The image of $\iota_s$ is clearly of $p^s$-torsion, and in fact as the following proposition and corollary show it is exactly equal to the $p^s$-torsion subgroup in ${\rm H}^{n+1}_{p^r}(F)$.

\begin{prop}\label{prop:ptors}
Let $F$ be a field of characteristic $p>0$. Then the subgroup of elements of $p$-torsion in ${\rm H}^{n+1}_{p^r}(F)$ is isomorphic to ${\rm H}^{n+1}_{p}(F)$, embedded via the homomorphism $\left[ a,b_1,\ldots,b_n\right\rbrace\mapsto \left[0,\ldots,0,a,b_1,\ldots,b_n\right \rbrace$.
\end{prop}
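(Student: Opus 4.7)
The plan is to invoke the Izhboldin short exact sequence (the lemma just stated) twice, once with $s=1$ and once with $s=r-1$, and to compute multiplication by $p$ directly on the symbol presentation of ${\rm H}^{n+1}_{p^r}(F)$.

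First I would verify that the image of $\iota_1$ is $p$-torsion. Using the Witt-vector identity $p(a_1,\ldots,a_r) = (0, a_1^p, \ldots, a_{r-1}^p)$, the element $[0,\ldots,0,a,b_1,\ldots,b_n\}$ is killed by $p$ since the resulting Witt vector has a $0$ in the last slot, which then identifies with the zero symbol. So $\iota_1({\rm H}^{n+1}_p(F))\subseteq {\rm H}^{n+1}_{p^r}(F)[p]$, and $\iota_1$ is injective by the $s=1$ case of the Izhboldin sequence.

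The core of the proof is to show the reverse inclusion, for which I would establish the key factorization
\[
p = \iota_{r-1}\circ \pi_1 \quad\text{on } {\rm H}^{n+1}_{p^r}(F).
\]
To see this, take a generator $[\underline a,\underline b\} = [a_1,\ldots,a_r,b_1,\ldots,b_n\}$. Then
\[
p\cdot[a_1,\ldots,a_r,\underline{b}\} = [0,a_1^p,\ldots,a_{r-1}^p,\underline{b}\}
\]
by the Witt-vector formula. Using the defining Frobenius relation in ${\rm H}^{n+1}_{p^r}(F)$ (that $[a_1^p,\ldots,a_r^p,\underline{b}\} = [a_1,\ldots,a_r,\underline{b}\}$, noting $0^p=0$), this equals $[0,a_1,\ldots,a_{r-1},\underline{b}\} = \iota_{r-1}(\pi_1[\underline{a},\underline{b}\})$. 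So $p$ factors as claimed.

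Now I finish by invoking the Izhboldin sequence with $s=r-1$, which tells us that $\iota_{r-1}$ is injective. Combined with the factorization, $\ker(p) = \ker(\pi_1)$, and by the $s=1$ exact sequence this kernel is precisely the image of $\iota_1$. Hence the $p$-torsion subgroup of ${\rm H}^{n+1}_{p^r}(F)$ coincides with $\iota_1({\rm H}^{n+1}_p(F))$, which by injectivity of $\iota_1$ is isomorphic to ${\rm H}^{n+1}_p(F)$ via the stated embedding. The only subtle point, and the one I would double-check carefully, is the use of the Frobenius relation to replace $a_i^p$ by $a_i$ in the symbol; everything else is a formal consequence of the exact sequences already established.
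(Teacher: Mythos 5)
Your proof is correct, and the key identity $p=\iota_{r-1}\circ\pi_1$ on ${\rm H}^{n+1}_{p^r}(F)$ is verified exactly as you say: multiplication by $p$ on Witt vectors gives $p\cdot[a_1,\ldots,a_r,\underline{b}\}=[0,a_1^p,\ldots,a_{r-1}^p,\underline{b}\}$, and the Frobenius relation applied to the Witt vector $(0,a_1,\ldots,a_{r-1})$ turns this into $[0,a_1,\ldots,a_{r-1},\underline{b}\}=\iota_{r-1}(\pi_1[\underline{a},\underline{b}\})$; additivity then extends the identity from generators to all elements, and the two instances of Izhboldin's sequence finish the job. The paper uses the same two ingredients (the Witt-vector formula for $p(a_1,\ldots,a_r)$ together with the Frobenius relation, and \Cref{lm:truncation}) but organizes them as an induction on $r$: the direct symbol computation is carried out only for $r=2$, where it amounts to showing $p\alpha=0\Leftrightarrow\pi_1\alpha=0$, and the case $r>2$ is reduced to $r=2$ by passing through the truncation maps. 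Your formulation is cleaner in that it is a single uniform argument with no induction, and it has the added benefit that the same factorization $p^s=\iota_{r-s}\circ\pi_s$ (obtained by iterating your computation, or by composing your identity at successive levels) immediately yields \Cref{cor:pstors} as well, which the paper proves by a separate induction. Two cosmetic remarks: you should note that the case $r=1$ is trivial since the whole group is killed by $p$ (your two exact sequences require $1\leq s<r$, hence $r\geq 2$); and in your first step the point is that $p(0,\ldots,0,a)$ is the zero Witt vector outright (every slot $a_1,\ldots,a_{r-1}$ being $0$), not merely that its last slot vanishes — though this step also follows for free from $\pi_1\circ\iota_1=0$ and your factorization.
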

\begin{proof}
We argue by induction on $r$. For $r=1$, the statement is obvious. For $r=2$, define 
\[
\alpha = \sum_i \left[a_{i},a_{2,i},\underline{b}_i\right\rbrace.
\] 
Then $p\alpha = \sum_i \left[0, a_{i}^p,\underline{b}_i\right\rbrace$. Now note that taking the inverse image in ${\rm H}^{n+1}_p(F)$:
\[
\sum_i (\left[a_i^p,\underline{b}_i\right\rbrace-\left[a_i,\underline{b}_i\right\rbrace) = 0 \Rightarrow \sum_i \left[a_i,\underline{b}_i\right\rbrace = 0 \Leftrightarrow \sum_i \left[a_i^p,\underline{b}_i\right\rbrace = 0
\]
which proves the statement. Now let $r$ be greater than $2$, and let 
$\alpha =\sum_i \left[\underline{a}_i, \underline{b}_i\right\rbrace$
be $p$-torsion. Then $\pi_{r-1}\alpha $ is $p$-torsion.

By the inductive hypothesis, we know that 
\[
\pi_{r-1}\alpha = \sum_i \left[0, \ldots, 0, a'_i ,\underline{b}'_i\right\rbrace
\Leftrightarrow 
\alpha= \sum_i \left[0, \ldots, a'_i, 0 ,\underline{b}'_i\right\rbrace + \iota_1 \beta
\]
for some $\beta \in {\rm H}^n_{p}(F)$. 

This shows that $\alpha$ belongs to $\iota_2({\rm H}^{n+1}_{p^2}(F))$, so we can conclude by the case $r=2$.
\end{proof}

\begin{cor}\label{cor:pstors}
The subgroup of elements of $p^s$-torsion in ${\rm H}^{n+1}_{p^r}(F)$ is isomorphic to ${\rm H}^{n+1}_{p^s}(F)$, embedded via the homomorphism
\[
\left[a_1,\ldots, a_s,b_1,\ldots,b_n\right\rbrace\mapsto \left[0,\ldots,0,a_1, \ldots, a_s,b_1,\ldots,b_n\right \rbrace.
\]
\end{cor}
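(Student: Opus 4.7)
The plan is to derive the result in a single step from Izhboldin's truncation exact sequence (\Cref{lm:truncation}), avoiding an induction on $s$. The crux is the identity
\[
p^s \alpha \;=\; \iota_{r-s}\bigl(\pi_s(\alpha)\bigr) \qquad \text{in } {\rm H}^{n+1}_{p^r}(F),
\]
valid for every $\alpha$. Given this, the $p^s$-torsion subgroup of ${\rm H}^{n+1}_{p^r}(F)$ is
\[
\ker(p^s) \;=\; \ker(\iota_{r-s}\circ\pi_s) \;=\; \ker(\pi_s) \;=\; {\rm Im}(\iota_s),
\]
where the second equality uses the injectivity of $\iota_{r-s}$ from \Cref{lm:truncation} with index $r-s$, and the last is exactness at ${\rm H}^{n+1}_{p^r}(F)$ in the sequence with index $s$. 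The injectivity of $\iota_s$ from the same lemma then promotes this to the desired isomorphism $\iota_s\colon {\rm H}^{n+1}_{p^s}(F) \xrightarrow{\sim} \ker(p^s)$.

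To prove the identity, I would write $\alpha = \sum_i [\underline{a}_i, \underline{b}_i\rbrace$ with $\underline{a}_i = (a_{1,i},\ldots,a_{r,i})$ and iterate the Witt-vector rule $p\cdot(a_1,\ldots,a_r) = (0, a_1^p, \ldots, a_{r-1}^p)$ exactly $s$ times to obtain
\[
p^s \alpha \;=\; \sum_i \bigl[(0,\ldots,0, a_{1,i}^{p^s},\ldots,a_{r-s,i}^{p^s}),\, \underline{b}_i\bigr\rbrace \;=\; \iota_{r-s}(\gamma),
\]
where $\gamma = \sum_i [(a_{1,i}^{p^s},\ldots,a_{r-s,i}^{p^s}), \underline{b}_i\rbrace$ is regarded as an element of ${\rm H}^{n+1}_{p^{r-s}}(F)$. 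The relation $[a_1^p,\ldots,a_{r-s}^p,\underline{b}\rbrace = [a_1,\ldots,a_{r-s},\underline{b}\rbrace$ that sits inside the ideal $J$ defining ${\rm H}^{n+1}_{p^{r-s}}(F)$ says precisely that the Witt-vector Frobenius acts as the identity on that quotient, so iterating it $s$ times rewrites $\gamma$ as $\pi_s(\alpha)$ and closes the identity.

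The main point requiring care is the index bookkeeping: one must apply \Cref{lm:truncation} with subscript $r-s$ (for injectivity of $\iota_{r-s}$) and with subscript $s$ (for exactness at ${\rm H}^{n+1}_{p^r}(F)$), and both require $1 \leq s < r$. The boundary case $s = r$, which is outside the range of \Cref{lm:truncation}, is trivial since then the whole group is $p^r$-torsion and the embedding is the identity. I expect the Frobenius-equals-identity step to be the conceptual heart of the argument, as it is what prevents any need for iterating \Cref{prop:ptors} and for extracting $p$-th roots of field elements (which would fail in a non-perfect base).
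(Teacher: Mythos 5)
Your proof is correct, but it takes a genuinely different route from the paper's. The paper argues by induction on $s$, using \Cref{prop:ptors} as the base case (itself proved by a separate induction on $r$) together with the truncation maps. You instead establish the single operator identity $p^s=\iota_{r-s}\circ\pi_s$ on ${\rm H}^{n+1}_{p^r}(F)$, which follows on generators from the Witt-vector formula $p\cdot(a_1,\ldots,a_r)=(0,a_1^p,\ldots,a_{r-1}^p)$ iterated $s$ times together with the relation $\left[a_1^p,\ldots,a_m^p,\underline{b}\right\rbrace=\left[a_1,\ldots,a_m,\underline{b}\right\rbrace$ in the presentation (Frobenius acts as the identity), and since both sides are homomorphisms it suffices to check it on simple tensors. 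The computation $\ker(p^s)=\ker(\iota_{r-s}\circ\pi_s)=\ker(\pi_s)={\rm Im}(\iota_s)$ then needs only the injectivity and exactness statements of \Cref{lm:truncation} applied with the two indices $r-s$ and $s$, both legitimate for $1\leq s<r$, and your separate treatment of $s=r$ is fine. What your approach buys is a shorter, non-inductive argument that in fact subsumes \Cref{prop:ptors} as the special case $s=1$, making that proposition redundant as an input; what the paper's route buys is that it never has to verify the operator identity, leaning instead on the already-established $p$-torsion case.
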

\begin{proof}
We proceed by induction on $s$. An element $\alpha \in {\rm H}^{n+1}_{p^r}(F)$ is of $p^s$ torsion if and only if $p^{s-1}\alpha$ is of $p$ torsion, which by \Cref{prop:ptors} means that $\pi_{r-1}(\alpha)$ is of $p^{s-1}$-torsion, because the $p$-torsion of ${\rm H}_{p^r}^{n+1}(F)$ maps to zero in ${\rm H}_{p^{r-1}}^n(F)$. By the inductive hypothesis that means that we can write
\[
\alpha = (\sum_i \left[ 0, \ldots, 0, \underline{a}'_i, 0, \underline{b}_i \right\rbrace) + \iota_1 \beta
\]
with $\underline{a}'_i \in F^{s-1}$, which allows us to conclude imemdiately.
\end{proof}

\subsection{Unramified, tamely ramified and wildly ramified elements}\label{subsec:ramif}
The functor $\K_{p^r}$ has a natural notion of ramification at a point $x \in X^{(1)}$. If $\pi$ is a uniformizer for the DVR $\mathcal{O}_{X,x}$, define:
\[
\partial_x: \K^n_{p^r}(\bfk(X)) \to \K^n_{p^r}(\bfk(x)), \quad \lbrace \pi, b_2, \ldots, b_n\rbrace = \lbrace \overline{b_2}, \ldots , \overline{b_n}\rbrace
\]
where $\overline{b_i}$ is the image of $b_i$ in $\bfk(x)$, and $\partial_x\lbrace b_1,\ldots,b_n\rbrace=0$ if $b_i \in \mathcal{O}^*_{X,x}$ for all $i$. We have the following description of $\K_{p^r}^{\bullet}(\bfk(t))$, from \cite{GilSza}*{Chapter 7}, which shows that Milnor's $\K$-theory mod $p^r$ behaves the way we expect based on the mod $\ell$ case.

\begin{equation}\label{eq:KA1}
0 \to \K^{\bullet}_{p^r}(\bfk)\to \K^{\bullet}_{p^r}(\bfk(t)) \to \bigoplus_{x \in (\bA^1)^{(1)}} \K^{\bullet-1}_{p^r}(\bfk(x)) \to 0.
\end{equation}

The ramification map $\partial_v$ turns out to be equal to the one in \cite{GrSu}*{Corollary 1.6} over perfect fields, and we will see later (Section \ref{sub:CohDVR}) that we can use it to compute ramification in \Cref{eq:ramification}.

On the other hand, there is no obvious residue map for the functor ${\rm H}^{n+1}_{p^r}$ and looking at \Cref{eq:ramification} a problem is immediately apparent: the differential $\partial_x$ does not map to ${\rm H}^{n}_{p^r}(\bfk(x))$ as one expects from the mod $\ell$ case, but instead to a completely different group, the relative cohomology group ${\rm H}^{n+2}_{{\textnormal{\'et}},x}(X,\ZZ/p^r(n))$, and the identification of the two only works with mod $\ell$-coefficients. 

To get a map into ${\rm H}^{n}_{p^r}(\bfk(x))$ suiting our needs we will have to restrict to an appropriate subgroup, the subgroup of \emph{tamely ramified} elements.

First, let us recall the notion of tamely ramified field extensions.
Let $(R,v)$ be a DVR, with fraction field $F$, residue field $\bfk_v$ and maximal ideal $\mathfrak{m}_v$. Let $F\subset L$ be a finite and separable extension, and denote $\mathfrak{m}_i$ the maximal ideals of the integral closure $\overline{R}$ of $R$ in $L$ (there are only finitely many of these maximal ideals).
Then $F\subset L$ is \emph{tamely ramified} at $v$ if either $\bfk_v$ has characteristic zero or $\bfk_v$ has characteristic $p$, the extensions $\bfk_v \subset \overline{R}/\mathfrak{m}_i$ are separable and the ramification indices of $\mathfrak{m}_v$ in $\overline{R}$ are prime to $p$. 

\begin{df}
let $F$ be a field and $v$ a valuation on it. Define $F^{\rm tm}$ as the subfield of $F^{\rm sep}$ generated by extensions that are tamely ramified at $v$. An element $\alpha \in {\rm H}_{p^r}^{\bullet}(F)$ is tamely ramified at $v$ if and only if $\alpha_{F^{\rm tm}} = 0$. We say $\alpha$ is wildly ramified otherwise. 

Given $(F,v)$ as above we write ${\rm H}^{\bullet}_{p^r}(F)_{\rm tm}$ for the subgroup of tamely ramified elements, and $\widetilde{{\rm H}}^{\bullet}_{p^{r}}(F)$ for the wild quotient ${\rm H}^{\bullet}_{p^r}(F)/{\rm H}^{\bullet}_{p^r}(F)_{\rm tm}$.
\end{df}

Note that by \cite{Endl}*{Chapter III, 17.19 and Section 22} given a DVR $(R,v)$ the quotient field of the strict Henselization $\bfk(R^{\rm sh})$ is contained in $\bfk(R)^{\rm tm}$.

For $r=1$, Izhboldin shows that the subgroup of tamely ramified elements is generated by symbols $\left[a, b_1, \ldots, b_n\right\rbrace$ where $v(a)\geq0$, i.e. $a \in \mathcal{O}_v$. The following proposition extends this result to general $r$.

\begin{prop}
Let $(F,v)$ be as above. An element $\alpha \in {\rm H}^{n+1}_{p^r}(F)$ is tamely ramified at $v$ if and only if $\alpha$ can be written as a sum of elements in the form $\left[ a_1, \ldots, a_r , b_1, \ldots, b_n \right \rbrace $ with $v(a_i) \geq 0$ for all $i$.
\end{prop}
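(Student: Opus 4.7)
The proof naturally splits into two directions: the easier (``if'') uses Artin--Schreier--Witt theory, while the (``only if'') proceeds by induction on $r$ and is the main combinatorial content.

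\textbf{The ``if'' direction.} I would argue that every symbol $\alpha=\left[\underline{a},\underline{b}\right\rbrace$ with $\underline{a}=(a_1,\ldots,a_r)\in W_r(\mathcal{O}_v)$ vanishes already in ${\rm H}^{n+1}_{p^r}(F^{\rm tm})$. The crucial observation is that the defining relation $\left[F(\underline{a}),\underline{b}\right\rbrace-\left[\underline{a},\underline{b}\right\rbrace=0$, combined with additivity of the symbol in its Witt-vector argument, yields $\left[\mathcal{P}(\underline{c}),\underline{b}\right\rbrace=0$ in ${\rm H}^{n+1}_{p^r}$ for every Witt vector $\underline{c}$, where $\mathcal{P}=F-\mathrm{id}$ is the Artin--Schreier--Witt operator. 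It therefore suffices to solve $\mathcal{P}(\underline{c})=\underline{a}$ for some $\underline{c}\in W_r(F^{\rm tm})$. Because $\mathcal{P}$ acts on components by $\mathcal{P}(\underline{c})_i=c_i^p-c_i+\phi_i(c_1,\ldots,c_{i-1})$ for universal polynomials $\phi_i$ with integer coefficients, one can solve the system recursively: at step $i$, $c_i$ satisfies an Artin--Schreier equation $x^p-x=\eta_i$ over $F(c_1,\ldots,c_{i-1})$ whose right-hand side is integral (by induction on $i$ and integrality of $\underline{a}$). The reduction modulo the maximal ideal is then separable (derivative $-1$), so by Hensel's lemma the extension $F(c_1,\ldots,c_i)/F(c_1,\ldots,c_{i-1})$ is unramified. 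Iterating gives $\underline{c}\in W_r(F^{\rm unr})\subseteq W_r(F^{\rm tm})$, hence $\alpha_{F^{\rm tm}}=0$.

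\textbf{The ``only if'' direction.} I would proceed by induction on $r$, the base case $r=1$ being Izhboldin's result recalled just before the proposition. For $r>1$, I would exploit the short exact sequence of Lemma \ref{lm:truncation} with $s=1$:
\[0\to{\rm H}^{n+1}_{p}(F)\xrightarrow{\iota_1}{\rm H}^{n+1}_{p^r}(F)\xrightarrow{\pi_1}{\rm H}^{n+1}_{p^{r-1}}(F)\to 0.\]
Because the definitions of $\iota_1$ and $\pi_1$ are functorial in $F$, both maps commute with base change to $F^{\rm tm}$; in particular they preserve tameness, and $\iota_1$ being injective they also detect it. Given $\alpha\in{\rm H}^{n+1}_{p^r}(F)$ tamely ramified, $\pi_1(\alpha)$ is tamely ramified in ${\rm H}^{n+1}_{p^{r-1}}(F)$, so by the inductive hypothesis $\pi_1(\alpha)=\sum_i\left[c_{i,1},\ldots,c_{i,r-1},\underline{b}_i\right\rbrace$ with every $v(c_{i,j})\geq 0$. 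Choosing the naive lift $\alpha'=\sum_i\left[c_{i,1},\ldots,c_{i,r-1},0,\underline{b}_i\right\rbrace$ we have $\alpha-\alpha'\in\ker(\pi_1)=\mathrm{image}(\iota_1)$, say $\alpha-\alpha'=\iota_1(\gamma)$. Then $\gamma\in{\rm H}^{n+1}_p(F)$ is tamely ramified by the compatibility and injectivity of $\iota_1$, so the base case gives $\gamma=\sum_j\left[e_j,\underline{b}'_j\right\rbrace$ with $v(e_j)\geq 0$. Combining,
\[\alpha=\sum_i\left[c_{i,1},\ldots,c_{i,r-1},0,\underline{b}_i\right\rbrace+\sum_j\left[0,\ldots,0,e_j,\underline{b}'_j\right\rbrace,\]
a sum of symbols with integral Witt components.

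\textbf{Main obstacle.} The delicate point is the ``if'' direction, where one must justify that integrality of the Witt vector $\underline{a}$ forces the associated Artin--Schreier--Witt extension to be unramified. This reduces to the elementary Hensel argument for each single Artin--Schreier equation in the recursion, but requires a careful unpacking of Witt-vector arithmetic (in particular that the correction polynomials $\phi_i$ have integer coefficients, so that integrality propagates). The ``only if'' direction is then a clean diagram chase once Izhboldin's base case and the truncation exact sequence are in hand.
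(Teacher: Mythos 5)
Your proof is correct and follows essentially the same route as the paper's: the ``if'' direction by killing the symbol over an unramified Artin--Schreier(--Witt) extension generated by integral elements, and the ``only if'' direction by induction on $r$ via the truncation exact sequence of \Cref{lm:truncation}, reducing to Izhboldin's case $r=1$. The only (cosmetic) difference is that you truncate the last Witt coordinate ($\pi_1$, with the error term absorbed into ${\rm H}^{n+1}_p$) whereas the paper truncates down to the first coordinate and pushes the error into ${\rm H}^{n+1}_{p^{r-1}}$; both inductions are equivalent.
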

\begin{proof}
First we observe that the elements as above form a subgroup of ${\rm H}^{n+1}_{p^r}(F)$ corresponding to the image of $W_r(\mathcal{O}_v) \otimes \K^n_{p_r}(F)$. The case $r=1$ is true by \cite{Izh}*{Corollary 2.6}. We proceed by induction. Assume the statement is true for $r-1$.

First let $\alpha = \left[ a_1, \ldots, a_r , b_1, \ldots, b_n \right \rbrace$ and assume that $v(a_i) \geq 0$ for all $i$. We want to show that $\alpha$ is tamely ramified. The extension $F'/F$ obtained by adding a root of $x^p-x-a_1$ to $F$ is unramified, and $\alpha_{F'} = \left[ 0, a'_2 \ldots, a'_{r} , b_1, \ldots, b_n \right \rbrace$ for some $a'_2, \ldots, a'_r \in \mathcal{O}_{v'}$. Repeating the process up to $r-1$ more times we see that $\alpha$ becomes zero after passing to a finite unramified extension.

Now let $\alpha = \sum_i \left[ \underline{a}_i , \underline{b}_i \right \rbrace$ be a tamely ramified element and let the morphisms $\pi_s$ and $\iota_s$ be defined as in \Cref{lm:truncation}. Then $\pi_{r-1}(\alpha)$ is tamely ramified, which means there is an element $\alpha'=\sum_j\left[ a'_j, \underline{b}'_j \right \rbrace \in {\rm H}^{n+1}_{p}(F)$ which is equal to $\pi_1(\alpha)$ and such that $v(a'_1) \geq 0$. But then $\alpha = \sum_j \left[ a'_j, 0, \ldots, 0 , \underline{b}'_j \right \rbrace + \alpha'$ where $\alpha'$ is a tamely ramified element coming from ${\rm H}^{n+1}_{p^{r-1}}(F)$, and we can conclude by the inductive hypothesis.
\end{proof}

Izhboldin \cite{Izh}*{Corollary 2.6, Proposition 6.6} completely described the tame part of ${\rm H}^{n+1}_{p^r}(F)$ for a complete discrete valuation field $(F,v)$. Totaro \cite{Totp}*{Theorem 4.3} showed that for $r=1$ the same description extends to Henselian fields. Here we show that the description of the \emph{tame} part of ${\rm H}^{n+1}_{p^r}(F)$ (trivially) works for Henselian fields for any $r$.

\begin{prop}\label{prop:DVR}
Let $(R,v)$ be an Henselian DVR. We have a split exact sequence
\[
0 \to {\rm H}^{n+1}_{p^r}(\bfk_v)  \xrightarrow{j^*} {\rm H}^{n+1}_{p^r}(\bfk(R))_{\rm tm} \xrightarrow{\partial_v} {\rm H}^{n}_{p^r}(\bfk_v) \to 0.
\]
We can see $j^*$ as the composition ${\rm H}^{n+1}_{p^r}(\bfk_v)={\rm H}^{n+1}(R,\ZZ/p^r(n)) \to {\rm H}^{n+1}_{p^r}(\bfk(R))$. It is defined by $\left[ a_1, \ldots, a_r , b_1, \ldots, b_n \right \rbrace  \mapsto \left[ \tilde{a}_1, \ldots, \tilde{a}_r , \tilde{b}_1, \ldots, \tilde{b}_n \right \rbrace $, where $\tilde{\varphi}$ is any lifting of $\varphi$ to $R$.

The residue map $\partial_v$ is $\left[ a_1, \ldots, a_r , \pi, b_2, \ldots, b_n \right \rbrace \mapsto \left[ \overline{a}_1, \ldots, \overline{a}_r , \overline{b}_2, \ldots, \overline{b}_n \right \rbrace$, where $\overline{\varphi}$ is the image of $\varphi$ in $\bfk_v$.
\end{prop}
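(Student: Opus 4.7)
My plan is to argue by induction on $r$. The base case $r=1$ is Totaro's theorem \cite{Totp}*{Theorem 4.3}, which already establishes the split short exact sequence for Henselian DVRs with mod $p$ coefficients. For the inductive step $r>1$ I will reduce to the combination of the cases $r=1$ and $r-1$ by means of Izhboldin's truncation sequence from \Cref{lm:truncation}, applied at both the generic and residue field.

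Concretely, I would assemble a $3\times 3$ commutative diagram whose rows are three instances of \Cref{lm:truncation} with $s=1$: the top row
\[ 0 \to {\rm H}^{n+1}_p(\bfk_v) \xrightarrow{\iota_1} {\rm H}^{n+1}_{p^r}(\bfk_v) \xrightarrow{\pi_1} {\rm H}^{n+1}_{p^{r-1}}(\bfk_v) \to 0, \]
the middle row for the tame subgroup ${\rm H}^{n+1}_{p^r}(\bfk(R))_{\rm tm}$, and the bottom row the analogous truncation for ${\rm H}^{n}_{p^r}(\bfk_v)$. The vertical arrows are $j^*$ from top to middle and $\partial_v$ from middle to bottom. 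The genuinely non-trivial point in setting up the diagram is that the truncation sequence remains short exact after restriction to the tame subgroup: injectivity of $\iota_1$ and exactness in the middle descend from the ambient short exact sequence together with the injectivity of $\iota_1$, while surjectivity of $\pi_1$ on the tame parts follows from the previous proposition, since any tame element of ${\rm H}^{n+1}_{p^{r-1}}(\bfk(R))$ is a sum of symbols $[\underline{a},\underline{b}\}$ with $v(a_i)\geq 0$, and appending a zero in the last Witt coordinate produces a tame preimage under $\pi_1$. Commutativity of the diagram is then a direct verification from the explicit formulas for $\iota_1$, $\pi_1$, $j^*$, and $\partial_v$.

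The main obstacle I expect is producing a well-defined residue $\partial_v$ on the tame part of ${\rm H}^{n+1}_{p^r}(\bfk(R))$ satisfying the stated formula. I plan to first define $\partial_v$ on the tensor product $W_r(\mathcal{O}_v)\otimes \K^n_{\rm M}(\bfk(R))$ as the composition of the Witt-vector reduction $W_r(\mathcal{O}_v)\to W_r(\bfk_v)$ with the classical Milnor $K$-theoretic residue, and then verify that this homomorphism descends through the quotient defining ${\rm H}^{n+1}_{p^r}$: the Frobenius relation is preserved because reduction modulo $\mathfrak{m}_v$ commutes with the $p$-th power map on both $W_r$ and on units, while the Steinberg-type relations (including Izhboldin's diagonal Witt relation) transfer cleanly because every unit entry reduces into $\bfk_v^*$. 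With $\partial_v$ in hand, the outer columns of the diagram are split short exact sequences by the base case and the inductive hypothesis, and the five lemma delivers short exactness of the middle column. The splitting is immediate: $j^*$ sends a symbol to one with all $b_i$ lifted units and with Witt entries in $W_r(\mathcal{O}_v)$, so $\partial_v\circ j^* = 0$ and $j^*$ is a section of the inclusion of $\ker\partial_v$ into ${\rm H}^{n+1}_{p^r}(\bfk(R))_{\rm tm}$.
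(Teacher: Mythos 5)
Your overall strategy is the same as the paper's: base case $r=1$ from Totaro's theorem, inductive step via the truncation sequences of \Cref{lm:truncation} assembled into a $3\times 3$ diagram and the nine lemma (the paper transposes the diagram, using the wild quotients as the third row and the sums ${\rm H}^{n+1}_{p^i}(\bfk_v)\oplus{\rm H}^{n}_{p^i}(\bfk_v)$ as the first, but that difference is cosmetic). Your verification that the truncation sequence stays exact on tame subgroups — injectivity and middle exactness from injectivity of $\iota_1$ over $\bfk(R)^{\rm tm}$, surjectivity by appending a zero Witt coordinate to integral symbols — is correct.

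The gap is exactly where you place the main obstacle, and your proposed resolution does not close it. The tame subgroup is the \emph{image} of $W_r(\mathcal{O}_v)\otimes(F^*)^{\otimes n}$ in ${\rm H}^{n+1}_{p^r}(F)$, so to descend your candidate residue you must show it kills the full intersection of $W_r(\mathcal{O}_v)\otimes\K^n_{\rm M}(F)$ with the relation subgroup $J$ of $W_r(F)\otimes\K^n_{\rm M}(F)$ — not merely the subgroup of $J$ generated by relations whose Witt entries are already integral, which is all that checking the generating relations accomplishes. A sum of Artin--Schreier--Witt relations $[\phi(\underline{c}_i)-\underline{c}_i,\underline{b}_i\rbrace$ with non-integral $\underline{c}_i$ can perfectly well land in the integral part, and nothing in your argument controls the candidate residue of such an element. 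For $r=1$ this well-definedness is precisely the hard content of Izhboldin's and Totaro's theorems (proved via the filtration ${\rm U}_i$ and using Henselianness essentially), so an argument that disposes of it by inspecting generators proves too much. The standard repair, and the route the paper follows, is to define the map in the \emph{opposite} direction: the candidate isomorphism ${\rm H}^{n+1}_{p^r}(\bfk_v)\oplus{\rm H}^{n}_{p^r}(\bfk_v)\to{\rm H}^{n+1}_{p^r}(\bfk(R))$ given by lifting symbols, whose well-definedness reduces to the fact that for Henselian $R$ every Witt vector with entries in $\mathfrak{m}$ lies in the image of $\phi-1$ (a point you also leave unaddressed for $j^*$, and the only place Henselianness is used). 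The nine lemma applied to your diagram, with this map in the middle column, then shows it is an isomorphism onto the tame part, and $\partial_v$ together with its formula is read off a posteriori as a component of the inverse rather than constructed directly.
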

\begin{proof}
The case $r=1$ is proven in \cite{Totp}*{Thm 4.3}, and the general case can be proven by induction exactly as in \cite{Izh}*{Proposition 6.6}, by considering the commutative diagram with exact rows
\[
     \xymatrix@C=1em{
      & 0 \ar[d] & 0 \ar[d] & 0 \ar[d] & \\
     0 \ar[r] & {\rm A}_{r-1}^{n+1} \ar[r] \ar[d] & {\rm A}_{r}^{n+1} \ar[r] \ar[d] & {\rm A}_{1}^{n+1} \ar[r] \ar[d] & 0 \\
     0 \ar[r] & {\rm H}^{n+1}_{p^{r-1}}(\bfk(R)) \ar[r]^{\iota_{r-1}} \ar[d] & {\rm H}^{n+1}_{p^{r}}(\bfk(R)) \ar[r]^{\pi_1} \ar[d] & {\rm H}^{n+1}_{p}(\bfk(R))  \ar[d] \ar[r] & 0 \\
    0 \ar[r] & \widetilde{{\rm H}}^n_{p^{r-1}}(\bfk(R)) \ar[r] & \widetilde{{\rm H}}^n_{p^{r}}(\bfk(R)) \ar[r] & \widetilde{{\rm H}}^n_{p^{r}}(\bfk(R)) \ar[r] & 0 
        }
 \]
where $A^{n+1}_{i}={\rm H}^{n+1}_{p^i}(\bfk_v) \oplus {\rm H}^{n}_{p^i}(\bfk_v)$. The left and right columns are exact by the inductive hypothesis, which implies the same for the central one. The map 
\[
\left[ a_1, \ldots, a_r , b_1, \ldots, b_n \right \rbrace  \mapsto \left[ \tilde{a}_1, \ldots, \tilde{a}_r , \tilde{b}_1, \ldots, \tilde{b}_n \right \rbrace 
\]
can be shown to be well defined exactly as in \cite{Totp}*{Theorem 4.3}, because if ${\rm m} \subset R$ is the maximal ideal, any element $a \in {\rm m}$ can be written as $u^p - u$ for some $u \in \bfk(R)$. It is an isomorphism by \Cref{prop:Hunr}.
\end{proof}

\subsection{Izhboldin's description of wild ramification.}
Coming back to the wild part of motivic cohomology, Izhboldin and Totaro give a nice description of $\widetilde{{\rm H}}^{n+1}_p(F)$, which we are going to recall. As before, we denote with $(F,v)$ a discrete valuation field, with associated DVR $R$ and residue field $\bfk_v$. 

There is a filtration 
\[ 0\subset {\rm U}_{0}\subset {\rm U}_1 \subset \ldots\subset {\rm U}_{i-1}\subset {\rm U}_{i}\subset\ldots\subset {\rm H}^{n+1}_p(F) \]
where
\[
{\rm U}_i = \langle \left[a,b_1,\ldots,b_n\right\rbrace \mid v(i) \geq -i \rangle.
\]
The graded pieces ${\rm U}_i/{\rm U}_{i-1}$ of this filtration admit a description in terms of forms \emph{over the residue field}.

Indeed, for $p\nmid i$, there is a morphism
\[ \psi_i:\Omega_{\bfk_v}^n \longrightarrow {\rm U}_i/{\rm U}_{i-1} \]
defined as follows: given an element $\alpha=a\frac{{\rm d}b_1}{b_1}\wedge\ldots\wedge\frac{{\rm d}b_n}{b_n}$ in $\Omega_{\bfk_v}^{n}$, pick liftings $a',b_1',\ldots,b'_n \in R$ of $a,b_1,\ldots,b_n$; then we define $\psi_i(\alpha)$ as $[t^{-i}a',b'_1,\ldots,b'_n\rbrace$ in ${\rm U}_i/{\rm U}_{i-1}$. 

This map is well defined because if we pick a second lifting, the associated element in ${\rm U}_i/{\rm U}_{i-1}$ differs from the previous one by a form in ${\rm U}_{i-1}$, which is of course zero in the quotient.

For $p\mid i$, let $Z^n\subset \Omega_{\bfk_v}^n$ be the subgroup of closed forms. We define
\begin{align*}
    \psi_i:\Omega^n_{\bfk_v}/Z^n\oplus \Omega_{\bfk_v}^{n-1}/Z^{n-1} \rightarrow {\rm U}_i/{\rm U}_{i-1} 
\end{align*}
\begin{align*}
    (a\frac{{\rm d}b_1}{b_1}\wedge\ldots\wedge\frac{{\rm d}b_n}{b_n},0)&\longmapsto [t^{-i}a',b'_1,\ldots,b'_n\rbrace\\
    (0,a\frac{{\rm d}b_2}{b_2}\wedge\ldots\wedge\frac{{\rm d}b_n}{b_n})&\longmapsto [t^{-i}a',t,b'_2,\ldots,b'_n\rbrace
\end{align*}

As before, the $a',b'_1,\ldots,b'_n$ denote liftings of $a,b_1,\ldots,b_n$ to $R$.
Again, picking a different lifting changes the associated element in ${\rm U}_{i}/{\rm U}_{i-1}$ by a form in ${\rm U}_{i-1}$, so the map is well defined.

\begin{thm}[\cite{Izh}*{Theorem 2.5} and \cite{Totp}*{Theorem 4.3}]\label{thm:wild pres}
Let $F$ be a field with a discrete valuation $v$, let $\bfk_v$ be the residue field of the corresponding DVR, and let $0\subset {\rm U}_0\subset {\rm U}_1\subset\ldots\subset {\rm H}^{n+1}_p(F)$ be the filtration defined above. 

Then the subgroup ${\rm U}_0$ is the tame subgroup ${\rm H}_{p}^{n+1}(F)_{\rm tm}$, and the maps $\psi_i$ that we defined before induce isomorphisms
\[
{\rm U}_i/{\rm U}_{i-1} \simeq \begin{cases}
\Omega^n_{\bfk_v}\quad \mbox{if } \, p \nmid i \\
\Omega^n_{\bfk_v}/Z_n \oplus \Omega^{n-1}_{\bfk_v}/Z_{n-1} \quad \mbox{if } \, p \mid i 
\end{cases}
\]
where $Z_n \subset \Omega^n_{\bfk_v}$ is the subgroup of closed forms.
\end{thm}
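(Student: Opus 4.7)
The plan is to treat the tame part and the wild graded pieces separately, ultimately reducing the harder assertions to Izhboldin's explicit computation in the Henselian setting and Totaro's extension.

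For the identification ${\rm U}_0 = {\rm H}^{n+1}_p(F)_{\rm tm}$, the inclusion ${\rm U}_0 \subseteq {\rm H}^{n+1}_p(F)_{\rm tm}$ is immediate from the preceding proposition, which shows that tame elements are generated by symbols $[a, b_1, \ldots, b_n\}$ with $v(a) \geq 0$: for $r=1$ this is exactly the defining generating set of ${\rm U}_0$. The reverse inclusion is obtained \emph{a posteriori} from the description of the graded pieces: once ${\rm U}_i/{\rm U}_{i-1}$ has been identified with a nonzero group of differentials on the residue field $\bfk_v$ for every $i > 0$, any $\alpha \in {\rm U}_i \smallsetminus {\rm U}_{i-1}$ has a nonzero image in such a quotient, and that image is preserved under passage to any unramified extension of $F$, so $\alpha$ cannot vanish over $F^{\rm tm}$.

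For well-definedness of the maps $\psi_i$, I would observe that a different choice of liftings of $a, b_1, \ldots, b_n \in \bfk_v$ changes $[t^{-i} a', b'_1, \ldots, b'_n\}$ by a symbol whose valuation is strictly less negative, hence belonging to ${\rm U}_{i-1}$. The relations of $\Omega^n_{\bfk_v}$ — multilinearity, alternation, and the Leibniz rule — are then checked symbol by symbol, using multilinearity of Milnor K-theory in the $b_j$-arguments together with the Artin-Schreier relation $[u^p - u, \cdot\} = 0$ applied to suitable expansions. When $p \mid i$ the target splits as $\Omega^n_{\bfk_v}/Z^n \oplus \Omega^{n-1}_{\bfk_v}/Z^{n-1}$ because $t^{-i}$ becomes a $p$-th power modulo ${\rm U}_{i-1}$, which forces exact forms to vanish in the graded piece and introduces an additional ${\rm d}t/t$-contribution parameterized by one less wedge factor.

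Surjectivity of $\psi_i$ is a direct manipulation: any generator $[a, b_1, \ldots, b_n\}$ of ${\rm U}_i$ with $v(a) = -i$ is first rewritten as $[t^{-i} u, b_1, \ldots, b_n\}$ with $u \in R^*$; then each $b_j$ is decomposed as a product of $t$ and a unit by multilinearity, and symbols with two or more $t$'s are killed by the relation $[\,\cdot, a, a, \cdot\,\} = 0$. The main obstacle is injectivity of $\psi_i$, which I would handle by reducing to the Henselization $F^h$ and invoking the known results \cite{Izh}*{Theorem 2.5} and \cite{Totp}*{Theorem 4.3}. The reduction is justified because the filtration and the maps $\psi_i$ depend only on DVR data, and the canonical map ${\rm H}^{n+1}_p(F) \to {\rm H}^{n+1}_p(F^h)$ provided by \Cref{prop:DVR} respects both the tame and wild pieces. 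In the Henselian case the inverse is constructed explicitly using Teichmüller/Witt-vector lifts of differential forms to canonical symbols, with Izhboldin's pairing against an appropriate residue map providing the required separation between consecutive filtration levels.
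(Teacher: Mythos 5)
The paper does not actually prove this statement: it is imported wholesale from Izhboldin and Totaro, as the bracketed attribution in the theorem header indicates, so there is no internal argument to compare against. Your sketch ultimately defers the one genuinely hard step — injectivity of the $\psi_i$ — to those very same references, which makes it consistent with the paper's treatment; the only imprecision worth flagging is that for $p \mid i$ the kernel of $\psi_i$ must contain all \emph{closed} forms $Z^n$, not merely the exact ones, the remaining closed forms $a^p\,\frac{\mathrm{d}b_1}{b_1}\wedge\cdots\wedge\frac{\mathrm{d}b_n}{b_n}$ dropping to level $i/p$ via Cartier's theorem together with the Artin--Schreier relation $[c^p,\underline{b}\} = [c,\underline{b}\}$.
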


Therefore, the group $\widetilde{{\rm H}}^{n+1}_p(F)$ is isomorphic to ${\rm H}^{n+1}_p(F)/{\rm U}_0$ and it has a filtration whose associated graded pieces ${\rm U}_i/{\rm U}_{i-1}$ are described by the theorem above. 

Nonetheless, given $\alpha \in {\rm U}_i$, understanding its image in ${\rm U}_{i}/{\rm U}_{i-1}$ and extrapolating information on its class in $\widetilde{{\rm H}}^{n+1}_p(F)$ may be highly nontrivial.

Luckily, when $\alpha$ is one of the standard generators of ${\rm U}_i/{\rm U}_{i-1}$, the situation gets much easier, as shown in the following corollary:

\begin{cor}\label{cor:lifting}
Let $(F,v)$ be as above and let $t$ be a uniformizer for $v$. Assume moreover that $\bfk_v=\bfk$.
\begin{enumerate}
    \item if $p\nmid i$, for every $\varphi\in\Omega^n_\bfk$ we have $t^{-i}\varphi=0$ in ${\rm U}_i/{\rm U}_{i-1} \Rightarrow t^{-i}\varphi=0$ in ${\rm H}^{n+1}_p(F)$;
    \item if $p\mid i$, for every $\varphi\in\Omega^n_\bfk$ we have $t^{-i}\varphi=0$ in ${\rm U}_i/{\rm U}_{i-1} \Rightarrow t^{-i}\varphi=t^{-i/p}\varphi'$ in ${\rm H}^{n+1}_p(F)$ for some $\varphi'\in \Omega_\bfk^n$;
    \item if $p \mid i$, for every $\psi\in \Omega^{n-1}_\bfk$ we have $t^{-i}\frac{{\rm d}t}{t}\wedge \varphi =0$ in ${\rm U}_i/{\rm U}_{i-1} \Rightarrow t^{-i}\frac{{\rm d}t}{t}\wedge\varphi=t^{-i/p}\frac{{\rm d}t}{t}\wedge\varphi'$ for some $\varphi'\in \Omega_{n-1}^{\bfk}$. If we further assume that $p^2\nmid i$, then we can replace $\frac{{\rm d}t}{t}\wedge \varphi'$ in the expression above with $\varphi''\in \Omega_\bfk^n$.
\end{enumerate}
\end{cor}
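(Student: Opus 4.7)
My plan is to prove the three claims by case analysis on \Cref{thm:wild pres}, using two basic relations in ${\rm H}^{n+1}_p(F)$: \emph{exactness}, $d\tilde\eta = 0$ for any $\tilde\eta\in\Omega^{n-1}_F$, and \emph{Artin-Schreier}, $[c^p,d_1,\ldots,d_n\rbrace=[c,d_1,\ldots,d_n\rbrace$. Both follow from the defining relations of ${\rm H}^{n+1}_p$ recalled above.

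For (1), since $p\nmid i$, \Cref{thm:wild pres} says $\psi_i$ is injective, so the hypothesis forces $\varphi=0$ in $\Omega^n_\bfk$; taking the zero lift gives $t^{-i}\varphi=0$ in ${\rm H}^{n+1}_p(F)$.

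For (2), write $i=pj$; then $\psi_i(\varphi,0)=0$ is equivalent to $\varphi\in Z^n$. First I will establish a Cartier-type decomposition $\varphi\equiv \sum_k a_k^p\omega_k\pmod{d\Omega^{n-1}_\bfk}$ with each $\omega_k$ logarithmic. Lifting each term to $R$,
\[ t^{-pj}\tilde\varphi \;=\; \sum_k (t^{-j}\tilde a_k)^p\,\tilde\omega_k \;+\; t^{-pj}\,d\tilde\mu. \]
The second summand vanishes in ${\rm H}^{n+1}_p(F)$ since $d(t^{-pj}\tilde\mu)=0$ and $pj\equiv 0\pmod p$; the first reduces by Artin-Schreier to $t^{-j}\tilde\varphi'$ with $\tilde\varphi'=\sum_k \tilde a_k\tilde\omega_k$, descending to $\varphi'=\sum_k a_k\omega_k\in\Omega^n_\bfk$, as required.

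Claim (3) follows by the same method applied to $\frac{dt}{t}\wedge\psi$, producing $\psi'\in\Omega^{n-1}_\bfk$ with $t^{-pj}\frac{dt}{t}\wedge\psi=t^{-j}\frac{dt}{t}\wedge\psi'$. For the refinement when $p^2\nmid i$ (so $p\nmid j$), integration by parts yields
\[ 0 \;=\; d(t^{-j}\tilde\psi') \;=\; -j\,t^{-j-1}\,dt\wedge\tilde\psi' \;+\; t^{-j}\,d\tilde\psi' \]
in ${\rm H}^{n+1}_p(F)$, i.e.\ $j\cdot t^{-j}\frac{dt}{t}\wedge\psi'=t^{-j}d\psi'$; since $j$ is invertible modulo $p$, setting $\varphi''=j^{-1}d\psi'\in\Omega^n_\bfk$ completes the argument.

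The main hurdle will be producing the Cartier-type decomposition in step (2) (and its analogue in (3)): for perfect $\bfk$ it is immediate from Cartier's isomorphism $Z^n/B^n\simeq\Omega^n$, but for general $\bfk$ it must be verified by a symbolic argument exploiting that closedness of $\varphi=a\omega$ forces $da$ into the $\bfk$-span of $db_1,\ldots,db_n$, permitting an iterative reduction to $p$-th power coefficients modulo exact forms.
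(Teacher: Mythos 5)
Your proposal is correct and follows essentially the same route as the paper: injectivity of $\psi_i$ for (1), then closedness plus Cartier's decomposition, vanishing of exact forms, and the Artin--Schreier relation $[c^p,\underline{b}\}=[c,\underline{b}\}$ to drop the pole order from $i$ to $i/p$ for (2)--(3), with the same integration-by-parts trick using the invertibility of $i/p$ for the refinement. The "main hurdle" you flag is not one: Cartier's theorem holds for arbitrary fields of characteristic $p$ (every finitely generated extension of $\mathbb{F}_p$ is separably generated, then pass to colimits), and the paper simply invokes it via \cite{Izh}*{Lemma 1.5.1}.
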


\begin{rmk}
The hypothesis that $\bfk_v=\bfk$ in the statement above is of key importance, because it provides a canonical way to lift elements from $\Omega_{\bfk_v}^n$ to $\Omega^n_F$ using the embedding $\bfk_v=\bfk\hookrightarrow F$.

Without this hypothesis, we would not be able to define an element in $\Omega^n_F$, because the maps that induce isomorphisms of \Cref{thm:wild pres} only lift to ${\rm U}_i$ up to elements of ${\rm U}_{i-1}$.
\end{rmk}

\begin{proof}[Proof of \Cref{cor:lifting}]
Point (1) above follows from the fact that $\psi_i:\Omega_{\bfk_v}^n\to {\rm U}_i/{\rm U}_{i-1}$ is an isomorphism, hence if $\psi_i(\varphi)=0$ we must have $\varphi=0$.

If $p \mid i$ then $t^{-i}\varphi =0 \in {\rm U}_{i}/{\rm U}_{i-1}$ together with \Cref{thm:wild pres} implies that $\varphi$ is closed. By Cartier's theorem \cite{Izh}*{Lemma 1.5.1} this implies that $\varphi$ is a sum of exact forms and forms that can be written as $a^p\frac{{\rm d}b_1}{b_1}\wedge\ldots\wedge\frac{{\rm d}b_n}{b_n}$. If $\tau = {\rm d}\tau'\in \Omega_{\bfk}$ is exact, then $t^{-i}\tau = {\rm d}(t^{-i}\tau')$ so $t^{-i}\tau = 0 \in {\rm H}^{n+1}_p(F)$. On the other hand, 
\[
(t^{-i} a^p-t^{-i/p}a)\frac{{\rm d}b_1}{b_1}\wedge\ldots\wedge\frac{{\rm d}b_n}{b_n} = 0 \in {\rm H}_p^{n+1}(F) 
\]
which immediately implies
\[
(t^{-i} a^p)\frac{{\rm d}b_1}{b_1}\wedge\ldots\wedge\frac{{\rm d}b_n}{b_n}=t^{-i/p}a\frac{{\rm d}b_1}{b_1}\wedge\ldots\wedge\frac{{\rm d}b_n}{b_n} \in {\rm U}_{i/p}.
\]
This proves (2).

To prove (3), proceeding as in (2), we easily get that $t^{-i}\frac{{\rm d}t}{t}\wedge\varphi=t^{-i/p}\frac{{\rm d}t}{t}\wedge \varphi'$. Indeed, if $\varphi={\rm d}\tau$, then 
\[{\rm d}(t^{-i-1}dt\wedge\tau)=t^{-i}\frac{{\rm d}t}{t}\wedge {\rm d}\tau + (i-1)t^{-i-2}dt\wedge dt\wedge \tau, \]
and both the left hand side of the equality and the last term of the sum on the right hand side of the equality are zero in ${\rm H}^{n+1}_p(F)$. If $\varphi=a^p\frac{{\rm d}b_1}{b_1}\wedge\ldots\wedge\frac{{\rm d}b_n}{b_n}$, then the same argument used to prove (2) shows that $t^{-i}\frac{{\rm d}t}{t}\varphi=t^{-i/p}\frac{{\rm d}t}{t}\wedge\varphi'$.
If we further assume that $p^2\nmid i$, then we have that
\begin{align*} {\rm d}(at^{-i/p}\wedge\frac{{\rm d}b_1}{b_1}\wedge\ldots\wedge\frac{{\rm d}b_n}{b_n}) = &-\frac{i}{p} a t^{-i/p} \frac{{\rm d}t}{t}\wedge \frac{{\rm d}b_1}{b_1}\wedge\ldots\wedge\frac{{\rm d}b_n}{b_n} \\
&+ t^{-i/p} \frac{{\rm d}a}{a}\wedge \frac{{\rm d}b_1}{b_1}\wedge\ldots\wedge\frac{{\rm d}b_n}{b_n},\end{align*}
which implies that in ${\rm H}^{n+1}_p(F)$ we have
\[ t^{-i/p}\frac{{\rm d}t}{t}\wedge \varphi' = (-i/p)^{-1}t^{-i/p} \frac{{\rm d}a}{a}\wedge \frac{{\rm d}b_1}{b_1}\wedge\ldots\wedge\frac{{\rm d}b_n}{b_n},\]
where $i/p$ is invertible by hypothesis. This proves (3).
\end{proof}

Another consequence of \Cref{thm:wild pres} is that every element in ${\rm H}_{p}^{n+1}(\bfk(t))$ can, up to subtracting a tamely ramified element, be written in a standard form.

\begin{lm}\label{lm:simple form}
Let $(F,v)$ be a field with a discrete valuation such that $\bfk_v=\bfk$, and let $\beta$ be an element of ${\rm H}_{p}^{n+1}(F)$. Let $t$ be a uniformizer for $v$. Then we have
\begin{align*} \beta &= \sum_{i=0}^{m} t^{-i} \varphi_i + t^{-i} \frac{{\rm d}t}{t}\wedge\varphi'_i + \beta_{{\rm tm}}\\
&= \sum_{i=0}^{m} [ c_i/t^i,\underline{f_i} \} + [ c'_i/t^i,t,\underline{f'_i} \} + \beta_{{\rm tm}} \\
 \end{align*}
where $\varphi_i\in \Omega_\bfk^n$, the $\varphi_i'\in \Omega_\bfk^{n-1}$ and $\beta_{{\rm tm}}$ is tamely ramified. Moreover, we can pick $\varphi'_i=0$ if $p\nmid i$ and $\varphi, \varphi'$ not closed if $p \mid i$.
\end{lm}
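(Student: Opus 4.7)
The plan is a descending induction on the Izhboldin--Totaro filtration
\[0 \subset {\rm U}_0 \subset {\rm U}_1 \subset \ldots \subset {\rm H}^{n+1}_p(F)\]
of \Cref{thm:wild pres}. Since $\beta$ is a finite sum of symbols $[a,b_1,\ldots,b_n\}$ and each such symbol lies in ${\rm U}_{\max(0,-v(a))}$, we may fix $m$ so that $\beta \in {\rm U}_m$; the induction will terminate at ${\rm U}_0$, which is precisely ${\rm H}^{n+1}_p(F)_{\rm tm}$.

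At the top level, take the image $\overline{\beta}$ of $\beta$ in the graded piece ${\rm U}_m/{\rm U}_{m-1}$. By \Cref{thm:wild pres} this quotient is isomorphic via $\psi_m$ either to $\Omega^n_{\bfk}$ when $p \nmid m$, or to $\Omega^n_{\bfk}/Z^n \oplus \Omega^{n-1}_{\bfk}/Z^{n-1}$ when $p \mid m$. The hypothesis $\bfk_v = \bfk$ is crucial at this step: it provides a canonical lift from forms on the residue field to honest elements of $\Omega^\bullet_F$, removing the indeterminacy modulo ${\rm U}_{m-1}$ that one otherwise encounters (cf.\ the remark following \Cref{cor:lifting}). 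If $p \nmid m$, I would pick any $\varphi_m \in \Omega^n_\bfk$ with $\psi_m(\varphi_m) = \overline{\beta}$ and set $\varphi'_m = 0$. If $p \mid m$, I would pick representatives $\varphi_m \in \Omega^n_\bfk$ and $\varphi'_m \in \Omega^{n-1}_\bfk$ of the two summands of $\overline{\beta}$; since closed forms vanish in $\Omega^\bullet_\bfk/Z^\bullet$, each representative may be taken to be either zero or non-closed, which is exactly the ``moreover'' clause. By construction
\[\beta - t^{-m}\varphi_m - t^{-m}\tfrac{{\rm d}t}{t}\wedge\varphi'_m \in {\rm U}_{m-1}.\]

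Iterating the construction at each filtration level $i = m, m-1, \ldots, 1$ produces the required summands, and what remains after reaching ${\rm U}_0$ is by definition the tamely ramified part $\beta_{\rm tm}$. The second equality in the statement is then purely formal: each $\varphi_i \in \Omega^n_\bfk$ can be expanded as a finite sum of elementary forms $c_i\,\frac{{\rm d}b_1}{b_1}\wedge\ldots\wedge\frac{{\rm d}b_n}{b_n}$, and unwinding the symbol description in ${\rm H}^{n+1}_p(F)$ converts $t^{-i}\varphi_i$ into the sum $\sum [c_i/t^i, \underline{f_i}\}$; the same expansion applied to $\varphi'_i$ yields the $[c'_i/t^i, t, \underline{f'_i}\}$ summands. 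There is no serious obstacle to the argument; the only real subtlety is that the residue-field hypothesis $\bfk_v = \bfk$ is what enables the iterative lift to take place inside $\Omega^\bullet_F$ rather than only in a successive quotient, and without it the induction would fail to close.
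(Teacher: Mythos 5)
Your proposal is correct and follows essentially the same route as the paper: apply \Cref{thm:wild pres} to lift the class of $\beta$ in ${\rm U}_m/{\rm U}_{m-1}$ to $t^{-m}\varphi_m + t^{-m}\frac{{\rm d}t}{t}\wedge\varphi'_m$ (using $\bfk_v=\bfk$ for a canonical lift and the quotient by closed forms to arrange the non-closedness when $p\mid m$), subtract, and iterate down to ${\rm U}_0={\rm H}^{n+1}_p(F)_{\rm tm}$. The only addition beyond the paper's argument is your explicit justification that $\beta$ lies in some ${\rm U}_m$ and the formal expansion into symbols, both of which are fine.
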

\begin{proof}
Assume that $\beta \in {\rm U}_m$. By \Cref{thm:wild pres} there are $\varphi_m, \varphi'_m$ respectively in $\Omega^n_{\bfk}$ and $\Omega^{n-1}_{\bfk}$ such that
\[
\beta = t^{-m}\varphi + t^{-m}\frac{{\rm d}t}{t}\wedge \phi' \in {\rm U}_m/{\rm U}_{m-1},
\]
so that the element $\beta-t^{-m}\varphi - t^{-m}\frac{{\rm d}t}{t}\wedge \phi'$ belongs to ${\rm U}_{m-1}$. If $p \mid m$ we can pick $\varphi, \varphi'$ not closed, while if $p\nmid m$ we can pick $\varphi'=0$.

Repeating this process up to $m-1$ times we find that

\[
\beta - \sum_{i\leq m,\, p\mid i} \left(t^{-i}\varphi + t^{-i}\frac{{\rm d}t}{t}\wedge \phi'\right) - \sum_{i \leq m,\, p\nmid i} t^{-i}\varphi 
\]
belongs to ${\rm U}_0$, proving our claim.

\end{proof}

Finding a presentation similar to \Cref{thm:wild pres} for $r>1$ would probably be rather challenging, but the following proposition describes a set of generators for $\widetilde{{\rm H}}^n_{p^r}(F)$ and shows that to check if an element is wildly ramified we can reduce to the case $r=1$.

\begin{prop}\label{prop:wild gen}
Let $(F,v)$ be a field with a discrete valuation. The group 
\[\widetilde{{\rm H}}^{n+1}_{p^r}(F)={\rm H}^{n+1}_{p^r}(F)/{\rm H}^{n+1}_{p^r}(F)_{\rm tm}\]
is generated by elements in the form 
\[
\left[0,\ldots,0,a_i, \ldots, a_r, b_1, \ldots, b_n \right\rbrace
\]
where $\left[a_i, b_1, \ldots, b_n \right\rbrace \in {\rm H}^n_p(F)$ is wildly ramified.

Moreover, given a wildly ramified element $\alpha$ we can always write
\[
\alpha = \iota_i \alpha' + \alpha_{\rm tm}
\]
where $\alpha_{\rm tm}$ is tamely ramified and $\pi_{i-1}\alpha' \in {\rm H}^{n+1}_p(F)$ is wildly ramified.
\end{prop}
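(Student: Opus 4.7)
The argument is by induction on $r$, with the first claim following from the second by iterating the decomposition on each summand of a generic presentation of a wild element. For the base case $r=1$, a wildly ramified $\alpha\in {\rm H}^{n+1}_p(F)$ is already of the desired form, taking $\alpha'=\alpha$ and $\alpha_{\rm tm}=0$.

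For the inductive step $r>1$, let $\alpha\in {\rm H}^{n+1}_{p^r}(F)$ be wild and consider $\pi_{r-1}(\alpha)\in {\rm H}^{n+1}_p(F)$ from \Cref{lm:truncation}, which extracts the leading Witt coordinate. If $\pi_{r-1}(\alpha)$ is wild, $\alpha$ already satisfies the conclusion (no padding required). Otherwise, using the tame-symbol description, write $\pi_{r-1}(\alpha)=\sum_j\left[a_j,\underline{b}_j\right\rbrace$ with $v(a_j)\geq 0$, and lift term by term to
\[
\gamma:=\sum_j\left[a_j,0,\ldots,0,\underline{b}_j\right\rbrace\in {\rm H}^{n+1}_{p^r}(F).
\]
Because each Witt vector $(a_j,0,\ldots,0)$ lies in $W_r(\mathcal{O}_v)$, the element $\gamma$ is tame, and by construction $\pi_{r-1}(\alpha-\gamma)=0$. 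Exactness of the sequence in \Cref{lm:truncation} then yields $\alpha-\gamma=\iota_{r-1}(\beta)$ for some $\beta\in {\rm H}^{n+1}_{p^{r-1}}(F)$. Prepending a zero preserves the condition $v(\cdot)\geq 0$, so $\iota_{r-1}$ carries tame elements to tame elements; hence the wildness of $\alpha-\gamma$ forces $\beta$ to be wild.

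Applying the inductive hypothesis to $\beta$ produces a decomposition $\beta=\iota_s(\beta')+\beta_{\rm tm}$ with $\beta_{\rm tm}$ tame in ${\rm H}^{n+1}_{p^{r-1}}(F)$ and $\beta'$ having wild leading Witt coordinate. Since the composition of two padding maps $\iota_{r-1}\circ\iota_s$ coincides with the padding map into ${\rm H}^{n+1}_{p^r}(F)$, we obtain
\[
\alpha=\iota_s(\beta')+\bigl(\gamma+\iota_{r-1}(\beta_{\rm tm})\bigr),
\]
with the second summand tame, completing the induction.

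The principal technical inputs are Izhboldin's characterization of tame elements as symbols drawn from $W_r(\mathcal{O}_v)\otimes (F^*)^{\otimes n}$, which both supplies the lift $\gamma$ and certifies tame-preservation under padding, together with exactness of the truncation sequence in \Cref{lm:truncation}. The chief obstacle is organisational rather than conceptual: tracking the various embeddings $\iota_\bullet$ through the recursion, and certifying at each step that the element produced by the exact sequence is genuinely wild (which is where the tame-to-tame property of $\iota_{r-1}$ is essential).
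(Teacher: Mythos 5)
Your proof is correct and follows essentially the same route as the paper's: induction on $r$ via the truncation sequence of \Cref{lm:truncation}, using the symbol characterization of tame elements to build the correcting tame lift $\gamma$ (the paper's $\tilde{\alpha}$) and the fact that $\iota_{r-1}$ preserves tameness. The only cosmetic difference is that you establish the ``moreover'' part by direct induction and composition of padding maps, whereas the paper phrases it as a minimality-of-$s$ argument; the content is identical.
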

\begin{proof}
To prove our first claim we proceed by induction on $r$. The case $r=1$ is trivial. Now assume the claim for $r-1$. Write $\alpha = \left[a_1, \ldots, a_r, b_1, \ldots, b_n \right\rbrace$. If $\left[a_1, b_1, \ldots, b_n \right\rbrace$ is wildly ramified there's nothing to prove. 

If $\left[a_1, b_1, \ldots, b_n \right\rbrace$ is tamely ramified we must have 
\[
\left[a_1, b_1, \ldots, b_n \right\rbrace = \mathop{\Sigma}_{s} \left[{a}_{1,s}, {b}_{1,s}, \ldots, {b}_{n,s}\right\rbrace
\]
with $v({a}_{1,s}) \geq 0$. Now let 
\[
\tilde{\alpha}=\mathop{\Sigma}_{s} \left[{a}_{1,s}, 0, \ldots, 0, {b}_{1,s}, \ldots, {b}_{n,s}\right\rbrace \in {\rm H}^{n+1}_{p^r}(F)
\] 
then $\alpha -  \tilde{\alpha}$ belongs to the kernel of $\pi_1$, i.e. the image of ${\rm H}^{n+1}_{p^{r-1}}(F)$, and $\tilde{\alpha}$ is clearly tamely ramified, so we can conclude by the inductive hypothesis.

Now take a wildly ramified element $\alpha$ and consider the minimum $s$ such that $\alpha=\iota_s \alpha' + \alpha_{\rm tm}$ with $\alpha_{\rm tm}$ a tamely ramified element. If we had that $\pi_{1}\alpha'$ is tamely ramified then as above there is a tamely ramified element $\tilde{\alpha}'$ such that $\alpha' - \tilde{\alpha}'$ is in the image of ${\rm H}^{n+1}_{p^{s-1}}(F)$, contradicting the minimality of $s$. 
\end{proof}

An immediate consequence is that checking if an element is tamely ramified or unramified con be done on the Henselization or even completion of $(F,v)$.

\begin{cor}\label{cor:unramcompl}
Let $(F,v)$ be a discretely valued field, and let $\alpha$ be an element of ${\rm H}^{n+1}_{p^r}(F)$. Then $\alpha$ is tamely ramified (resp. unramified) if and only if the same is true for the pullback of $\alpha$ to the Henselization or completion of $(F,v)$.
\end{cor}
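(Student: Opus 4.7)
\medskip
\noindent\textbf{Proof plan.} I plan to treat the tame claim first and then deduce the unramified claim from it via compatibility with the residue map. Let $F^\star$ denote either the Henselization $F^h$ or the completion $\hat F$, and $R^\star$ the corresponding DVR; both have residue field $\bfk_v$, and the inclusion $R\hookrightarrow R^\star$ sends a uniformizer to a uniformizer.

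The ``only if'' direction for tame ramification is immediate from the preceding proposition characterizing the tame subgroup by symbols $[a_1,\dots,a_r,b_1,\dots,b_n\rbrace$ with $v(a_i)\geq 0$: any such presentation of $\alpha$ in $F$ remains valid in $F^\star$, with $v$ replaced by its unique extension (on which the same $a_i$ still have nonnegative valuation). For the ``if'' direction, I would first reduce to the case $r=1$ via \Cref{prop:wild gen}. If $\alpha$ is wildly ramified, write $\alpha=\iota_s\alpha'+\alpha_{\rm tm}$ with $\pi_1\alpha'\in{\rm H}^{n+1}_p(F)$ wildly ramified. The maps $\iota_s$ and $\pi_1$ of \Cref{lm:truncation} are natural in the field, and they both preserve tameness (direct from the explicit symbol descriptions) and reflect it (because $\iota_s$ is injective, $\pi_1$ is surjective, and tameness is defined by vanishing in the tame closure, a condition respected by both). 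Hence if $\alpha_{F^\star}$ were tame, then so would $(\pi_1\alpha')_{F^\star}\in{\rm H}^{n+1}_p(F^\star)$, reducing the task to the case $r=1$.

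For $r=1$, I would exploit the Izhboldin--Totaro filtration ${\rm U}_\bullet$ of \Cref{thm:wild pres}. By construction ${\rm U}_i$ is generated by symbols $[a,b_1,\dots,b_n\rbrace$ with $v(a)\geq -i$, so the natural map ${\rm H}^{n+1}_p(F)\to{\rm H}^{n+1}_p(F^\star)$ respects the filtration. The isomorphisms $\psi_i$ describing the graded pieces in terms of $\Omega^\bullet_{\bfk_v}$ are defined via choices of liftings from $\bfk_v$ to $R$, and the same liftings remain valid in $R^\star$; hence the induced maps on graded pieces are the identity under the natural identifications. Consequently, if $\beta\in{\rm U}_i(F)\setminus{\rm U}_{i-1}(F)$ with $i\geq 1$, its image in ${\rm H}^{n+1}_p(F^\star)$ still lies in ${\rm U}_i\setminus{\rm U}_{i-1}$ and hence is wildly ramified.

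For the unramified claim: by \Cref{prop:DVR}, an element of the tame subgroup is unramified precisely when its residue $\partial_v$ vanishes. Since tameness is already settled, it suffices to verify that $\partial_v$ commutes with the pullback to $F^\star$; this is immediate from the explicit formula $[\underline a,\pi,\underline b\rbrace\mapsto[\overline{\underline a},\overline{\underline b}\rbrace$, which depends only on residue-field data. The principal technical obstacle is the assertion that the induced maps on the graded pieces ${\rm U}_i/{\rm U}_{i-1}$ are the identity under base change; this rests on checking that the liftings used in constructing the $\psi_i$ are compatible with the inclusion $R\hookrightarrow R^\star$, a verification that can be made by direct inspection of the construction.
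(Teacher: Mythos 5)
Your proposal is correct and follows essentially the same route as the paper: reduce the wild/tame question to $r=1$ via \Cref{prop:wild gen}, use the Izhboldin--Totaro structure theory of \Cref{thm:wild pres} to see that wildness is unchanged under Henselization or completion (the paper simply cites that $\widetilde{{\rm H}}^{n+1}_{p}(F) \to \widetilde{{\rm H}}^{n+1}_{p}(F')$ is an isomorphism, where you unwind this through the graded pieces ${\rm U}_i/{\rm U}_{i-1}$, which are identified since the residue field does not change), and then settle unramifiedness by the compatibility of $\partial_v$ with the pullback.
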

\begin{proof}
Let $F'$ be either the Henselization or completion or $F$ at $v$. \Cref{prop:wild gen} shows that $\alpha$ is wildly ramified if a certain element $\alpha' \in {\rm H}^{n+1}_p(F)$ is, and \Cref{thm:wild pres} shows that the pullback $\widetilde{{\rm H}}^{n+1}_{p}(F) \to \widetilde{{\rm H}}^{n+1}_{p}(F')$ is an isomorphism, so $\alpha$ is wildly ramified on $F$ if and only if it is on $F'$.

Now assume that $\alpha$ is tamely ramified. The differential $\partial_v$ clearly commutes with passing to $F'$ and as $\bfk_v=\bfk_{v'}$ we have that $\partial_v(\alpha)=0 \Leftrightarrow \partial_{v'}(\alpha)=0$.
\end{proof}

\section{\'{E}tale motivic cohomology of DVRs}\label{sub:CohDVR}
Recall that by the work of Kato \cite{Kato} and Izhboldin \cite{IzhK}*{Corollary 6.5} for a field $F$ of characteristic $p>0$ we have isomorphisms
\[ {\rm H}^{n}(F,\underline{\ZZ}/p^r(n))\simeq \K^n_{p^r}(F),\quad {\rm H}^{n+1}(F,\underline{\ZZ}/p^r(n))\simeq {\rm H}^{n+1}_{p^r}(F). \]
With this identification in mind, given a DVR $(R,v)$ with fraction field $\bfk(R)$ and residue field $\bfk_v$, from \Cref{eq:KA1} and \Cref{prop:DVR} we can derive residue homomorphisms
\[ \partial_v\colon {\rm H}^{n}(\bfk(R),\underline{\ZZ}/p^r(n)) \longrightarrow {\rm H}^{n-1}(\bfk_v,\underline{\ZZ}/p^r(n-1)),\textnormal{and}  \]
\[ \partial_v\colon {\rm H}^{n+1}(\bfk(R),\underline{\ZZ}/p^r(n))_{\rm tm} \longrightarrow {\rm H}^{n}(\bfk_v,\underline{\ZZ}/p^r(n-1)).  \]

The main goal of this section is to show that for $i$ and $j$ in this range the image of 
\[
{\rm H}^i(R,\underline{\ZZ}/p^r(j)) \longrightarrow {\rm H}^i(\bfk(R),\underline{\ZZ}/p^r(j))
\]
coincides with the kernel of the residue map $\partial_v$.

\subsection{The case $i=j=n$}
We begin with the easier case of ${\rm H}^n(R,\underline{\ZZ}/p^r(n))$. Define
\[
\mathcal{K}^{\bullet}(R)={\rm Ker}(\partial_v): \K^{\bullet}_{\textnormal{Mil}}(\bfk(R)) \to \K^{\bullet}_{\textnormal{Mil}}(\bfk_v)\textnormal{ and}
\]
\[
\mathcal{K}^{\bullet}_{p^r}(R)={\rm Ker}(\partial_v): \K^{\bullet}_{\textnormal{Mil}}(\bfk(R))/p^r \to \K^{\bullet}_{\textnormal{Mil}}(\bfk_v)/p^r
\]
where $\partial_v$ is the usual residue in Milnor's $\K$-theory. Note that in general $\Kcal^{\bullet}(R)$ is \emph{not} equal to the group $\K^{\bullet}_{\textnormal{Mil}}(R)$ that we obtain by taking the quotient $(R^*)^{\otimes n}/J$, with the subgroup $J$ generated by the same types of elements as for a field.

Now let $D$ be a Dedekind domain which is the semi-localization of a regular finite type algebra over $\bfk$. We  can extend the functor $\Kcal^{\bullet}_{p^r}$ to $D$ by defining  
\[
\Kcal^{\bullet}(D) = \bigcap_v {\rm Ker} (\partial_v) : \K^{\bullet}_{\textnormal{Mil}}(\bfk(D)) \to \K^{\bullet}_{\textnormal{Mil}}(\bfk_v)
\]
and correspondingly
\[
\mathcal{K}^{\bullet}_{p^r}(D)=\bigcap_v {\rm Ker} (\partial_v): \K^{\bullet}_{\textnormal{Mil}}(\bfk(D))/p^r \to \K^{\bullet}_{\textnormal{Mil}}(\bfk_v)/p^r
\]
where $v$ runs among all valuations $\bfk(D)$ that define a maximal ideal of $D$, i.e. the closed points of $\Spec(D)$. 

First, we show that $\Kcal^{\bullet}_{p^r}$ behaves well when changing the exponent $r$.
\begin{cor}\label{lm:exactK}
Let $(R,v)$ be a DVR. There is a natural isomorphism
\[
\mathcal{K}^n(R)/p^r \simeq \mathcal{K}^n_{p^r}(R)
\]
and the sequence
\[
0 \to \Kcal_{p^{r-i}}^{\bullet}(R) \xrightarrow{i_{r-i}} \Kcal_{p^r}^{\bullet}(R) \to \Kcal_{p^i}^{\bullet}(R) \to 0
\]
where the first map is given by $\lbrace b_1, \ldots, b_n \rbrace\mapsto p^{i} \lbrace b_1, \ldots, b_n \rbrace$, is exact.
\end{cor}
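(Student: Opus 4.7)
The plan is to reduce both statements to two standard inputs: the splitting of the Milnor residue sequence for a DVR, and Izhboldin's theorem that $\K^n_{\rm M}(F)$ is $p$-torsion free whenever $F$ has characteristic $p$.

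For the first isomorphism, I would start from the classical exact sequence of Milnor $K$-theory
\[
0 \to \Kcal^n(R) \to \K^n_{\rm M}(\bfk(R)) \xrightarrow{\partial_v} \K^{n-1}_{\rm M}(\bfk_v) \to 0,
\]
which splits after fixing a uniformizer $\pi$ via $\lbrace b_1,\ldots,b_{n-1} \rbrace \mapsto \lbrace \pi, \tilde b_1, \ldots, \tilde b_{n-1}\rbrace$ for any lifts $\tilde b_i \in R^*$. Since the sequence is split, tensoring with $\ZZ/p^r$ preserves exactness, giving
\[
0 \to \Kcal^n(R)/p^r \to \K^n_{\rm M}(\bfk(R))/p^r \xrightarrow{\partial_v} \K^{n-1}_{\rm M}(\bfk_v)/p^r \to 0,
\]
and the kernel on the right is $\Kcal^n_{p^r}(R)$ by definition. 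This gives the claimed natural isomorphism.

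For the short exact sequence, I would use the first isomorphism to rewrite it entirely in terms of $\Kcal^n(R)$, as
\[
0 \to \Kcal^n(R)/p^{r-i} \xrightarrow{\cdot p^i} \Kcal^n(R)/p^r \to \Kcal^n(R)/p^i \to 0.
\]
Surjectivity on the right and exactness in the middle are formal once one lifts representatives to $\Kcal^n(R)$: any class mod $p^i$ lifts to $\Kcal^n(R)$ and hence to $\Kcal^n(R)/p^r$; and if $\tilde \alpha \in \Kcal^n(R)$ represents a class mod $p^r$ whose reduction mod $p^i$ vanishes, then $\tilde \alpha = p^i \beta$ for some $\beta \in \Kcal^n(R)$, and the class of $\beta$ in $\Kcal^n(R)/p^{r-i}$ is a well-defined preimage.

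The only delicate point is injectivity of multiplication by $p^i$. Suppose $\beta \in \Kcal^n(R)$ satisfies $p^i \beta = p^r \gamma$ for some $\gamma \in \Kcal^n(R)$. Then $p^i(\beta - p^{r-i}\gamma) = 0$ in $\K^n_{\rm M}(\bfk(R))$, and here I would invoke Izhboldin's theorem that $\K^n_{\rm M}(F)$ is $p$-torsion free for $F$ of characteristic $p$ \cite{IzhK}; this forces $\beta = p^{r-i}\gamma$, so the class of $\beta$ in $\Kcal^n(R)/p^{r-i}$ vanishes. The main obstacle is precisely this step: without $p$-torsion-freeness of Milnor $K$-theory the injectivity of multiplication by $p^i$ would fail in general, and every other part of the argument is formal once the residue splitting is in place.
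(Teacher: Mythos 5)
Your argument is in substance the same as the paper's: both halves ultimately rest on Izhboldin's theorem that Milnor $K$-theory of a characteristic-$p$ field has no $p$-torsion, and your treatment of the second exact sequence (formal surjectivity and middle exactness, injectivity reduced to torsion-freeness of $\K^{\bullet}_{\rm M}(\bfk(R))$) is exactly the paper's.

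The one step I would not let stand as written is the claimed splitting of the integral residue sequence by $\lbrace b_1,\ldots,b_{n-1}\rbrace \mapsto \lbrace \pi,\tilde b_1,\ldots,\tilde b_{n-1}\rbrace$. For a general (non-Henselian) geometric DVR this is not a well-defined homomorphism into $\K^n_{\rm M}(\bfk(R))$: changing a lift $\tilde b_1$ by a unit $u\in 1+\mathfrak m_v$ changes the value by $\lbrace\pi,u,\tilde b_2,\ldots\rbrace$, which lies in $\ker\partial_v$ but need not vanish in $\K^n_{\rm M}(\bfk(R))$ (already $\lbrace 2,3\rbrace\neq 0$ in $\K^2_{\rm M}(\mathbb{Q})$ shows the phenomenon), so the ``section'' is only well defined modulo $\Kcal^n(R)$, i.e.\ not as a splitting. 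Fortunately you do not need the splitting: tensoring the short exact sequence with $\ZZ/p^r$ yields
\[
{\rm Tor}_1(\K^{n-1}_{\rm M}(\bfk_v),\ZZ/p^r) \to \Kcal^n(R)/p^r \to \K^n_{\rm M}(\bfk(R))/p^r \to \K^{n-1}_{\rm M}(\bfk_v)/p^r \to 0,
\]
and the Tor term vanishes by the very same Izhboldin theorem applied to the residue field, so exactness mod $p^r$ survives and your identification $\Kcal^n(R)/p^r\simeq\Kcal^n_{p^r}(R)$ goes through. (The paper sidesteps the issue by exhibiting, for a class $\beta$ with $\partial_v\beta=p^r\beta'$, the explicit unramified representative $\beta-p^r\lbrace\pi\rbrace\cdot\gamma$ for some chosen lift $\gamma$ of $\beta'$; here only the existence of one lift matters, not a well-defined section.) With that repair your proof is complete.
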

\begin{proof}
There is an obvious map $\mathcal{K}^n(R)/p^r \to \mathcal{K}^n_{p^r}(R)$. it suffices to prove that it is surjective. If $\beta \in \K_{\textnormal{Mil}}^{n}(\bfk(R))$ represents an element $\beta \in \Kcal_{p^r}^{n}(R)$ then $\partial_v\beta = p^r\beta'$ for some $\beta' \in \K_{\textnormal{Mil}}^{n-1}(\bfk_v)$. Pick an element $\gamma \in (R^*)^{\otimes n-1}$ which restricts to $\beta'$ on the residue field; now the element $\beta - p^r\lbrace\pi \rbrace \cdot \beta'$ belongs to $\Kcal^n(R)$ and maps to $\beta \in \Kcal_{p^r}^{n}(R)$.

This shows moreover that the last map in the exact sequence is surjective. The only nontrivial thing left to show for the sequence to be exact is that the map $i_{r-1}$ is injective. If $i_{r-1}\alpha = 0$ then in $\K^{\bullet}_{\textnormal{Mil}}(\bfk(R))$ we must have $p^i \alpha' = p^r\beta$ for some lifting $\alpha'$ of $\alpha$ and some $\beta$.

Therefore, we must have $p^i(\alpha - p^{r-i}\beta)=0$. It follows then that $\iota_{r-1}$ is injective if and only if $\alpha - p^{r-i}\beta=0$ in this case, i.e. if there is no $p$-torsion in $\K^{\bullet}_{\textnormal{Mil}}(\bfk(R))$. This is true by \cite{IzhK}*{Theorem A}.
\end{proof}

What follows shows that the functor we defined is equal to ${\rm H}^n(D,\underline{\ZZ}/p^r(n))$ for all such $D$, and in particular for a DVR $(R,v)$.
\begin{prop}\label{prop:KMH}
Let $D$ be as above. There is a natural isomorphism 
\[
{\rm H}^n(D,\underline{\ZZ}/p^r(n)) \simeq \Kcal_{p^r}^n(D).
\]
\end{prop}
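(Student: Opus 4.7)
The plan is to reduce the statement to a direct combination of three tools developed earlier in the excerpt: the Geisser--Levine identification of $\underline{\ZZ}/p^r(n)$ with logarithmic de Rham--Witt sheaves, the Gros--Suwa Gersten-type resolution of $W_r\Omega^n_{\mathrm{log}}$, and the Bloch--Kato theorem $W_r\Omega^n_{F,\log}\simeq \K^n_{p^r}(F)$.

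First I would treat the case when $\bfk$ is perfect. Under this hypothesis $\Spec(D)$ is essentially smooth over $\bfk$, so Geisser--Levine gives ${\rm H}^n(D,\underline{\ZZ}/p^r(n))={\rm H}^0_{\mathrm{\acute{e}t}}(\Spec D,W_r\Omega^n_{\log})$. Taking global sections in the Gros--Suwa resolution \Cref{eq:Omega} on $\Spec(D)$ (which, being semilocal of dimension one, has trivial higher Zariski cohomology of the relevant pieces) yields the left-exact sequence
\[
0 \longrightarrow {\rm H}^0_{\mathrm{\acute{e}t}}(\Spec D,W_r\Omega^n_{\log}) \longrightarrow W_r\Omega^n_{\bfk(D),\log} \xrightarrow{\partial} \bigoplus_{v} W_r\Omega^{n-1}_{\bfk_v,\log}.
\]
Applying Bloch--Kato to each term rewrites this as
\[
0 \longrightarrow {\rm H}^n(D,\underline{\ZZ}/p^r(n)) \longrightarrow \K^n_{p^r}(\bfk(D)) \xrightarrow{\partial} \bigoplus_v \K^{n-1}_{p^r}(\bfk_v),
\]
so the image is precisely $\Kcal^n_{p^r}(D)$ \emph{provided} the Gros--Suwa boundary $\partial$ matches the Milnor residue on the level of symbols. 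This compatibility is the key technical point; it can be checked on a symbol $\{b_1,\ldots,b_n\}$ with $b_1$ a uniformizer $\pi$ by directly computing $\partial$ in the Gros--Suwa complex and using that the Bloch--Kato isomorphism sends $\{b_1,\ldots,b_n\}\mapsto \tfrac{db_1}{b_1}\wedge\cdots\wedge\tfrac{db_n}{b_n}$ (with the analogous Witt-vector formula for $r>1$). This is precisely the compatibility that underlies the construction of the residue maps $\partial_v$ appearing earlier in \Cref{eq:KA1} and in \Cref{prop:DVR}.

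Second, I would remove the perfectness hypothesis on $\bfk$ via Quillen's trick, following the same strategy alluded to in the remarks before \Cref{prop:KMH}. Write $\bfk$ as a filtered union of its finitely generated subfields $\bfk_\alpha/\mathbb{F}_p$; each $\bfk_\alpha$ is the function field of a smooth $\mathbb{F}_p$-variety. Spreading out the finite data defining $D$, one finds $D$ as a filtered colimit of semilocalizations $D_\alpha$ of smooth $\mathbb{F}_p$-algebras, in such a way that the codimension-one points and residue fields of $D$ are colimits of those of the $D_\alpha$. \'Etale motivic cohomology, Milnor K-theory mod $p^r$, and the residue maps all commute with filtered colimits of essentially smooth rings, so the isomorphism ${\rm H}^n(D_\alpha,\underline{\ZZ}/p^r(n))\simeq \Kcal^n_{p^r}(D_\alpha)$ established in the perfect case passes to the colimit.

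The main obstacle I expect is exactly the compatibility of the motivic residue with the Milnor K-theory residue, since everything else is a clean colimit and Gersten-sequence bookkeeping. This compatibility is folklore but requires genuine care because the Gros--Suwa boundary is constructed from local cohomology with supports, while the Milnor residue is defined combinatorially; matching them at the level of $W_r\Omega^\bullet_{\log}$ uses the explicit description of both in terms of logarithmic de Rham--Witt forms and the standard computation $\partial(\tfrac{d\pi}{\pi}\wedge \omega)=\omega|_{\bfk_v}$ for $\omega$ unramified, extended $p$-typically by the Witt vector structure described in \Cref{sec:explicit}.
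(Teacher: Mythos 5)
Your proposal is correct and follows essentially the same route as the paper: the perfect-field case via Geisser--Levine together with the Gros--Suwa Gersten resolution and Bloch--Kato, followed by Quillen's method (spreading out over finitely generated subfields of $\bfk$, which are function fields of smooth $\mathbb{F}_p$-varieties) to remove the perfectness hypothesis. You spell out the residue-compatibility step that the paper treats as implicit (cf.\ the remark after \Cref{eq:KA1} that $\partial_v$ agrees with the Gros--Suwa boundary over perfect fields), but the underlying argument is the same.
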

\begin{proof}
The case $r=1$ is proven by Geisser and Levine in \cite{GeisLev}*{Proposition 3.1}. 

In general, if the base field is perfect the proposition is immediate by \cite{GeisLev}*{Theorem 8.3}. To get around this hypothesis we apply Quillen's method: there exists a subfield $\bfk_0$ of $\bfk$ that is finitely generated over $\mathbb{F}_p$ and such that $R=R' \otimes_{\bfk_0} \bfk$. As all functors in the statement commute with direct limits we just have to prove the proposition for the fields $\bfk_0 \subseteq \bfk' \subseteq \bfk$ which are finitely generated over $\mathbb{F}_p$. For such a field we can always see $D_{\bfk'}$ as a partial localization around a subset of codimension one of a smooth variety over $\mathbb{F}_p$. Thus we have reduced to the case of $\bfk$ being perfect.

 \end{proof}

\subsection{The case $i=j+1=n+1$}
Now we move on to the groups ${\rm H}^{n+1}(R,\underline{\ZZ}/p^r(n))$. Note that we always have 
\[{\rm H}^{n+1}(R,\underline{\ZZ}/p^r(n)) \subseteq {\rm H}^{n+1}(\bfk(R),\underline{\ZZ}/p^r(n))_{\rm tm}\]
as $\bfk(R^{\rm sh}) \subset \bfk(R)^{\rm tm}$
where $R^{\rm sh}$ is the strict henselization of $R$. The following lemma and proposition deal with the case $r=1$. \Cref{prop:Tot sec 4} is stated without proof in \cite{Totp}*{Section 4}; Burt Totaro kindly explained a way to derive it to the authors.

\begin{lm}\label{lm:exact Omega}
Let $(R,v)$ be a DVR. Then the following sequence is exact in the small \'etale site of $R$:
\[
0 \longrightarrow {\rm H}^{n}(R,\underline{\ZZ}/p(n)) \longrightarrow \Omega^n \overset{1-\Phi}{\longrightarrow} \Omega^n/{\rm d}\Omega^{n-1} \longrightarrow 0 
\]
where $\Phi$ is the inverse Cartier operator defined by
\[
\Phi(a {\rm d}f_1 \wedge \ldots {\rm d}f_1) = (a^p f_1^{p-1} \ldots f_n^{p-1}) {\rm d}f_1 \wedge \ldots {\rm d}f_1\]
\end{lm}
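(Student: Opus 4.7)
The plan is to reduce the lemma to Illusie's classical exact sequence for smooth schemes over perfect fields, and then descend to arbitrary base fields via Quillen's trick as in the proof of \Cref{prop:KMH}.

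First I would verify the easy inclusion. By Geisser--Levine \cite{GeisLev}*{Thm. 8.3} the sheaf ${\rm H}^n(-, \underline{\ZZ}/p(n))$ on the small étale site of $R$ is identified with the logarithmic differentials $\Omega^n_{R,{\rm log}}$. A direct computation using the formula for $\Phi$ gives
\[\Phi\Bigl(\tfrac{db_1}{b_1}\wedge\cdots\wedge\tfrac{db_n}{b_n}\Bigr)=(b_1\cdots b_n)^{-p}\,(b_1\cdots b_n)^{p-1}\,db_1\wedge\cdots\wedge db_n=\tfrac{db_1}{b_1}\wedge\cdots\wedge\tfrac{db_n}{b_n},\]
so $\Omega^n_{R,{\rm log}}\subseteq \ker(1-\Phi)$, which takes care of the left-hand portion of the sequence.

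For the nontrivial part, I would first assume $\bfk$ is perfect. Then $R$ is a localization of a smooth $\bfk$-variety $X$ at a codimension-one point, and Illusie's fundamental exact sequence for the de Rham--Witt complex in length one \cite{Ill}*{Ch.~I, Cor.~3.29} asserts that
\[0 \to \Omega^n_{X,{\rm log}} \to \Omega^n_X \xrightarrow{1-\Phi} \Omega^n_X/d\Omega^{n-1}_X \to 0\]
is exact as a sequence of étale sheaves on $X$. Middle exactness is the Cartier isomorphism together with the fact that closed forms are étale-locally sums of logarithmic and exact forms; surjectivity of $1-\Phi$ follows from the étale-local solvability of Artin--Schreier type equations for forms. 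Restriction to the small étale site of $R$ is exact, so the desired sequence over $R$ follows.

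To remove the perfectness hypothesis I apply Quillen's trick exactly as in \Cref{prop:KMH}: write $R = R_0 \otimes_{\bfk_0}\bfk$ with $\bfk_0/\mathbb{F}_p$ finitely generated and $R_0$ a regular semilocal $\bfk_0$-algebra, so that $R_0$ is a partial localization of a smooth variety over the \emph{perfect} field $\mathbb{F}_p$. The functors $\Omega^n$, $d\Omega^{n-1}$, the étale subsheaf $\Omega^n_{{\rm log}}$, and ${\rm H}^n(-,\underline{\ZZ}/p(n))$ all commute with the relevant filtered colimits of rings, so exactness over $\mathbb{F}_p$ propagates to $R$. The main obstacle is genuinely the middle exactness in the perfect case: proving that $\omega - \Phi(\omega) \in d\Omega^{n-1}$ forces $\omega$ to be étale-locally a logarithmic form. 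This is the core Cartier-operator computation, but once it is imported from Illusie the rest of the argument is formal bookkeeping.
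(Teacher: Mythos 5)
Your argument is correct, but it takes precisely the route the paper deliberately sidesteps. The authors open their proof by acknowledging that the lemma ``is a direct consequence of'' \cite{GeisLev}*{Proposition 3.1} together with \cite{Mor}*{Corollary 4.1(ii) and 4.2(iii)} --- i.e.\ exactly the kind of citation-plus-descent argument you propose (you cite Illusie instead of Morrow; the classical exact sequence $0\to\Omega^n_{X,\log}\to\Omega^n_X\xrightarrow{1-\Phi}\Omega^n_X/{\rm d}\Omega^{n-1}_X\to 0$ for $X$ smooth over a perfect field is indeed available there, though you should double-check the exact corollary number in \cite{Ill}). They then choose to give a hands-on proof instead: left and middle exactness are taken as the content of the identification ${\rm H}^n(R,\underline{\ZZ}/p(n))=\Omega^n_{\log,R}=\ker(1-\Phi)$ from \cite{GeisLev}*{Proposition 3.1}, and the real work is an explicit verification that $1-\Phi$ is étale-locally surjective. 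For this they use the DVR structure to write every generator of $\Omega^n_R$ either as $a\,{\rm d}b_1\wedge\cdots\wedge{\rm d}b_n$ with all $b_i$ units or as $a\,{\rm d}t\wedge{\rm d}b_2\wedge\cdots\wedge{\rm d}b_n$ with $t$ a uniformizer, and exhibit explicit Artin--Schreier covers $R[x]/(x^p-x-a')$ on which each generator becomes a value of $1-\Phi$. Your approach buys brevity and works for any localization of a smooth variety, not just a DVR; the paper's approach buys the explicit Artin--Schreier manipulations, which the authors state they need for later computations and which make transparent exactly which étale cover trivializes a given form. Your Quillen-trick reduction to perfect base fields is the same device the paper uses in \Cref{prop:KMH} and is sound, as is your direct check that logarithmic forms are fixed by $\Phi$.
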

\begin{proof}
This statement is a direct consequence of \cite{GeisLev}*{Proposition 3.1} and \cite{Mor}*{Corollary 4.1(ii) and 4.2(iii)}, but as much of the rest of the paper will require doing explicit computations we believe it is valuable to give a more basic proof here.

The first map is defined using the equivalence ${\rm H}^{n}(R,\underline{\ZZ}/p(n)) = \Omega^n_{{\rm log},R}$ in \cite{GeisLev}*{Proposition 3.1}, and its image is by definition the kernel of $(1-\Phi)$. It remains to show that the map $1-\Phi$ is \'etale locally surjective. Let $t$ be a uniformizer for $v$. Then $\Omega^n(R)$ is generated by two types of elements, namely $a {\rm d}b_1 \wedge \ldots \wedge {\rm d}b_n$ with $b_1, \ldots, b_n \in R^*$ and $a {\rm d}t \wedge {\rm d} b_2 \wedge \ldots \wedge {\rm d}b_n$ with $b_2, \ldots, b_n \in R^*$.

For elements of the first type write $a'= a b_1 \ldots b_{n}$, so that 
\[
a \,{\rm d}b_1 \wedge \ldots \wedge {\rm d}b_n = a' \frac{{\rm d}b_1}{b_1} \wedge \ldots \wedge \frac{{\rm d}b_n}{b_n}.
\]
We can take the Artin-Schreier extension $R\left[x\right]/(x^p-x-a')$, which is \'etale over $R$. Then it is immediate to check that
\[
(1 - \Phi)\left( x \frac{{\rm d}b_1}{b_1} \wedge \ldots \wedge \frac{{\rm d}b_n}{b_n}\right) = a' \frac{{\rm d}b_1}{b_1} \wedge \ldots \wedge \frac{{\rm d}b_n}{b_n}.
\]

Now consider the second type of elements. First assume that $v(a)=n$ is not congruent to $-1$ mod $p$, and write $a = t^n b$ with $b$ invertible in $R$. Then
\[
{\rm d}(t^{n+1}b \, {\rm d}t \wedge {\rm d} b_2 \wedge \ldots \wedge {\rm d}b_n) = (n+1) a \, {\rm d}t \wedge {\rm d} b_2 \wedge \ldots \wedge {\rm d}b_n + t^{n+1} \, {\rm d} b \wedge  {\rm d} b_2 \wedge \ldots \wedge {\rm d}b_n
\]

which shows that $a {\rm d}t \wedge {\rm d} b_2 \wedge \ldots \wedge {\rm d}b_n$ is equivalent to an element of the second type in $\Omega^n_R/{\rm d}\Omega^{n-1}_R$.

Now assume that $a = t^{mp-1}b $. Write $a' = t^{pm-p} b\, b_1 \ldots b_n$. It is immediate to check that after passing to $R\left[x\right]/(x^p-x-a')$ we have
\[
(1-\Phi)\left(x\, {\rm d}t \wedge \frac{{\rm d} b_2}{b_2} \wedge \ldots \wedge \frac{{\rm d}b_n}{b_n}\right) = a' t^{p-1} {\rm d}t \wedge \frac{{\rm d}b_1}{b_1} \wedge \ldots \wedge \frac{{\rm d}b_n}{b_n} = a\, {\rm d}t \wedge {\rm d} b_2 \wedge \ldots \wedge {\rm d}b_n.
\]

Finally, the surjectivity of $1-\Phi$ on DVRs easily imples the surjectivity on the whole small \'etale site by passing to stalks.
\end{proof}

\begin{prop}\label{prop:Tot sec 4}
The image of ${\rm H}^{n+1}(R,\underline{\ZZ}/p(n))$ in ${\rm H}(\bfk(R),\underline{\ZZ}/p(n))_{\rm tm}$ is equal to the kernel of the residue map $\partial_v$.
\end{prop}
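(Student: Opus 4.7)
The plan is to combine the explicit description of ${\rm H}^{n+1}(R,\underline{\ZZ}/p(n))$ from \Cref{lm:exact Omega} with the residue formulas of \Cref{prop:DVR}. Since $\Omega^n$ is a quasi-coherent sheaf on the affine scheme $\Spec R$, its higher étale cohomology vanishes, so the long exact sequence associated to $0\to \Omega^n_{\log}\to \Omega^n\xrightarrow{1-\Phi}\Omega^n/d\Omega^{n-1}\to 0$ yields
\[ {\rm H}^{n+1}(R,\underline{\ZZ}/p(n))={\rm coker}\bigl(1-\Phi\colon \Omega^n_R\to \Omega^n_R/d\Omega^{n-1}_R\bigr). \]
The same computation over $\bfk(R)$ recovers Izhboldin's presentation \eqref{eq:omega to H}, and the natural map between the two cokernels is induced by $\Omega^n_R\hookrightarrow \Omega^n_{\bfk(R)}$.

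For the containment image $\subseteq \ker\partial_v$, I would use the two types of generators of $\Omega^n_R$ identified in the proof of \Cref{lm:exact Omega}. A form $a\, db_1\wedge\ldots\wedge db_n$ with $a\in R$ and $b_i \in R^*$ corresponds to the symbol $[ab_1\cdots b_n, b_1, \ldots, b_n\rbrace$, all of whose entries lie in $R$ with multiplicative slots in $R^*$, so by \Cref{prop:DVR} its residue vanishes; a form $a\, dt\wedge db_2 \wedge\ldots\wedge db_n$ corresponds to $[atb_2\cdots b_n, t, b_2, \ldots, b_n\rbrace$, whose first entry lies in $m_v$, so the residue is $[0,\bar b_2,\ldots, \bar b_n\rbrace=0$.

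For the reverse inclusion, given $\alpha \in {\rm H}^{n+1}(\bfk(R),\underline{\ZZ}/p(n))_{\rm tm}$ with $\partial_v\alpha = 0$, I would write $\alpha=\sum_i[a_i,b_{1,i},\ldots,b_{n,i}\rbrace$ with $a_i\in R$ by tameness and expand each $b_{j,i}=t^{k_{j,i}}u_{j,i}$ with $u_{j,i}\in R^*$ by multilinearity. Symbols with two or more $t$'s vanish by the defining relations of ${\rm H}^{n+1}_p$, so we obtain $\alpha=\alpha_0+\alpha_1$, where $\alpha_0$ has all entries in $R$ (hence comes from $\Omega^n_R$) and $\alpha_1 = \frac{dt}{t}\wedge \omega$ for some $\omega \in \Omega^{n-1}_R$. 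The hypothesis $\partial_v\alpha=0$ translates to $\bar\omega = 0$ in ${\rm H}^n(\bfk_v,\underline{\ZZ}/p(n-1))$, so $\bar\omega = d\bar\eta + (1-\Phi)\bar\zeta$ over $\bfk_v$. Lifting $\bar\eta,\bar\zeta$ to $\tilde\eta\in \Omega^{n-2}_R$ and $\tilde\zeta\in \Omega^{n-1}_R$ and using the identities $\frac{dt}{t}\wedge d\tilde\eta = -d\bigl(\frac{dt}{t}\wedge\tilde\eta\bigr)$ and $\frac{dt}{t}\wedge (1-\Phi)\tilde\zeta = (1-\Phi)\bigl(\frac{dt}{t}\wedge \tilde\zeta\bigr)$, both auxiliary terms die modulo $d\Omega^{n-1}_{\bfk(R)}+(1-\Phi)\Omega^n_{\bfk(R)}$. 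This reduces $\alpha_1$ to $\frac{dt}{t}\wedge \tau$ with $\tau \in \ker(\Omega^{n-1}_R \to \Omega^{n-1}_{\bfk_v})$; decomposing $\tau = t\tau_1 + dt\wedge\tau_2$, the factor $t$ cancels the pole and we conclude $\frac{dt}{t}\wedge\tau = dt\wedge\tau_1\in \Omega^n_R$.

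The main obstacle will be rigorously justifying the final decomposition $\tau = t\tau_1 + dt\wedge\tau_2$: one needs the description $\ker(\Omega^{n-1}_R \to \Omega^{n-1}_{\bfk_v}) = t\cdot \Omega^{n-1}_R + dt\wedge\Omega^{n-2}_R$, which relies on $R$ being a geometric DVR so that the module of absolute differentials is sufficiently well-behaved. A secondary check is verifying that $\Phi$ commutes with $\frac{dt}{t}\wedge(-)$, which follows from its defining formula after rewriting forms in the $d\log$ basis and validates the identity used for $\frac{dt}{t}\wedge (1-\Phi)\tilde\zeta$.
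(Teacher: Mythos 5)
Your proposal is correct, and the first half (the presentation of ${\rm H}^{n+1}(R,\underline{\ZZ}/p(n))$ as ${\rm coker}(1-\Phi)$ via \Cref{lm:exact Omega}, the two types of generators of $\Omega^n_R$, and the vanishing of their residues) is exactly the paper's argument. Where you genuinely diverge is the reverse inclusion: the paper simply quotes the proof of \cite{Totp}*{Theorem 4.3}, where Totaro identifies $\ker(\partial_v)$ with the subgroup generated by forms $a\,\frac{{\rm d}b_1}{b_1}\wedge\cdots\wedge\frac{{\rm d}b_n}{b_n}$ with $a\in R$, $b_i\in R^*$, and then only has to observe that this subgroup is hit by $\Omega^n_R$. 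You instead reprove this identification directly: decompose a tame class as $\alpha_0+\frac{{\rm d}t}{t}\wedge\omega$ by expanding each slot as $t^k u$, translate $\partial_v\alpha=0$ into $\bar\omega\in d\Omega^{n-2}_{\bfk_v}+\mathcal{P}(\cdot)$, lift and absorb those terms using $\frac{{\rm d}t}{t}\wedge d\tilde\eta=-d(\frac{{\rm d}t}{t}\wedge\tilde\eta)$ and the compatibility of $\Phi$ with $\frac{{\rm d}t}{t}\wedge(-)$, and finish with the kernel description of $\Omega^{n-1}_R\to\Omega^{n-1}_{\bfk_v}$. The step you flag as the main obstacle is in fact unproblematic and needs nothing about geometricity: the second fundamental exact sequence for $R\to R/(t)$ gives $\Omega_{\bfk_v}=\Omega_R\otimes_R\bfk_v/\langle dt\rangle$, and since $\Lambda^{k}$ of a quotient module $M/N$ is $\Lambda^k M/(N\wedge\Lambda^{k-1}M)$, one gets $\ker(\Omega^{n-1}_R\to\Omega^{n-1}_{\bfk_v})=t\,\Omega^{n-1}_R+dt\wedge\Omega^{n-2}_R$ as you want. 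What your route buys is a self-contained proof that does not defer to Totaro's Henselian/complete computation (and in particular applies verbatim to an arbitrary geometric DVR, whereas the paper implicitly reduces to the Henselization via \Cref{cor:unramcompl}); what it costs is length and the bookkeeping of lifts and signs that the citation avoids.
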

\begin{proof}
As part of the proof of \cite{Totp}*{Theorem 4.3} Totaro shows that the kernel of $\partial_v$ is equal to the subgroup generated by the diffierentials $a\frac{{\rm d}b_1}{b_1}\wedge\cdots\frac{{\rm d}b_n}{b_n}$ where $a\in R$ and $b_1,\ldots,b_n \in R^{*}$. Following his notation, we will refer to this subgroup as ${\rm H}^{n+1}(\bfk(R))_{\rm nr}$.

We want to show that the image of ${\rm H}^{n+1}(R,\underline{\ZZ}/p(n))$ is exactly ${\rm H}^{n+1}(\bfk(R))_{\rm nr}$. 
For this, observe that the short exact sequence in \Cref{lm:exact Omega} induces a long exact sequence in cohomology, which by \cite{GeisLev}*{Thm 8.3} and, as usual, Quillen's method, gives us
\[ \Omega^n_R \xrightarrow{1-\Phi} \Omega^n_R/{\rm d}\Omega_R^{n-1} \rightarrow {\rm H}^1(R,\Omega^n_{{\rm log},R}) \rightarrow {\rm H}^1(R,\Omega^n_{R})=0, \]
where the last term vanishes because $\Omega^n_{R}$ is an $R$-module. This gives us a presentation of ${\rm H}^{n+1}(R,\underline{\ZZ}/p(n))$ as  a group of differentials which is compatible with the presentation of ${\rm H}^{n+1}(\bfk(R),\underline{\ZZ}/p(n))$.
By looking at the composition
\[ \Omega^n_R/d\Omega_R^{n-1} \longrightarrow {\rm H}^{n+1}(R,\underline{\ZZ}/p(n)) \longrightarrow {\rm H}^{n+1}(\bfk(R),\underline{\ZZ}/p(n))_{\rm tm} \]
we deduce that the image of ${\rm H}^{n+1}(R,\underline{\ZZ}/p(n))$ is generated, as above, by differentials of one of the two types
\[
a \, {\rm d} b_1 \wedge \ldots \wedge {\rm d} b_n \,\; \textnormal{or} \,\; a \, {\rm d} t \wedge {\rm d} b_2 \wedge \ldots \wedge {\rm d} b_n .
\]
Differentials of the first type are clearly unramified and surject to ${\rm H}^{n+1}(\bfk(R))_{\rm nr}$. Now observe that symbols of the second form can be rewritten as
\[
a\, {\rm d} t \wedge {\rm d} b_2 \wedge \ldots \wedge {\rm d} b_n  = a \, b_2 \ldots b_n \, t \frac{{\rm d} t}{t} \wedge \frac{{\rm d} b_2}{b_2} \wedge \ldots \wedge \frac{{\rm d} b_n}{b_n}
\]
whose ramification is by definition equal to 
 \[\overline{a \, b_2 \ldots b_n} \cdot \overline{t} \cdot \frac{{\rm d} \overline{b_2}}{\overline{b_2}} \wedge \ldots \wedge \frac{{\rm d} \overline{b_n}}{\overline{b_n}} = 0 \cdot \left( \frac{{\rm d} \overline{b_2}}{\overline{b_2}} \wedge \ldots \wedge \frac{{\rm d} \overline{b_n}}{\overline{b_n}}\right) =0.\]
\end{proof}

To generalize this result to $r>1$ we start by proving that the conclusion of \Cref{lm:truncation} works in this setting as well.

\begin{prop}
Let $(R,v)$ be a DVR. We have an exact sequence
\[
0 \to {\rm H}^{n+1}(R,\underline{\ZZ}/p^{r-1}(n)) \to {\rm H}^{n+1}(R,\underline{\ZZ}/p^{r}(n)) \to {\rm H}^{n+1}(R,\underline{\ZZ}/p(n)) \to 0
\]
which is compatible with the exact sequence on quotient fields
\[
0 \to {\rm H}^{n+1}_{p^{r-1}}(\bfk(R)) \to {\rm H}^{n+1}_{p^{r}}(\bfk(R)) \to {\rm H}^{n+1}_{p}(\bfk(R)) \to 0.
\]
\end{prop}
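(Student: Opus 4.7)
The approach is to extract the stated sequence from the long exact étale cohomology sequence of $\Spec R$ attached to the distinguished triangle of étale motivic complexes
\[
\underline{\ZZ}/p^{r-1}(n) \xrightarrow{\cdot p} \underline{\ZZ}/p^r(n) \to \underline{\ZZ}/p(n) \xrightarrow{+1}.
\]
The corresponding sequence on $\Spec \bfk(R)$ reduces to a short exact sequence in degree $n+1$ by \Cref{lm:truncation}, and the compatibility asserted in the statement follows from naturality of the long exact sequence under the inclusion of the generic point $j\colon \Spec \bfk(R) \to \Spec R$. The substance of the proof is thus to show that the two connecting maps
\[
\delta^n_R\colon {\rm H}^n(R, \underline{\ZZ}/p(n)) \to {\rm H}^{n+1}(R, \underline{\ZZ}/p^{r-1}(n)),\quad \delta^{n+1}_R\colon {\rm H}^{n+1}(R, \underline{\ZZ}/p(n)) \to {\rm H}^{n+2}(R, \underline{\ZZ}/p^{r-1}(n))
\]
both vanish.

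First I would handle $\delta^n_R$ by a diagram chase: since the corresponding field-level boundary vanishes by \Cref{lm:truncation}, we have $j \circ \delta^n_R = 0$, so it suffices to observe that $j\colon {\rm H}^{n+1}(R, \underline{\ZZ}/p^{r-1}(n)) \to {\rm H}^{n+1}(\bfk(R), \underline{\ZZ}/p^{r-1}(n))$ is injective. This will follow from \Cref{eq:local ramification} applied to $X = \Spec R$: the Zariski sheafification $\Hcal^{n+1}(R, \underline{\ZZ}/p^{r-1}(n))$ injects into ${\rm H}^{n+1}(\bfk(R), \underline{\ZZ}/p^{r-1}(n))$, and on the local scheme $\Spec R$ every Zariski cover refines to the trivial one $\{\Spec R\}$, so the canonical map from the presheaf ${\rm H}_{\rm ét}^{n+1}(-, \underline{\ZZ}/p^{r-1}(n))$ to its Zariski sheafification is an isomorphism at the whole scheme, giving $\Hcal^{n+1}(R, -) = {\rm H}^{n+1}(R, -)$.

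For $\delta^{n+1}_R$ the plan is to prove that its codomain vanishes entirely, i.e.\ ${\rm H}^{n+2}(R, \underline{\ZZ}/p^{r-1}(n)) = 0$. Via the Geisser--Levine isomorphism $\underline{\ZZ}/p^{r-1}(n) \simeq W_{r-1}\Omega^n_{\rm log}[-n]$ (extended to non-perfect $\bfk$ by Quillen's method as in the proof of \Cref{prop:KMH}), the group in question becomes ${\rm H}_{\rm ét}^2(R, W_{r-1}\Omega^n_{R, \rm log})$. I would then invoke the de Rham--Witt generalization of \Cref{lm:exact Omega}, an étale-local short exact sequence of sheaves
\[
0 \to W_{r-1}\Omega^n_{R, \rm log} \to W_{r-1}\Omega^n_R \xrightarrow{1-F} W_{r-1}\Omega^n_R / dV^{r-2}\Omega^{n-1}_R \to 0,
\]
where $F$ is the de Rham--Witt Frobenius. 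Since the middle and right-hand terms are quasi-coherent $W_{r-1}\mathcal{O}_R$-modules on the affine scheme $\Spec R$, their positive-degree étale cohomology vanishes, and the associated long exact sequence forces ${\rm H}_{\rm ét}^i(R, W_{r-1}\Omega^n_{R, \rm log}) = 0$ for all $i \geq 2$, as required.

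The hard part will be this last step: establishing the Illusie-type exact sequence together with the étale cohomological vanishing of $W_{r-1}\Omega^n_R$ on the affine scheme $\Spec R$ over a possibly non-perfect base field $\bfk$. These are classical inputs from the de Rham--Witt machinery over perfect fields, and extending them to arbitrary $\bfk$ will require the same Quillen limit argument used in \Cref{prop:KMH}. Once these ingredients are in place, the two vanishings together with automatic exactness in the middle of the long exact sequence will yield the claimed short exact sequence, and its compatibility with the field-level sequence will follow from the naturality of the entire construction.
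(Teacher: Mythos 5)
Your overall skeleton --- take the long exact étale cohomology sequence of $\Spec(R)$ attached to the triangle $\underline{\ZZ}/p^{r-1}(n)\to\underline{\ZZ}/p^{r}(n)\to\underline{\ZZ}/p(n)$ and kill the two connecting maps --- is the same as the paper's, and your treatment of $\delta^n_R$ is correct: since $R$ is local, every Zariski cover of $\Spec(R)$ is refined by the trivial one, so $\Hcal^{n+1}(R,\underline{\ZZ}/p^{r-1}(n))={\rm H}^{n+1}(R,\underline{\ZZ}/p^{r-1}(n))$, and \Cref{eq:local ramification} then gives the injectivity of restriction to the generic point that you need. (The paper kills this connecting map differently, by observing that ${\rm H}^{n}(R,\underline{\ZZ}/p^{r}(n))\to{\rm H}^{n}(R,\underline{\ZZ}/p(n))$ is surjective via \Cref{lm:exactK} and \Cref{prop:KMH}; either route works.)

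The gap is in your treatment of $\delta^{n+1}_R$. You propose to prove ${\rm H}^{n+2}(R,\underline{\ZZ}/p^{r-1}(n))=0$ by establishing, for arbitrary $r$, an Illusie-type étale-local resolution of $W_{r-1}\Omega^n_{\log}$ together with the acyclicity on affines of its non-logarithmic terms, and you explicitly leave this as ``the hard part''. This is not merely laborious: the quotient $W_{r-1}\Omega^n/{\rm d}V^{r-2}\Omega^{n-1}$ is not obviously a quasi-coherent $W_{r-1}\mathcal{O}$-module (neither ${\rm d}$ nor $V$ is $W\mathcal{O}$-linear), so the vanishing you assert does not follow from ``quasi-coherent on an affine''; making it rigorous requires the Illusie--Raynaud coherence theory for the de Rham--Witt complex, none of which is developed in the paper (only the $r=1$ sequence, \Cref{lm:exact Omega}, is proved there). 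All of this machinery is unnecessary: the very same locality-plus-\Cref{eq:local ramification} argument you already used one degree lower gives ${\rm H}^{n+2}(R,\underline{\ZZ}/p^{r-1}(n))=\Hcal^{n+2}(R,\underline{\ZZ}/p^{r-1}(n))\hookrightarrow {\rm H}^{n+2}(\bfk(R),\underline{\ZZ}/p^{r-1}(n))$, and the target is zero because a field of characteristic $p$ has $p$-cohomological dimension one, so its weight-$n$ étale motivic cohomology is concentrated in degrees $n$ and $n+1$. This is exactly how the paper disposes of the ${\rm H}^{n+2}$ term. With that one-line replacement your argument closes, and the compatibility statement follows, as you say, from naturality of the long exact sequence under the generic-point inclusion together with Izhboldin's identification of the field-level sequence.
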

\begin{proof}
We begin by applying Quillen's method to reduce to $\bfk$ perfect (see the beginning of the proof of \Cref{prop:KMH}). Now, the exact sequence from \Cref{lm:exactK} implies that we have an exact sequence of the sheafifications of $\Kcal_{p^r}^{\bullet}$ on the site of $\Spec(R)$. By \cite{GeisLev}*{Thm 8.3} the sheafification of $\Kcal_{p^r}^{n}$ is equal to the sheaf $W_r\Omega^{n}_{R,{\rm log}}$, and the cohomology of the latter is equal to ${\rm H}^{i}(R,\underline{\ZZ}/p^r(n))$, which we know to be zero except when $i=n,n+1$. Therefore, we obtain the following long exact sequence in cohomology
\begin{align*}
0 \to {\rm H}^n(R,\underline{\ZZ}/p^{r-1}(n)) \to {\rm H}^n(R,\underline{\ZZ}/p^{r}(n)) \to {\rm H}^n(R,\underline{\ZZ}/p(n)) \to
{\rm H}^{n+1}(R,\underline{\ZZ}/p^{r-1}(n))\\ \to {\rm H}^{n+1}(R,\underline{\ZZ}/p^{r}(n)) \to {\rm H}^{n+1}(R,\underline{\ZZ}/p^{1}(n)) \to {\rm H}^{n+2}(R,\underline{\ZZ}/p^{r-1}(n)).
\end{align*}
By \Cref{eq:local ramification} we know that the last term is zero, as it injects into 
\[{\rm H}^{n+2}(\bfk(R),\underline{\ZZ}/p^{r-1}(n))=0.\] Moreover, we know by \Cref{lm:exactK} that the sequence splits into two five-terms exact sequences. So we have the exact sequence:
\[
0 \to {\rm H}^{n+1}(R,\underline{\ZZ}/p^{r-1}(n)) \to {\rm H}^{n+1}(R,\underline{\ZZ}/p^{r}(n)) \to {\rm H}^{n+1}(R,\underline{\ZZ}/p(n)) \to 0.
\]
Compatibility with the exact sequence on Izhboldin's presentation of the \'etale cohomology of $\bfk(R)$ is immediate from \cite{IzhK}*{Corollary 6.5}.
\end{proof}

To understand ramification in relation to the inclusion ${\rm H}^{n+1}(R,\underline{\ZZ}/p^r(n)) \to {\rm H}^{n+1}_{p^r}(\bfk(R))$ we want to describe its image in terms of symbols. We start from the exact sequence of \'{e}tale sheaves
\[
 0 \to \ZZ/p^r \to W_r(\Ga) \xrightarrow{\phi - 1} W_r(\Ga) \to 0
 \]
where $\phi \left[ a_1, \ldots, a_r\right]=\left[ a^p_1, \ldots, a^p_r\right]$. This induces a map 
\[
 W_r(R)/(\phi-1) \to {\rm H}^1(R,\ZZ/p^r).
\]
Combining this with the isomorphism $\Kcal_{p^r}^{\bullet}(R)={\rm H}^n(R,\underline{\ZZ}/p^r(n))$ and taking the cup product we get a map
\[
W_r(R) \otimes \Kcal_{p^r}^{n}(R) \to {\rm H}^{n+1}(R,\underline{\ZZ}/p^r(n))
\]
\begin{lm}\label{lm:surjH}
The map 
\[
W_r(R) \otimes \Kcal_{p^r}^{n}(R) \to {\rm H}^{n+1}(R,\underline{\ZZ}/p^r(n))
\]
is surjective for all $r, n$.
\end{lm}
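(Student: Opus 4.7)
My plan is to argue by induction on $r$, exploiting the truncation exact sequence from the preceding proposition, namely
\[ 0 \to {\rm H}^{n+1}(R,\underline{\ZZ}/p^{r-1}(n)) \xrightarrow{\iota_{r-1}} {\rm H}^{n+1}(R,\underline{\ZZ}/p^{r}(n)) \xrightarrow{\pi_{r-1}} {\rm H}^{n+1}(R,\underline{\ZZ}/p(n)) \to 0. \]

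For the base case $r=1$, observe that $W_1(R)=R$ and \Cref{prop:Tot sec 4} tells us that every element of ${\rm H}^{n+1}(R,\underline{\ZZ}/p(n))$ is a sum of symbols $[a,b_1,\ldots,b_n\rbrace$ with $a\in R$ and $b_i \in R^*$. Each such symbol is the cup product of $a\in R=W_1(R)$ with the Milnor symbol $\lbrace b_1,\ldots,b_n\rbrace$, which lies in $\Kcal_p^n(R)$ since all the $b_i$ are units and hence the residue vanishes. This settles $r=1$.

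For the inductive step, given $\gamma \in {\rm H}^{n+1}(R,\underline{\ZZ}/p^r(n))$, I would first project $\gamma$ via $\pi_{r-1}$ to ${\rm H}^{n+1}(R,\underline{\ZZ}/p(n))$ and use the base case to produce a preimage $\bar\eta=\sum_i a_i\otimes \omega_i \in R\otimes \Kcal_p^n(R)$. Lifting each $\omega_i$ to some $\tilde\omega_i\in\Kcal_{p^r}^n(R)$ via the surjection of \Cref{lm:exactK} and setting $\eta=\sum_i[a_i,0,\ldots,0]\otimes\tilde\omega_i$, I obtain an element of $W_r(R)\otimes\Kcal_{p^r}^n(R)$ whose image in ${\rm H}^{n+1}(R,\underline{\ZZ}/p^r(n))$ has the same $\pi_{r-1}$-projection as $\gamma$. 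Then $\gamma-\mathrm{cup}(\eta) = \iota_{r-1}(\gamma')$ for some $\gamma'$, which by induction lifts to $\delta=\sum_j[\underline{c}_j]\otimes \rho_j \in W_{r-1}(R)\otimes\Kcal_{p^{r-1}}^n(R)$. Shifting each Witt vector by $\iota_{r-1}$ and lifting each $\rho_j$ to $\tilde\rho_j\in\Kcal_{p^r}^n(R)$ yields
\[ \delta'' := \sum_j[0,c_{j,1},\ldots,c_{j,r-1}]\otimes\tilde\rho_j \in W_r(R)\otimes\Kcal_{p^r}^n(R), \]
which, modulo the compatibility issue below, lifts $\iota_{r-1}(\gamma')$. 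Then $\eta+\delta''$ is a preimage of $\gamma$.

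The main obstacle is verifying that the cup product $W_r(R)\otimes\Kcal_{p^r}^n(R)\to{\rm H}^{n+1}(R,\underline{\ZZ}/p^r(n))$ is compatible with the projection $\pi_{r-1}$ and inclusion $\iota_{r-1}$ on both the cohomological side and the Witt-vector side. This should follow from the naturality of cup products applied to the morphisms of Artin-Schreier-Witt short exact sequences induced by the truncation $W_r\twoheadrightarrow W_1=\Ga$ and the inclusion $W_{r-1}\hookrightarrow W_r$, combined with the analogous truncation and inclusion maps on the $\Kcal_{p^s}^n(R)$ from \Cref{lm:exactK}. Concretely, one needs the cup product to identify the tensor $[a_1,\ldots,a_r]\otimes\lbrace b_1,\ldots,b_n\rbrace$ with the Izhboldin-type symbol $[a_1,\ldots,a_r,b_1,\ldots,b_n\rbrace$ in the notation of \Cref{lm:truncation}; this is built into the way both constructions arise from Artin-Schreier-Witt extensions, so the check is essentially a diagram chase over the map of short exact sequences of sheaves.
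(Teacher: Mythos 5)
Your proposal is correct and follows essentially the same route as the paper: the base case $r=1$ from \Cref{prop:Tot sec 4}, the truncation exact sequences on both the $W_\bullet\otimes\Kcal_{p^\bullet}$ side and the cohomology side, and induction via a diagram chase (which you spell out explicitly where the paper simply calls it ``a standard diagram chase''). The compatibility of the cup product with the truncation and inclusion maps, which you rightly flag, is exactly the point the paper disposes of by asserting that the sequence of tensor products ``commutes with the morphisms above.''
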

\begin{proof}
The case $r=1$ is a consequence of \Cref{prop:Tot sec 4}. Now note that due to the equality $R=W_1(R) = W_r(R)/W_{r-1}(R)$ we have a natural exact sequence
\[
W_{r-1}(R)\otimes \Kcal_{p^{r-1}}(R) \to W_{r}(R)\otimes \Kcal_{p^{r}}(R) \to R\otimes \Kcal_{p}(R) \to 0
\]
which commutes with the morphisms above. Consider the commutative diagram
\[
\xymatrix@C=1em{
 0 \ar[r] & {\rm H}^{n+1}(R, \underline{\ZZ}/p^{r-1}(n))  \ar[r] & {\rm H}^{n+1}(R, \underline{\ZZ}/p^{r}(n))  \ar[r] & {\rm H}^{n+1}(R, \underline{\ZZ}/p(n))  \ar[r] & 0 \\
    & W_{r-1}(R)\otimes \Kcal_{p^{r-1}}(R) \ar[r] \ar[u] & W_{r}(R)\otimes \Kcal_{p^{r}}(R) \ar[r] \ar[u] & R\otimes \Kcal_{p}(R) \ar[r] \ar[u] & 0
   }
\]
We can assume by induction on $r$ that the left and right vertical maps are surjective. Then a standard diagram chase proves the surjectivity of the central vertical map.
\end{proof}

\begin{prop}\label{prop:Hunr}
Let $(R,v)$ be a DVR. We have an exact seqence
\[ 0 \to {\rm H}^{n+1}(R,\underline{\ZZ}/p^r(n)) \to {\rm H}^{n+1}_{p^r}(\bfk(R))_{\rm tm} \xrightarrow{\partial_v} {\rm H}^{n}_{p^r}(\bfk_v) \to 0  \]
\end{prop}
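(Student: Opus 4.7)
The plan is to proceed by induction on $r$, with the base case $r=1$ coming from \Cref{prop:Tot sec 4} (for the equality image $= \ker\partial_v$) together with the local ramification sequence \eqref{eq:local ramification} applied to $\Spec(R)$ (for injectivity). Writing $\phi_r\colon {\rm H}^{n+1}(R,\underline{\ZZ}/p^r(n))\to {\rm H}^{n+1}_{p^r}(\bfk(R))$ for the natural map, the claim decomposes into: (a) the image of $\phi_r$ lies in the tame subgroup; (b) $\partial_v\circ\phi_r=0$; (c) $\partial_v$ is surjective; (d) $\phi_r$ is injective; (e) the image of $\phi_r$ equals $\ker\partial_v$.

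For (a), \Cref{lm:surjH} shows that ${\rm H}^{n+1}(R,\underline{\ZZ}/p^r(n))$ is generated by cup products $[\underline{a}]\cup\underline{b}$ with $\underline{a}\in W_r(R)$ and $\underline{b}\in\Kcal^n_{p^r}(R)$, and their images in ${\rm H}^{n+1}_{p^r}(\bfk(R))$ are sums of symbols $[\underline{a},b_1,\dots,b_n\rbrace$ whose Witt-vector entries all lie in $R$, hence are tame by the symbol characterization established just before \Cref{prop:DVR}. For (b) one checks the Leibniz-type identity $\partial_v([\underline{a}]\cup\underline{b})=[\overline{\underline{a}}]\cup\partial_v^{\K}\underline{b}$: expanding each $b_j=\pi^{e_j}u_j$ with $u_j\in R^*$ and applying the residue formula of \Cref{prop:DVR} on each resulting symbol reduces the computation to the standard Milnor-theoretic residue, which vanishes since $\partial_v^{\K}\underline{b}=0$ by the definition of $\Kcal^n_{p^r}(R)$. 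For (c), the residue formula of \Cref{prop:DVR} provides an explicit lift: given $\gamma=\sum [\overline{\underline{a}}^i,\overline{b}^i_1,\dots,\overline{b}^i_{n-1}\rbrace$, choose liftings of the Witt coordinates to $R$ and of the $\overline{b}^i_k$ to $R^*$ and form $\alpha=\sum [\underline{a}^i,\pi,b^i_1,\dots,b^i_{n-1}\rbrace$, which is tame and satisfies $\partial_v\alpha=\gamma$.

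For (d) and (e) I would use the commutative ladder
\[
\xymatrix@C=1em{
0 \ar[r] & {\rm H}^{n+1}(R,\underline{\ZZ}/p^{r-1}(n)) \ar[r] \ar[d]^{\phi_{r-1}} & {\rm H}^{n+1}(R,\underline{\ZZ}/p^{r}(n)) \ar[r] \ar[d]^{\phi_{r}} & {\rm H}^{n+1}(R,\underline{\ZZ}/p(n)) \ar[r] \ar[d]^{\phi_{1}} & 0 \\
0 \ar[r] & {\rm H}^{n+1}_{p^{r-1}}(\bfk(R)) \ar[r]^-{\iota_{r-1}} & {\rm H}^{n+1}_{p^r}(\bfk(R)) \ar[r]^-{\pi_{r-1}} & {\rm H}^{n+1}_p(\bfk(R)) \ar[r] & 0
}
\]
whose top row is exact by the proposition preceding \Cref{lm:surjH} and whose bottom row is \Cref{lm:truncation}. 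The five lemma combined with the inductive hypothesis on the outer columns immediately yields (d). For (e), take $\alpha\in {\rm H}^{n+1}_{p^r}(\bfk(R))_{\rm tm}$ with $\partial_v\alpha=0$. Then $\partial_v\pi_{r-1}\alpha=\pi_{r-1}\partial_v\alpha=0$, so the base case $r=1$ produces $\alpha_1\in {\rm H}^{n+1}(R,\underline{\ZZ}/p(n))$ with $\phi_1(\alpha_1)=\pi_{r-1}\alpha$; lifting $\alpha_1$ to some $\tilde\alpha_r\in {\rm H}^{n+1}(R,\underline{\ZZ}/p^r(n))$ via surjectivity of the top-right map, the difference $\alpha-\phi_r(\tilde\alpha_r)$ lies in $\ker\pi_{r-1}=\iota_{r-1}({\rm H}^{n+1}_{p^{r-1}}(\bfk(R)))$, say equal to $\iota_{r-1}(\beta')$.

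The main obstacle is verifying that $\beta'$ is itself tame, so the inductive hypothesis for $r-1$ can be applied. To overcome this I would reduce to the Henselian case via \Cref{cor:unramcompl}: after base change to $\bfk(R^{\rm h})$, \Cref{prop:DVR} identifies ${\rm H}^{n+1}_{p^s}(\bfk(R^{\rm h}))_{\rm tm}\cong {\rm H}^{n+1}_{p^s}(\bfk_v)\oplus {\rm H}^{n}_{p^s}(\bfk_v)$ as a split extension, and under this decomposition $\iota_{r-1}$ and $\pi_{r-1}$ act componentwise via the corresponding Izhboldin truncations on $\bfk_v$; in particular, if $\iota_{r-1}(\beta'_{\bfk(R^{\rm h})})$ is tame then so is $\beta'_{\bfk(R^{\rm h})}$, and \Cref{cor:unramcompl} descends this to tameness of $\beta'$ over $\bfk(R)$. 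A final check that $\partial_v\beta'=0$ follows from commutativity of $\partial_v$ with $\iota_{r-1}$ together with injectivity of $\iota_{r-1}\colon {\rm H}^n_{p^{r-1}}(\bfk_v)\hookrightarrow {\rm H}^n_{p^r}(\bfk_v)$. The inductive hypothesis then yields $\beta'=\phi_{r-1}(\beta'')$ for some $\beta''$, and by commutativity of the left square of the ladder $\alpha = \phi_r(\tilde\alpha_r+\iota_{r-1}(\beta''))$, completing the argument.
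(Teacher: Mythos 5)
Your proposal is correct and follows essentially the same route as the paper: induction on $r$ via the truncation exact sequences of \Cref{lm:truncation}, the base case supplied by \Cref{prop:Tot sec 4}, tameness of the image via the symbol description coming from \Cref{lm:surjH}, and the same diagram chase identifying the image with $\ker(\partial_v)$. Your explicit verification that the element $\beta'$ with $\iota_{r-1}(\beta')=\alpha-\phi_r(\tilde\alpha_r)$ is itself tame (via the Henselization, \Cref{cor:unramcompl} and \Cref{prop:DVR}) fills in a step the paper's proof passes over when it asserts that the middle row of its diagram, restricted to tame subgroups, is exact.
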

\begin{proof}
The statement is true for $r=1$ by \Cref{prop:Tot sec 4}. Now consider the commutative diagram 
\[
     \xymatrix@C=1em{
    0 \ar[r] & {\rm H}^n_{p^{r-1}}(\bfk_v) \ar[r] & {\rm H}^n_{p^{r}}(\bfk_v) \ar[r] & {\rm H}^n_{p^{r}}(\bfk_v) \ar[r] & 0 \\
   0 \ar[r] & {\rm H}^{n+1}_{p^{r-1}}(\bfk(R))_{\rm tm} \ar[r]^{\iota_{r-1}} \ar[u]^{\partial_v} & {\rm H}^{n+1}_{p^{r}}(\bfk(R))_{\rm tm} \ar[r]^{\pi_1} \ar[u]^{\partial_v}& {\rm H}^{n+1}_{p}(\bfk(R))_{\rm tm}  \ar[u]^{\partial_v}&  \\
      0 \ar[r] & {\rm H}^{n+1}(R, \underline{\ZZ}/p^{r-1}(n)) \ar[r] \ar[u] & {\rm H}^{n+1}(R, \underline{\ZZ}/p^{r}(n)) \ar[r] \ar[u] & {\rm H}^{n+1}(R, \underline{\ZZ}/p(n))\ar[u] \ar[r] & 0 \\
     }
 \]
Each row is exact, the lower vertical arrows are injective and the upper vertical arrows are surjective. We want to show that all columns are exact as well.

First, note that by \Cref{lm:surjH} each element of ${\rm H}^{n+1}_{p^r}(\bfk(R))$ coming from the group ${\rm H}^{n+1}(R, \underline{\ZZ}/p^{r}(n))$ is in the form $\left[ a_1, \ldots, a_r, b_1, \ldots, b_n \right \rbrace $ where $a_1, \ldots, a_r \in R$ and $\lbrace b_1, \ldots, b_n \rbrace \in \Kcal^n_{p^r}(R)$. These elements are unramified by construction, so all we need to prove is the exactness of 
\[
{\rm H}^{n+1}(R, \underline{\ZZ}/p^{r}(n)) \to {\rm Ker}(\partial_v) \to 0.
\]

By induction we can assume the left and right column are exact, and we have to show that the central one is. Pick $\gamma\in {\rm H}^{n+1}_{p^r}(\bfk(R))_{\rm tm}$ such that $\partial_v(\gamma)=0$. As $\partial_v(\pi_1(\gamma))=0$ there is $\gamma' \in {\rm H}^{n+1}(R,\underline{\ZZ}/p^r(n))$ such that $\pi_1(\gamma-\gamma')=0$. Then $\gamma-\gamma' = \iota_{r-1}(\beta)$ with $\partial_v(\beta)=0$, which implies that $\beta$ is in the image of ${\rm H}^{n+1}(R,\underline{\ZZ}/p^{r-1}(n))$, allowing us to conclude by the commutativity of the diagram.
\end{proof}

With this it is easy to check that the two notions of ramification coming from \Cref{eq:ramification} and Izhboldin's description are equivalent.

\begin{prop}
Let $(R,v)$ be a DVR with fraction field $F$ and let $y$ be the closed point of $\Spec(R)$. An element $\alpha \in {\rm H}^{n+1}_{p^r}(F)$ is unramified at $y$ in the sense of \Cref{lm:unraMerk} if and only if it is unramified at $v_y$ as above.
\end{prop}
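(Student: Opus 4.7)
The plan is to observe that both notions of being ``unramified at $y$'' describe the same subgroup of ${\rm H}^{n+1}_{p^r}(F)$, namely the image of ${\rm H}^{n+1}(R,\underline{\ZZ}/p^r(n))$, so the equivalence follows by assembling the two exact sequences already at our disposal.

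First I would specialize \Cref{eq:local ramification} to the case $X = \Spec(R)$, $s = y$. Since $R$ is a DVR, the only point of codimension one is $y$ itself, so the sequence simplifies to
\[
0 \to \Hcal^{n+1}(R,\underline{\ZZ}/p^r(n)) \to {\rm H}^{n+1}(F,\underline{\ZZ}/p^r(n)) \xrightarrow{\partial_y} {\rm H}^{n+2}_{{\rm ét},y}(\Spec(R),\underline{\ZZ}/p^r(n)).
\]
Here $\partial_y$ is precisely the ramification map appearing in \Cref{lm:unraMerk}. Since $R$ is local, the Zariski sheafification is inert, so $\Hcal^{n+1}(R,\underline{\ZZ}/p^r(n)) = {\rm H}^{n+1}(R,\underline{\ZZ}/p^r(n))$. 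Therefore being unramified at $y$ in Merkurjev's sense, i.e. $\partial_y(\alpha)=0$, is equivalent to $\alpha$ lying in the image of ${\rm H}^{n+1}(R,\underline{\ZZ}/p^r(n)) \to {\rm H}^{n+1}_{p^r}(F)$.

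Next I would invoke \Cref{prop:Hunr}, which identifies this same image with the kernel of the Izhboldin residue map $\partial_{v_y} : {\rm H}^{n+1}_{p^r}(F)_{\rm tm} \to {\rm H}^n_{p^r}(\bfk_v)$. Concretely: any $\alpha$ in the image of ${\rm H}^{n+1}(R,\underline{\ZZ}/p^r(n))$ is automatically tamely ramified at $v_y$ (the inclusion of \Cref{prop:Hunr} factors through ${\rm H}^{n+1}_{p^r}(F)_{\rm tm}$) and has vanishing residue, while conversely any tamely ramified $\alpha$ with $\partial_{v_y}(\alpha)=0$ lifts to ${\rm H}^{n+1}(R,\underline{\ZZ}/p^r(n))$ by the same proposition. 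This is exactly what ``unramified at $v_y$'' means in the notation of Section \ref{sub:CohDVR}.

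Combining the two steps, both conditions characterize the subgroup ${\rm H}^{n+1}(R,\underline{\ZZ}/p^r(n)) \subseteq {\rm H}^{n+1}_{p^r}(F)$, so they coincide. There is no real obstacle beyond checking that the inclusion ${\rm H}^{n+1}(R,-) \hookrightarrow {\rm H}^{n+1}(F,-)$ appearing in \Cref{eq:local ramification} is the same map as the one constructed in \Cref{prop:Hunr}; this is immediate from functoriality of étale cohomology under the open immersion $\Spec(F) \hookrightarrow \Spec(R)$, which is what both constructions rely on.
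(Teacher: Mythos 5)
Your proof is correct and takes essentially the same route as the paper, whose entire argument is that comparing \Cref{prop:DVR} (or, as you more precisely cite, \Cref{prop:Hunr}) with \Cref{eq:local ramification} shows both conditions amount to $\alpha$ lying in the image of ${\rm H}^{n+1}(R,\underline{\ZZ}/p^r(n))$ in ${\rm H}^{n+1}_{p^r}(F)$. You have simply spelled out that comparison in more detail.
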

\begin{proof}
Comparing \Cref{prop:DVR} and \Cref{eq:local ramification} we immediately see that both conditions amount to the element coming from the cohomology of $\Spec(R)$.
\end{proof}

\begin{cor}\label{cor:invariants are tame and unramified}
Let $(R,v)$ be a DVR with a map $\Spec(R) \to \Xcal$. Then given any invariant $\alpha \in \Inv(\Xcal, {\rm H}_{p^r})$ the element $\alpha(\bfk(R))$ is tamely ramified and unramified at $v$.

Given a scheme $X$ smooth over $\bfk$, the map $\alpha \mapsto \alpha(\bfk(X))$ is an isomorphism between the cohomological invariants of $X$ and the subgroup of ${\rm H}^{\bullet}_{p^r}(\bfk(X))_{\rm tm}$ of elements unramified at all $x \in X^{(1)}$.
\end{cor}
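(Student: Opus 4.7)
The plan is to prove the two statements in turn, with the first supplying the tameness assertion needed in the second.

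For the first statement, I would invoke the continuity condition on $\alpha$ applied to the map $\Spec(R) \to \Xcal$, producing a finite étale extension $(R',v')/(R,v)$ with trivial residue extension $\bfk_{v'}=\bfk_v$ and $p_{v'}(\alpha(\bfk(R)))=j_{v'}(\alpha(\bfk_{v'}))$. The key reduction is that, because $R'/R$ is finite étale with trivial residue extension, the Henselization $(R')^h$ coincides with $R^h$; consequently the receiving group $\Theta'(\bfk(R'),v')$ in the $v$-functor structure built just before \Cref{lm:local} is simply ${\rm H}^{n+1}(\bfk(R^h),\underline{\ZZ}/p^r(n))$. Then \Cref{lm:local} translates the continuity condition into the statement that the pullback of $\alpha(\bfk(R))$ to $\bfk(R^h)$ lies in the image of ${\rm H}^{n+1}(R^h,\underline{\ZZ}/p^r(n))$. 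From \Cref{prop:Hunr} applied to $R^h$ this image is precisely the kernel of $\partial_v$ inside the tame subgroup, and \Cref{cor:unramcompl} transports both the tameness and the vanishing of the residue back from the Henselization to $(R,v)$ itself.

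For the second statement I would apply \Cref{prop:SheafHSch} to identify $\Inv(X,{\rm H}^n(-,\underline{\ZZ}/p^r(j)))$ with the Zariski sheafification $\Hcal^n(X,\underline{\ZZ}/p^r(j))$, which by \eqref{eq:ramification} sits inside ${\rm H}^n(\bfk(X),\underline{\ZZ}/p^r(j))$ as the intersection of the kernels of the Merkurjev residues $\partial_x$ for $x \in X^{(1)}$. The proposition immediately preceding this corollary identifies each such kernel with the intersection of the tame subgroup at $v_x$ and the kernel of the Izhboldin residue $\partial_{v_x}$. Combined with the first part of the corollary, which guarantees that any invariant is automatically tame at every $v_x$, this gives the claimed bijection between invariants of $X$ and elements of ${\rm H}^{\bullet}_{p^r}(\bfk(X))_{\rm tm}$ unramified at every codimension one point.

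The main obstacle is the first paragraph: correctly matching the rather abstract continuity condition, phrased in the $v$-functor language of \Cref{sec:inv p}, with the concrete cohomology-of-$R^h$ statement needed downstream. This hinges on the identification $(R')^h=R^h$ and on recalling the precise definitions of $p_{v'}$ and $j_{v'}$ used in constructing the $v$-functor associated to ${\rm H}^{n+1}(-,\underline{\ZZ}/p^r(n))$. Once that translation is in place, every remaining ingredient feeds directly into the conclusion without further work.
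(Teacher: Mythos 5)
Your argument is correct and uses exactly the ingredients the paper intends: the corollary is stated without proof as an immediate consequence of the continuity condition (via \Cref{lm:local} and \Cref{prop:Hunr}, transported through the Henselization by \Cref{cor:unramcompl}) together with \Cref{prop:SheafHSch} and the proposition identifying the two notions of ramification. Your observation that the finite étale extension $(R',v')$ with $\bfk_{v'}=\bfk_v$ has the same Henselization as $(R,v)$ is the right way to reconcile the $v$-functor formulation of continuity with the cohomology-of-$R^{\rm h}$ statement, and the rest of the chain goes through as you describe.
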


\section{Some computations of cohomological invariants}\label{sec:some comp}
In this section we come back to cohomological invariants, and present some explicit computations that will be useful for our study of the invariants of $\Mcal_{1,1}$.

To start, we recall Izhboldin's description of ${\rm H}^{n+1}_{p^r}(\bfk(t))$, which we state here using Totaro's notation:
\begin{thm}[\cite{Izh}*{Theorem 4.5, Theorem 6.10}]\label{thm:tame over P1}
Let $S$ be the set of closed points of $\PP^1_\bfk$. Write $v_y$ for the valuation corresponding to the point $y$, and $\bfk(t)_{v_y}$ for the completion of $\bfk(t)$ at $v_y$. We have an exact sequence:
\[
{\rm H}^{n+1}_{p^r}(\bfk(t)) \rightarrow \bigoplus_{y\in S}  \widetilde{{\rm H}}^{\bullet}_{p^{r}}(\bfk(t)_{v_y}) \to 0.
\]

The kernel of this homomorphism, which we call ${\rm H}^{n+1}_{p^r}(\bfk(t))_{{\rm tm}/\PP^1}$, fits in the following exact sequence
\[
0 \to {\rm H}^{n+1}_{p^r}(\bfk)  \to {\rm H}^{n+1}_{p^r}(\bfk(t))_{{\rm tm}/\PP^1} \to \bigoplus_{S \setminus \infty} {\rm H}^{n+1}_{p^r}(\bfk(y)) \to 0 
\]
\end{thm}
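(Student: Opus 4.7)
Since the statement is attributed to Izhboldin, one can in principle just cite his results, but let me sketch how I would reprove it using the tools already developed in the paper.

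First, for the surjectivity of the wild ramification map ${\rm H}^{n+1}_{p^r}(\bfk(t)) \to \bigoplus_{y \in S} \widetilde{{\rm H}}^{n+1}_{p^r}(\bfk(t)_{v_y})$, I would argue by partial fractions. By \Cref{prop:wild gen}, each $\widetilde{{\rm H}}^{n+1}_{p^r}(\bfk(t)_{v_y})$ is generated by symbols $\left[0,\ldots,0,a_i,\ldots,a_r,b_1,\ldots,b_n\right\rbrace$ with the underlying mod-$p$ symbol $\left[a_i,b_1,\ldots,b_n\right\rbrace$ wildly ramified at $v_y$. Using the explicit description in \Cref{thm:wild pres}, every wild generator at $y$ is represented, modulo tame and lower-filtration terms, by a symbol $\left[\pi_y^{-i}a',b_1',\ldots,b_n'\right\rbrace$ (or a variant involving $\pi_y$) with coefficients lifted from $\bfk(y)$. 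Such expressions make sense globally on $\bfk(t)$ and have zero wild contribution at any closed point $y' \neq y$ since the pole is concentrated at $y$. Summing over finitely many local contributions and iterating \Cref{lm:truncation} to handle the Witt vector length realises any prescribed element of the direct sum.

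For the second exact sequence, the injectivity of ${\rm H}^{n+1}_{p^r}(\bfk) \to {\rm H}^{n+1}_{p^r}(\bfk(t))_{{\rm tm}/\PP^1}$ follows from the retraction $t \mapsto 0$, which induces a splitting on cohomology and shows that the pullback has a section. To define the map to the direct sum, for each finite closed point $y$ I would use the splitting from \Cref{prop:DVR}: on the tame part ${\rm H}^{n+1}_{p^r}(\bfk(t)_{v_y})_{\rm tm} \simeq {\rm H}^{n+1}_{p^r}(\bfk(y)) \oplus {\rm H}^{n}_{p^r}(\bfk(y))$ (relative to the uniformizer $\pi_y$), extract the component valued in ${\rm H}^{n+1}_{p^r}(\bfk(y))$ obtained by subtracting off the pullback of the residue.

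Surjectivity of this map is proven by a transfer-and-multiply construction: given $\beta \in {\rm H}^{n+1}_{p^r}(\bfk(y))$, lift $\beta$ through $\bfk(y) \simeq \bfk[t]/(\pi_y)$ to a symbol in ${\rm H}^{n+1}_{p^r}(\bfk(t))$ whose only nontrivial tame contribution at finite points is at $y$; one checks this using the explicit Izhboldin-style presentation from \Cref{sec:explicit} and the fact that residues at $y' \neq y$ vanish because the lifted symbol has coefficients that are units at $v_{y'}$. Finally, for exactness in the middle, if $\alpha$ is tame at every closed point of $\PP^1$ and has zero image in the direct sum, then $\alpha$ is ``everywhere unramified'' on $\PP^1_\bfk$, so by \Cref{prop:Hunr} (applied to the local rings at each closed point) it corresponds to a global class in ${\rm H}^{n+1}(\PP^1_\bfk, \underline{\ZZ}/p^r(n))$; a direct de Rham--Witt computation using the Gros--Suwa resolution \Cref{eq:Omega} identifies this with ${\rm H}^{n+1}_{p^r}(\bfk)$.

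The main obstacle is the last step --- showing that ``tame and unramified everywhere on $\PP^1$'' implies ``coming from the base field''. Without homotopy invariance, one cannot simply invoke the mod-$\ell$ argument; instead one has to verify it by a direct analysis of the wild filtration at $\infty$ (where the exclusion of $\infty$ from the sum forces a compatibility between the residues at finite points and the behaviour at infinity) combined with the log-differential description of the relevant motivic cohomology. I would expect this to be the technical heart of the argument.
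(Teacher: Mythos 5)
A point of context first: the paper does not prove this statement — it is quoted directly from Izhboldin (\cite{Izh}, Theorems 4.5 and 6.10) — so there is no internal proof to compare yours against, and your sketch has to stand on its own. It does not, for two reasons.

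The outright error is your definition of the second map. You take the splitting ${\rm H}^{n+1}_{p^r}(\bfk(t)_{v_y})_{\rm tm} \simeq {\rm H}^{n+1}_{p^r}(\bfk(y)) \oplus {\rm H}^{n}_{p^r}(\bfk(y))$ of \Cref{prop:DVR} and project onto the \emph{first} (specialization) factor, so as to land in the group named in the statement. With that map the sequence is not even a complex: a nonzero constant class $c \in {\rm H}^{n+1}_{p^r}(\bfk)$ (take $n=0$ and $c=[1\}$ over $\FF_p$, say) specializes at a rational point $y$ to $c$ itself, so the image of ${\rm H}^{n+1}_{p^r}(\bfk)$ is not contained in the kernel and exactness in the middle fails on the nose. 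The map that makes the sequence work is the residue $\partial_{v_y}$, i.e.\ the \emph{second} factor of the splitting, which lands in ${\rm H}^{n}_{p^r}(\bfk(y))$ — one degree lower than the displayed target. That degree shift is forced by \Cref{prop:DVR} and \Cref{eq:KA1}, and it is how the theorem is used everywhere downstream (the ${\rm H}^{\bullet-1}_{p^r}(\bfk)$ summand in \Cref{cor:inv A1}, the residue computations ``$12\beta$'', ``$6\delta$'' in \Cref{sec:inv M11}). You have engineered a map to match the literal target group instead of checking that the resulting sequence can possibly be exact; the target should be read as ${\rm H}^{\bullet-1}_{p^r}(\bfk(y))$ and the map as $\partial_{v_y}$.

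The hole is the one you name yourself: exactness in the middle, i.e.\ that a class tame and unramified at every place of $\PP^1$ is constant. You defer this as ``the technical heart,'' and it is — the identification of the everywhere-unramified subgroup with the generic image of ${\rm H}^{n+1}(\PP^1,\underline{\ZZ}/p^r(n))$, and of the latter (modulo the $c_1(\OO(1))$-contribution, which dies at the generic point) with ${\rm H}^{n+1}_{p^r}(\bfk)$, is precisely where the absence of homotopy invariance must be confronted, and none of it is carried out. The partial-fraction argument for surjectivity onto the wild parts is the right idea (and is essentially Izhboldin's), but even there the tameness of your local correction terms at $\infty$ and at the other finite places is asserted rather than verified. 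As written, the proposal is a plan with a broken middle map, not a proof.
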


\begin{rmk}
Note that by \cite{Totp}*{Theorem 4.3} given a discrete valuation field $(F,v)$ we have $\widetilde{{\rm H}}^{\bullet}_{p}(F)=\widetilde{{\rm H}}^{\bullet}_{p}(F_v)$, where $F_v$ is the completion of $F$ at $v$, simplifying the formula.
\end{rmk}

As an immediate corollary we get the cohomological invariants of $\bA^1$ and $\bA^1\smallsetminus \lbrace 0 \rbrace$

\begin{cor}\label{cor:inv A1}
We have 
\begin{gather*}
0 \to {\rm H}^{\bullet}_{p^r}(\bfk) \to \Inv(\bA^1,{\rm H}_{p^r}) \to \widetilde{{\rm H}}^{\bullet}_{p^r}(\bfk((1/t))) \to 0 \\[2ex]
0 \to {\rm H}^{\bullet}_{p^r}(\bfk)\oplus {\rm H}^{\bullet-1}_{p^r}(\bfk) \to \Inv(\bA^1\smallsetminus\{0\},{\rm H}_{p^r}) \to \widetilde{{\rm H}}^{\bullet}_{p^r}((\bfk(t)))\oplus \widetilde{{\rm H}}^{\bullet}_{p^r}(\bfk((1/t))) \to 0
\end{gather*}

Here the left hand map sends a pair $[a,b_1,\ldots,b_n\},[a',b'_2,\ldots,b'_n\}$ of degree $n+1$ to the element $[a,b_1,\ldots,b_n\}+[a',t,b'_2\ldots,b'_n\}$, and its image is contained in ${\rm H}^{\bullet}_{p^r}(k(t))_{{\rm tm}/\PP^1}$. The component on the right comes from the wild part of ${\rm H}^{\bullet}_{p^r}(k(t))$.
\end{cor}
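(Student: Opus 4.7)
The plan is to combine Corollary~\ref{cor:invariants are tame and unramified}, which identifies $\Inv(X,{\rm H}_{p^r})$ for a smooth scheme $X$ with the subgroup of ${\rm H}^{\bullet}_{p^r}(\bfk(X))$ consisting of elements tamely ramified and with vanishing residue at every $x\in X^{(1)}$, with the two exact sequences of Theorem~\ref{thm:tame over P1}. Note that by \cite{Totp}*{Theorem 4.3} the wild quotient $\widetilde{{\rm H}}^{\bullet}_{p^r}(\bfk(t)_{v_y})$ only depends on the completion of $\bfk(t)$ at $v_y$, so for $y=\infty$ and $y=0$ it is respectively $\widetilde{{\rm H}}^{\bullet}_{p^r}(\bfk((1/t)))$ and $\widetilde{{\rm H}}^{\bullet}_{p^r}(\bfk((t)))$.

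For the first sequence, Corollary~\ref{cor:invariants are tame and unramified} identifies $\Inv(\bA^1,{\rm H}_{p^r})$ with the subgroup $H\subset{\rm H}^{\bullet}_{p^r}(\bfk(t))$ of elements tame and unramified at every $y\in\bA^1$; there is no condition at $\infty$. Composing with the projection to the wild part at $\infty$ from the first exact sequence of Theorem~\ref{thm:tame over P1} produces the map $H\to \widetilde{{\rm H}}^{\bullet}_{p^r}(\bfk((1/t)))$. An element of its kernel is now tame at every $y\in\PP^1$, so it belongs to ${\rm H}^{\bullet}_{p^r}(\bfk(t))_{{\rm tm}/\PP^1}$, and it has vanishing residue at every finite $y$; by the second exact sequence of Theorem~\ref{thm:tame over P1} it must come from ${\rm H}^{\bullet}_{p^r}(\bfk)$. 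Surjectivity is a two-step correction: lift $\beta\in \widetilde{{\rm H}}^{\bullet}_{p^r}(\bfk((1/t)))$ via the first exact sequence of Theorem~\ref{thm:tame over P1} to some $\alpha_0\in {\rm H}^{\bullet}_{p^r}(\bfk(t))$ with wild part $\beta$ at $\infty$ and trivial wild part at every other $y\in\PP^1$; the resulting residues $(r_y)_{y\in\bA^1}$ of $\alpha_0$ can then be matched by some $\gamma\in {\rm H}^{\bullet}_{p^r}(\bfk(t))_{{\rm tm}/\PP^1}$ via the surjectivity in the second exact sequence of Theorem~\ref{thm:tame over P1}, and $\alpha_0-\gamma\in H$ still maps to $\beta$ because $\gamma$ is tame at $\infty$.

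The second sequence is proved by exactly the same strategy applied to $X=\bA^1\smallsetminus\{0\}$: the right-hand map targets the direct sum of the wild quotients at $0$ and at $\infty$, and its kernel is the subgroup of ${\rm H}^{\bullet}_{p^r}(\bfk(t))_{{\rm tm}/\PP^1}$ with vanishing residue at every $y\in \bA^1\smallsetminus\{0\}$. By the second exact sequence of Theorem~\ref{thm:tame over P1} combined with the split residue exact sequence at $v_0$ of Proposition~\ref{prop:DVR}, this kernel is ${\rm H}^{\bullet}_{p^r}(\bfk)\oplus {\rm H}^{\bullet-1}_{p^r}(\bfk)$. The explicit splitting stated in the corollary is then read off Proposition~\ref{prop:DVR}: the class $[a',b'_2,\ldots,b'_n\}$ lifts to $[a',t,b'_2,\ldots,b'_n\}$, which has residue exactly $[a',b'_2,\ldots,b'_n\}$ at $0$ and, being a symbol of units at every other finite point, is unramified there; the class $[a,b_1,\ldots,b_n\}$ lifted from ${\rm H}^{\bullet}_{p^r}(\bfk)$ is a symbol of constants and therefore has no residues at any finite $y$.

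The main, and essentially only nonroutine, ingredient is Izhboldin's Theorem~\ref{thm:tame over P1}; once that is in hand and once invariants have been reinterpreted via Corollary~\ref{cor:invariants are tame and unramified}, the rest is a diagram chase between the two sequences of that theorem and the split residue sequence of Proposition~\ref{prop:DVR}.
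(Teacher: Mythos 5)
Your argument is correct and is exactly the deduction the paper intends: the paper states this as an ``immediate corollary'' of \Cref{thm:tame over P1} without further proof, and your expansion—identifying $\Inv(X,{\rm H}_{p^r})$ with the tame, everywhere-unramified classes via \Cref{cor:invariants are tame and unramified}, then chasing the two sequences of \Cref{thm:tame over P1} together with the explicit section $[a',\underline{b}'\}\mapsto[a',t,\underline{b}'\}$—is the standard way to make it precise. No gaps.
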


The cohomological invariants of ${\Brm}\ZZ/p$ with coefficients in ${\rm H}_p$ were computed by Totaro \cite{Totp}*{Proposition 8.1}. Here we give a different proof which we think has independent interest, and then we'll extend the description to coefficients in ${\rm H}_{p^r}$.

\begin{thm}[(Totaro)]\label{ZpTot}
We have
\[
\Inv({\Brm}\ZZ/p,{\rm H}_{p}) = {\rm H}^{\bullet}_{p}(\bfk) \oplus \K^{\bullet-1}_{p}(\bfk)\cdot\alpha
\]
where $\alpha \in {\rm H}^1({\Brm}\ZZ/p,\ZZ/p)$ is the identity element, i.e. given a map $\Spec(F) \to {\Brm}\ZZ/p$ induced by a torsor $\pi:E\to \Spec(F)$, we have $\alpha(F)= \pi \in {H}^1(F,\ZZ/p)$
\end{thm}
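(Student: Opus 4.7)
First, I would realize ${\Bcal}\ZZ/p$ as the quotient stack $[\bA^1/\Ga]$ coming from the Artin--Schreier exact sequence $0 \to \ZZ/p \to \Ga \xrightarrow{\mathcal{P}} \Ga \to 0$, with $\Ga$ acting on $\bA^1$ by $b \cdot a = a + \mathcal{P}(b)$. Since $\Ga$ is special, $\pi \colon \bA^1 \to {\Bcal}\ZZ/p$ is a smooth-Nisnevich cover with fibre product $\bA^1 \times_{{\Bcal}\ZZ/p} \bA^1 = \bA^1 \times \Ga$, so the sheaf property \Cref{thm:sheaf} combined with the generic-point argument of \Cref{lm:injreg} gives
\[
\Inv({\Bcal}\ZZ/p, {\rm H}_p) \;=\; \{\beta_0 \in \Inv(\bA^1, {\rm H}_p) : \beta_0(t) = \beta_0(t + \mathcal{P}(b)) \text{ in } {\rm H}^\bullet_p(\bfk(t,b))\}.
\]
By \Cref{cor:inv A1}, $\Inv(\bA^1, {\rm H}_p) = {\rm H}^\bullet_p(\bfk) \oplus \widetilde{{\rm H}}^\bullet_p(\bfk((1/t)))$ splits into a constant and a wild-at-infinity part.

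The forward inclusion is immediate. Constants in ${\rm H}^\bullet_p(\bfk)$ are trivially $\Ga$-invariant; for $\alpha \cup \{b_1,\ldots,b_{n-1}\}$ with $b_i \in \bfk^*$, the pullback to $\bA^1$ has generic value $[t, b_1, \ldots, b_{n-1}\rbrace$, which under $t \mapsto t + \mathcal{P}(b)$ becomes $[t, \underline{b}\rbrace + [\mathcal{P}(b), \underline{b}\rbrace$, the second term vanishing by the Artin--Schreier relation $[c^p - c, \underline{b}\rbrace = 0$ in ${\rm H}^\bullet_p$. Linear independence is clear: the $\alpha$-part dies on the trivial torsor while the constants do not, and distinct $\underline{b} \in \K^{\bullet-1}_p(\bfk)$ give distinct symbols $[t, \underline{b}\rbrace$ at the generic point of $\bA^1$.

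For the reverse inclusion, take $\beta_0 \in \Inv(\bA^1, {\rm H}_p)$ satisfying the $\Ga$-invariance condition and subtract off the constant part. Using \Cref{lm:simple form} write the wild component as $\sum_{i \geq 1} t^i \varphi_i + \sum_{p \mid i} t^i \frac{dt}{t} \wedge \varphi'_i$. I will show (a) $\varphi_i = 0$ for $i \geq 2$ and $\varphi'_i = 0$ for all $p \mid i$, and (b) $\varphi_1 \in \Omega^{\bullet-1}_{\bfk, \log} = \K^{\bullet-1}_p(\bfk)$; together these give that $\beta_0 = t \varphi_1$ coincides with the pullback of $\alpha \cup \varphi_1$, completing the proof. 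For (a), expanding $\beta_0(t + \mathcal{P}(b)) - \beta_0(t) = \sum_{i, j \geq 1} \binom{i}{j} t^{i-j} \mathcal{P}(b)^j \varphi_i + (\text{analogous } dt/t\text{-terms})$, the characteristic-$p$ identities $(t+u)^{p^k} = t^{p^k} + u^{p^k}$ and $\mathcal{P}(b)^{p^k} = \mathcal{P}(b^{p^k})$ show the top contribution in $t$-degree from the highest surviving $\varphi_i$ is $m \cdot t^{i - p^{v_p(i)}} \mathcal{P}(b)^{p^{v_p(i)}} \varphi_i$ with $m \not\equiv 0 \pmod{p}$. Passing to the corresponding graded quotient ${\rm U}_\bullet/{\rm U}_{\bullet - 1}$ of the Izhboldin filtration on $\widetilde{{\rm H}}^\bullet_p(\bfk(b)((1/t)))$ via \Cref{thm:wild pres}, using that $\mathcal{P}(b) \in \bfk(b)^*$ together with the injectivity of $db \wedge - \colon \Omega^{\bullet-1}_\bfk \hookrightarrow \Omega^\bullet_{\bfk(b)}$, forces $\varphi_i = 0$; an analogous argument on the $dt/t$-terms handles $\varphi'_i$.

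The hardest step is (b): the condition $\mathcal{P}(b) \varphi_1 = 0$ in ${\rm H}^\bullet_p(\bfk(b))$ must force $\varphi_1 \in \Omega^{\bullet-1}_{\bfk, \log}$. The subtle point is that multiplication by scalars in $\bfk$ is not well-defined on ${\rm H}^\bullet_p$, so the Artin--Schreier relation $\mathcal{P}(a)\omega = 0$ (valid only for $\omega$ a log form of coefficient one) cannot be applied termwise to a general decomposition $\varphi_1 = \sum a_j \omega_j$. One must instead use the explicit presentation \Cref{eq:omega to H} of ${\rm H}^{\bullet}_p$ as $\Omega^{\bullet-1}/(d\Omega^{\bullet-2} + \mathcal{P}(\Omega^{\bullet-1}))$ together with a detailed filtration analysis of $\mathcal{P}(b) \varphi_1 = b^p \varphi_1 - b \varphi_1$ at $v_\infty$ on $\bfk(b)$ via \Cref{thm:wild pres}. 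Using the inverse Cartier identity $(b a_j)^p \omega_j = b a_j \omega_j$ in ${\rm H}^{\bullet}_p(\bfk(b))$ to rewrite $b^p \varphi_1$, the condition reduces to $\sum_j a_j \mathcal{P}(b) \omega_j = 0$; since $a_j \mathcal{P}(b) = \mathcal{P}(a_j b)$ holds precisely when $a_j^p = a_j$, i.e.\ $a_j \in \FF_p$, examining the contribution of each term to the graded pieces of the Izhboldin filtration (where nonzero contributions for $a_j \notin \FF_p$ cannot cancel one another across distinct log symbols $\omega_j$) forces every $a_j$ to lie in $\FF_p$, giving $\varphi_1 \in \Omega^{\bullet-1}_{\bfk, \log}$ as required.
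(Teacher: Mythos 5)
Your overall strategy is the paper's: descend along the Artin--Schreier cover $\bA^1\to\Bcal\ZZ/p$, reduce to a $\Ga$-invariance condition on $\Inv(\bA^1,{\rm H}_p)$, and kill the wild part using the Izhboldin filtration; the forward inclusion and linear independence are fine. In step (a), however, your claim that the top $t$-degree contribution to $\beta_0(t+\mathcal{P}(b))-\beta_0(t)$ comes from the highest surviving $\varphi_i$ is not justified: the surviving degree $i-p^{v_p(i)}$ is not monotone in $i$ (for $i=p$ it is $0$, while $\varphi_{p-1}$ contributes degree $p-2$), so contributions from different $\varphi_i$ interleave. The paper avoids this by restricting the difference to the fibre $t=0$, where the condition becomes $\sum_i(s^p-s)^i\varphi_i-s(s^p-s)^{i-1}\frac{{\rm d}s}{s}\wedge\varphi'_i=0$ in ${\rm H}^{n+1}_p(\bfk(s))$ and the leading pole orders at $s=\infty$ are strictly increasing in $i$; even then one must separately treat the degenerate case where the top graded piece dies because $\varphi_i$ is closed (via \Cref{cor:lifting}), which your sketch does not address.

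The genuine gap is step (b). You attempt to show that $\mathcal{P}(b)\varphi_1=0$ forces every coefficient $a_j$ in a decomposition $\varphi_1=\sum_j a_j\omega_j$ to lie in $\FF_p$, on the grounds that nonzero contributions for $a_j\notin\FF_p$ cannot cancel across distinct log symbols. That no-cancellation principle is false: the symbol presentation of $\Omega^n_\bfk$ has many relations and decompositions are wildly non-unique (already $x\,\frac{{\rm d}x}{x}=(x+1)\frac{{\rm d}(x+1)}{x+1}$ in $\Omega^1$), so no statement about individual coefficients of an arbitrary decomposition can hold. Moreover, your use of $(ba_j)^p\omega_j=ba_j\omega_j$ to rewrite $b^p\varphi_1$ only applies when $\varphi_1=\sum_j a_j^p\omega_j$, i.e.\ after you know $\varphi_1$ is closed and have applied Cartier's theorem --- a step you omit (if $\varphi_1$ is not closed, $(b^p-b)\varphi_1$ is already nonzero in ${\rm U}_p/{\rm U}_{p-1}$ and you are done). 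The correct argument, as in the paper, is: write $\varphi_1=\sum_j a_j^p\psi_j+\varphi'$ with $\psi_j$ logarithmic and $\varphi'$ exact by Cartier, reduce $(b^p-b)\varphi_1=0$ to the vanishing of $b\cdot\bigl(\sum_j(a_j-a_j^p)\psi_j+\varphi'\bigr)$, use ${\rm U}_1/{\rm U}_0\simeq\Omega^n_\bfk$ to conclude that $\mathcal{P}\bigl(\sum_j a_j\psi_j\bigr)$ is exact, hence by \Cref{eq:omega to H} that the \emph{sum} $\sum_j a_j\psi_j$ is logarithmic, and finally transport this to $\varphi_1=\sum_j a_j^p\psi_j$ using that Frobenius commutes with $\mathcal{P}$. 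Nothing forces individual $a_j$ into $\FF_p$, and nothing of the sort is needed.
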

\begin{proof}

Consider the map $\Ga \to \Ga$ given by $t \mapsto t^p - t$. It induces an additive action of $\Ga$ on $\bA^1$ which is transitive and has stabilizers equal to $\ZZ/p$, so $\left[ \bA^1/\Ga\right] = {\Brm}\ZZ/p$. We have 
\[\bA^1 \times_{{\Brm}\ZZ/p} \bA^1 = \bA^1 \times  \Ga = \Spec(\bfk\left[t,s\right])\]
where the two projections are the first projection ${\rm pr}_1$ and multiplication map ${\rm m}$.

The map $\bA^1 \to {\Brm}\ZZ/p$ is smooth-Nisnevich, so we can conclude that the cohomological invariants of ${\Brm}\ZZ/p$ are the invariants of $\bA^1$ such that ${\rm pr}_1^*\alpha = {\rm m}^*\alpha$. 

We claim that the subgroup of elements that descend to invariants of ${\Brm}\ZZ/p$ is generated by elements coming from the base field and elements in the form $t\frac{{\rm d}b_1}{b_1}\wedge\ldots\frac{{\rm d}b_n}{b_n}$. The elements coming from the base field form a direct summand in $\Inv(\bA^1,{\rm H}_p)$, and they trivially glue, so let $\alpha$ be an element of ${\rm Inv}^n(\bA^1,{\rm H}_p)$ that restricts to zero at $t=0$. These elements form a subgroup isomorphic to ${\rm Inv}^n(\bA^1,{\rm H}_p)/{\rm H}^{n+1}_p(\bfk)$.

By \Cref{cor:inv A1} and \Cref{lm:simple form} we can write such an element as

\begin{equation}\label{eq:simple form alpha}
\alpha = \sum_i t^i\varphi_i + t^i\frac{{\rm d}t}{t}\wedge\varphi'_i, \quad \varphi_i \in \Omega^n_k,\varphi'_i\in\Omega^{n-1}_\bfk.
\end{equation}
where $\varphi'_i=0$ if $p\nmid i$.
We have
\begin{align*}
    {\rm pr}_1^*(t^i\varphi_i)=t^i\varphi_i&, \quad {\rm pr}_1^*(t^i\frac{{\rm d}t}{t}\wedge \varphi'_i)=t^i\frac{{\rm d}t}{t}\wedge \varphi'_i,\\
    {\rm m}^*(t^i\varphi_i)=(t+s^p-s)^i\varphi_i&, \quad {\rm m}^*(t^i\frac{{\rm d}t}{t}\wedge \varphi'_i)=(t+s^p-s)^i\frac{{\rm d}(t+s^p-s)}{t+s^p-s}\wedge \varphi'_i.
\end{align*}
We first show that for an element to descend we must have $i=1$.  Observe that if $\alpha$ glues then the pullback of ${\rm pr}_1^*\alpha-{\rm m}^*\alpha$ to the copy of $\Ga$ given by $t=0$ must be zero, that is

\[
\alpha' = \sum_i (s^p-s)^i (\varphi_i) - s(s^p-s)^{i-1} \frac{{\rm d}s}{s}\wedge\varphi'_i = 0
\]

Pick the highest $i$ such that $\alpha\in {\rm U}_i/{\rm U}_{i-1}$, which we can assume to be the highest $i$ appearing in equation \eqref{eq:simple form alpha}. 
If $i$ is divisible by $p$ we can assume that at least one among $\varphi_i$ and $\varphi'_i$ is not closed, otherwise we could apply \Cref{lm:simple form} to replace the term $t^{i}(\varphi_i+ \frac{{\rm d}t}{t}\wedge \varphi'_i)$ with something of the form $t^{i/p}\varphi''_i$, and this would imply that $\alpha\in {\rm U}_{i-1}$. 

If $\varphi_i$ is not closed, then the element $s^{pi}\varphi_i$ is the highest degree term in the sum and by \Cref{thm:wild pres} this implies that the equivalence class of $\alpha'$ in ${\rm U}_{pi}/{\rm U}_{pi-1}$ is non-zero.

If $\varphi_i$ is closed, we can assume that it is actually zero: indeed, as before, we can apply \Cref{lm:simple form} to rewrite $\alpha$ in such a way that the highest power of $t$ appears in the term $t^i\frac{{\rm d}t}{t}\wedge\varphi'_i$ with $\varphi'_i$ not closed. Therefore the element $\alpha'$ is equivalent to $-s^{p(i-1)+1} \frac{{\rm d}s}{s}\wedge\varphi'_i$ in ${\rm U}_{i}/{\rm U}_{i-1}$. Observe that in ${\rm H}^{n+1}_p(k(s))$ we have
\begin{align*}
    0 = {\rm d}(s^{p(i-1)+1}\varphi'_i) = s^{p(i-1)+1}{\rm d}(\varphi'_i) + (p(i-1)+1)s^{p(i-1)}ds\wedge\varphi'_i
\end{align*} 
which immediately implies
\[ -s^{p(i-1)+1} \frac{{\rm d}s}{s}\wedge\varphi'_i =  (p(i-1)+1)^{-1} s^{p(i-1)+1}{\rm d}(\varphi'_i) \]
and the term on the right is non-zero in ${\rm U}_{p(i-1)+1}/{\rm U}_{p(i-1)}$: this follows from \Cref{thm:wild pres}, as ${\rm d}\varphi'_i$ is by hypothesis non-zero in $\Omega_\bfk^{n}$, and $p(i-1)+1$ is not divisible by $p$.

If $i>1$ is not divisible by $p$ (hence $\varphi'_i=0$) we consider two cases: if $\varphi_i$ is not closed we can conclude exactly as above. If $\varphi_i$ is closed then the term $s^{pi}(\varphi_i)$ is equivalent to some element of degree $i$, and the next highest degree term is $-{i}s^{(i-1)p+1}(\varphi_i)$, which is nonzero. As $i$ is not divisible by $p$ we have that $-{i}s^{(i-1)p+1}(\varphi_i)$ is nonzero in ${\rm U}_{(i-1)p+1}/{\rm U}_{(i-1)p}$.

We are left with the case $i=1$. In this case ${\rm pr}_1^*\alpha-{\rm m}^*\alpha = (s^p-s)(\varphi) $. If $\varphi$ is not closed then clearly this element cannot be zero. Now assume $\varphi$ is closed: by Cartier's theorem we can write $\varphi = (\sum_j a_j^p(\psi_j)) + \varphi' $ where the forms $\psi_j$ are logarithmic and $\varphi'$ is exact. Then $(s^p-s)(\varphi)=s(\sum_j (a_j-a_j^p)\psi_j) + \varphi')$. If the element $(\sum_j (a_j-a_j^p)\psi_j)+ \varphi'$ is equal to zero then in particular it is equal to zero in $\Omega^{n}_\bfk/{\rm d}\Omega^{n-1}_\bfk$. But this implies that $\sum_j (a_j-a_j^p)\psi_j = \mathcal{P}(\sum_j -a_j\psi_j)=0$, which implies $\sum_j -a_j\psi_j$ is logarithmic, and in particular $\sum_j (a_j-a_j^p)\psi_j = \mathcal{P}(\sum_j -a_j\psi_j)=0$ in $\Omega^n_\bfk$ as well. Then $\varphi' = 0$ necessarily.

Finally, note that if $\mathcal{P}(\sum_j a_j\psi_j)$ is zero than the same must be true for $\sum_j a_j^p\psi_j$: let $\mathcal{F}$ be the additive morphism defined $a\varphi \mapsto a^p\varphi$. Then $\mathcal{P}$ and $\mathcal{F}$ commute, so 
\[\mathcal{P}(\sum_j a_j^p\psi_j)=\mathcal{P}(\mathcal{F}(\sum_j a_j\psi_j))=\mathcal{F}(\mathcal{P}(\sum_j a_j\psi_j))=0
\]
showing that if $t(\varphi)$ glues we must have $\varphi \in \Omega_{\bfk, {\rm log}}^n=\K^n_p(\bfk)$, as claimed.

This shows that 
\[\Inv({\Brm}\ZZ/p,{\rm H}_p)\simeq {\rm H}^{\bullet}_{p}(\bfk) \oplus \K^{\bullet-1}_{p}(\bfk)\cdot t
\]
and we only have to identify the element $t$ with the pullback of $\alpha \in {\rm H}^1({\Brm}\ZZ/p, \ZZ/p)$. To do this, consider the $\ZZ/p$-torsor $E \to \bA^1$ induced by taking a root of $x^p-x-t$. It's immediate to check that the induced map $\bA^1 \to {\Brm}\ZZ/p$ is the quotient we considered above. Now, the pullback of $\alpha$ to the base of a torsor $E \to \Spec(R)$ defined by adding a root of $x^p-x-r$ is exactly the class $\left[ r \right] \in {\rm H}^1(\Spec(R),\ZZ/p)$, allowing us to conclude.
\end{proof}

\begin{cor}
We have
\[ 
\Inv({\Brm}\ZZ/p, {\rm H}_{p^r})= {\rm H}^{\bullet}_{p^r}(k) \oplus \alpha \cdot \K^{\bullet-1}_{p}(\bfk)
\]
where we identify $\K^{\bullet}_{p} = \K^{\bullet}_{p^r}(k)/p\K^{\bullet}_{p^r}(k)$.
\end{cor}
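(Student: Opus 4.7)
The argument proceeds by induction on $r$, the base case $r=1$ being \Cref{ZpTot}. The key tool is \Cref{lm:truncation}, which supplies the short exact sequence of coefficient functors
\[
0\to {\rm H}^\bullet_{p^{r-1}}\xrightarrow{\iota_{r-1}}{\rm H}^\bullet_{p^r}\xrightarrow{\pi_{r-1}}{\rm H}^\bullet_p\to 0.
\]
Since $\iota_{r-1}$ and $\pi_{r-1}$ respect the relevant $v$-functor structures (they commute with specialization, Henselization, and the residue formulas of \Cref{prop:DVR}), one obtains a left-exact sequence on invariants
\[
0\to\Inv({\Bcal}\ZZ/p,{\rm H}_{p^{r-1}})\xrightarrow{\iota_{r-1}}\Inv({\Bcal}\ZZ/p,{\rm H}_{p^r})\xrightarrow{\pi_{r-1}}\Inv({\Bcal}\ZZ/p,{\rm H}_p),
\]
whose outer terms are computed by the inductive hypothesis and \Cref{ZpTot}.

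The crux is to show that the image of $\pi_{r-1}$ lies in the constant subgroup ${\rm H}^\bullet_p(\bfk)\subset\Inv({\Bcal}\ZZ/p,{\rm H}_p)$ and contains no piece of $\alpha\K^{\bullet-1}_p(\bfk)$. The containment ${\rm H}^\bullet_p(\bfk)\subseteq \operatorname{Im}(\pi_{r-1})$ is immediate: a symbol $[a,\vec b\}$ lifts to $[a,0,\ldots,0,\vec b\}\in {\rm H}^\bullet_{p^r}(\bfk)$. For the reverse direction, given $\beta$, first apply $\pi_1$ to land in $\Inv({\Bcal}\ZZ/p,{\rm H}_{p^{r-1}})$ and use induction to write $\pi_1\beta=c'+\alpha\gamma''$; then for $r\geq 3$ apply $\pi_{r-2}$ and exploit the identity $\pi_{r-2}\iota_1=0$ (valid since $r-2\geq 1$) to conclude that $\pi_{r-1}\beta=\pi_{r-2}\pi_1\beta=\pi_{r-2}c'$ lies in ${\rm H}^\bullet_p(\bfk)$ and has no $\alpha$-component. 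The remaining case $r=2$ must be handled differently: pull $\beta$ back to the smooth-Nisnevich cover $\bA^1\to{\Bcal}\ZZ/p$ coming from the $\Ga$-action $t\mapsto t^p-t$; if $\pi_1\beta_{\bA^1}=[t,\vec{b'}\}$ with $\gamma=\{\vec{b'}\}\neq 0$, then this class is wildly ramified at $v_\infty$ by \Cref{thm:wild pres}, and whenever \Cref{prop:wild gen} affords a decomposition $\beta_{\bA^1}=\iota_1\beta'+\beta_{\rm tm}$ with $\beta_{\rm tm}$ tame at $\infty$, the relations $\pi_1\iota_1=0$ and the preservation of tameness under $\pi_1$ give $\pi_1\beta_{\bA^1}=\pi_1\beta_{\rm tm}$, tame, a contradiction; when the minimum index in \Cref{prop:wild gen} equals $r=2$, one analyzes the gluing defect ${\rm m}^*\beta_{\bA^1}-{\rm pr}_1^*\beta_{\bA^1}$ by explicit Witt-vector arithmetic, using the Frobenius identity $[(a^p,b^p),\vec c\}=[(a,b),\vec c\}$ and the linearity of the symbol in its Witt-vector argument to reduce the defect to an element of the form $\iota_1[f(t,s),\vec{b'}\}$, whose vanishing would force $f$ to lie in the Artin--Schreier subgroup $\mathcal{P}(\bfk(t,s))$, which a degree count rules out.

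Once the image of $\pi_{r-1}$ is identified with ${\rm H}^\bullet_p(\bfk)$, surjectivity of the map ${\rm H}^\bullet_{p^r}(\bfk)\oplus\alpha\K^{\bullet-1}_p(\bfk)\to\Inv({\Bcal}\ZZ/p,{\rm H}_{p^r})$ follows: lift $\pi_{r-1}\beta$ to a constant $\tilde c_0\in {\rm H}^\bullet_{p^r}(\bfk)$, so that $\beta-\tilde c_0\in \iota_{r-1}\Inv({\Bcal}\ZZ/p,{\rm H}_{p^{r-1}})$, and apply the inductive hypothesis noting the identity $\iota_{r-1}\circ\iota_1^{(r-1)}=\iota_1^{(r)}$ which ensures that $\iota_{r-1}$ carries the $\alpha$-generator of ${\rm H}^\bullet_{p^{r-1}}$ to the $\alpha$-generator of ${\rm H}^\bullet_{p^r}$. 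Injectivity is elementary: applying $\pi_{r-1}$ to a relation $c+\alpha\gamma=0$ gives $\pi_{r-1}c=0$ (as $\pi_{r-1}\alpha=0$), so $c$ is $p$-torsion and equals $\iota_1 c'$ by \Cref{cor:pstors}; then $\iota_1(c'+\alpha_0\gamma)=0$, and by injectivity of $\iota_1$ together with \Cref{ZpTot} we conclude $c'=\gamma=0$. The principal obstacle is the $r=2$ Witt-vector computation showing that no lift of $[t,\vec{b'}\}$ can satisfy the gluing condition ${\rm pr}_1^*=\mathrm{m}^*$, as all other steps are formal consequences of the inductive framework.
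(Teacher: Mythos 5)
Your overall strategy --- induction on $r$ via the truncation sequence of \Cref{lm:truncation}, reducing everything to the claim that the image of $\pi_{r-1}$ on invariants is the constant subgroup ${\rm H}^{\bullet}_{p}(\bfk)$ --- is genuinely different from the paper's argument, and it is structurally sound for $r\geq 3$, where the claim does follow formally from the inductive hypothesis via $\pi_{r-2}\circ\iota_1=0$. But this means the entire non-formal content of the corollary is concentrated in your base case $r=2$, i.e.\ in showing that $\alpha\cdot\gamma$ with $\gamma\neq 0$ does not lift along $\pi_1$ to a mod $p^2$ invariant, and there your argument has a genuine gap. Writing $\beta_{\bA^1}=\tilde c+[(t,0),\vec\gamma\}+\iota_1\delta$, the gluing defect is $\iota_1\bigl([f(t,s),\vec\gamma\}+{\rm m}^*\delta-{\rm pr}_1^*\delta\bigr)$ for some $f$ determined by the $W_2$ carry polynomials; its vanishing therefore does not force $[f,\vec\gamma\}=0$, as you assert, but only that $[f,\vec\gamma\}$ equals the gluing defect of some unramified class $\delta\in\Inv(\bA^1,{\rm H}_p)$. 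Ruling this out would require computing $f$ explicitly (the Witt addition formulas, which depend on $p$) and then redoing the filtration analysis of \Cref{thm:wild pres} that occupies the proof of \Cref{ZpTot}; ``a degree count'' does not suffice and is not carried out. (There is also a small slip in your injectivity step: $\ker\pi_{r-1}$ is the $p^{r-1}$-torsion $\operatorname{im}\,\iota_{r-1}$, not the $p$-torsion, though this is easily repaired, e.g.\ by evaluating at the trivial torsor first.)

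The paper avoids all of this with a single transfer argument that works uniformly in $r$: any \emph{normalized} invariant $\alpha$ (one vanishing on the trivial torsor) satisfies $p\alpha(F)=\pi_*\pi^*\alpha(F)=0$ for every torsor $E\xrightarrow{\pi}\Spec(F)$, because the torsor trivializes over its own total space; by \Cref{prop:ptors} the $p$-torsion of ${\rm H}^{\bullet}_{p^r}(F)$ is exactly $\iota_1({\rm H}^{\bullet}_{p}(F))$, so normalized mod $p^r$ invariants are identified with normalized mod $p$ invariants, and \Cref{ZpTot} finishes the proof. If you want to salvage your induction, this transfer observation is precisely what closes the $r=2$ case --- and in fact it renders the induction unnecessary.
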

\begin{proof}
Write $E_{\rm triv}$ for the trivial torsor $\ZZ/p \times \Spec(\bfk) \to \Spec(\bfk)$. An invariant $\alpha \in \Inv({\Brm}\ZZ/p, {\rm H}_{p^r})$ is \emph{normalized} if $\alpha(E_{\rm triv})=0$; given any invariant $\alpha$, the invariant $\alpha - \alpha(E_{\rm triv})$ is normalized, where the second element is seen as a constant invariant, so the group of normalized invariants of $\Brm \ZZ/p$ is equal to $\Inv({\Brm}\ZZ/p, {\rm H}_{p^r})/{\rm H}^{\bullet}_{p^r}(\bfk)$. 

The claim is equivalent to showing that all normalized invariants of ${\Brm}\ZZ/p$ with mod $p^r$ coefficients come from normalized invariants with mod $p$ coefficients. The map $\iota_1:{\rm H}_{p} \to {\rm H}_{p^r}$ induces by composition a map
\[\Inv({\Brm}\ZZ/p, {\rm H}^{\bullet}_{p})/{\rm H}^{\bullet}_{p}(\bfk)\to \Inv({\Brm}\ZZ/p, {\rm H}^{\bullet}_{p^r})/{\rm H}^{\bullet}_{p^r}(\bfk).\] 

Now, observe that if $\alpha$ is a normalized cohomological invariant of ${\Brm}\ZZ/p$ with any coefficients then for any $\ZZ/p$-torsor $E \to \Spec(F)$ the pullback of $\alpha$ to $E$ must be trivial, as $E\times_K E \xrightarrow{\pi} E$ is a trivial torsor, and thus by a standard transfer argument we must have $0=\pi_* \pi^* \alpha(F) = p\alpha(F)$, i.e. $\alpha$ is of $p$-torsion. By \Cref{prop:ptors} we know that the $p$-torsion of ${\rm H}^{\bullet}_{p^r}(F)$ is exactly ${\rm H}^{\bullet}_{p}(F)$: using this identification we get a map 
\[\Inv({\Brm}\ZZ/p, {\rm H}^{\bullet}_{p^r})/{\rm H}^{\bullet}_{p^r}(\bfk)\to \Inv({\Brm}\ZZ/p, {\rm H}^{\bullet}_{p})/{\rm H}^{\bullet}_{p}(\bfk).\]
It's immediate that the two maps are inverse to each other, proving our claim.
\end{proof}

Now we consider the relative case, following the same idea as \cite{Totp}*{Proposition 8.3}.

\begin{cor}\label{cor:trivial action}
Let $X$ be a smooth scheme. Then
\[
\Inv(X \times {\Brm}\ZZ/p,{\rm H}_{p^r}) = \Inv(X,{\rm H}_{p^r}) \oplus \alpha \cdot \Inv(X,\K_{p})
\]
\end{cor}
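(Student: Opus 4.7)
The plan is to mimic the proof of \Cref{ZpTot} in a relative setting, reducing to its absolute version after restriction to the generic point of $X$. The trivial torsor gives a section $s_0\colon X\to X\times\Bcal\ZZ/p$ of the projection $\pi\colon X\times\Bcal\ZZ/p\to X$, yielding a splitting
\[ \Inv(X\times\Bcal\ZZ/p, {\rm H}_{p^r}) = \Inv(X, {\rm H}_{p^r}) \oplus \Inv(X\times\Bcal\ZZ/p, {\rm H}_{p^r})_0, \]
where the second summand consists of normalized invariants (those killed by $s_0^*$). It suffices to identify this second factor with $\alpha\cdot\Inv(X,\K_p)$. The transfer argument used at the end of the previous corollary generalizes verbatim and shows that every normalized invariant is $p$-torsion, so by \Cref{cor:pstors} we may assume $r=1$.

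The second step is to use the same Artin-Schreier smooth-Nisnevich cover $\bA^1\to \Bcal\ZZ/p$ as in the absolute case, which pulls back to $X\times\bA^1\to X\times\Bcal\ZZ/p$. By \Cref{thm:sheaf}, normalized invariants of $X\times\Bcal\ZZ/p$ correspond to $\gamma\in\Inv(X\times\bA^1,{\rm H}_p)$ that vanish at $t=0$ and satisfy the cocycle identity $\pr_1^*\gamma={\rm m}^*\gamma$ on $X\times\bA^1\times\Ga$. By \Cref{cor:invariants are tame and unramified}, $\gamma$ lives inside ${\rm H}_p^{n+1}(\bfk(X)(t))$, and its restriction to the generic fiber is an invariant of $\bA^1_{\bfk(X)}$ satisfying the same cocycle identity over $\bfk(X)$. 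Running the entire gluing analysis in the proof of \Cref{ZpTot} with $\bfk$ replaced by $\bfk(X)$ -- the hypothesis $\bfk_v=\bfk$ in \Cref{lm:simple form} still holds because the residue field of the valuation at infinity on $\bfk(X)(t)$ is exactly $\bfk(X)$ -- forces $\gamma=t\cdot\varphi$ for a unique $\varphi\in\K^n_p(\bfk(X))$.

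Finally, we must impose unramifiedness of $\gamma$ at the codimension-$1$ points of $X\times\bA^1$ lying above $y\in X^{(1)}$. For such a point $v$ the coordinate $t$ is a unit in the local ring, and a direct computation using \Cref{prop:DVR} together with the bilinearity of the pairing $W_r\otimes\K^\bullet_p\to{\rm H}^{\bullet+1}_p$ gives
\[ \partial_v(t\cdot\varphi) = t\cdot\partial_v\varphi \in {\rm H}_p^n(\bfk(y)(t)). \]
A further application of \Cref{ZpTot} over $\bfk(y)$ -- in the form that the wedge component $\alpha\cdot\K_p^{n-1}(\bfk(y))$ is a direct summand -- identifies the right-hand side with zero if and only if $\partial_v\varphi=0$ in $\K_p^{n-1}(\bfk(y))$. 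Thus the unramifiedness condition is precisely $\varphi\in\Inv(X,\K_p)$, and the identification $t\cdot\varphi=\alpha\cdot\varphi$ under the classifying map $\bA^1\to\Bcal\ZZ/p$ is inherited from the absolute case. I expect this final residue computation to be the main technical obstacle: one has to check carefully how elements of the form $[t,b_1,\ldots,b_n\rbrace$ with $t$ a unit interact with the $K$-theory residue map, a point the absolute case does not directly address.
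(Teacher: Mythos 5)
Your proof is correct, but it takes a genuinely different route from the paper's. The paper argues fiberwise: for each $x\colon\Spec(F)\to X$ it restricts $\beta$ to the fiber $\Spec(F)\times\Bcal\ZZ/p$, extracts from \Cref{ZpTot} and its corollary the unique coefficients $a_\beta(x)\in{\rm H}^{\bullet}_{p^r}(F)$ and $b_\beta(x)\in\K^{\bullet}_p(F)$, and then verifies directly that $x\mapsto a_\beta(x)$ and $x\mapsto b_\beta(x)$ are themselves cohomological invariants of $X$ (functoriality from uniqueness; continuity on a Henselian DVR via the cover $\Spec(R[t])$, the key point being that $j_v$ respects cup products). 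That route works uniformly in $r$ and needs no residue computations on $X\times\bA^1$. Your route instead splits off $\Inv(X,{\rm H}_{p^r})$ via the section, reduces to $r=1$ by transfer and \Cref{cor:pstors}, reruns the gluing analysis of \Cref{ZpTot} over the generic point $\bfk(X)$, and then imposes unramifiedness along the vertical divisors $y\times\bA^1$. The residue computation you flag as the main obstacle does go through: writing $\varphi=\lbrace\pi\rbrace\cdot u+w$ with $u,w$ symbols in units of $\mathcal{O}_{X,y}$ and using bilinearity, \Cref{prop:DVR} gives $\partial_v(t\cdot\varphi)=[t,\partial_{v_y}\varphi\rbrace$ in ${\rm H}^{n}_p(\bfk(y)(t))$, and the injectivity of $\psi\mapsto[t,\psi\rbrace$ on $\K^{n-1}_p(\bfk(y))$ follows from \Cref{thm:wild pres}, since at $t=\infty$ this class lies in ${\rm U}_1$ with image $\psi_1(\tfrac{{\rm d}c_2}{c_2}\wedge\ldots\wedge\tfrac{{\rm d}c_n}{c_n})$ in ${\rm U}_1/{\rm U}_0\simeq\Omega^{n-1}_{\bfk(y)}$. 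The trade-off is that the paper's argument is shorter and avoids the reduction to $r=1$, while yours is closer in style to the later $\Mcal_{1,1}$ computations (descent along an explicit cover plus a ramification analysis) and makes the generator $t\cdot\varphi$ completely explicit on the cover.
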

\begin{proof}
Let $\beta$ be a cohomological invariant of $\left[ X/(\ZZ/p)\right]$. Given a point $x:\Spec(F) \to X$, restricting $\beta$ to the fiber of $x$ we get an invariant of ${\Brm}\ZZ/p\times \Spec(F)$. In particular we get unique elements $a_\beta(x) \in {\rm H}^{\bullet}_{p^r}(F), b_\beta(x) \in \K^{\bullet}_p(F)$. We claim that the associations $x \mapsto a_\beta(x), \, x \mapsto b_\beta(x)$ are cohomological invariants of $X$ and that $\beta = a_\beta + b_\beta \cdot \alpha$.

First, it is easy to see that $a_\beta$ is a cohomological invariant of $X$, as it is equal to the pullback of $\beta$ through the map $X \to X \times \Brm \ZZ/p\ZZ$ induced by $\Spec(\bfk) \to \Brm \ZZ/p\ZZ$. Now consider $b_\beta$; we want to show that it is functorial and that it satisfies the continuity condition. Functoriality is an immediate consequence of uniqueness. 

Given an Henselian DVR $(R, v)$ with a map $\Spec(R) \to X$ we get a map $\Spec(R) \times {\Brm}\ZZ/p \to X \times {\Brm}\ZZ/p$. Taking the usual cover $\bA^1 \to {\Brm}\ZZ/p$ we get $\Spec(R) \times \bA^1 = \Spec(R\left[ t\right]) \to X \times {\Brm}\ZZ/p$, and the pullback of $\beta$ to the generic point is $a_\beta(\bfk(R)) + b_\beta(\bfk(R))\cdot \lbrace t\rbrace$. On the other hand, we have $\beta(\bfk_v(t)) = a_\beta(\bfk_v) + b_\beta(\bfk_v)\cdot \lbrace t\rbrace$, and 
\[
j(a_\beta(\bfk_v) + b_\beta(\bfk_v)\cdot \lbrace t\rbrace) = a_\beta(\bfk(R)) + b_\beta(\bfk(R))\cdot \lbrace t \rbrace.
\]
Now, as $j$ is a composition of cohomology pullbacks and their inverses it respects both the group structure and cup product, which implies that 
\[
j(a_\beta(\bfk_v) + b_\beta(\bfk_v)\cdot \lbrace t\rbrace)=j(a_\beta(\bfk)) + j(b_\beta(\bfk_v))\cdot t
\]
which in turn gives us $j(b_\beta(\bfk_v))=b_\beta(\bfk(R))$, showing that $b$ is a cohomological invariant of $X$ as well.

Finally, consider the inclusion of the subgroup 
\[\Inv(X,{\rm H}_{p^r})\oplus \Inv(X, \K_{p}) \cdot \alpha \subseteq \Inv(\left[ X/(\ZZ/p)\right],{\rm H}_{p^r}).\] 
Given an invariant $\beta$ the element $\beta - a_\beta - b_\beta\cdot \alpha$ is by construction zero at all points, concluding our proof.
\end{proof}
As one would expect, the invariants of $X\times{\Brm}\ZZ/q$ with coefficients in ${\rm H}_{p^r}^{\bullet}$, for $q$ a prime different from $p$, are only those coming from $X$.

\begin{lm}\label{lm:invariants Zq}
Let $X$ be a smooth scheme over a field $\bfk$ of characteristic $p$ and let $q\neq p$ be a prime integer. Then the pullback along the projection $X\times \Brm\ZZ/q\to X$ induces an isomorphism
\[ \Inv(X,{\rm H}_{p^r})  \simeq \Inv(X\times\Brm\ZZ/q,{\rm H}_{p^r}).\]
\end{lm}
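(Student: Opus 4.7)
The plan is to adapt the approach of \Cref{cor:trivial action} while exploiting the fact that $q$ is a unit modulo $p^r$. Unlike the $\ZZ/p$ case, no extra $\alpha$-component will appear: a transfer argument will force every invariant of $X \times \mathcal{B}\ZZ/q$ to come from $X$.

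I would first dispatch injectivity, which is immediate. The projection $\pi \colon X \times \mathcal{B}\ZZ/q \to X$ admits a section $s \colon X \to X \times \mathcal{B}\ZZ/q$ given by the trivial $\ZZ/q$-torsor over $X$, so $s^* \circ \pi^* = \operatorname{id}$ on cohomological invariants. For surjectivity, given $\beta \in \Inv(X \times \mathcal{B}\ZZ/q, {\rm H}_{p^r})$, I would define the candidate preimage $a_\beta := s^* \beta \in \Inv(X, {\rm H}_{p^r})$ and set $\gamma := \beta - \pi^* a_\beta$. By construction $\gamma$ vanishes on every point of the form $(x, E_{\rm triv})$; the task reduces to showing that $\gamma$ vanishes on an arbitrary point $(x, E) \colon \Spec(F) \to X \times \mathcal{B}\ZZ/q$.

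The heart of the argument, and the step I expect to carry all the weight, is a transfer computation. Write $E = \Spec(L)$ for an étale $\ZZ/q$-Galois algebra $L/F$. The base change of the torsor $E$ along $\pi_L \colon \Spec(L) \to \Spec(F)$ is the trivial torsor over $L$, since $L \otimes_F L$ splits as a product of copies of $L$ indexed by $\ZZ/q$. Therefore $\pi_L^* \gamma(x, E) = \gamma(x \circ \pi_L, E_{\rm triv}) = 0$ in ${\rm H}^\bullet_{p^r}(L)$. Now I would invoke the transfer morphism $(\pi_L)_*$ for étale cohomology with coefficients in $\underline{\ZZ}/p^r(j)$: since $\pi_L$ is finite étale of constant degree $q$, one has $(\pi_L)_* \pi_L^* = q \cdot \operatorname{id}$, and hence $q \cdot \gamma(x, E) = 0$. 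Because $\gamma(x, E)$ is automatically $p^r$-torsion and $\gcd(q, p^r) = 1$, this forces $\gamma(x, E) = 0$, concluding the proof. The potential worry---the absence of homotopy invariance for mod $p$ coefficients in characteristic $p$---plays no role here, because the transfer identity holds for finite étale maps independently of the characteristic.
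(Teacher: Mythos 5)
Your proposal is correct and follows essentially the same route as the paper: split off $\Inv(X,{\rm H}_{p^r})$ using the section given by the trivial torsor, then kill the complementary piece at an arbitrary field point by pulling back along the degree-$q$ finite étale torsor $E\to\Spec(F)$ (where the torsor trivializes) and applying the transfer identity $(\pi_L)_*\pi_L^*=q$, with $q$ invertible on $p^r$-torsion groups. This is precisely the argument in the paper's proof of \Cref{lm:invariants Zq}.
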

\begin{proof}
We can regard $X\times\Brm\ZZ/q$ as the quotient stack $[X/(\ZZ/q)]$, where $\ZZ/q$ acts trivially on $X$, so that we have a quotient map $\pi:X\to X\times\Brm\ZZ/q$. Observe that the composition $X\to X\times\Brm\ZZ/q \overset{\pr_1}{\to} X$ is equal to the identity. This implies that $\pi^*$ is a section of $\pr_1^*:\Inv(X,{\rm H}_{p^r})\to\Inv(X\times\Brm\ZZ/q,{\rm H}_{p^r})$, hence it induces a decomposition
\[ \Inv(X\times\Brm\ZZ/q,{\rm H}_{p^r}) \simeq \Inv(X,{\rm H}_{p^r}) \oplus \ker(\pi^*). \]
For every field $F\supset \bfk$ and for every morphism $f:\Spec(F)\to X\times\Brm\ZZ/q$ we can form the diagram
\[
\begin{tikzcd}
E \ar[r, "f'"] \ar[d, "\pi'"] & X \ar[d, "\pi"] \\
\Spec(F) \ar[r, "f"] \ar[rd, "g"] & X\times\Brm\ZZ/q \ar[d] \\
& X.
\end{tikzcd}
\]
In particular we have that $E\to\Spec(F)$ is a $\ZZ/q$-torsor.
Let $\beta=\pr_1^*\beta_1+\beta_0$ be an invariant of $X\times\Brm\ZZ/q$, with $\pi^*\beta_0=0$. We claim that $f^*\beta = g^*\beta_1$; this would imply that the pullback morphism $\pr_1^*$ induces an isomorphism of cohomological invariants.
We have
\begin{align*}
    q f^*\beta &= \pi'_*{\pi'}^* f^*\beta = \pi'_*{f'}^*\pi^*(\pr_1^*\beta_1 + \beta_0) \\
    &=\pi'_*{f'}^*(\beta_1) = \pi'_*{\pi'}^*g^*\beta_1 = q g^*\beta_1.
\end{align*}
As $q$ is invertible in ${\rm H}_{p^r}^{\bullet}(F)$, we get the claimed equality.
\end{proof}
\begin{lm}\label{lm:invariants X x BG}
Let $X$ be a smooth scheme over a field $\bfk$ of characteristic $p>0$ and let $G$ be a smooth affine group over $\bfk$. Then $\Inv(X\times\Brm G,\K_{p^r})\simeq \Inv(X,\K_{p^r})$.
\end{lm}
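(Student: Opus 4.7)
The strategy uses an equivariant approximation of $\Bcal G$ together with two properties of $\K_{p^r}$-valued cohomological invariants: the $\bA^1$-homotopy invariance that follows from the Milnor K-theoretic residue sequence \eqref{eq:KA1}, and extension across codimension $\geq 2$ subsets. Crucially, ${\rm H}_{p^r}$-coefficients lack the first property (witness the wild ramification at $\infty$ in \Cref{cor:inv A1}), which is precisely why the analogous statement fails for them.

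First, the trivial-torsor section $\sigma_{\rm triv}\colon X \to X \times \Bcal G$, being a section of $\pr_1$, gives a splitting
\[
\Inv(X \times \Bcal G, \K_{p^r}) = \pr_1^*\Inv(X, \K_{p^r}) \oplus \ker(\sigma_{\rm triv}^*),
\]
so it suffices to show $\ker(\sigma_{\rm triv}^*) = 0$. Pick a representation $V$ of $G$ with an open $U \subset V$ on which $G$ acts freely and with $\codim(V \smallsetminus U, V) \geq 2$ (\cite{EG}*{Lemma 9}), and form the cover $\pi\colon X \times (U/G) \to X \times \Bcal G$ classifying the $G$-torsor $X \times U \to X \times (U/G)$. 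This is smooth-Nisnevich (over infinite fields directly, and in general using Aizenbud--Avni), so by \Cref{thm:sheaf} the pullback $\pi^*$ is injective on cohomological invariants.

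Next, consider the $G$-torsor $\rho\colon X \times U \to X \times (U/G)$. The composition $\pi \circ \rho$ classifies the trivial $G$-torsor $X \times U \times G \to X \times U$ (a torsor becomes trivial once pulled back to its total space), so it factors as $X \times U \xrightarrow{\pr} X \xrightarrow{\sigma_{\rm triv}} X \times \Bcal G$. Therefore for any $\alpha \in \ker(\sigma_{\rm triv}^*)$,
\[
\rho^* \pi^* \alpha = \pr^* \sigma_{\rm triv}^* \alpha = 0 \in \Inv(X \times U, \K_{p^r}).
\]
By \Cref{cor:invariants are tame and unramified}, combined with the fact that $X \times U \subset X \times V$ has the same codimension-one points, one gets $\Inv(X \times U, \K_{p^r}) = \Inv(X \times V, \K_{p^r})$; iterating the exact sequence \eqref{eq:KA1} relativised over $X$ then gives $\Inv(X \times V, \K_{p^r}) = \Inv(X, \K_{p^r})$.

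The remaining step is to prove injectivity of $\rho^*\colon \Inv(X \times U/G, \K_{p^r}) \to \Inv(X \times U, \K_{p^r})$; from this $\pi^* \alpha = 0$ and hence $\alpha = 0$ by injectivity of $\pi^*$. Both groups inject into the Milnor K-theory mod $p^r$ of the corresponding generic function fields by \Cref{cor:invariants are tame and unramified}, so it suffices to show $\K^n_{p^r}(\bfk(X \times U/G)) \hookrightarrow \K^n_{p^r}(\bfk(X \times U))$. As $G$ is $\bfk$-unirational (at least for its connected component), the extension is unirational and the injectivity follows by iterating \eqref{eq:KA1}. The main technical obstacle is handling this injectivity when $G$ has a component group of order divisible by $p$; there one may need to combine unirationality of $G^0$ with a direct analysis of Milnor K-theory mod $p^r$ under the separable $\pi_0(G)$-cover, via a norm-restriction argument on the finite étale piece.
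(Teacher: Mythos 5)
Your proposal takes a genuinely different route from the paper's, and it has a real gap precisely at the step you flag at the end. The splitting via $\sigma_{\rm triv}$ and the factorization $\pi\circ\rho\simeq\sigma_{\rm triv}\circ\pr$ are fine, but the whole argument then rests on injectivity of $\rho^*$, i.e.\ on injectivity of $\K^n_{p^r}(\bfk(X\times U/G))\to \K^n_{p^r}(\bfk(X\times U))$, and this is not established. Unirationality does not obviously give it: the residue sequence \eqref{eq:KA1} only controls purely transcendental extensions, and the generic fibre of $U\to U/G$ is a generally nontrivial torsor, so $\bfk(X\times U)$ is not purely transcendental over $\bfk(X\times U/G)$ unless $G$ is special. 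The norm--restriction idea for the $\pi_0(G)$-part only handles degrees prime to $p$, so it cannot close the very case you single out as problematic. The fact that actually makes any separable extension harmless here is Galois descent for logarithmic de Rham--Witt forms: for any field $F$ of characteristic $p$ the restriction $\K^n_{p^r}(F)=W_r\Omega^n_{F,\log}\hookrightarrow W_r\Omega^n_{F^s,\log}$ is injective. Without invoking something of this kind your proof does not go through.

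That injectivity is in fact the entire content of the paper's proof, which avoids equivariant approximation altogether and argues pointwise: after the same splitting, take $\alpha$ with $\sigma_{\rm triv}^*\alpha=0$ and any $f:\Spec(F)\to X\times\Bcal G$. If the associated $G$-torsor over $F$ is trivial, $f$ factors through $\sigma_{\rm triv}$ and $f^*\alpha=0$; otherwise pass to $F^s$, where the torsor trivializes because $G$ is smooth and affine, so $f^*\alpha$ dies in $\K^n_{p^r}(F^s)$ and hence vanishes by the displayed injectivity. Since an invariant vanishing at every point is zero, $\ker(\sigma_{\rm triv}^*)=0$. You could graft the same descent statement onto your argument to repair it (it gives the injectivity of $\rho^*$ you need), but at that point the approximation, the codimension-two extension, and the homotopy invariance steps buy you nothing over the two-line pointwise argument.
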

\begin{proof}
First, observe that $X\times\Brm G\simeq [X/G]$, where the groups action trivial. Let $\pi:X\to X\times\Brm G$ be the quotient map: the same argument used in the proof of \Cref{lm:invariants Zq} gives us a decomposition
\[ \Inv(X\times\Brm G,\K_{p^r}) \simeq \Inv(X,\K_{p^r}) \oplus \ker(\pi^*). \]
Let $\alpha$ be an invariant in $\ker(\pi^*)$. For every field $F$ with a map $f:\Spec(F)\to X\times\Brm G$, we claim that $f^*\alpha=0$ in $\K^{\bullet}_{p^r}(F)$. If so, this implies that $\ker(\pi^*)=0$ and we are done.

To prove the claim, observe that if the induced $G$-torsor $X_F\to F$ is trivial, then we have a factorization
\[
\begin{tikzcd}
 & X \ar[d, "\pi"] \\
 \Spec(F) \ar[ur, "g"] \ar[r, "f"] & X\times\Brm G,
\end{tikzcd}
\]
hence $f^*\alpha=g^*\pi^*\alpha=0$. If $X_F\to \Spec(F)$ is not trivial, let $F^s$ be the separable closure of $F$ and denote $h:\Spec(F^s)\to\Spec(F)$ the induced morphism. Then $X_{F^s}\to\Spec(F^s)$ is a trivial $G$-torsor because $G$ is smooth and affine, thus $h^*f^*\alpha=0$. To conclude, observe that $h^*$ is injective because it coincides with the injective homomorphism $W_r\Omega^{\bullet}_{F,{\rm log}}\to W_r\Omega^{\bullet}_{F^s,{\rm log}}$.
\end{proof}

\section{Cohomological invariants of $\mathcal{M}_{1,1}$ in positive characteristic}\label{sec:inv M11}
Let $\mathcal{M}_{1,1}$ be the stack of elliptic curves over a base field $\bfk$ of characteristic $p>0$. In this section we compute the cohomological invariants of $\mathcal{M}_{1,1}$ with coefficients in ${\rm H}_{p^r}$ and $\K_{p^r}$. Our main results for coefficients in ${\rm H}_{p^r}$ can be summarized as follows.
\begin{thm}\label{thm:inv M11 H}
\[ \Inv(\mathcal{M}_{1,1},{\rm H}_{p^r})\simeq 
\begin{cases}
\Inv(\bA^1,{\rm H}_{2^r})\oplus {\rm J}_{2^r}^{\bullet-1}(\bfk) & \textnormal{ if }p=2, \\
\Inv(\bA^1,{\rm H}_{3^r})\oplus {\rm H}^{\bullet-1}_{3}(\bfk)\cdot\{\Delta\} & \textnormal{ if }p=3, \\
\Inv(\bA^1,{\rm H}_{p^r}) & \textnormal{ if }p>3.
\end{cases}
\]
\end{thm}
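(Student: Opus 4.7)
The strategy is to present $\Mcal_{1,1}$ as a quotient stack using Weierstrass equations adapted to each characteristic and to combine the sheaf description of Theorem \ref{thm:SheafHStack} with the explicit computations of Section \ref{sec:some comp}. In any characteristic $\Mcal_{1,1} = [W/G]$ with $W \subset \bA^5$ the open locus of nondegenerate Weierstrass data $(a_1,a_2,a_3,a_4,a_6)$ and $G$ the $5$-dimensional change-of-coordinate group. This specializes to $\Mcal_{1,1} \simeq [(\bA^2 \setminus V(\Delta))/\Gm]$ with weights $(-4,-6)$ when $p > 3$, to a $3$-dimensional presentation $[U_3/G_3]$ with $U_3 \subset \bA^3$ when $p = 3$, and to a presentation by a group containing a residual $\ZZ/4$-symmetry when $p = 2$.

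For $p > 3$ the argument is short. Since $\Gm$ is special, $U \to \Mcal_{1,1}$ is a smooth-Nisnevich cover and by Theorem \ref{thm:sheaf} the invariants of $\Mcal_{1,1}$ are the $\Gm$-equivariant invariants of $U$. The $j$-invariant realizes the generic point of $\Mcal_{1,1}$ as a $\Gm$-quotient of $U$, with the only exceptional stacky points at $j=0$ and $j=1728$ having stabilizer of order coprime to $p$. A standard transfer argument together with Proposition \ref{prop:generic} then shows that these do not contribute, giving $\Inv(\Mcal_{1,1},{\rm H}_{p^r}) = \Inv(\bA^1_j,{\rm H}_{p^r})$.

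For $p = 3$ the analogous analysis produces an essential $\ZZ/3$-residue: after maximal normalization, elliptic curves in characteristic $3$ admit a discrete twist by a cube-root-of-discriminant action, and one shows that an open substack of $\Mcal_{1,1}$ fibers birationally over $\bA^1_j$ with a residual $\Bcal\ZZ/3$-gerbe structure. Applying Corollary \ref{cor:trivial action} (together with Lemma \ref{lm:invariants Zq} to discard any prime-to-$3$ contributions) would give the extra summand ${\rm H}^{\bullet-1}_{3^r}(\bfk)\cdot\{\Delta\}$, with generator identified as the pullback of the universal class in ${\rm H}^1(\Bcal\ZZ/3,\ZZ/3)$ via the discriminant morphism.

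For $p = 2$ the computation is the most intricate, and constitutes the main obstacle. The residual symmetry is $\ZZ/4$ rather than $\ZZ/2$: after normalizing the Weierstrass equation, only a $\ZZ/4$-quotient of the parametrizing group acts faithfully on isomorphism classes. To extract the new summand I would first compute $\Inv(\Bcal\ZZ/4, {\rm H}_{2^r})$ by adapting the Artin-Schreier argument of Theorem \ref{ZpTot}, using a length-$2$ Artin-Schreier-Witt cover $W_2(\Ga) \to \Bcal\ZZ/4$ and invoking the truncation exact sequence of Lemma \ref{lm:truncation}. Comparing $\Inv(\Bcal\ZZ/4,{\rm H}_{2^r})$ with $\Inv(\Bcal\ZZ/2,{\rm H}_{2^r})$ along $\ZZ/2\hookrightarrow\ZZ/4$ should produce a fiber square whose apex is precisely ${\rm J}_{2^r}^\bullet$: the ${\rm H}^\bullet_{2^r}$-component needs to be divisible by $4$ to lift (accounting for the multiplication-by-$4$ map), and the $\K_2^{\bullet-1}$-component records the symbol attached to the order-$4$ torsor (accounting for the map $\Omega^{n-1}_{\bfk,\log}\to \Omega^{n-1}_\bfk \to {\rm H}^n_{2^r}(\bfk)$). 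The hard part will be to combine this $\Bcal\ZZ/4$-computation with careful control of wild ramification along the special locus (via Izhboldin's presentation of Section \ref{sec:explicit} and Proposition \ref{prop:Hunr}) to show that no further invariants appear; I expect to conclude by checking continuity at the supersingular point $j=0$ using Corollary \ref{cor:invariants are tame and unramified}.
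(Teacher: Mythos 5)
Your skeleton for $p>3$ (special-group presentation, restriction to the open locus where the residual gerbe is trivial, then cutting down by ramification) is the paper's, but the mechanisms you propose for $p=2$ and $p=3$ rest on a false geometric premise. In every characteristic the automorphism group of an elliptic curve with $j\neq 0$ (resp.\ $j\neq 0,1728$) is $\ZZ/2$, so the relevant open substack is $(\bA^1\smallsetminus\lbrace 0\rbrace)\times\Bcal\ZZ/2$ in characteristics $2$ and $3$ as well: there is no residual $\ZZ/4$-symmetry in characteristic $2$ and no residual $\Bcal\ZZ/3$-gerbe on any open substack in characteristic $3$ (the larger automorphism groups live only over the single point $j=0$, which is removed). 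Consequently your plan to produce ${\rm J}_{2^r}^{\bullet}$ by comparing $\Inv(\Bcal\ZZ/4,{\rm H}_{2^r})$ with $\Inv(\Bcal\ZZ/2,{\rm H}_{2^r})$ has no map to hang on, and your plan for $p=3$ via \Cref{cor:trivial action} applied to a $\Bcal\ZZ/3$-factor would in any case output a summand of the shape $\K_{3}^{\bullet-1}(\bfk)\cdot\alpha$ with $\alpha$ an Artin--Schreier class, which is not ${\rm H}^{\bullet-1}_{3}(\bfk)\cdot\lbrace\Delta\rbrace$.

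The fiber product actually arises from the residue condition along the divisor of the Weierstrass cover lying over $j=0$. In characteristic $2$, \Cref{cor:trivial action} applied to the genuine $\Bcal\ZZ/2$-factor gives $\Inv(\bA^1\smallsetminus\lbrace 0\rbrace,{\rm H}_{2^r})\oplus\Inv(\bA^1\smallsetminus\lbrace 0\rbrace,\K_2)\cdot[\alpha]$; a long wild-ramification analysis in Izhboldin's filtration (the injectivity of $\widetilde{\pi}^*$) shows only $\Inv(\bA^1,{\rm H}_{2^r})\oplus{\rm H}^{\bullet-1}_{2^r}\cdot\lbrace j\rbrace\oplus\K_2^{\bullet-1}\cdot[\alpha,j\}$ is tame on $U$, and then unramifiedness at $\lbrace a_1=0\rbrace$ reads $4[w,\underline{x}\}+[(0,\ldots,0,1),\underline{y}\}=0$ because $j^{-1}=\Delta/a_1^{12}$ vanishes to order $12\equiv 4\pmod 8$ there and the residue of $[\alpha,j\}$ is $[(0,\ldots,0,1)]$. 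That equation \emph{is} the definition of ${\rm J}^{\bullet}_{2^r}$: the ``$4$'' is $12\bmod 8$, not an artifact of $\ZZ/2\subset\ZZ/4$. Likewise in characteristic $3$ the condition is $6\delta=0$, i.e.\ $\delta\in{\rm H}_3^{\bullet-1}(\bfk)$, coming from $j=b_2^6/\Delta$. Finally, even for $p>3$ your appeal to ``a standard transfer argument'' plus \Cref{prop:generic} skips the real difficulty: after the $\Bcal\ZZ/2$-factor is discarded one still has all of $\Inv(\bA^1\smallsetminus\lbrace 0,1728\rbrace,{\rm H}_{p^r})$, including its large wild part, and one must prove that no wildly ramified class extends (injectivity of the pullbacks on the wild quotients $\widetilde{{\rm H}}_{p^r}^{\bullet}$) before computing the residues $3\delta$ and $2\delta'$; this term-by-term filtration argument is where most of the work lies and is absent from your proposal.
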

The group ${\rm J}_{2^r}^{\bullet}(\bfk)$ is defined in \Cref{rmk:J} (also see the paragraph above the remark), and $\{\Delta\}$ is the invariant that sends an elliptic curve $C\to\Spec(F)$ to its discriminant, regarded as an element in $\K^1_{p^r}(F)$ (for those $p$ for which this quantity is well defined).

The computation is divided in three cases, namely the case $p=2$ is discussed in \ref{sec:char 2}, the case $p=3$ in \ref{sec:char 3} and finally the case $p>3$ in \ref{sec:char p}. The case $p=2$ is the most complicated to deal with and the most interesting. 

For what concerns invariants with coefficients in $\K_{p^r}$, we prove the following.
\begin{thm}\label{thm:inv M11 K}
\[
\Inv(\Mcal_{1,1},\K_{p^r})\simeq
\begin{cases}
\K^{\bullet}_{2}(\bfk)\oplus \K^{\bullet-1}_2\cdot\{\Delta\} & \textnormal{if }p=2,r=1, \\
\K^{\bullet}_{2^r}(\bfk)\oplus\K^{\bullet-1}_4\cdot\{\Delta\}&\textnormal{if }p=2,r>1,\\
\K^{\bullet}_{3^r}(\bfk)\oplus\K^{\bullet-1}_3\cdot\{\Delta\}&\textnormal{if }p=3, \\
\K^{\bullet}_{p^r}(\bfk) &\textnormal{if }p>3.
\end{cases}
\]
\end{thm}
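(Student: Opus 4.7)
The plan is to reuse the smooth-Nisnevich presentations $\Mcal_{1,1}\simeq [W/G]$ already established in \Cref{sec:inv M11} for the computation of $\Inv(\Mcal_{1,1},{\rm H}_{p^r})$, and to observe that with $\K_{p^r}$-coefficients the analysis becomes far simpler than \Cref{thm:inv M11 H}: by \Cref{prop:KMH} and \eqref{eq:KA1} the functor $\K_{p^r}$ is homotopy invariant on smooth $\bfk$-schemes, so there is no wildly ramified contribution and the only non-constant invariant will come from the Weierstrass discriminant $\Delta$, with its coefficient constrained by the $\Gm$-weight of $\Delta$.

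Concretely, one has $\Mcal_{1,1}\simeq[W/G]$ with $W\subset\bA^n$ an open subscheme of affine space whose complement is $V(\Delta)$, and $G=H\rtimes\Gm$ where $H$ is an iterated extension of copies of $\Ga$ and the $\Gm$-factor acts on $\Delta$ with weight $12$: explicitly $W=\bA^2_{a_4,a_6}\setminus V(\Delta)$ with $H=1$ for $p>3$; $W\subset\bA^3$ with $H=\Ga$ for $p=3$; and $W\subset\bA^5$ with $H$ a successive extension of $\Ga$'s for $p=2$. Iterating \eqref{eq:KA1} yields $\Inv(\bA^n,\K_{p^r})=\K^{\bullet}_{p^r}(\bfk)$; tracking the ramification at the (irreducible components of the) divisor $V(\Delta)$ then gives
\[ \Inv(W,\K_{p^r}) = \K^{\bullet}_{p^r}(\bfk) \oplus \K^{\bullet-1}_{p^r}(\bfk)\cdot\{\Delta\}. \]
The $H$-quotient contributes nothing: iterating the $\K_{p^r}$-analogue of \Cref{lm:invariants X x BG} along the successive $\Ga$-extensions, or equivalently using homotopy invariance of $\K_{p^r}$ for $\Ga$-torsors, one gets $\Inv([W/H],\K_{p^r})=\Inv(W,\K_{p^r})$.

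Finally I would impose $\Gm$-equivariance. For $\beta=\alpha_0+\alpha_1\cdot\{\Delta\}$ in $\Inv([W/H],\K_{p^r})$, the two pullbacks to $\Gm\times [W/H]$ along the projection $\pr_2$ and along the action map differ by the single term $12\alpha_1\cdot\{\lambda\}$, where $\lambda$ is the coordinate on $\Gm$. Since $\{\lambda\}$ is free over $\K^{\bullet}_{p^r}(\bfk)$ inside $\K^{\bullet}_{p^r}(\bfk(\lambda))$ by \eqref{eq:KA1}, $\Gm$-equivariance is equivalent to $12\alpha_1=0$, i.e.\ $\alpha_1\in\K^{\bullet-1}_{\gcd(12,p^r)}(\bfk)$. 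Since $\gcd(12,p^r)$ equals $1,3,2,4$ respectively in the four cases of the statement, we recover the claimed decomposition; the class $\{\Delta\}$ is explicitly realised by sending an elliptic curve $E/F$ to the image in $\K^1_{\gcd(12,p^r)}(F)$ of the Weierstrass discriminant of any Weierstrass model of $E$, which is well defined modulo $(F^*)^{12}$ and hence well defined in this quotient. The main obstacle is the existence of the presentations $[W/G]$ with $H$ additive and $\Delta$ of weight $12$ in low characteristic; this is already carried out in \Cref{sec:inv M11} for the ${\rm H}_{p^r}$-analogue and applies here without change.
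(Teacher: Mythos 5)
Your route is genuinely different from the paper's. The paper never computes $\Inv(W,\K_{p^r})$ for the full Weierstrass space: it restricts to $\Mcal_{1,1}\smallsetminus\{j=0\}$ (resp.\ $\smallsetminus\{j=0,1728\}$), identifies this with $(\bA^1\smallsetminus\{\ldots\})\times\Bcal\ZZ/2$, removes the $\Bcal\ZZ/2$ factor by \Cref{lm:invariants X x BG}, reads off the invariants of the punctured line from \eqref{eq:KA1}, and then tests the single candidate class $\beta\cdot\{j\}$ for ramification along the boundary divisor in the cover ($\{a_1=0\}$, $\{b_2=0\}$, $\{c_4=0\}$ and $\{c_6=0\}$). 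That reduces everything to one-variable statements. Your endgame is fine: the $\Gm$-weight argument correctly forces $12\alpha_1=0$, and identifying the $12$-torsion of $\K^{\bullet}_{2^r}(\bfk)$ with $\K^{\bullet}_4(\bfk)$ for $r\geq 2$ is legitimate via the absence of $p$-torsion in Milnor K-theory used in \Cref{lm:exactK}. The $H$-part is also harmless, but \Cref{lm:invariants X x BG} is the wrong citation (it treats the \emph{trivial} action); the correct justification is the one in the proof of \Cref{ZpTot}: by homotopy invariance of $\K_{p^r}$-invariants, $\pr_1^*$ and ${\rm m}^*$ on $W\times\Ga$ are both inverse to restriction along the zero section, so they coincide.

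The genuinely under-justified step is $\Inv(W,\K_{p^r})=\K^{\bullet}_{p^r}(\bfk)\oplus\K^{\bullet-1}_{p^r}(\bfk)\cdot\{\Delta\}$ for $W=\bA^n\smallsetminus V(\Delta)$ with $n\geq 2$. Iterating \eqref{eq:KA1} only gives $\Inv(\bA^n,\K_{p^r})=\K^{\bullet}_{p^r}(\bfk)$; to control the cokernel of $\Inv(\bA^n,\K_{p^r})\to\Inv(W,\K_{p^r})$ you need (a) irreducibility of $V(\Delta)$, (b) exactness of the Gersten complex \eqref{eq:Omega} for $\bA^n$ in degree one, i.e.\ $\Hcal^1_{\rm Zar}(\bA^n,\K^m_{p^r})=0$, so that the cokernel is identified with the group of unramified residues supported on $V(\Delta)$, and (c) the fact that this group reduces to the constants $\K^{\bullet-1}_{p^r}(\bfk)$ — which is where rationality of the discriminant hypersurface enters ($\Delta$ is linear in $a_6$, resp.\ $b_6$, resp.\ cuts out the cuspidal cubic $c_4^3=c_6^2$). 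None of (a)--(c) is established in the paper, and (c) can genuinely fail for a non-rational hypersurface, so it cannot be asserted by "tracking the ramification" alone. The conclusion is correct and the argument is completable, but this multi-variable computation is exactly the work that the paper's detour through the $j$-line is designed to avoid.
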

This computation is contained in \ref{sec:inv K} and is much easier than the previous ones.
\subsection{Setup}\label{sub:setup}
Here we recall some basic facts about $\mathcal{M}_{1,1}$.
Following \cites{FulOl, Sil}, we adopt the following notation: given variables $a_1,a_2,a_3,a_4,a_6$ we set
\[b_2=a_1^2+4a_4,\quad b_4=2a_4+a_1a_3,\quad b_6=a_3^2+4a_6 \]
and
\begin{align*}
    b_8&=a_1^2a_6+4a_2a_6-a_1a_3a_4+a_2a_3^2-a_4^2,\\
    c_4&=b_2^2-24b_4,\\
    c_6&=-b_2^3-36b_2b_4-216b_6,\\
    \Delta&=-b_2^2 b_8-8b_4^3-27b_6^2+9b_2b_4b_6,\\
    j&=c_4^3/\Delta.
\end{align*}
We also have $4b_8=b_2b_6-b_4^2$ and $1728\Delta = c_4^3-c_6^2$.

We denote $U\overset{\textnormal{def}}{=}\Spec(\ZZ[a_1,a_2,a_3,a_4,a_6,\Delta^{-1}])$ and we define the group scheme $G$ over $\Spec(\ZZ)$ as the scheme $\Spec(\ZZ[u^{\pm 1},r,s,t])$ together with the group structure
\[ (u,r,s,t)\cdot (u',r',s',t')\overset{\textnormal{def}}{=}(uu',u^2r'+r,us'+s,u^3t'+u^2r's+t).  \]
There is an action of $G$ on $U$ defined via the following coaction:
\begin{align*}
    a_1&\longmapsto u^{-1}(a_1+2s), \\
    a_2&\longmapsto u^{-2}(a_2-sa_1+3r-s^2),\\
    a_3&\longmapsto u^{-3}(a_3+ra_1+2t),\\
    a_4&\longmapsto u^{-4}(a_4-sa_3+2ra_2-(t+rs)a_1+3r^2-2st),\\
    a_6&\longmapsto u^{-6}(a_6+ra_4+r^2a_2+r^3-ta_3-t^2-rta_1)
\end{align*}
The quotient stack $[U/G]$ is isomorphic to $\mathcal{M}_{1,1,\ZZ}$, the stack of elliptic curves over $\Spec(\ZZ)$.

Let $j:\mathcal{M}_{1,1,\ZZ}\to M_{1,1,\ZZ}$ be the morphism to the coarse moduli space. The scheme $M_{1,1,\ZZ}$ is isomorphic to $\bA^1_{\ZZ}$, and the induced map $U\to \bA^1_{\ZZ}$ corresponds to the element $j$ defined above.

\subsection{Characteristic 2}\label{sec:char 2}
In this subsection, we work over a ground field $\bfk$ of characteristic 2.
Our goal is to compute the group $\Inv(\mathcal{M}_{1,1,\bfk},{\rm H}_{2^r})$.
In what follows, as we only deal with stacks and schemes over $\bfk$, we use the simpler notation $\mathcal{M}_{1,1}\overset{\textnormal{def}}{=}\mathcal{M}_{1,1,\bfk}$.

The stack ${\mathcal{M}}_{1,1}\smallsetminus j^{-1}(0)$ is isomorphic to $(\bA^1\smallsetminus\{0\})\times \Brm\ZZ/2$: one way to see this is to observe that $\mathcal{M}_{1,1}\smallsetminus j^{-1}(0) \to \bA^1\smallsetminus\{0\}$ is a gerbe banded by $\ZZ/2$ and that it has a section, as in \cite{Shi}*{Lemma 3.2}. 

Another one consists in showing that $\mathcal{M}_{1,1}\smallsetminus j^{-1}(0)\simeq [\Spec(\bfk[a'_2,a'_6,{a'}_6^{-1}])/\Ga]$, where the additive group acts by $(a'_2,a'_6)\mapsto (a'_2+s(s+1),a'_6)$. This follows from \cite{Sil}*{Appendix A, Proposition 1.1} or, more explicitly, by considering the map
\[\bfk[a'_2,{a'}_6^{\pm 1}]\longrightarrow \bfk[a_1^{\pm 1},a_2,a_4,a_6,\Delta^{-1}]\]
given by
\begin{align*}
    a_2'&\longmapsto \frac{a_1a_2+a_3}{a_1^3},\\
    a_6'&\longmapsto \frac{a_1^4 a_2 a_3^2 + a_1^3 a_3^3 + a_3^4 + a_1^5 a_3 a_4 + a_1^4 a_4^2 + a_1^6 a_6}{a_1^{12}}=\frac{\Delta}{a_1^{12}}=j^{-1}.
\end{align*}
Then it is immediate to check that $[\Spec(\bfk[a'_2,{a'}_6^{\pm 1}])/\Ga]\simeq \Spec(\bfk[{a'}_6^{\pm 1}])\times{\Brm}\ZZ/2$.

\begin{lm}\label{lm:inv open part}
We have
\[ \Inv(\mathcal{M}_{1,1}\smallsetminus j^{-1}(0),{\rm H}_{2^r})\simeq \Inv(\bA^1\smallsetminus\{0\},{\rm H}_{2^r})\oplus \Inv(\bA^1\smallsetminus\{0\},\K_2)\cdot [\alpha],   \]
where $[\alpha]$ is the cohomological invariant pulled back from $\Brm\ZZ/2$.
\end{lm}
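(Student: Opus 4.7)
The proof plan is essentially to combine the identification of the open substack with a product already established in the paragraph preceding the lemma with the general computation for $X \times \Bcal \ZZ/p$ given in \Cref{cor:trivial action}.

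More precisely, the first step is to invoke the isomorphism $\Mcal_{1,1} \setminus j^{-1}(0) \simeq (\bA^1 \setminus \{0\}) \times \Bcal \ZZ/2$ discussed right before the statement of the lemma. Either description suffices: one can view the morphism $\Mcal_{1,1}\setminus j^{-1}(0) \to \bA^1\setminus\{0\}$ to the coarse moduli space as a $\ZZ/2$-gerbe that is trivialized by a section, or one can use the explicit change of coordinates $(a'_2, a'_6) \mapsto (a'_2 + s(s+1), a'_6)$ to identify $\Mcal_{1,1}\setminus j^{-1}(0)$ with a quotient of the form $\left[\Spec(\bfk[a'_2, {a'}_6^{\pm 1}])/\Ga\right]$, whose $\Ga$-action is trivial on the $a'_6$ coordinate and has stabilizer $\ZZ/2$ (coming from $s^2 + s = 0$), so the quotient is $\Spec(\bfk[{a'}_6^{\pm 1}])\times \Bcal\ZZ/2$.

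The second step is to apply \Cref{cor:trivial action} with $X = \bA^1\setminus\{0\}$ and $p=2$, which directly yields
\[
\Inv\bigl((\bA^1\setminus\{0\}) \times \Bcal\ZZ/2,\, {\rm H}_{2^r}\bigr) \;\simeq\; \Inv(\bA^1\setminus\{0\},\,{\rm H}_{2^r}) \,\oplus\, \alpha \cdot \Inv(\bA^1\setminus\{0\},\,\K_2),
\]
where $\alpha \in {\rm H}^1(\Bcal \ZZ/2, \ZZ/2)$ is the identity class from \Cref{ZpTot}. The only thing to check is that the generator appearing in the decomposition is indeed the pullback of $\alpha$ along the projection to $\Bcal\ZZ/2$, which is exactly how \Cref{cor:trivial action} produces the second summand.

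There is no real obstacle here: the work has all been done in \Cref{cor:trivial action} and in the explicit identification of the complement of $j^{-1}(0)$ with $(\bA^1\setminus\{0\})\times \Bcal\ZZ/2$. The mild care needed is only to make sure that the section of the $\ZZ/2$-gerbe exists over $\bfk$ (equivalently, that the presentation as a trivial product of stacks is defined over $\bfk$ and not only after an extension), which is handled by the explicit Weierstrass coordinate description recalled above.
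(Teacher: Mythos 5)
Your proposal is correct and matches the paper's proof exactly: the paper also just applies \Cref{cor:trivial action} to the identification $\mathcal{M}_{1,1}\smallsetminus j^{-1}(0)\simeq(\bA^1\smallsetminus\{0\})\times\mathcal{B}\ZZ/2$ established in the preceding paragraph. The extra care you take about the section being defined over $\bfk$ is already handled by the explicit coordinate description and is a reasonable remark, but not a new ingredient.
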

\begin{proof}
We apply \Cref{cor:trivial action} to $\mathcal{M}_{1,1}\smallsetminus j^{-1}(0)\simeq(\bA^1\smallsetminus \{0\})\times\Brm\ZZ/2$.
\end{proof}

\begin{rmk}\label{rmk:alpha}
More concretely, using the presentation 
\[\mathcal{M}_{1,1}\smallsetminus j^{-1}(0)\simeq [\Spec(\bfk[a'_2,{a'}_6^{\pm 1}])/\Ga],\]
we can look at the pullback of $[\alpha]$ to $\Spec(\bfk[a'_2,{a'}_6^{\pm 1}])$. Given a map $\Spec(F)\to \Spec(\bfk[a'_2,{a'}_6^{\pm 1}])$, there is an induced map $\varphi:\bfk[a'_2,{a'}_6^{\pm 1}]\to F$. Then the pullback of $[\alpha]$  coincides with the invariant
\[ (\Spec(F)\to \Spec(\bfk[a'_2,{a'}_6^{\pm 1}])) \longmapsto [(0,\ldots,0,\varphi(a'_2))] \in \rm H^1_{2^r}(F).   \]
Observe that this cohomological invariant is $\Ga$-invariant, so it descends to $\mathcal{M}_{1,1}\smallsetminus j^{-1}(0)$, as expected.

Moreover we deduce that the pullback of $[\alpha]$ to ${\rm H}^1_{p^r}(\bfk(a_1,\ldots,a_6))$ is equal to $[(0,\ldots,0,\phi(\frac{a_1a_2+a_3}{a_1^3})]$.
\end{rmk}

Knowing the cohomological invariants of $\mathcal{M}_{1,1}\smallsetminus\lbrace j=0 \rbrace$, the next step is to understand which ones extend to cohomological invariants of $\mathcal{M}_{1,1}$. For this, we first investigate what are the invariants of $\mathcal{M}_{1,1}\smallsetminus\lbrace j=0 \rbrace$ that are tamely ramified over $\mathcal{M}_{1,1}$.

Recall that $\mathcal{M}_{1,1}\simeq [U/G]$, where $U\overset{\textnormal{def}}{=}\Spec(\ZZ[a_1,a_2,a_3,a_4,a_6,\Delta^{-1}])$ and $G$ is a special group. In particular, the morphism $\pi:U\to\mathcal{M}_{1,1}$ is smooth-Nisnevich, hence we can check if an element is wildly ramified, tamely ramified or unramified by looking at its pullback to $U$. In this presentation, the substack $\mathcal{M}_{1,1}\smallsetminus\lbrace j=0 \rbrace$ corresponds to the quotient of the complement of the divisor $\lbrace a_1=0\rbrace$.

\begin{df}
We define $\Inv(\mathcal{M}_{1,1}\smallsetminus\lbrace j=0 \rbrace,{\rm H}_{2^r})_{{\rm tm}/U}$ as the subgroup of cohomological invariants of $\mathcal{M}_{1,1}\smallsetminus\lbrace j=0 \rbrace$ that, once pulled back to ${\rm H}^{\bullet}_{2^r}(\bfk(U))$, are tamely ramified on $U$ or, equivalently, are tamely ramified at $\lbrace a_1=0 \rbrace$. We refer to these elements as \emph{invariants that are tame on} $U$.

Similarly, we define
$\Inv(\bA^1\smallsetminus\lbrace 0 \rbrace, {\rm H}_{2^r})_{{\rm tm}/U}$ as the intersection
\[\Inv(\mathcal{M}_{1,1}\smallsetminus\lbrace j=0 \rbrace,{\rm H}_{2^r})_{{\rm tm}/U}\cap \Inv(\bA^1\smallsetminus\lbrace 0 \rbrace, {\rm H}_{2^r})\]
and $(\Inv(\bA^1\smallsetminus\lbrace 0 \rbrace, \K_{2})\cdot [\alpha])_{{\rm tm}/U}$ as the intersection
\[\Inv(\mathcal{M}_{1,1}\smallsetminus\lbrace j=0 \rbrace,{\rm H}_{2^r})_{{\rm tm}/U}\cap \Inv(\bA^1\smallsetminus\lbrace 0 \rbrace, \K_{2})\cdot [\alpha].\]
\end{df}

Let $F$ be the field $\bfk(a_1,\ldots, a_6)$ and let $v$ be the discrete valuation determined by $a_1$. The map $U \xrightarrow{\pi} \bA^1$ induces a map
\[
\tilde{\pi}^*: \widetilde{{\rm H}}_{2^r}^{\bullet}(\bfk(j)) \to \widetilde{{\rm H}}_{2^r}^{\bullet}(\bfk_v(a_1)),
\]
where $\bfk_v=\bfk(a_2,a_3,a_4,a_6)$.
\begin{lm}\label{lm:tame Gm}
The map $\tilde{\pi}^*$ is injective and we have
\[ \Inv(\bA^1 \smallsetminus \lbrace 0 \rbrace,{\rm H}_{2^r})_{{\rm tm}/U} = \Inv(\bA^1,{\rm H}_{2^r})\oplus {\rm H}_{2^r}^{\bullet -1}(\bfk)\cdot\lbrace j \rbrace. \]
Moreover, if $\beta\in {\rm H}^{n+1}_p(\bfk(j))$ belongs to ${\rm U}_{s}\smallsetminus {\rm U}_{s-1}$, then $\pi^*\beta$ belongs to ${\rm U}_{12s}\smallsetminus {\rm U}_{12s-4}$.
\end{lm}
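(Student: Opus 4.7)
The plan is to analyze the pullback $\tilde\pi^*$ through Izhboldin's filtration on the wild quotient, working first at $r=1$ and reducing the case $r>1$ to it via \Cref{prop:wild gen}. In characteristic $2$ the formulae of \Cref{sub:setup} give
\[\Delta = a_3^4 + a_1^3 a_3^3 + a_1^4(a_2 a_3^2 + a_4^2) + a_1^5 a_3 a_4 + a_1^6 a_6,\]
so $\pi^* j = a_1^{12}/\Delta$ has $v_{a_1}$-valuation $12$ with residue a unit. This immediately yields the inclusion $\pi^*({\rm U}_s)\subseteq {\rm U}_{12s}$.

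For the sharp bound, take $\beta \in {\rm U}_s\smallsetminus {\rm U}_{s-1}$ and use the representatives provided by \Cref{thm:wild pres}: $j^{-s}\varphi$ for $\varphi \in \Omega^n_\bfk$ when $p\nmid s$, and $j^{-s}\varphi + j^{-s}(dj/j)\wedge\varphi'$ when $p \mid s$ (with $\varphi$ or $\varphi'$ not closed in the appropriate quotients). Using $\Delta^s = a_3^{4s} + s\, a_1^3 a_3^{4s-1} + O(a_1^4)$ and (since $12=0$ in characteristic $2$) $dj/j = d\Delta/\Delta = a_1^2 a_3^{-1} da_1 + a_1^3 a_3^{-2} da_3 + O(a_1^4)$, together with the identity $a_1^{-(12s-3)}(da_1/a_1)\wedge\omega \equiv a_1^{-(12s-3)} d\omega$ in ${\rm H}^{n+1}_p$ (derived from $d(a_1^{-(12s-3)}\omega)=0$ and $12s-3$ odd, in the spirit of \Cref{cor:lifting}), I would compute modulo ${\rm U}_{12s-4}$
\begin{align*}
\pi^*(j^{-s}\varphi) &\equiv a_1^{-12s} a_3^{4s}\varphi + s\,a_1^{-(12s-3)} a_3^{4s-1}\varphi,\\
\pi^*\bigl(j^{-s}(dj/j)\wedge\varphi'\bigr) &\equiv a_1^{-(12s-3)} a_3^{4s-1} d\varphi'.
\end{align*}

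A case analysis then yields non-vanishing in ${\rm U}_{12s}/{\rm U}_{12s-4}$. If $\varphi$ is not closed, $a_3^{4s}\varphi$ is not closed (since $d(a_3^{4s})=0$), giving a nonzero class in ${\rm U}_{12s}/{\rm U}_{12s-1}\simeq\Omega^n_{\bfk_v}/Z^n$. If $\varphi$ is closed and nonzero (only possible when $p\nmid s$), the subleading class $s\cdot a_3^{4s-1}\varphi$ is nonzero in ${\rm U}_{12s-3}/{\rm U}_{12s-4}\simeq\Omega^n_{\bfk_v}$ because $p\nmid s$. For the $dj/j\wedge\varphi'$ branch, $a_3^{4s-1} d\varphi'\ne 0$ in $\Omega^n_{\bfk_v}$ since $\varphi'$ is not closed. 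Linearity handles general representatives, so in every case $\pi^*\beta \in {\rm U}_{12s}\smallsetminus {\rm U}_{12s-4}$, proving both the injectivity of $\tilde\pi^*$ and the sharper ``moreover'' claim at $r=1$. For $r>1$, \Cref{prop:wild gen} writes any wild class as $\iota_i\alpha' + \alpha_{\rm tm}$ with $\pi_1\alpha'$ wild in $\widetilde{{\rm H}}^\bullet_2$, and the $r=1$ result propagates by dévissage along this presentation.

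Finally, to derive the description of $\Inv(\bA^1\smallsetminus\{0\},{\rm H}_{2^r})_{{\rm tm}/U}$, note that by \Cref{cor:inv A1} an invariant of $\bA^1\smallsetminus\{0\}$ is determined, modulo the constant summands ${\rm H}^\bullet_{2^r}(\bfk) \oplus {\rm H}^{\bullet-1}_{2^r}(\bfk)\cdot\{j\}$, by its image in the wild parts at $v_0$ and $v_\infty$ of $\bfk(j)$. The restriction of $v_{a_1}$ to $\bfk(j)$ equals $12\,v_0$, while $v_\infty$ does not extend to any codimension-$1$ valuation of $U$ because $\Delta\in\mathcal{O}(U)^*$. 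By the injectivity just proved, tameness on $U$ is therefore equivalent to vanishing of the wild part at $v_0$, cutting out exactly $\Inv(\bA^1,{\rm H}_{2^r}) \oplus {\rm H}^{\bullet-1}_{2^r}(\bfk)\cdot\{j\}$. The symbol $\{j\}$ does survive because $\pi^*\{j\}=12\{a_1\}-\{\Delta\}$ and $\Delta$ is a unit at $v_{a_1}$, so $\mu\cdot\{j\}$ for constant $\mu$ lies in the tame subgroup (all entries in Izhboldin's form belonging to $\mathcal{O}_{v_{a_1}}$). The main obstacle I foresee is the case analysis of the second step, where in characteristic $2$ many obvious cancellations vanish ($12=0$, $4s=0$, etc.) and one must descend to the subleading terms, using identities modulo exact differentials to identify the class in the graded quotient ${\rm U}_{12s-3}/{\rm U}_{12s-4}$.
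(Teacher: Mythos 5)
Your proposal is correct and follows essentially the same route as the paper: reduce to $r=1$ via \Cref{prop:wild gen}, put $\beta$ in Izhboldin standard form, expand $\pi^*j^{-1}=\Delta/a_1^{12}$ in powers of $a_1$ (leading term $a_3^4/a_1^{12}$, subleading $a_3^3/a_1^9$), and run the same closed/non-closed case analysis in the graded pieces ${\rm U}_{12s}/{\rm U}_{12s-1}$ and ${\rm U}_{12s-3}/{\rm U}_{12s-4}$, concluding via \Cref{cor:inv A1}. The only cosmetic difference is that you expand $\Delta^s$ and $dj/j$ rather than $(j^{-1})^s$ and $dj^{-1}$, which yields the identical leading and subleading terms.
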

\begin{proof}
Let $\alpha \in {\rm H}_{p^r}^{\bullet}(\bfk(j))$ be a wildly ramified element. Let the morphisms $\pi_s$ and $\iota_s$ be defined as in \Cref{lm:truncation}. \Cref{prop:wild gen} tells us that there must be an $i$ such that $\alpha=\iota_i \alpha' + \alpha_{\rm tm}$ and $\beta=\pi_{i-1}\alpha' \in {\rm H}_{p}^{\bullet}(\bfk(j))$ is wildly ramified. Now, if the pullback of $\beta$ is wildly ramified so is the pullback of $\alpha$. Thus we have reduced our proof to the case $r=1$.

Applying \Cref{lm:simple form} we can rewrite $\beta$, up to a tamely ramified element, as 
\begin{equation}\label{eq:beta writing} \beta = \sum_{i=1}^s j^{-i}\varphi_i + j^{-i}\frac{{\rm d}j^{-1}}{j^{-1}}\wedge\varphi'_i, \quad \varphi_i \in \Omega^n_\bfk,\varphi'_i\in\Omega^{n-1}_\bfk \end{equation}
where $\varphi'_i=0$ if $p\nmid i$.
On $U$, we have 
\[j^{-1} =\frac{a_1^4 a_2 a_3^2 + a_1^3 a_3^3 + a_3^4 + a_1^5 a_3 a_4 + a_1^4 a_4^2 + a_1^6 a_6}{a_1^{12}} = \frac{a_2 a_3^2}{a_1^8} + \frac{a_3^3}{a_1^9} + \frac{a_3^4}{a_1^{12}} + \frac{a_3 a_4}{a_1^7} + \frac{a_4^2}{a_1^8} + \frac{a_6}{a_1^6}. \]

Note that $dj^{-1}$ equals
\[
(\frac{a_3^3}{a_1^9}+\frac{a_3a_4}{a_1^7})\frac{{\rm d}a_1}{a_1} + \frac{a_2a_3^2}{a_1^8}\frac{{\rm d}a_2}{a_2} + (\frac{a_3^3}{a_1^9}+\frac{a_3a_4}{a_1^7})\frac{{\rm d}a_3}{a_3}+\frac{a_3a_4}{a_1^7}\frac{{\rm d}a_4}{a_4}+\frac{a_6}{a_1^6}\frac{{\rm d}a_6}{a_6},
\]
so by direct computation
\[\pi^*\left(j^{-i}\frac{{\rm d}j^{-1}}{j^{-1}}\wedge \varphi'_i\right) = (\pi^*j)^{-i+1}{\rm d}\pi^*{j^{-1}} \wedge \varphi_i'\]
belongs to ${\rm U}_{12i-3}$.

Let $-s$ be the lowest power of $j$ appearing in \eqref{eq:beta writing}. We can assume that $j^{-s}$ appears coupled with an element $\varphi_s+\frac{{\rm d}j^{-1}}{j^{-1}}\wedge\varphi'_s$ that is non-zero if $s$ is odd, and $(\varphi_s,\varphi'_s)$ is not closed in $\Omega_\bfk^n\oplus\Omega_\bfk^{n-1}$ if $s$ is even.

We first consider the case where $s$ is even. If $\varphi_s$ is not closed, then the lowest degree term in $\pi^*\beta$ is $a_1^{-12s}(a_3^{4s}\varphi_s)$. Observe that ${\rm d}(a_3^{4s}\varphi_s)=a_3^{4s}{\rm d}(\varphi_s) \neq 0$.  This shows that $\pi^*\beta$ is not $0$ in ${\rm U}_{12s}/{\rm U}_{12s-1}$. 

If $\varphi_s$ is closed, then $\varphi_s'$ is not closed. By \Cref{lm:simple form} up to elements of degree $-s/2$ we may assume that $\varphi_s=0$. Then $\pi^*\beta$ is equal in ${\rm U}_{12s-3}/{\rm U}_{12s-4}$ to 
\[
\pi^*{j}^{-s+1}(\frac{a_3^3}{a_1^9}\frac{{\rm d}a_1}{a_1} +  \frac{a_3^3}{a_1^9}\frac{{\rm d}a_3}{a_3})\wedge \varphi'_s= \frac{a_3^{4s+3}}{a_1^{12s-3}} (\frac{{\rm d}a_1}{a_1}+\frac{{\rm d}a_3}{a_3})\wedge \varphi'_s.
\]
Observe that as
\begin{align*}
    0 = {\rm d}\left(\frac{a_3^{4s+3}}{a_1^{12s-3}}\varphi'_s\right) = \frac{a_3^{4s+3}}{a_1^{12s-3}}{\rm d}(\varphi'_s) + \frac{a_3^{4s+3}}{a_1^{12s-3}}\frac{{\rm d}a_1}{a_1}\wedge\varphi'_s + \frac{a_3^{4s+3}}{a_1^{12s-3}}\frac{{\rm d}a_3}{a_3}\wedge\varphi'_s
\end{align*} 
we get that 
\[
\pi^*\beta = \frac{a_3^{4s-1}}{a_1^{12s-3}}{\rm d}(\varphi'_s) \neq 0
\]
in ${\rm U}_{12s-3}/{\rm U}_{12s-4}$.

Now assume $s$ is odd. If $\varphi_s$ is not closed the same argument as above allows us to conclude. Assume that ${\rm d}(\varphi_s)=0$. In ${\rm U}_{12s}/{\rm U}_{12s-1}$ we have
\[
\pi^*\beta = \frac{a_3^{4s}}{a_1^{12s}}\varphi
\]
which is closed, and by \Cref{cor:lifting} this shows that this term actually belongs to ${\rm U}_{6s}$. This shows that in ${\rm U}_{12s-3}/{\rm U}_{12s-4}$ we have
\[
\pi^*\beta = \frac{a_3^{4s-1}}{a_1^{12s-3}}\varphi
\]
which is nonzero as $a_3^{4s-1}\varphi \neq 0$.

By \Cref{lm:inv open part} we can regard $\Inv(\bA^1\smallsetminus\lbrace 0 \rbrace, {\rm H}_{2^r})$ as a subgroup of $\Inv(\mathcal{M}_{1,1}\smallsetminus\lbrace j=0 \rbrace,{\rm H}_{2^r})$. Then the invariants in this subgroup that are tame on $U$ are those that are sent to zero by the map \[\Inv(\bA^1\smallsetminus\lbrace 0 \rbrace, {\rm H}_{2^r})\longrightarrow \widetilde{{\rm H}}^{\bullet}_{2^r}(\bfk(j)).\]
By looking at the exact sequence for the invariants of $\bA^1$ (\Cref{cor:inv A1}), we see that the kernel of this map coincides with the image of the injective homomorphism 
\[ \Inv(\bA^1, {\rm H}_{2^r}) \longrightarrow \Inv(\bA^1\smallsetminus\lbrace 0 \rbrace, {\rm H}_{2^r}).\]
This concludes the proof.
\end{proof}

\begin{lm}\label{lm:tame alpha}
We have  
\[(\Inv(\bA^1\smallsetminus\lbrace 0 \rbrace, \K_{2})\cdot [\alpha])_{{\rm tm}/U}=\K_2^{\bullet-1}(\bfk)\cdot [\alpha,j\rbrace. \]
\end{lm}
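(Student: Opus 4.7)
The plan is to combine a standard decomposition of $\Inv(\bA^1\smallsetminus\{0\},\K_2)$ with an explicit analysis of tameness along the divisor $\{a_1=0\}\subset U$, showing that the summand generated by $\{j\}$ is automatically tame on $U$ while the constant summand $\K_2^\bullet(\bfk)\cdot[\alpha]$ is entirely wildly ramified there.

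First I would invoke \eqref{eq:KA1} to obtain the standard decomposition $\Inv(\bA^1\smallsetminus\{0\},\K_2) = \K_2^\bullet(\bfk) \oplus \K_2^{\bullet-1}(\bfk)\cdot\{j\}$, so that an arbitrary invariant in $\Inv(\bA^1\smallsetminus\{0\},\K_2)\cdot[\alpha]$ has the form $\gamma_1\cdot[\alpha] + \gamma_2\cdot[\alpha,j\}$ with $\gamma_1 = \{\underline d\}\in\K_2^\bullet(\bfk)$ and $\gamma_2 = \{\underline c\}\in\K_2^{\bullet-1}(\bfk)$. Writing $F = \bfk(U)$, letting $v$ denote the $a_1$-adic valuation and $a = \frac{a_1 a_2 + a_3}{a_1^3}$, \Cref{rmk:alpha} identifies the pullback to ${\rm H}^{\bullet}_2(F)$ with $[a,\underline d\} + [a,j,\underline c\}$.

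Next I would show that $[a,j,\underline c\}$ is always tame at $v$. In characteristic $2$ the identity $j = a_1^{12}/\Delta$ and the vanishing of $12\{a_1\}$ in $\K_1^M(F)/2$ give $\{j\} = \{\Delta\}$, so $[a,j,\underline c\} = [a,\Delta,\underline c\}$. Using the characteristic $2$ formula
\[
\Delta = a_3^4 + a_1^3 a_3^3 + a_1^4 a_2 a_3^2 + a_1^4 a_4^2 + a_1^6 a_6,
\]
and the key observation that $d(a_3^4)=0$, one computes $d\Delta = a_1^2 a_3^3\,da_1 + a_1^4 a_3^2\,da_2 + a_1^3 a_3^2\,da_3 + a_1^6\,da_6$, so every coefficient of $d\Delta/\Delta$ is divisible by $a_1^2$. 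Splitting $a = a_2/a_1^2 + a_3/a_1^3$ and expanding each $[a_k/a_1^j,\Delta,\underline c\}$ as a sum of logarithmic symbols would then yield, for example, the explicit identity
\[
\bigl[\tfrac{a_3}{a_1^3},\Delta,\underline c\bigr\} = \bigl[\tfrac{a_3^4}{\Delta},a_1,\underline c\bigr\} + \bigl[\tfrac{a_1 a_2 a_3^3}{\Delta},a_2,\underline c\bigr\} + \bigl[\tfrac{a_3^4}{\Delta},a_3,\underline c\bigr\} + \bigl[\tfrac{a_1^3 a_3 a_6}{\Delta},a_6,\underline c\bigr\},
\]
in which every first argument has $v$-valuation $\geq 0$, so each symbol is tame. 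The term $[a_2/a_1^2,\Delta,\underline c\}$ decomposes in the same way.

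Then I would show that if $\gamma_1 \neq 0$, the symbol $[a,\underline d\}$ is wildly ramified. Since $v(a) = -3$ and $p\nmid 3$, it lies in $\mathrm U_3$, and by \Cref{thm:wild pres} its image under Izhboldin's isomorphism $\psi_3^{-1}\colon\mathrm U_3/\mathrm U_2\xrightarrow{\sim}\Omega^n_{\bfk_v}$ is $a_3\cdot\frac{d\underline d}{\underline d}$. Since $\bfk_v = \bfk(a_2,a_3,a_4,a_6)$ is purely transcendental hence separable over $\bfk$, the natural map $\Omega^n_\bfk\hookrightarrow\Omega^n_{\bfk_v}$ is injective, and multiplication by $a_3\in\bfk_v^\ast$ is a bijection; hence $a_3\cdot\frac{d\underline d}{\underline d}$ is nonzero precisely when $\gamma_1\neq 0$, so $[a,\underline d\}$ is not tame. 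Combining with the first step, the full sum $\gamma_1\cdot[\alpha]+\gamma_2\cdot[\alpha,j\}$ has the same image in $\mathrm U_3/\mathrm U_2$ as $[a,\underline d\}$, so it is tame iff $\gamma_1 = 0$, which is exactly the claimed equality.

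The main obstacle will be the explicit tameness computation in the second step. It works because the characteristic $2$ identity $d(a_3^4) = 0$ forces every surviving term of $d\Delta$ to carry enough powers of $a_1$ to absorb the pole of $a/a_1^j$ once the form is converted back to a sum of logarithmic symbols; carrying out the bookkeeping (both for $a_3/a_1^3$ and $a_2/a_1^2$) is the bulk of the work.
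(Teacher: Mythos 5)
Your proposal follows essentially the same route as the paper's proof: decompose $\Inv(\bA^1\smallsetminus\{0\},\K_2)$ via \eqref{eq:KA1}, show $[\alpha,\Delta\}=[\alpha,j\}$ is tame by expanding $a\,\frac{{\rm d}\Delta}{\Delta}$ into logarithmic symbols whose coefficients have non-negative valuation at $\{a_1=0\}$, and detect the wildness of $\gamma_1\cdot[\alpha]$ through its nonzero class $a_3\cdot\frac{{\rm d}\underline{d}}{\underline{d}}$ in ${\rm U}_3/{\rm U}_2$ via \Cref{thm:wild pres}. The only slip is that your formula for $\Delta$ in characteristic $2$ drops the monomial $a_1^5a_3a_4$ (so ${\rm d}\Delta$ is missing the terms $a_1^4a_3a_4\,{\rm d}a_1+a_1^5a_4\,{\rm d}a_3+a_1^5a_3\,{\rm d}a_4$), but these carry even higher powers of $a_1$ and so do not affect the tameness conclusion.
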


\begin{proof}
We have $\Inv(\bA^1\smallsetminus\lbrace 0 \rbrace, \K_2)\simeq \K_2^{\bullet}(\bfk)\oplus \K_2^{\bullet-1}(\bfk)\cdot\{j\}$ due to \Cref{eq:KA1}. We first check that $[\alpha,\Delta \}$ is tame on $U$: this implies that $[\alpha, j \rbrace$ is tame, because $\{\Delta\}=\{j^{-1}\}= -\{ j\}$. 

Using the formula for the pullback of $[\alpha]$ given in \Cref{rmk:alpha} and the formula for $\Delta$ in characteristic two, we can write the pullback of $[\alpha,\Delta\}$ to $U$ as a differential form as
\begin{small}
  \[
       \left(\frac{a_1a_2+a_3}{a_1^3} \right) \Biggl( \frac{(a_1^2a_3)(a_1^2a_4+a_3^2)\cdot da_1}{\Delta} + \frac{a_1^4(a_1^2+a_3^2)\cdot da_2}{\Delta}  
    + \frac{a_1^3(a_1^2a_4+a_3^2)\cdot da_3}{\Delta} + \frac{a_1^4a_3\cdot da_4 }{\Delta} \Biggr)
\]  
\end{small}
which, after some manipulations, can be written as
\begin{small}
 \[ \left(\frac{a_1a_2+a_3}{\Delta} \right) \Biggl( a_3(a_1^2a_4+a_3^2) \frac{{\rm d}a_1}{a_1} + a_1a_2(a_1^2+a_3^2)\cdot \frac{{\rm d}a_2}{a_2}  
    + a_3(a_1^2a_4+a_3^2)\cdot \frac{{\rm d}a_3}{a_3} + a_1a_3a_4\cdot \frac{{\rm d}a_4 }{a_4} \Biggr).
\]   
\end{small}
    
All the coefficients in the formula have non negative valuation at $\lbrace a_1=0 \rbrace$, hence this element is tamely ramified by \Cref{thm:wild pres}.

To conclude the proof, we only need to check that $\delta \cdot \left[\alpha\right]$ is wildly ramified at $U$ for any $\delta \in \K_2^{\bullet}(\bfk)$. Again, using the formula of \Cref{rmk:alpha}, we get
\[ \pi^*[\alpha] = [(0,\ldots,0,(a_1a_2+a_3)/a_1^3)] \]
which is wildly ramified at $\lbrace a_1=0 \rbrace$: this follows from \Cref{thm:wild pres}, as the form 
\[\frac{a_1a_2+a_3}{a_1^3} \varphi, \, \varphi \in \Omega^n_{\bfk, {\rm log}}\]
is sent to $a_3/a_1^3 \varphi \neq 0$ in ${\rm U}_3/{\rm U}_2$, which coincides with the class of $\pi^*[\alpha]$ in this quotient (we are implicitly using the fact that $\pi^*[\alpha]$ belongs by definition to the subgroup ${\rm H}_2^1(\bfk(a_1,\ldots,a_6))$ contained in ${\rm H}_{2^r}^1(\bfk(a_1,\ldots,a_6))$).
\end{proof}

Before proceeding with the computation of the cohomological invariants of $\mathcal{M}_{1,1}$ in characteristic $2$, we need some technical results.

\begin{lm}\label{lm:wild elements}
Let $\pi:U\smallsetminus\{a_1=0\} \to \bA^1\smallsetminus\{0\}$ be the map given by the $j$-invariant and let $\beta$ be a cohomological invariant of $\bA^1\smallsetminus\{0\}$ that is wildly ramified at $\{j=0\}$. Then for every $\delta\in \K_2^n(\bfk)$ the element $\pi^*\beta + \delta\cdot [\alpha]$ is wildly ramified at $\{a_1=0\}$, where $\alpha=(a_2a_1+a_3)/a_1^3$.
\end{lm}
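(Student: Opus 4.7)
The plan is to argue by induction on $r$. The base case $r=1$ relies on the explicit filtration ${\rm U}_\bullet$ on ${\rm H}^{n+1}_p$ from \Cref{thm:wild pres} together with the quantitative statement of \Cref{lm:tame Gm}: since $\beta$ is wild there is some $s\geq 1$ with $\beta\in {\rm U}_s\smallsetminus {\rm U}_{s-1}$, so $\pi^*\beta\in {\rm U}_{12s}\smallsetminus {\rm U}_{12s-4}$ with respect to the valuation $v_{a_1}$. Writing $\delta=\{c_1,\ldots,c_n\}$, the pullback of $\delta\cdot[\alpha]$ is the symbol $[\tfrac{a_1a_2+a_3}{a_1^3},c_1,\ldots,c_n\rbrace$, whose leading coefficient has $v_{a_1}$-valuation equal to $-3$, so $\delta\cdot[\alpha]\in {\rm U}_3\subseteq {\rm U}_{12s-4}$ (as $s\geq 1$ forces $12s-4\geq 8$). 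Therefore $\pi^*\beta+\delta\cdot[\alpha]$ belongs to ${\rm U}_{12s}\smallsetminus {\rm U}_{12s-4}$, and in particular lies outside the tame subgroup ${\rm U}_0$.

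For the inductive step, write $[\alpha]^{(m)}$ for the analogue of $[\alpha]$ in ${\rm H}^1_{p^m}$ and observe that $[\alpha]^{(r)}=\iota_{r-1}([\alpha]^{(r-1)})$ by a direct Witt vector computation, so that $\pi_{r-1}(\delta\cdot[\alpha]^{(r)})=0$. Hence $\pi_{r-1}(\pi^*\beta+\delta\cdot[\alpha])=\pi^*(\pi_{r-1}\beta)$, and if $\pi_{r-1}\beta$ is wild then \Cref{lm:tame Gm} forces the mod $p$ reduction of the full sum to be wild, so the sum itself is wild. If instead $\pi_{r-1}\beta$ is tame, pick a representation $\sum_j[a_j,\underline b_j\rbrace$ with $v_j(a_j)\geq 0$ and lift it to the tame element $\beta_t=\sum_j[a_j,0,\ldots,0,\underline b_j\rbrace$ in ${\rm H}^{n+1}_{p^r}(\bfk(j))$. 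Then $\beta-\beta_t\in\ker\pi_{r-1}=\iota_{r-1}({\rm H}^{n+1}_{p^{r-1}}(\bfk(j)))$, hence $\beta=\beta_t+\iota_{r-1}\tilde\beta$ with $\tilde\beta$ necessarily wild since $\iota_{r-1}$ sends tame to tame. Combining everything gives
\[ \pi^*\beta+\delta\cdot[\alpha]^{(r)}=\pi^*\beta_t+\iota_{r-1}\bigl(\pi^*\tilde\beta+\delta\cdot[\alpha]^{(r-1)}\bigr), \]
where the first summand is tame (the ramification index of $\bfk(j)\subset \bfk(U)$ at $v_j$ equals $12$, so $v_{a_1}(a)=12\,v_j(a)\geq 0$ for every tame coefficient $a\in \bfk(j)$) and the inner expression is wild by the inductive hypothesis.

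The main technical obstacle is ensuring that $\iota_{r-1}$ preserves wildness, so that applying it to a wild element of ${\rm H}^{n+1}_{p^{r-1}}$ produces a wild element of ${\rm H}^{n+1}_{p^r}$. This follows from the description of wild generators in \Cref{prop:wild gen}: prepending a zero to a generator of shape $[0,\ldots,0,a_i,\ldots,a_{r-1},\underline b\rbrace$ produces a generator of shape $[0,\ldots,0,a'_{i+1},\ldots,a'_r,\underline b\rbrace$ whose $p$-reduction $[a'_{i+1},\underline b\rbrace=[a_i,\underline b\rbrace$ remains wild in ${\rm H}^{n+1}_p$, and the same argument shows that tameness is preserved in the opposite direction, so wildness cannot be lost under $\iota_{r-1}$.
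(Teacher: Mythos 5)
Your proof is correct, and your base case is exactly the paper's: $\pi^*\beta\in{\rm U}_{12s}\smallsetminus{\rm U}_{12s-4}$ by the last assertion of \Cref{lm:tame Gm}, while $\delta\cdot[\alpha]\in{\rm U}_3\subseteq{\rm U}_{12s-4}$ for $s\geq 1$, so the sum cannot lie in ${\rm U}_0$. The only real difference is how the reduction from general $r$ to $r=1$ is organized. The paper does it in one step: since $\delta\cdot[\alpha]\in\ker\pi_1$, tameness of the sum would force $\pi_1\beta$ to be tame by injectivity of $\widetilde{\pi}^*$, hence $\beta=\iota_1\beta'+\beta_{\rm tm}$ with $\beta'\in{\rm H}^{n+1}_p(\bfk(j))$ wild, which lands directly in the mod $p$ case. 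You instead run an induction on $r$ via $\pi_{r-1}$ and $\iota_{r-1}$, peeling off one Witt coordinate at a time and splitting according to whether $\pi_{r-1}\beta$ is wild or tame. Both reductions use the same ingredients (the truncation sequence of \Cref{lm:truncation}, injectivity of $\widetilde{\pi}^*$ from \Cref{lm:tame Gm}, the symbol description of tame elements, and the fact that $\iota_s$ and $\pi_s$ preserve and reflect tameness), so the difference is one of bookkeeping rather than substance.

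One point worth tightening: your justification that $\iota_{r-1}$ reflects tameness by inspecting the generators of \Cref{prop:wild gen} is not quite complete as written, since knowing that wild generators map to elements of the same wild shape does not by itself control an arbitrary sum of such generators modulo tame elements. The clean argument is that tameness of $\gamma$ at $v$ means $\gamma_{F^{\rm tm}}=0$, that $\iota_{r-1}$ commutes with field extension, and that $\iota_{r-1}$ is injective over every field of characteristic $p$ by \Cref{lm:truncation}; hence $\iota_{r-1}\tilde\beta$ tame forces $\tilde\beta_{F^{\rm tm}}=0$, i.e. $\tilde\beta$ tame. With that substitution your induction closes without further issues.
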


\begin{proof}
Suppose that the statement holds true for $r=1$ and let $\beta$ be an invariant contained in ${\rm H}_{p^r}^{n+1}(\bfk(j))$ with $r>1$. Let $\pi_1$ and $\iota_1$ be defined as in \Cref{lm:truncation}. By definition, the invariant $\delta\cdot[\alpha]$ belongs to the image of $\iota_1$, hence $\pi_1(\pi^*\beta + \delta\cdot[\alpha]) = \pi_1(\pi^*\beta) = \pi^*(\pi_1(\beta))$, where with a little abuse of notation we denoted as $\pi_1$ both the projection on ${\rm H}_p^{n+1}(\bfk(j))$ and the one on ${\rm H}_p^{n+1}(\bfk(a_2,\ldots,a_6)(a_1))$.

If $\pi^*\beta+\delta\cdot[\alpha]$ is tamely ramified at $\{a_1=0\}$, then so it is its image via $\pi_1$, hence by the computation above $\widetilde{\pi}^*(\pi_1(\beta))=0$ in $\widetilde{{\rm H}}_{p}^{n+1}(\bfk(a_2,\ldots,a_6)(a_1))$. By \Cref{lm:tame Gm} this implies that $\pi_1(\beta)=0$ in $\widetilde{H}_{p}^{n+1}(\bfk(j))$, i.e. $\beta$ is equal to $\iota_1(\beta')$ up to tame elements. As the tameness of $\pi^*\beta+\delta\cdot[\alpha]$ is equivalent to the tameness of $\pi^*(\iota_1(\beta'))+\delta\cdot[\alpha]$, we can assume $\beta=\iota_1(\beta')$, that is $\beta$ belongs to ${\rm H}_p^{n+1}(\bfk(j))$. Then we have reduced ourselves to the case $r=1$.

Observe that $\delta\cdot[\alpha]$ belongs to ${\rm U}_3\smallsetminus {\rm U}_2$ (the ${\rm U}_i$ are the subgroups appearing in \Cref{thm:wild pres}), so that if $\beta$ is tame we are done.

Suppose that $\beta$ is in ${\rm U}_s\smallsetminus {\rm U}_{s-1}$, with $s\geq 1$. Then by \Cref{lm:tame Gm} we have that $\pi^*\beta \in {\rm U}_{12s}\smallsetminus {\rm U}_{12s-4}$. If $\pi^*\beta + \delta\cdot[\alpha]$ is tame then it belongs to ${\rm U}_0\subset {\rm U}_{12s-4}$ by \Cref{thm:wild pres}, hence $\pi^*\beta=(\pi^*\beta + \delta\cdot[\alpha])-\delta\cdot [\alpha]$ is in ${\rm U}_{12s-4}$, because $3<12s-4$ for $s\geq 1$. This contradicts the fact that $\pi^*\beta \in {\rm U}_{12s}\smallsetminus {\rm U}_{12s-4}$, and concludes the proof. 
\end{proof}

\begin{prop}\label{pr:tame elements}
We have
\[ \Inv(\mathcal{M}_{1,1}\smallsetminus\{j=0\},{\rm H}_{2^r})_{{\rm tm}/U}\simeq \Inv(\bA^1,{\rm H}_{2^r})\oplus {\rm H}_{2^r}^{\bullet -1}\cdot\{j\}\oplus \K_2^{\bullet - 1}\cdot [\alpha,j\} \]
\end{prop}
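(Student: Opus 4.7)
The plan is to prove the $\supseteq$ inclusion first, which will be essentially immediate: by \Cref{lm:tame Gm} the summand $\Inv(\bA^1,{\rm H}_{2^r}) \oplus {\rm H}^{\bullet-1}_{2^r}(\bfk)\cdot\{j\}$ sits inside $\Inv(\bA^1\smallsetminus\{0\},{\rm H}_{2^r})_{{\rm tm}/U}$, while \Cref{lm:tame alpha} places $\K_2^{\bullet-1}(\bfk)\cdot[\alpha,j\}$ inside $(\Inv(\bA^1\smallsetminus\{0\},\K_2)\cdot[\alpha])_{{\rm tm}/U}$. Both appear as subgroups of $\Inv(\mathcal{M}_{1,1}\smallsetminus\{j=0\},{\rm H}_{2^r})_{{\rm tm}/U}$ via the direct sum decomposition of \Cref{lm:inv open part}.

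For the reverse inclusion, I will take an invariant $\gamma$ that is tame on $U$ and use \Cref{lm:inv open part} to write $\gamma = \beta + \eta\cdot[\alpha]$ with $\beta\in\Inv(\bA^1\smallsetminus\{0\},{\rm H}_{2^r})$ and $\eta\in\Inv(\bA^1\smallsetminus\{0\},\K_2)$. Invoking the splitting $\Inv(\bA^1\smallsetminus\{0\},\K_2) \simeq \K^\bullet_2(\bfk)\oplus \K^{\bullet-1}_2(\bfk)\cdot\{j\}$ furnished by \Cref{eq:KA1}, I will further decompose $\eta = \eta_0 + \eta_1\cdot\{j\}$ to obtain
\[\gamma = \beta + \eta_0\cdot[\alpha] + \eta_1\cdot[\alpha,j\}.\]
Since $\eta_1\cdot[\alpha,j\}$ is automatically tame on $U$ by \Cref{lm:tame alpha}, the tameness of $\gamma$ reduces to the tameness of $\beta + \eta_0\cdot[\alpha]$.

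The core of the argument is then to show that $\beta + \eta_0\cdot[\alpha]$ tame on $U$ forces both $\beta$ to be tame on $U$ and $\eta_0\cdot[\alpha]=0$. For this I plan to invoke \Cref{lm:wild elements}: if $\beta$ were wildly ramified at $\{j=0\}$, the sum $\pi^*\beta + \eta_0\cdot[\alpha]$ would remain wildly ramified at $\{a_1=0\}$, contradicting our assumption. Hence $\beta$ must be tame at $\{j=0\}$, which by \Cref{lm:tame Gm} places it in $\Inv(\bA^1,{\rm H}_{2^r}) \oplus {\rm H}^{\bullet-1}_{2^r}(\bfk)\cdot\{j\}$. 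Subtracting $\beta$ then shows that $\eta_0\cdot[\alpha]$ is tame on $U$; since $\K_2^\bullet(\bfk)\cdot[\alpha]$ and $\K_2^{\bullet-1}(\bfk)\cdot[\alpha,j\}$ are complementary summands of $\Inv(\bA^1\smallsetminus\{0\},\K_2)\cdot[\alpha]$, \Cref{lm:tame alpha} forces $\eta_0\cdot[\alpha]=0$. The main obstacle — namely ruling out cancellations between the wild contributions of $\pi^*\beta$ and $\eta_0\cdot[\alpha]$ at the level of the Izhboldin filtration — has already been absorbed into \Cref{lm:wild elements}, so once that lemma is in hand the rest of the plan should reduce to formal bookkeeping.
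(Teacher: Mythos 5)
Your proposal is correct and follows essentially the same route as the paper's proof: the same decomposition via \Cref{lm:inv open part} and \Cref{eq:KA1}, with \Cref{lm:tame alpha} handling the $[\alpha,j\}$ and $\delta\cdot[\alpha]$ terms and \Cref{lm:wild elements} ruling out cancellation between a wild $\beta$ and $\eta_0\cdot[\alpha]$. The only difference is organizational (you rule out the wild case up front, the paper splits into two cases), so there is nothing substantive to add.
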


\begin{proof}
Let $\beta$ be an invariant of $\mathcal{M}_{1,1}\smallsetminus \{j=0\}$ with coefficients in ${\rm H}_{2^r}$, tame over $U$. From \Cref{lm:inv open part} we can write $\beta=\beta' + \beta''\cdot [\alpha]$, where $\beta'$ is (the pullback of) an invariant of $\bA^1\smallsetminus\{0\}$ with coefficients in ${\rm H}_{2^r}$ and $\beta''$ is (the pullback of) an invariant of $\bA^1\smallsetminus\{0\}$ with coefficients in $\K_2$.

We can further rewrite $\beta''$ as $\delta + \delta'\cdot\{j\}$, where $\delta$ and $\delta'$ belongs to $\K^{\bullet}_2(\bfk)$. 

If the image of $\beta'$ is zero in $\widetilde{{\rm H}}^{n+1}_{2^r}(\bfk(a_2,\ldots,a_6)(a_1))$, then $\beta'$ is tamely ramified and hence it belongs to $ \Inv(\bA^1,{\rm H}_{2^r})\oplus {\rm H}_{2^r}^{\bullet -1}\cdot\{j\}$ by \Cref{lm:tame Gm}; moreover, we have then that $\beta$ is tame if and only if $\delta=0$ because $[\alpha,j\}$ is tame and $\delta\cdot [\alpha]$ is not (\Cref{lm:tame alpha}).

This implies that $\beta$ belongs to $\Inv(\bA^1,{\rm H}_{2^r})\oplus {\rm H}_{2^r}^{\bullet -1}\cdot\{j\}\oplus \K_2^{\bullet - 1}\cdot [\alpha,j\}$, as claimed.

Suppose now that the image of $\beta'$ is not zero in $\widetilde{{\rm H}}^{n+1}_{2^r}(\bfk(a_2,\ldots,a_6)(a_1))$. As $\beta$ is tamely ramified by hypothesis, this implies that $\beta'+\delta\cdot [\alpha]$ must be zero in $\widetilde{{\rm H}}^{n+1}_{2^r}(\bfk(a_2,\ldots,a_6)(a_1))$, which contradicts \Cref{lm:wild elements}. This finishes the proof.
\end{proof}

Let $\mu:\K_2^{\bullet-1} (\bfk) \to {\rm H}^{\bullet}_{2^r}(\bfk)$ be the map that sends 
\[\{x_1,\dots,x_n\}\longmapsto [(0,\ldots,0,1),x_1,\ldots,x_n\}\]
and let ${\rm H}_{2^r}^{\bullet}(\bfk)\overset{4}{\to} {\rm H}_{2^r}^{\bullet}(\bfk)$ be the multiplication by $4$.
In the category of abelian groups, we can form the cartesian diagram
\[
\begin{tikzcd}
{\rm J}_{2^r}^{\bullet}(\bfk) \ar[r] \ar[d] & \K_2^{\bullet-1} (\bfk) \ar[d, "\mu"]\\
{\rm H}_{2^r}^{\bullet}(\bfk) \ar[r, "4"] & {\rm H}_{2^r}^{\bullet}(\bfk).
\end{tikzcd}
\]
More concretely, one can think of ${\rm J}_{2^r}^{\bullet}(\bfk)$ as the subgroup of ${\rm H}_{2^r}^{\bullet}(\bfk)\oplus\K_2^{\bullet-1}(\bfk)$ formed by those pairs $([w,\underline{x}\},\{\underline{y}\})$ such that $4[w,\underline{x}\}=[(0,\ldots,0,1),\underline{y}\}$.
\begin{rmk}\label{rmk:J}
If $r\leq 2$, then the multiplication by $4$ is equal to the zero map, hence in this range ${\rm J}^{\bullet}_{2^r}(\bfk)$ is equal to ${\rm H}_{2^r}^{\bullet}(\bfk)\oplus \ker(\mu)$.

If $r\geq 3$, then ${\rm J}_{2^r}^{\bullet}(\bfk)$ is equal to ${\rm J}_8^{\bullet}(\bfk)$: indeed we have by definition that for every element in ${\rm J}_{2^r}^{\bullet}(\bfk)$ we have $4[w,\underline{x}\}=[(0,\ldots,0,1),\underline{y}\}$, which immediately implies that $8[w,\underline{x}\}=0$. By \Cref{prop:ptors} we deduce that $[w,\underline{x}\}$ belongs to ${\rm H}_8^{\bullet}(\bfk)$, thus the pair $([w,\underline{x}\},\{\underline{y}\})$ belongs to ${\rm J}_8^{\bullet}(\bfk)$.

Finally, observe that if $x^2+x+1$ has a solution in $\bfk$, we have $[(0,\ldots,0,1)]=0$ in ${\rm H}^1_{2^r}(\bfk)$, thus ${\rm J}^{\bullet}_{2^r}(\bfk)$ is actually equal to ${\rm H}^{\bullet}_{4}(\bfk) \oplus \K^{\bullet -1}_2(\bfk)$.
\end{rmk}

\begin{thm}\label{thm:InvM11H2}
Suppose that the base field $\bfk$ has characteristic two. Then
\[\Inv(\mathcal{M}_{1,1},{\rm H}_{2^r})\simeq \Inv(\bA^1,{\rm H}_{2^r})\oplus {\rm J}_{2^r}^{\bullet-1}(\bfk)\]
\end{thm}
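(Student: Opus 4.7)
The plan is to identify $\Inv(\Mcal_{1,1},{\rm H}_{2^r})$ as the subgroup of $\Inv(\Mcal_{1,1}\setminus\{j=0\},{\rm H}_{2^r})_{{\rm tm}/U}$ described in \Cref{pr:tame elements} consisting of those elements whose pullback to $U$ is additionally unramified at the divisor $\{a_1=0\}$. In characteristic $2$ we have $c_4=b_2^2=a_1^4$, so $j=a_1^{12}/\Delta$, and since $\Delta\equiv a_3^4\pmod{a_1}$ is a unit on this divisor, $\{a_1=0\}\subset U$ is the (irreducible) preimage of $\{j=0\}$. By \Cref{cor:invariants are tame and unramified} this subgroup of tame-over-$U$ invariants is exactly $\Inv(\Mcal_{1,1},{\rm H}_{2^r})$. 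The summand $\Inv(\bA^1,{\rm H}_{2^r})$ extends to $\Mcal_{1,1}$ automatically via the map to the coarse moduli space, so the problem reduces to determining, for which pairs $(\beta,\delta)\in {\rm H}^{\bullet-1}_{2^r}(\bfk)\oplus \K^{\bullet-2}_2(\bfk)$, the combination $\beta\cdot\{j\}+\delta\cdot[\alpha,j\}$ is unramified at $\{a_1=0\}$.

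The heart of the proof is the computation of the two corresponding residues $\partial_{a_1}$. For $\beta\cdot\{j\}$ one expands $\{j\}=12\{a_1\}-\{\Delta\}$ in $\K^1_{2^r}$; as $\Delta$ is a unit at $a_1=0$ the contribution of $\{\Delta\}$ is unramified, giving $\partial_{a_1}(\beta\cdot\{j\})=12\beta$. For $\delta\cdot[\alpha,j\}$ I would first use that $[\alpha]$ is $2$-torsion (it lies in the image of $\iota_{r-1}$), so that $12\cdot[\alpha,a_1\}=0$ and hence $[\alpha,j\}=-[\alpha,\Delta\}$, which is already known to be tame on $U$ by \Cref{lm:tame alpha}. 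Using the characteristic-$2$ expansion
\[\Delta = a_3^4 + a_1^3 a_3^3 + a_1^4(a_2 a_3^2 + a_4^2) + a_1^5 a_3 a_4 + a_1^6 a_6,\]
one then extracts the $\frac{da_1}{a_1}$-coefficient of the differential form representing $[\alpha,\Delta\}$ (mirroring the rewriting done in the proof of \Cref{lm:tame alpha}). Evaluating at $a_1=0$, this coefficient simplifies to $a_3\cdot a_3\cdot a_3^2/a_3^4=1$, yielding $\partial_{a_1}[\alpha,\Delta\}=[1\}$ and consequently $\partial_{a_1}(\delta\cdot[\alpha,j\})=\pm\mu(\delta)$, with $\mu$ the morphism appearing in the definition of ${\rm J}^{\bullet}_{2^r}$.

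Combining the two residues, unramifiedness at $\{a_1=0\}$ is equivalent to $12\beta=\pm\mu(\delta)$ in ${\rm H}^{\bullet-1}_{2^r}(\bfk)$. Writing $12=4\cdot 3$ with $3$ invertible modulo $2^r$, the change of variable $\beta\mapsto 3\beta$ (an automorphism of ${\rm H}^{\bullet-1}_{2^r}(\bfk)$) turns this into $4\beta=\pm\mu(\delta)$, which is exactly the defining relation of the fiber product ${\rm J}^{\bullet-1}_{2^r}(\bfk)$. The assignment $(\beta,\delta)\mapsto \beta\cdot\{j\}+\delta\cdot[\alpha,j\}$ then provides a bijection between ${\rm J}^{\bullet-1}_{2^r}(\bfk)$ and the corresponding subgroup of extending invariants; together with the direct summand $\Inv(\bA^1,{\rm H}_{2^r})$ this gives the desired decomposition, with injectivity of the sum inherited from the direct sum in \Cref{pr:tame elements}.

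The main obstacle is the residue identification $\partial_{a_1}[\alpha,\Delta\}=[1\}$. Although $[\alpha,\Delta\}$ is known to be tame, the element $[\alpha]$ is itself genuinely wild at $\{a_1=0\}$ and only becomes tame after cupping with $\{\Delta\}$, so the residue must be extracted by an explicit differential-form manipulation analogous to the proof of \Cref{lm:tame alpha}, followed by a careful reduction modulo $a_1$. Verifying that the resulting class is exactly the symbol $[1\}$ — and consequently that the map induced on pairs $(\beta,\delta)$ coincides precisely with the $\mu$ of the fiber product — is the delicate point.
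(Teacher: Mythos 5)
Your proposal is correct and follows essentially the same route as the paper: reduce to the tame-over-$U$ group of \Cref{pr:tame elements}, then impose unramifiedness at $\{a_1=0\}$ by computing $\partial_{a_1}(\beta\cdot\{j\})=12\beta$ and $\partial_{a_1}(\delta\cdot[\alpha,j\})=\mu(\delta)$ via the explicit differential form from \Cref{lm:tame alpha}. Your extra care in passing from the relation $12\beta=\mu(\delta)$ to the defining relation $4\beta=\mu(\delta)$ of ${\rm J}^{\bullet-1}_{2^r}$ via the automorphism $\beta\mapsto 3\beta$ is a small but welcome precision over the paper's shorthand.
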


\begin{proof}
We know from \Cref{pr:tame elements} that
\[ \Inv(\mathcal{M}_{1,1}\smallsetminus\{j=0\},{\rm H}_{2^r})_{{\rm tm}/U}\simeq \Inv(\bA^1,{\rm H}_{2^r})\oplus {\rm H}_{2^r}^{\bullet -1}\cdot\{j\}\oplus \K_2^{\bullet - 2}\cdot [\alpha,j\}.  \]
The cohomological invariants of $\mathcal{M}_{1,1}$ are those elements in the group above whose pullback to $U$ is unramified along $\{a_1=0\}$ (\Cref{cor:invariants are tame and unramified}).

The elements coming from $\bA^1$ are obviously unramified, so we only have to check when the pullback to $U$ of an element of the form $[w,j,\underline{x}\}+[\alpha,j,\underline{y}\}$ is unramified at $\{a_1=0\}$.

We have seen in the proof of \Cref{lm:tame alpha} that can rewrite the pullback of $[\alpha,j\}$ to $U$ as
 \begin{small}
 \[
       \left(\frac{a_1a_2+a_3}{\Delta} \right) \Biggl( a_3(a_1^2a_4+a_3^2) \frac{{\rm d}a_1}{a_1} + a_1a_2(a_1^2+a_3^2)\cdot \frac{{\rm d}a_2}{a_2}  
    + a_3(a_1^2a_4+a_3^2)\cdot \frac{{\rm d}a_3}{a_3} + a_1a_3a_4\cdot \frac{{\rm d}a_4 }{a_4} \Biggr),
\]
\end{small}
from which we deduce by direct computation that the residue at $\{a_1=0\}$ of $[\alpha,j,\underline{y}\}$ is equal to $[(0,\ldots,0,1),\underline{y}\}$.

On the other hand, the pullback of $\{j\}$ to $U$ is equal to $\{a_1^{12}/\Delta\}$, whose residue at $\{a_1=0\}$ is $12$. This implies that the residue of $[w,j,\underline{x}\}$ is equal to $12[w,\underline{x}\}=4[w,\underline{x}\}$. 

We deduce that $[w,j,\underline{x}\}+[\alpha,\underline{y}\}$ is unramified if and only if the element $4[w,\underline{x}\}+[(0,\ldots,0,1),\underline{y}\}$ is equal to zero. This concludes the proof.
\end{proof}

\begin{rmk}
If $\bfk$ has a primitive cubic root of unity, i.e. the polynomial $x^2+x+1$ is not irreducible in $\bfk$, then it follows from \Cref{rmk:J} that the invariants of $\mathcal{M}_{1,1}$ are 
\[ \Inv(\bA^1,{\rm H}_{2^r})\oplus {\rm H}^{\bullet-1}_4(\bfk)\cdot \{\Delta\} \oplus \K^{\bullet-2}_2\cdot [\alpha,\Delta\}. \]
Indeed, for any $\beta\in {\rm H}_{4}^{\bullet-1}(\bfk)$, we have that the pullback of $\beta\cdot\{j\}$ to $U$ is equal to $\beta\cdot (12\{a_1\}-\{\Delta\})=-\beta\cdot\{\Delta\}$, from which follows that $\{\Delta\}$ is a generator for this subgroup.
Similarly, for $\beta\in \K^{\bullet-2}_2(\bfk)$, we have $\beta'\cdot[\alpha,j\} = \beta'\cdot 12[\alpha,a_1\} - \beta'\cdot [\alpha,\Delta\}= \beta'\cdot [\alpha,\Delta\}$.
\end{rmk}

\begin{rmk}\label{rmk:essM11}
\Cref{thm:InvM11H2}, in conjunction with \Cref{prop:essential}, show that when ${\rm char}(\bfk)=2$ we have
\[
{\rm ed}(\Mcal_{1,1}) \geq {\rm ed}_2(\Mcal_{1,1}) \geq 2
\]
as the group ${\rm J}^{1}\left(\overline{\bfk}\right)$ is isomorphic to $\K^0_2\left(\overline{\bfk}\right)=\ZZ/2$.

The essential dimension of $\Mcal_{1,1}$ is known to be $2$ over any field \cite{BrReVi}*{Section 9.5}, so \Cref{thm:inv M11 H} shows that ${\rm char}(\bfk)=2$ is the only characteristic where cohomological invariants provide a sharp lower bound for the essential dimension of $\Mcal_{1,1}$.
\end{rmk}

\subsection{Characteristic 3}\label{sec:char 3}
In this subsection we assume that the base field $\bfk$ has characteristic three, and our goal is to compute $\Inv(\mathcal{M}_{1,1},{\rm H}_{3^r})$.
Recall \cite{Sil}*{Section III.1, pg. 42} that if ${\rm{char}}(\bfk)=3$ then
\[ \mathcal{M}_{1,1} \simeq [\Spec(\bfk[b_2,b_4,b_6,\Delta^{-1}])/\Gm \ltimes \Ga]  \]
where the the group $\Gm\ltimes\Ga$ acts on $\Spec(\bfk[b_2,b_4,b_6,\Delta^{-1}])$ as:
\begin{align*}
    &b_2 \longmapsto u^{-2}b_2, \\
    &b_4 \longmapsto u^{-4}(b_4+rb_2), \\
    &b_6 \longmapsto u^{-6}(b_6-rb_4+r^2b_2+r^3).
\end{align*}
The isomorphism $[U/G]\simeq [\Spec(\bfk[b_2,b_4,b_6,\Delta^{-1}])/\Gm \ltimes \Ga]$ is given by expressing the coordinates $b_i$ in terms of the coordinates $a_i$ via the formulas of \ref{sub:setup}. In particular, we have
\begin{align*}
    \Delta = -b_2^3b_6 + b_2^2b_4^2 + b_4^3,\quad j= b_2^6/\Delta.
\end{align*}

Similarly to what happens in characteristic two, also in characteristic three we have
\[ \mathcal{M}_{1,1}\smallsetminus \{j=0\} \simeq (\bA^1\smallsetminus\{0\})\times\Brm\ZZ/2. \]
This can be proven exactly as in the characteristic two case, using the fact that for every elliptic curve $C$ with $j(C)\neq 0$, the only non-trivial automorphism is the involution. We can also write
\[ \mathcal{M}_{1,1}\smallsetminus \{j=0\} \simeq [\Spec(\bfk[b_2^{\pm 1},b_4,b_6,\Delta^{-1}])/\Gm\ltimes\Ga]. \]
In other terms, the invariant subscheme in $\Spec(\bfk[b_2,b_4,b_6,\Delta^{-1}])$ that parametrizes curves with $j$-invariant equal to zero is the divisor $\{b_2=0\}$.

We can apply \Cref{lm:invariants Zq} to deduce
\[ \Inv(\mathcal{M}_{1,1}\smallsetminus \{j=0\}, {\rm H}_{3^r}) \simeq \Inv(\bA^1\smallsetminus\{0\},{\rm H}_{3^r}). \]
Set $U'\overset{\textnormal{def}}{=} \Spec(\bfk[b_2,b_4,b_6,\Delta^{-1}])$, so that $U'\to \mathcal{M}_{1,1}$ is a $\Gm\ltimes\Ga$-torsor.

\begin{df}
We define $\Inv(\mathcal{M}_{1,1}\smallsetminus\lbrace j=0 \rbrace,{\rm H}_{3^r})_{{\rm tm}/U'}$ as the subgroup of cohomological invariants of $\mathcal{M}_{1,1}\smallsetminus\lbrace j=0 \rbrace$ that, once pulled back to ${\rm H}^{\bullet}_{3^r}(\bfk(U'))$, are tamely ramified on $U'$ or, equivalently, are tamely ramified at $\lbrace b_2=0 \rbrace$. We refer to these elements as \emph{invariants that are tame on} $U'$.
\end{df}
Analogously to what we did in characteristic two, our strategy consists in first finding the invariants of $\mathcal{M}_{1,1}\smallsetminus\{j=0\}$ that are tamely ramified on $U'$, and then checking what are the ones that are unramified using the residue homomorphism. 

Let $\pi:(U'\smallsetminus\{b_2=0\}) \to \bA^1$ be the composition of the quotient map and the $j$-invariant, so that there is an induced morphism 
\[\widetilde{\pi}^*: \widetilde{{\rm H}}_{3^r}(\bfk(j)) \longrightarrow \widetilde{{\rm H}}_{3^r}(\bfk(b_4,b_6)(b_2)). \]
\begin{lm}\label{lm:tame Gm char 3}
The homomorphism $\widetilde{\pi}^*$ is injective and 
\[ \Inv(\mathcal{M}_{1,1} \smallsetminus\{j=0\},{\rm H}_{3^r})_{{\rm tm}/U'} \simeq \Inv(\bA^1,{\rm H}_{3^r})\oplus {\rm H}^{\bullet-1}_{3^r}(\bfk)\cdot\{j\}. \]
\end{lm}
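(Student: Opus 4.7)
The plan is to adapt the argument of \Cref{lm:tame Gm} to characteristic three. The lemma has two parts -- injectivity of $\widetilde\pi^*$ and the explicit description of tame invariants -- and the second follows from the first via \Cref{cor:inv A1}, so the main work is injectivity.

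First I would reduce to $r=1$ using \Cref{prop:wild gen}: a wildly ramified $\alpha\in{\rm H}^{n+1}_{3^r}(\bfk(j))$ is, modulo tame elements, of the form $\iota_s\alpha'$ with $\pi_1\alpha'\in{\rm H}^{n+1}_3(\bfk(j))$ wildly ramified, so if $\widetilde\pi^*$ is injective on the $r=1$ part then it is injective in general. Given a wildly ramified $\beta\in{\rm H}^{n+1}_3(\bfk(j))$, by \Cref{lm:simple form} I would write $\beta\equiv\sum_{i=1}^{s}j^{-i}\varphi_i+j^{-i}(dj/j)\wedge\varphi'_i$ modulo tame elements, with $s$ the top nonzero index, $\varphi'_i=0$ if $3\nmid i$, and $(\varphi_s,\varphi'_s)$ not closed when $3\mid s$.

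The core computation is the Laurent expansion of $\pi^*j^{-s}$ and $\pi^*(dj/j)$ over the residue field $K=\bfk(b_4,b_6)$ of $\{b_2=0\}$ with uniformizer $t=b_2$. One has $\pi^*j=t^6/\Delta$ with $\Delta=b_4^3+b_4^2t^2-b_6t^3$, so the valuation of $\pi^*j$ at $t$ is $6$; a useful char-three simplification is $\pi^*(dj/j)=-d\Delta/\Delta$ (since $6\equiv 0 \pmod 3$), whose leading Laurent expansion is $-(2t/b_4)dt-(2t^2/b_4^2)db_4+O(t^3)$. Then a case analysis on the divisibility of $s$ by $3$ and the closedness of $\varphi_s$ shows that in each case $\pi^*\beta$ has a nonzero image in some ${\rm U}_k/{\rm U}_{k-1}$ of the Izhboldin filtration (\Cref{thm:wild pres}) with $k>0$. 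The straightforward cases are when $\varphi_s$ is not closed: then the leading term $b_4^{3s}\varphi_s$ is nonzero in ${\rm U}_{6s}/{\rm U}_{6s-1}$. If $\varphi_s$ is closed and $3\nmid s$, the next term of the binomial expansion of $\Delta^s$ gives the nonzero image $sb_4^{3s-1}\varphi_s$ in ${\rm U}_{6s-2}/{\rm U}_{6s-3}\simeq\Omega^n_K$. The main obstacle is the subcase $3\mid s$ with $\varphi_s$ closed (whence $\varphi'_s$ is not closed by \Cref{lm:simple form}): here one must combine the $dt$ and $db_4$ contributions from $d\Delta/\Delta$, use the identity $t^{-k}(dt/t)\wedge\psi\equiv (1/k)t^{-k}d\psi$ in ${\rm H}^{n+1}_3$ (valid for $3\nmid k$), and verify that after the resulting cancellations the image in ${\rm U}_{6s-2}/{\rm U}_{6s-3}$ reduces to $b_4^{3s-1}d\varphi'_s$, nonzero by hypothesis.

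For the second assertion, \Cref{lm:invariants Zq} identifies $\Inv(\mathcal{M}_{1,1}\smallsetminus\{j=0\},{\rm H}_{3^r})$ with $\Inv(\bA^1\smallsetminus\{0\},{\rm H}_{3^r})$; by the injectivity of $\widetilde\pi^*$, being tame on $U'$ is equivalent to being tame at $\{j=0\}$ as an invariant of $\bA^1\smallsetminus\{0\}$. The exact sequence of \Cref{cor:inv A1} then identifies the kernel of the projection to the wild part at $\{j=0\}$ with $\Inv(\bA^1,{\rm H}_{3^r})\oplus{\rm H}^{\bullet-1}_{3^r}(\bfk)\cdot\{j\}$, giving the desired description.
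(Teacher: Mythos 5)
Your proposal is correct and follows essentially the same route as the paper: reduction to $r=1$ via \Cref{prop:wild gen}, the standard form from \Cref{lm:simple form}, the Laurent expansion of $j$ in $b_2$ with the case analysis on $3\mid s$ and closedness of $\varphi_s,\varphi'_s$ through the filtration of \Cref{thm:wild pres}, and then \Cref{cor:inv A1} for the identification of the tame subgroup. Your computation in the hardest case ($3\mid s$, $\varphi_s$ closed) lands on $b_4^{3s-1}{\rm d}\varphi'_s$ in ${\rm U}_{6s-2}/{\rm U}_{6s-3}$, which matches the paper's argument (and in fact keeps track of the power of $b_4$ more carefully than the printed formula there).
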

\begin{proof}
The proof adopts the same strategy of \Cref{lm:tame Gm}, and the computations are quite similar.
First observe that
\[ j^{-1} = -\frac{b_6}{b_2^3} + \frac{b_4^2}{b_2^4} + \frac{b_4^3}{b_2^6},\quad dj^{-1} = \left(\frac{2b_6}{b_2^3}\right)\frac{{\rm d}b_6}{b_6} + \left(\frac{2b_4^2}{b_2^4}\right)\frac{{\rm d}b_4}{b_4} + \left(\frac{2b_4^2}{b_2^4}\right)\frac{{\rm d}b_2}{b_2} \]
To prove that $\widetilde{\pi}^*$ is injective, we can reduce to the case $r=1$ using exactly the same strategy adopted in the proof of \Cref{lm:tame Gm}. 
Let $\beta$ be a wildly ramified element of ${\rm H}_{3}^{n+1}(\bfk(j))$. We can apply \Cref{lm:simple form} to write it down, up to a tamely ramified element, as
\begin{equation}\label{eq:simple form 3} \beta = \sum_{i\geq 1} j^{-i}\wedge\varphi_i + j^{-i}\frac{{\rm d}j^{-1}}{j^{-1}}\wedge\varphi_i', \end{equation}
where $\varphi_i$ and $\varphi_i'$ come respectively from $\Omega^n_{\bfk}$ and $\Omega^{n-1}_{\bfk}$. Moreover, we can assume that if $3\nmid i$ then $\varphi_i\neq 0$ and $\varphi'_i=0$, whether if $3\mid i$ then at least one of $\varphi_i$ and $\varphi'_i$ is not closed.

Let $s$ be the highest power of $j^{-1}$ appearing in \eqref{eq:simple form 3}. Suppose that $3\mid s$, and that $\varphi_s$ is not closed.
Then in ${\rm U}_{6s}/{\rm U}_{6s-1}$ we have $\pi^*\beta = b_2^{-6s}(b_4^{3s}\varphi_s)$, hence if $b_4^{3s}\varphi_s$ is not closed, we can conclude by \Cref{thm:wild pres} that $\pi^*\beta$ is non-zero in ${\rm U}_{6s}/{\rm U}_{6s-1}$. We have ${\rm d}(b_4^{3s}\phi_s)=b_4^{3s}{\rm d}\phi_s \neq 0$ because ${\rm d}\phi_s\neq 0$ by hypothesis.

Suppose now that $3\mid s$, the element $\varphi_s=0$ but $\varphi'_s$ is not closed. Then in ${\rm U}_{6s-2}/{\rm U}_{6s-3}$ we have
\begin{equation}\label{eq:char 3 s even 2}
    b_2^{-(6s-2)}(2b_4^2\frac{{\rm d}b_4}{b_4}\wedge\phi'_s) + b_2^{-(6s-2)}(2b_4^2\frac{{\rm d}b_2}{b_2}\wedge\phi'_s).
\end{equation}
Observe that in ${\rm H}^{n+1}_3(\bfk(b_2,b_4,b_6))$ we have
\begin{equation*}
    0={\rm d}(b_2^{-(6s-2)}b_4^2\varphi'_s)=b_2^{-(6s-2)}(2b_4^2\frac{{\rm d}b_4}{b_4}\wedge\phi'_s) + b_2^{-(6s-2)}(2b_4^2\frac{{\rm d}b_2}{b_2}\wedge\phi'_s) + b_2^{-(6s-2)}(b_4^2{\rm d}\phi'_s)
\end{equation*}
hence \eqref{eq:char 3 s even 2} is equal to $-b_2^{-(6s-2)}(b_4^2{\rm d}\phi'_s)$, and the latter is non-zero in ${\rm U}_{6s-2}/{\rm U}_{6s-3}$ by \Cref{thm:wild pres}, because $b_4^2{\rm d}\phi'_s\neq 0$.

Finally, suppose $3\nmid s$, so that we only know that $\varphi_s\neq 0$. The highest term in the pullback of $\beta$ is $b_2^{-6s}(b_4^{3s}\varphi_s)$. If $\varphi_s$ is not closed, then we can conclude as before, so suppose the contrary. In this case, applying \Cref{cor:lifting} we see that this term can be rewritten as $b_2^{-3s}\varphi_s$, so that now $\pi^*\beta$ is equal in ${\rm U}_{6s-2}/{\rm U}_{6s-3}$ to $b_2^{-6s+2}(sb_4^{3s-1}\varphi_s)$. As $-6s+2$ is not divisible by $3$, the form $b_4^{3s-1}\varphi_s$ is non-zero and $s$ is invertible, we can again conclude that the pullback of $\beta$ is non-zero in ${\rm U}_{6s-2}/{\rm U}_{6s-3}$.

Putting all together, we proved that $\widetilde{\pi}^*$ is injective.

From \Cref{cor:inv A1} we have that the invariants of $\bA^1\smallsetminus\{j=0\}$ fits into the exact sequence
\[ 0\to \Inv(\bA^1,{\rm H}_{3^r})\oplus {\rm H}_{3^r}^{\bullet-1}\cdot\{j\} \to \Inv(\bA^1\smallsetminus\{0\},{\rm H}_{3^r}) \to \widetilde{{\rm H}}^{\bullet}_{3^r}(\bfk(j)) \to 0. \]
Let $\pi:(U'\smallsetminus\{b_2=0\}) \to \bA^1$ be the composition of the quotient map and the $j$-invariant. Then 
\[ \pi^*[w,j,\underline{x}\}= [w,b_2^6/\Delta,\underline{x}\} = [w,b_2^6,\underline{x}\} - [w,\Delta,\underline{x}\} \]
from which we deduce that $\pi^*(\delta\cdot\{j\})$ is tamely ramified at $\{b_2=0\}$ for every $\delta\in {\rm H}_{3^r}^{\bullet-1}(\bfk)$.
The invariants of $\bA^1$ are obviously tamely ramified on $U'$. This, together with the injectivity of $\widetilde{\pi}^*$, gives the claimed description of the subgroup of invariants that are tame on $U'$.
\end{proof}

\begin{thm}\label{thm:InvM11H3}
Suppose that the base field $\bfk$ has characteristic three. Then we have
\[ \Inv(\mathcal{M}_{1,1},{\rm H}_{3^r}) \simeq \Inv(\bA^1,{\rm H}_{3^r}) \oplus {\rm H}_3^{\bullet-1}(\bfk)\cdot \{\Delta\}. \]
\end{thm}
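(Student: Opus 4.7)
The plan is to mirror the strategy used in characteristic $2$: start from the description of tame-on-$U'$ invariants of $\mathcal{M}_{1,1}\setminus\{j=0\}$ furnished by \Cref{lm:tame Gm char 3}, and then cut out the subgroup of elements whose pullback to $U'$ is also \emph{un}ramified along the divisor $\{b_2=0\}$. By \Cref{prop:generic} the pullback $\Inv(\mathcal{M}_{1,1},{\rm H}_{3^r})\to \Inv(\mathcal{M}_{1,1}\smallsetminus\{j=0\},{\rm H}_{3^r})$ is injective, and by \Cref{cor:invariants are tame and unramified} the image consists of those invariants whose pullback to $\bfk(U')$ is tame and unramified at every codimension one point of $U'$. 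Since $U'\smallsetminus\{b_2=0\}\to\mathcal{M}_{1,1}\smallsetminus\{j=0\}$ is the restriction of $U'\to\mathcal{M}_{1,1}$, which is a $(\Gm\ltimes\Ga)$-torsor and hence smooth-Nisnevich, the only new condition relative to \Cref{lm:tame Gm char 3} is being unramified at $\{b_2=0\}$.

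By \Cref{lm:tame Gm char 3} any element of $\Inv(\mathcal{M}_{1,1}\setminus\{j=0\},{\rm H}_{3^r})_{{\rm tm}/U'}$ can uniquely be written as $\beta'+\delta\cdot\{j\}$ with $\beta'\in\Inv(\bA^1,{\rm H}_{3^r})$ and $\delta\in{\rm H}^{\bullet-1}_{3^r}(\bfk)$. The pullback of $\beta'$ to $U'$ is automatically unramified along $\{b_2=0\}$, as it factors through the cohomology of the scheme $\bA^1_\bfk$ via the $j$-invariant map. For the remaining piece I use the identity
\[
\pi^*(\delta\cdot\{j\}) \;=\; \delta\cdot\{b_2^6/\Delta\}\;=\; 6\,\delta\cdot\{b_2\}-\delta\cdot\{\Delta\}
\]
in ${\rm H}^\bullet_{3^r}(\bfk(U'))$. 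Since $\Delta=-b_2^3b_6+b_2^2b_4^2+b_4^3$ restricts to $b_4^3\in \bfk(b_4,b_6)^*$ on $\{b_2=0\}$, the residue map of \Cref{prop:DVR} annihilates $\delta\cdot\{\Delta\}$, so
\[
\partial_{b_2}\bigl(\pi^*(\delta\cdot\{j\})\bigr)\;=\;6\delta\in {\rm H}_{3^r}^{\bullet-1}(\bfk).
\]
Because $2$ is a unit in $\ZZ/3^r$, the vanishing $6\delta=0$ is equivalent to $3\delta=0$, and by \Cref{prop:ptors} the $3$-torsion in ${\rm H}^{\bullet-1}_{3^r}(\bfk)$ is precisely the image of ${\rm H}^{\bullet-1}_3(\bfk)$ under $\iota_{r-1}$. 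So the invariants of $\mathcal{M}_{1,1}\setminus\{j=0\}$ that extend to $\mathcal{M}_{1,1}$ form exactly the subgroup $\Inv(\bA^1,{\rm H}_{3^r})\oplus {\rm H}^{\bullet-1}_3(\bfk)\cdot\{j\}$.

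It remains to rewrite the ${\rm H}^{\bullet-1}_3(\bfk)\cdot\{j\}$ summand in the stated form ${\rm H}^{\bullet-1}_3(\bfk)\cdot\{\Delta\}$. For any $\delta\in{\rm H}^{\bullet-1}_3(\bfk)$ we have $3\delta=0$, so $6\delta\cdot\{b_2\}=0$ in ${\rm H}^\bullet_{3^r}(\bfk(U'))$, and the displayed identity above gives $\pi^*(\delta\cdot\{j\})=-\delta\cdot\{\Delta\}$. The symbol $\{\Delta\}$ is defined globally on $U'$ since $\Delta$ is a unit; under the $\Gm\ltimes\Ga$-action $\Delta$ has weight $-12$, so $\delta\cdot\{\Delta\}$ is invariant under the group action up to a term $-12\delta\cdot\{u\}$, which vanishes since $3\delta=0$ implies $12\delta=0$. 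Hence $\delta\cdot\{\Delta\}$ descends to an invariant of $\mathcal{M}_{1,1}$ and coincides (up to sign) with $\delta\cdot\{j\}$ on the open substack, completing the identification
\[
\Inv(\mathcal{M}_{1,1},{\rm H}_{3^r})\;\simeq\;\Inv(\bA^1,{\rm H}_{3^r})\oplus {\rm H}^{\bullet-1}_3(\bfk)\cdot\{\Delta\}.
\]

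The only genuinely delicate point is the residue computation at $\{b_2=0\}$, which must be carried out after verifying (as in the proof of \Cref{lm:tame alpha}) that the relevant cross-terms really do extend to $U'$ as regular differential forms; once that is set up, the two torsion identities $6\delta=0\iff 3\delta=0$ and $12\delta=0$ when $3\delta=0$ immediately force $\delta\in{\rm H}^{\bullet-1}_3(\bfk)$ and guarantee descent of $\delta\cdot\{\Delta\}$ to $\mathcal{M}_{1,1}$.
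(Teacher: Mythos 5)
Your proof is correct and follows essentially the same route as the paper's: reduce via \Cref{lm:tame Gm char 3} to checking unramifiedness at $\{b_2=0\}$ on $U'$, compute the residue of $\delta\cdot\{j\}=\delta\cdot\{b_2^6/\Delta\}$ to be $6\delta$, invoke \Cref{prop:ptors} to identify the $3$-torsion with ${\rm H}_3^{\bullet-1}(\bfk)$, and rewrite $\{j\}=-\{\Delta\}$. (One cosmetic slip: the $3$-torsion embedding of \Cref{prop:ptors} is $\iota_1$, not $\iota_{r-1}$.)
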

\begin{proof}
We know from \Cref{lm:tame Gm char 3} that the cohomological invariants of $\mathcal{M}_{1,1}$ correspond to the unramified elements in 
\[ \Inv(\bA^1,{\rm H}_{3^r})\oplus {\rm H}^{\bullet-1}_{3^r}(\bfk)\cdot\{j\}. \]
Of course, the invariants coming from $\bA^1$ are unramified, so that we only have to check the invariants of the form $\delta\cdot\{j\}$, where $\delta\in {\rm H}^{\bullet-1}_{3^r}$. 

This can be done on the cover $U'=\Spec(\bfk[b_2,b_4,b_6,\Delta^{-1}])\to\mathcal{M}_{1,1}$, where we only have to check the unramifiedness at $\{b_2=0\}$. When we pullback $\{j\}$ to $U'$ we get $\{ b_2^6/\Delta \}$, so that the residue of $\delta\cdot\{j\}$ at $\{b_2=0\}$ is equal to $6\delta$, which is zero if and only if $\delta$ is of $3$-torsion.

By \Cref{prop:ptors} the elements of $3$-torsion in ${\rm H}_{3^r}^{\bullet-1}(\bfk)$ are those that belongs to ${\rm H}_{3}^{\bullet-1}(\bfk)$. This, together with the fact that $\{j\}=\{b_2^6/\Delta\}=-\{\Delta\}$ in $\K_3^1(\bfk)$, concludes the proof.
\end{proof}

\subsection{Characteristic $p>3$}\label{sec:char p}
In this section we assume that the base field $\bfk$ has characteristic $p>3$. Our goal is to compute the cohomological invariants of $\mathcal{M}_{1,1}$ with coefficients in ${\rm H}_{p^r}$.

Recall \cite{Sil}*{Chapter III, Section 1} that with these hypotheses on $\bfk$ the stack $\mathcal{M}_{1,1}$ has the following presentation as a quotient stack
\[ \mathcal{M}_{1,1} \simeq [ \Spec(\bfk[c_4,c_6,\Delta^{-1}])/\Gm ], \]
where the multiplicative group acts by $(c_4,c_6)\mapsto (u^{-4}c_4,u^{-6}c_6)$ and $\Delta=\frac{c_4^3-c_6^2}{1728}$. The $j$-invariant in this case is $j=1728 \frac{c_4^3}{c_4^3-c_6^2}$, and we have 
\[\mathcal{M}_{1,1}\smallsetminus \{j=0,1728\}\simeq(\bA^1\smallsetminus\{0,1728\})\times\Brm\ZZ/2.\]
As we are in characteristic $p>3$, we can apply \Cref{lm:invariants Zq} to deduce
\[ \Inv(\mathcal{M}_{1,1}\smallsetminus\{j=0,1728\},{\rm H}_{p^r}) \simeq \Inv(\bA^1\smallsetminus\{0,1728\},{\rm H}_{p^r}). \]
Using \Cref{thm:tame over P1} we immediately deduce that there is a short exact sequence
\[ 0 \longrightarrow {\rm N}^{\bullet} \longrightarrow \Inv(\bA^1\smallsetminus\{0,1728\},{\rm H}_{p^r}) \longrightarrow {\rm P}^{\bullet} \longrightarrow 0 \]
where:
\begin{align*}
    {\rm N}^{\bullet} &\overset{\textnormal{def}}{=} \Inv(\bA^1,{\rm H}_{p^r})\oplus {\rm H}^{\bullet-1}_{p^r}(\bfk)\cdot\{j\}\oplus {\rm H}_{p^r}^{\bullet-1}(\bfk)\cdot\{j-1728\}, \\
    {\rm P}^{\bullet} &\overset{\textnormal{def}}{=} \widetilde{{\rm H}}_{p^r}^{\bullet}(\bfk(j))\oplus \widetilde{{\rm H}}_{p^r}^{\bullet}(\bfk(j-1728)).
\end{align*}
Set $U''\overset{\textnormal{def}}{=}\Spec(\bfk[c_4,c_6,\Delta^{-1}])$. As before, we introduce the definition of elements \emph{tame on }$U''$.
\begin{df}
We define $\Inv(\mathcal{M}_{1,1}\smallsetminus\lbrace j=0,1728 \rbrace,{\rm H}_{p^r})_{{\rm tm}/U''}$ as the subgroup of cohomological invariants of $\mathcal{M}_{1,1}\smallsetminus\lbrace j=0,1728 \rbrace$ that, once pulled back to ${\rm H}^{\bullet}_{p^r}(\bfk(U''))$, are tamely ramified on $U''$. We refer to these elements as \emph{invariants that are tame on} $U''$.
\end{df}
To compute the invariants of $\mathcal{M}_{1,1}$, we first determine which invariants are tame on $U''$. For this, let us denote $\pi:U''\to \bA^1$ the composition of the quotient map together with the morphism given by the $j$-invariant. Observe that the preimage of $0$ in $U''$ corresponds to the divisor $\{c_4=0\}$, and the preimage of $1728$ corresponds to $\{ c_6 = 0 \}$, so that we have pullback homomorphisms
\[ \widetilde{\pi}_{1728}^*: \widetilde{{\rm H}}_{p^r}^{\bullet}(\bfk(j-1728)) \to \widetilde{{\rm H}}_{p^r}^{\bullet}(\bfk(c_4)(c_6)), \quad \widetilde{\pi}_0^*: \widetilde{{\rm H}}_{p^r}^{\bullet}(\bfk(j)) \to \widetilde{{\rm H}}_{p^r}^{\bullet}(\bfk(c_6)(c_4)). \]
We use these maps to determine the invariants that are tame on $U''$.
\begin{lm}\label{lm:tame Gm char p}
The pullback homomorphisms $\widetilde{\pi}_0$ and $\widetilde{\pi}_{1728}$ are both injective and
\begin{align*} \Inv(\mathcal{M}_{1,1}\smallsetminus\lbrace j=0,&1728 \rbrace,{\rm H}_{p^r})_{{\rm tm}/U''} \simeq \\
&\simeq\Inv(\bA^1,{\rm H}_{p^r})\oplus {\rm H}^{\bullet-1}_{p^r}(\bfk)\cdot\{j\}\oplus {\rm H}_{p^r}^{\bullet-1}(\bfk)\cdot\{j-1728\}. \end{align*}
\end{lm}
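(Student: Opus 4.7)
The plan is to follow the two-step pattern of \Cref{lm:tame Gm} and \Cref{lm:tame Gm char 3}: first verify that the wild-pullback maps $\widetilde{\pi}_0^*$ and $\widetilde{\pi}_{1728}^*$ are injective, and then identify the tame-on-$U''$ subgroup using the exact sequence from ${\rm N}^\bullet$ into $\Inv(\bA^1\smallsetminus\{0,1728\},{\rm H}_{p^r})$ that precedes the statement.

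For the injectivity of $\widetilde{\pi}_0^*$, I would first reduce to $r=1$ via \Cref{prop:wild gen} and \Cref{lm:truncation}, exactly as in \Cref{lm:tame Gm}. Given a wildly ramified element $\beta\in{\rm H}^{n+1}_p(\bfk(j))$, \Cref{lm:simple form} lets me write
\[
\beta=\sum_{i\geq 1}j^{-i}\varphi_i+j^{-i}\frac{{\rm d}j^{-1}}{j^{-1}}\wedge\varphi'_i\pmod{\textnormal{tame}}
\]
with $\varphi'_i=0$ when $p\nmid i$ and $(\varphi_i,\varphi'_i)$ non-closed when $p\mid i$. On $U''$ one has $j=c_4^3/\Delta$, so $j^{-1}=\Delta/c_4^3$; letting $v$ be the valuation at $\{c_4=0\}$, the element $\Delta$ is a unit (its image in $\bfk(c_6)$ is $-c_6^2/1728$) and $v(j^{-1})=-3$, so $\pi_0^*(j^{-s})\in U_{3s}\smallsetminus U_{3s-1}$. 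Because $p>3$ we have $p\mid s\iff p\mid 3s$, so the case analysis of \Cref{lm:tame Gm char 3} transposes essentially verbatim: when $p\nmid s$ the leading coefficient $(-c_6^2/1728)^s\varphi_s\in\Omega^n_{\bfk(c_6)}$ is non-zero whenever $\varphi_s\neq 0$; when $p\mid s$ the same Leibniz-rule computation as in \eqref{eq:char 3 s even 2} identifies a non-zero class in $U_{3s-1}/U_{3s-2}$ (using \Cref{cor:lifting} to rule out the closed sub-case and ${\rm d}((-c_6^2/1728)^s\varphi'_s)$ to detect non-closedness over $\bfk(c_6)$). The injectivity of $\widetilde{\pi}_{1728}^*$ is established identically, using the identity $j-1728=c_6^2/\Delta$ (from $1728\Delta=c_4^3-c_6^2$), the valuation along $\{c_6=0\}$ with residue field $\bfk(c_4)$, and the fact that $\gcd(p,2)=1$.

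For the second step, the invariants pulled back from $\bA^1$ are visibly tame on $U''$; for the generators $\{j\}$ and $\{j-1728\}$, one computes
\[
\pi^*\{j\}=3\{c_4\}-\{\Delta\},\qquad \pi^*\{j-1728\}=2\{c_6\}-\{\Delta\}
\]
in $\K^1_{p^r}(\bfk(c_4,c_6))$, and since $\Delta\in\mathcal{O}_{U''}^\times$ and $c_4,c_6$ are uniformizers at the respective divisors, any product $\delta\cdot\pi^*\{j\}$ or $\delta\cdot\pi^*\{j-1728\}$ with $\delta\in{\rm H}^{\bullet-1}_{p^r}(\bfk)$ is a sum of symbols whose Witt coordinates lie in $\mathcal{O}_v$, hence tame. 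Combined with the injectivity from Step~1, the exact sequence
\[
0\to{\rm N}^\bullet\to\Inv(\bA^1\smallsetminus\{0,1728\},{\rm H}_{p^r})\to{\rm P}^\bullet\to 0
\]
forces the tame-on-$U''$ subgroup to coincide with ${\rm N}^\bullet$, as claimed. The main obstacle is the bookkeeping in the subcase analysis of Step~1, but the hypothesis $p>3$ eliminates the pathologies that made the characteristic $2$ and $3$ arguments delicate, so the proof amounts to a clean transposition of \Cref{lm:tame Gm char 3}.
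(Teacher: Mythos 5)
Your proposal is essentially the paper's own proof: reduction to $r=1$ via \Cref{prop:wild gen}, the normal form of \Cref{lm:simple form}, the valuation computations $v_{c_4}(j^{-1})=-3$ and $v_{c_6}((j-1728)^{-1})=-2$ with $\Delta$ a unit along both divisors, the observation that $p>3$ gives $p\mid s\iff p\mid 3s$ (resp.\ $2s$), and the identification of the tame subgroup from $\pi^*\{j\}=3\{c_4\}-\{\Delta\}$, $\pi^*\{j-1728\}=2\{c_6\}-\{\Delta\}$ together with the exact sequence defining ${\rm N}^\bullet$.

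One bookkeeping slip in the $p\mid s$ subcase: the relevant class does not descend to ${\rm U}_{3s-1}/{\rm U}_{3s-2}$. Every term of $\pi_0^*\bigl(j^{-s}\tfrac{{\rm d}j^{-1}}{j^{-1}}\wedge\varphi'_s\bigr)$, including the one carrying $\tfrac{{\rm d}c_4}{c_4}$, has coefficient of valuation exactly $-3s$, so the whole expression lives in ${\rm U}_{3s}$; and since $p\mid 3s$, the graded piece ${\rm U}_{3s}/{\rm U}_{3s-1}\simeq \Omega^n_{\bfk(c_6)}/Z^n\oplus\Omega^{n-1}_{\bfk(c_6)}/Z^{n-1}$ already contains the second summand that detects $3c_6^{2s}\varphi'_s$. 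The paper therefore argues entirely in ${\rm U}_{3s}/{\rm U}_{3s-1}$: if $\varphi'_s$ is not closed then $d(c_6^{2s}\varphi'_s)=c_6^{2s}\,d\varphi'_s\neq 0$ (using $2s\equiv 0\bmod p$), so the second entry of the pair is non-zero; if $\varphi'_s$ is closed then $\varphi_s$ is not and the first entry is non-zero. The descent to a lower filtration step that you import from \Cref{lm:tame Gm char 3} was forced there by the fact that $3\mid 6s$ for \emph{every} $s$, which has no analogue when $p>3$; here it is neither needed nor available.
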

\begin{proof}
Using exactly the same argument adopted in the proof of \Cref{lm:tame Gm}, we reduce to the case $r=1$. First we deal with the morphism $\widetilde{\pi}_0^*$. Using \Cref{lm:simple form}, given a wildly ramified element $\beta$ in ${\rm H}_{p^r}^{\bullet}(\bfk(j))$ we can rewrite it as
\begin{equation}\label{eq:simple form p} \beta = \sum_{i\geq 1} j^{-i}\wedge\varphi_i + j^{-i}\frac{{\rm d}j^{-1}}{j^{-1}}\wedge\varphi_i', \end{equation}
where $\varphi_i$ and $\varphi_i'$ come respectively from $\Omega^n_{\bfk}$ and $\Omega^{n-1}_{\bfk}$. Moreover, we can assume that if $p\nmid i$ then $\varphi_i\neq 0$ and $\varphi'_i=0$, whether if $p\mid i$ then at least one of $\varphi_i$ and $\varphi'_i$ is not closed.
Observe that
\[ j^{-1} = (1728)^{-1}\left( 1 - \frac{c_6^2}{c_4^3} \right), \quad {\rm d}j^{-1} = -(1728)^{-1}\left(\left(\frac{2c_6^2}{c_4^3}\right)\frac{{\rm d}c_6}{c_6} - \left(\frac{3c_6^2}{c_4^3}\right)\frac{{\rm d}c_4}{c_4}\right). \]

Let $s$ be the highest power of $j^{-1}$ appearing in \eqref{eq:simple form p} and suppose $p\mid s$, then in ${\rm U}_{3s}/{\rm U}_{3s-1}$ we have
\[\pi_0^*\beta = (1728)^{-s} c_4^{-3s} (c_6^{2s}(\varphi_s-2{\rm d}c_6\wedge \varphi'_s)) + (-1728)^{-s}c_4^{-3s}(3c_6^{2s}\frac{{\rm d}c_4}{c_4}\wedge\varphi'_s) \]
which by \Cref{thm:wild pres} is non-zero if and only if the pair
\begin{equation*}\label{eq:pair} ((1728)^{-s}c_6^{2s}(\varphi_s-2{\rm d}c_6\wedge \varphi'_s),(1728)^{-s}3c_6^{2s}\varphi'_s) \in \Omega^n/Z^n \oplus \Omega^{n-1}/Z^{n-1} \end{equation*}
is non-zero. We know by hypothesis that $\varphi_s$ and $\varphi'_s$ cannot be both closed. If $\varphi'_s$ is not closed, then we deduce that the element in \eqref{eq:pair} is not zero, because the second entry is not zero. Suppose that $\varphi'_s$ is closed: then the first entry of \eqref{eq:pair} is equal to $(1728)^s c_6^{2s}\varphi_s$, and we have ${\rm d}((1728)^s c_6^{2s}\varphi_s)=(1728)^s c_6^{2s}{\rm d}\varphi_s\neq 0$ by hypothesis. We have shown that the pullback of $\beta$ is not zero in ${\rm U}_{3s}/{\rm U}_{3s-1}$ when $p\mid s$.

If $p\nmid s$, then $p\nmid 3s$, the form $\varphi_s$ is not zero and again by \Cref{thm:wild pres} we can conclude that in ${\rm U}_{3s}/{\rm U}_{3s-1}$ the element $\pi_0^*\beta = (1728)^{-s} c_4^{-3s} (c_6^{2s}\varphi_s)$ is not zero in ${\rm U}_{3s}/{\rm U}_{3s-1}$ because $(1728)^s c_6^{2s}\varphi_s\neq 0$. This implies that $\widetilde{\pi}^*_0\beta\neq 0$, as claimed.

We deal with $\widetilde{\pi}^*_{1728}$ basically in the same way. Observe that
\[ (j-1728)^{-1} = (1728)^{-1}\left( \frac{c_4^3}{c_6^2} - 1 \right), \quad dj^{-1}=(1728)^{-1}\frac{c_4^3}{c_6^2}\left(3\frac{{\rm d}c_4}{c_4}-2\frac{{\rm d}c_6}{c_6}\right). \]
Write
\begin{equation}\label{eq:simple form p tris} \beta = \sum_{i\geq 1} (j-1728)^{-i}\varphi_i + (j-1728)^{-i}\frac{{\rm d}(j-1728)^{-1}}{(j-1728)^{-1}}\wedge\varphi_i', \end{equation}
and pull it back to $U''$.

Let $s$ be the highest power of $(j-1728)^{-1}$ appearing in \eqref{eq:simple form p tris}, then in ${\rm U}_{2s}/{\rm U}_{2s-1}$ we have
\[\pi_{1728}^*\beta = 1728^{-s} c_6^{-2s} (c_4^{3s}(\varphi_s+3{\rm d}c_4\wedge \varphi'_s)) - (1728)^{-s}c_6^{-2s}(2c_4^{3s}\frac{{\rm d}c_6}{c_6}\wedge\varphi'_s) \]
If $p\mid s$, by \Cref{thm:wild pres} the element above is non-zero if and only if the pair
\begin{equation*}\label{eq:pair bis} ((1728)^s c_4^{3s}(\varphi_s+3{\rm d}c_4\wedge \varphi'_s),-(1728)^s 2c_6^{2s}\varphi'_s) \in \Omega^n/Z^n \oplus \Omega^{n-1}/Z^{n-1} \end{equation*}
is non-zero. Arguing exactly as in the case of $\widetilde{\pi}^*_0$, we can conclude that $\pi_{1728}^*\beta\neq 0$ in ${\rm U}_{2s}/{\rm U}_{2s-1}$.
If $p\nmid s$, then $p\nmid 2s$, the form $\varphi_s$ is not zero and again by \Cref{thm:wild pres} we can conclude that $\pi_{1728}^*\beta\neq 0$ in ${\rm U}_{2s}/{\rm U}_{2s-1}$. This implies that $\widetilde{\pi}^*_{1728}\beta\neq 0$, as claimed.
\end{proof}

We are ready to compute the cohomological invariants of $\mathcal{M}_{1,1}$ with coefficients in ${\rm H}_{p^r}$.
\begin{thm}\label{thm:InvM11Hp}
Suppose that the base field $\bfk$ has characteristic $p>3$. Then we have
\[ \Inv(\mathcal{M}_{1,1},{\rm H}_{p^r}) \simeq \Inv(\bA^1,{\rm H}_{p^r}). \]
\end{thm}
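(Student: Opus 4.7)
The plan is to proceed in parallel with the proofs of \Cref{thm:InvM11H2} and especially \Cref{thm:InvM11H3}: we already know by \Cref{lm:tame Gm char p} that the subgroup of invariants of $\Mcal_{1,1}\smallsetminus\{j=0,1728\}$ that are tame on $U''=\Spec(\bfk[c_4,c_6,\Delta^{-1}])$ is
\[
\Inv(\bA^1,{\rm H}_{p^r})\oplus {\rm H}^{\bullet-1}_{p^r}(\bfk)\cdot\{j\}\oplus {\rm H}^{\bullet-1}_{p^r}(\bfk)\cdot\{j-1728\},
\]
and by \Cref{cor:invariants are tame and unramified} the cohomological invariants of $\Mcal_{1,1}$ correspond to the elements of this group whose pullback to $\bfk(U'')$ is unramified at every codimension one point of $U''$. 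The only such codimension one points that are not already unramified loci (i.e.\ lying in the open subscheme corresponding to the $j\neq 0,1728$ locus) are the divisors $\{c_4=0\}$ and $\{c_6=0\}$, so it remains to compute the two residues.

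First I would record the identities
\[
\{j\}=\{1728 c_4^3/\Delta\}=3\{c_4\}-\{\Delta/1728\},\qquad \{j-1728\}=\{c_6^2/\Delta\}=2\{c_6\}-\{\Delta\}
\]
in $\K^1_{p^r}(\bfk(U''))$, which hold because $1728\Delta=c_4^3-c_6^2$ on $U''$. Since $\Delta$ is invertible on $U''$, at the generic point of $\{c_4=0\}$ the element $\Delta$ is a unit (as $c_6\neq 0$ there) and similarly at the generic point of $\{c_6=0\}$ the element $c_4$ is a unit; using the standard residue formula for Milnor $\K$-theory we therefore find
\begin{align*}
\partial_{\{c_4=0\}}(\delta_1\cdot\{j\}+\delta_2\cdot\{j-1728\}) &= 3\delta_1,\\
\partial_{\{c_6=0\}}(\delta_1\cdot\{j\}+\delta_2\cdot\{j-1728\}) &= 2\delta_2.
\end{align*}
The invariants coming from $\bA^1$ are unramified everywhere, so these are the only obstructions.

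The key and final observation, which is where the hypothesis $p>3$ enters, is that both $2$ and $3$ are invertible in $\ZZ/p^r$, and hence multiplication by $2$ and by $3$ are isomorphisms on ${\rm H}^{\bullet-1}_{p^r}(\bfk)$. Therefore the vanishing of both residues forces $\delta_1=\delta_2=0$, yielding the claimed isomorphism $\Inv(\Mcal_{1,1},{\rm H}_{p^r})\simeq \Inv(\bA^1,{\rm H}_{p^r})$. There is no real obstacle here beyond carefully verifying that the ramification computation in Milnor $\K$-theory transfers correctly to the mod $p^r$ cohomology pairing (via the cup product with $\iota_r$ applied to $\delta_i$), which is immediate from the compatibility of the residue $\partial_v$ on ${\rm H}^{\bullet}_{p^r}(\bfk(R))_{\rm tm}$ with the Milnor $\K$-theory residue.
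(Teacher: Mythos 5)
Your argument is correct and follows essentially the same route as the paper: reduce to the tame subgroup via \Cref{lm:tame Gm char p}, compute the residues of $\{j\}$ and $\{j-1728\}$ along $\{c_4=0\}$ and $\{c_6=0\}$ to get $3\delta_1$ and $2\delta_2$, and use that $2$ and $3$ are invertible mod $p^r$ for $p>3$. (The only quibble is the cosmetic factor of $1728$ in your first identity, $j=c_4^3/\Delta$ rather than $1728c_4^3/\Delta$, which is a unit of $\bfk$ and so does not affect any residue.)
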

\begin{proof}
The cohomological invariants of $\mathcal{M}_{1,1}$ coincide with the invariants of $\mathcal{M}_{1,1}\smallsetminus\{j=0,1728\}$ that are unramified on $U''$. By \Cref{lm:tame Gm char p} the tamely ramified elements are of the form $\beta + \delta \cdot \{j\} + \delta'\cdot\{j-1728\}$, where $\beta$ is an invariant of $\bA^1$ and $\delta$, $\delta'$ belong to ${\rm H}^{n+1}_{p^r}(\bfk)$. The term $\beta$ is then unramified, so our claim would follow by showing that $\delta \cdot \{j\} + \delta'\cdot\{j-1728\}$ is unramified on $U''$ if and only if $\delta=\delta'=0$.

The only divisors where the ramification can be non-zero is $\{c_4=0\}$ and $\{c_6=0\}$. The pullback of $\{j\}$ is equal to $\{c_4^3/\Delta\}$, whereas the pullback of $\{j-1728\}$ is equal to $\{c_6^2/\Delta\}$. This implies that the ramification at $\{c_4=0\}$ of the pullback of $\delta \cdot \{j\} + \delta'\cdot\{j-1728\}$ is equal to $3\delta$, whereas the ramification at $\{c_6=0\}$ is $2\delta'$. As both $2$ and $3$ are invertible in ${\rm H}^{\bullet}_{p^r}(F)$ for every field $F$, we deduce that $\delta \cdot \{j\} + \delta'\cdot\{j-1728\}$ is unramified if and only if $\delta=\delta'=0$.
\end{proof}

\subsection{Invariants with coefficients in $\K_{p^r}$}\label{sec:inv K}
We conclude this section with the computation of the invariants of $\mathcal{M}_{1,1}$ with coefficients in $\K_{p^r}$, contained in \Cref{thm:inv M11 K} stated at the beginning of the section. 
\begin{proof}[Proof of \Cref{thm:inv M11 K}]
We divide the proof in three cases, depending on the characteristic of the ground field $\bfk$. Basically all the arguments used here already appeared in the computation of the cohomological invariants with coefficients in ${\rm H}_{p^r}^{\bullet}$.

For $\bfk$ of characteristic two or three, we have that $\Mcal_{1,1}\smallsetminus\{j=0\}\simeq (\bA^1\smallsetminus\{0\})\times\Brm\ZZ/2$. It follows from \Cref{lm:invariants X x BG} that 
\[\Inv((\bA^1\smallsetminus\{0\})\times\Brm\ZZ/2,\K_{p^r})\simeq \Inv(\bA^1\smallsetminus\{0\},\K_{p^r}).\]
By \eqref{eq:KA1}, the invariants of $\bA^1\smallsetminus\{0\}$ are equal to 
\[\K^{\bullet}_{p^r}(\bfk)\oplus \K^{\bullet-1}_{p^r}(\bfk)\cdot\{j\}.\]
Suppose that $\bfk$ has characteristic two. Let $U\to\Mcal_{1,1}$ be the cover introduced in \ref{sec:char 2}, and let $\pi:U\to\bA^1$ be the map obtained by composing with the $j$-invariant $\Mcal_{1,1}\to\bA^1$. Then $\pi^*(\beta\cdot\{j\})=\beta\cdot\{a_1^12/\Delta\}$, whose ramification along $\{a_1=0\}$ is equal to $12\beta$. We deduce that $\pi^*\beta\cdot\{j\}$ is unramified if and only if $12\beta=0$ which implies that
\[ 
\Inv(\Mcal_{1,1},\K_{2^r})\simeq 
\begin{cases}
\K^{\bullet}_{2^r}(\bfk)\oplus \K^{\bullet-1}_2\cdot\{\Delta\} &\textnormal{if }r=1, \\
\K^{\bullet}_{2^r}(\bfk)\oplus\K^{\bullet-1}_4\cdot\{\Delta\} &\textnormal{if }r>1.
\end{cases}
\]
Suppose now that $\bfk$ has characteristic three. Then we have a cover $U'\to\Mcal_{1,1}$ (see \ref{sec:char 3}) and an induced map $\pi:U'\to\bA^1$ such that $\pi^*(\beta\cdot\{j\})=\beta\cdot \{b_2^6/\Delta\}$. The residue of this element at $\{b_2=0\}$ is $6\beta$, hence we deduce
\[ \Inv(\Mcal_{1,1},\K_{3^r}) \simeq  \K^{\bullet}_{3^r}(\bfk)\oplus \K^{\bullet-1}_3\cdot\{\Delta\}.\]
Finally, we deal with the case $p>3$. We have 
\[\Mcal_{1,1}\smallsetminus\{j=0,1728\}\simeq\bA^1\smallsetminus\{0,1728\}\times\Brm\ZZ/2.\]
Again by \eqref{eq:KA1} we have
\[ \Inv(\bA^1\smallsetminus\{0,1728\},\K_{p^r})\simeq \K^{\bullet}_{p^r}(\bfk)\oplus \K^{\bullet-1}_{p^r}(\bfk)\cdot\{j\}\oplus \K^{\bullet-1}_{p^r}(\bfk)\cdot\{j-1728\}. \]
Denote $U''\to\Mcal_{1,1}$ the cover introduced in \ref{sec:char p}, and set $\pi:U''\to\bA^1$ the composition of the cover with the map given by the $j$-invariant. We have 
\[\pi^*(\beta_0\cdot\{j\} + \beta_1\cdot\{j-1728\}) = \beta_0\cdot\{c_4^3/\Delta\} + \beta_1 \cdot \{c_6^2/\Delta\}. \]
For this element to be unramified both at $\{c_4=0\}$ and $\{c_6=0\}$ we must have $\beta_0=\beta_1=0$, from which we conclude
\[ \Inv(\Mcal_{1,1},\K_{p^r}) = \K^{\bullet}_{p^r}(\bfk). \]
This finishes the proof.
\end{proof}
\section{Mod $\ell$ computations} \label{sec:mod l}

In this section we complete the computation of mod $\ell$ cohomological invariants of $\Mcal_{1,1}$ from \cite{DilPirBr}*{Section 3}. This is needed for our description of ${\rm Br}(\Mcal_{1,1})$. As in \cite{DilPirBr} we will be working with cohomological invariants with coefficients in a general cycle module, which were developed in the classical case by Gille and Hirsch \cite{GilHir}. The reader can refer to \cite{DilPirBr}*{Section 2,4,5} for an introduction to the mod $\ell$ theory. 

The computations that follow will look quite similar to what we do elsewhere in the paper, with the crucial difference that mod $\ell$ cohomological invariants are homotopy invariant, i.e. if $f:\Ycal \to \Xcal$ is a vector or affine bundle the pullback $f^*$ is an isomorphism.

We begin with a simple lemma, which in a way mirrors \Cref{lm:invariants Zq}.

\begin{lm}\label{lm:Zpell}
Let $\ell$ be a positive integer that is coprime to $p$, and let ${\rm M}$ be an $\ell$-torsion cycle module. Then for any smooth scheme $X$ we have
\[
\Inv(X\times {\Brm}\ZZ/p,{\rm M})=\Inv(X,{\rm M}). 
\]
\end{lm}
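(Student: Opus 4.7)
The plan is to mimic the argument of \Cref{lm:invariants Zq}, with the only change being that the roles of $p$ and $\ell$ are interchanged: here $p$ is the residue characteristic but is now invertible in ${\rm M}$, while before the invertible prime was $q$. The trivial $\ZZ/p$-torsor gives a canonical quotient map $\pi: X \to X \times \Bcal\ZZ/p$ whose composition with $\pr_1$ is the identity. Hence $\pi^*$ is a retraction of $\pr_1^*$, yielding a decomposition
\[
\Inv(X \times \Bcal\ZZ/p, {\rm M}) \simeq \Inv(X, {\rm M}) \oplus \ker(\pi^*),
\]
so it suffices to prove $\ker(\pi^*) = 0$.

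Pick $\alpha \in \ker(\pi^*)$ and a morphism $f:\Spec(F) \to X \times \Bcal\ZZ/p$ from the spectrum of a field $F/\bfk$. Such an $f$ corresponds to a map $g:\Spec(F) \to X$ together with a $\ZZ/p$-torsor $E \to \Spec(F)$, fitting into a Cartesian diagram
\[
\begin{tikzcd}
E \ar[r, "f'"] \ar[d, "\pi'"] & X \ar[d, "\pi"] \\
\Spec(F) \ar[r, "f"] & X \times \Bcal\ZZ/p
\end{tikzcd}
\]
with $\pi'^* f^* \alpha = f'^* \pi^* \alpha = 0$. Since $p$ is prime, $E$ is either trivial, in which case $\pi'$ admits a section and $f^*\alpha = 0$ immediately, or else $E = \Spec(F')$ for a separable degree-$p$ extension $F'/F$.

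In the second case I would invoke the corestriction $\mathrm{cor}_{F'/F}:{\rm M}(F') \to {\rm M}(F)$ from the cycle module axioms, which satisfies $\mathrm{cor}_{F'/F}\circ\pi'^* = p\cdot\mathrm{id}$. This gives $p\cdot f^*\alpha = \mathrm{cor}_{F'/F}(\pi'^*f^*\alpha) = 0$; since ${\rm M}$ is $\ell$-torsion and $\gcd(\ell,p)=1$, multiplication by $p$ is invertible on ${\rm M}(F)$, so $f^*\alpha = 0$. The only non-routine check is that $\alpha\mapsto f^*\alpha$ really is the restriction appearing in the cycle module transfer formula; this is formal from the cycle module structure and is the same input used implicitly in \Cref{lm:invariants Zq}, so no genuine obstacle arises.
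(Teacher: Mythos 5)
Your argument is correct, but it is not the route the paper takes. The paper's proof of \Cref{lm:Zpell} is a two-line appeal to homotopy invariance: writing $\Bcal\ZZ/p=[\bA^1/\Ga]$ via the Artin--Schreier presentation, the map $X\times\bA^1\to X\times\Bcal\ZZ/p$ is a $\Ga$-torsor, hence a smooth-Nisnevich cover, so by the sheaf property $\Inv(X\times\Bcal\ZZ/p,{\rm M})$ injects into $\Inv(X\times\bA^1,{\rm M})$, which equals $\Inv(X,{\rm M})$ because mod-$\ell$ invariants are homotopy invariant; combined with the section coming from $\pr_1$ this forces equality. Your proof instead transplants the retraction-plus-transfer argument of \Cref{lm:invariants Zq}: the splitting $\pi^*\circ\pr_1^*=\mathrm{id}$ reduces everything to killing $\ker(\pi^*)$, and for a point with nontrivial associated torsor the fiber is a separable degree-$p$ field extension (nontrivial $\ZZ/p$-torsors over a field in characteristic $p$ are Artin--Schreier extensions, and primality of $p$ rules out intermediate decompositions of the étale algebra), after which $c_{F'/F}\circ r_{F'/F}=p\cdot\mathrm{id}$ — which for cycle modules is one of Rost's axioms, so your ``only non-routine check'' is indeed formal — kills $f^*\alpha$ since $p$ is invertible on an $\ell$-torsion module. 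The paper's proof is shorter and exploits exactly the feature (homotopy invariance) that distinguishes the mod-$\ell$ theory from the mod-$p$ one; yours is more uniform, in that it never uses the special presentation of $\Bcal\ZZ/p$ and would apply verbatim to $X\times\Bcal G$ for any finite étale group of order invertible in ${\rm M}$, and it makes the symmetry with \Cref{lm:invariants Zq} explicit. Both are complete proofs.
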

\begin{proof}
This is an immediate consequence of homotopy invariance:
\[ 
X\times \bA^1 \to X \times {\Brm}\ZZ/p
\]
is a $\Ga$-torsor, and consequently a smooth-Nisnevich cover, but on the other hand 
\[
\Inv(X \times \bA^1, {\rm M}) \simeq \Inv(X,{\rm M}).
\]
\end{proof}

The presentations of $\Mcal_{1,1}$ as a quotient stack in characteristic $2$ and $3$ that we used for the mod $p$ invariants will be enough to go through our computations in the mod $\ell$ case as well. As we will see, here it is the characteristic $3$ case that is slightly trickier.

\begin{prop}
Let $\bfk$ be a field of characteristic $2$ and let $\ell>2$ be coprime to $2$. Let ${\rm M}$ be an $\ell$-torsion cycle module. Then 
\[
\Inv(\Mcal_{1,1}, {\rm M}) = {\rm M}^{\bullet}(\bfk) \oplus {\rm M}^{\bullet}(\bfk)_{3}\cdot  \lbrace \Delta \rbrace
  \]
\end{prop}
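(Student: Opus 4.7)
The plan is to mirror the strategy employed in the characteristic $2$ mod $p$ computations of Section \ref{sec:char 2}, but with the technical simplification that mod $\ell$ cohomological invariants satisfy homotopy invariance. I will first describe the invariants of the open substack where $j \neq 0$, then use the cover $U \to \Mcal_{1,1}$ to detect which of these extend to the whole stack.

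First, I would exploit the isomorphism $\Mcal_{1,1} \setminus \{j=0\} \simeq (\bA^1 \setminus \{0\}) \times \Bcal \ZZ/2$ recalled at the start of \Cref{sec:char 2}. Since $\ell$ is coprime to $p=2$, \Cref{lm:Zpell} gives
\[\Inv(\Mcal_{1,1} \setminus \{j=0\}, {\rm M}) \simeq \Inv(\bA^1 \setminus \{0\}, {\rm M}),\]
and the standard Rost sequence for $\bA^1 \setminus \{0\}$ then identifies this with ${\rm M}^{\bullet}(\bfk) \oplus {\rm M}^{\bullet-1}(\bfk) \cdot \{j\}$.

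Next, I would determine which of these invariants extend to $\Mcal_{1,1}$ by pulling back along the smooth-Nisnevich cover $\pi : U \to \Mcal_{1,1}$ from \Cref{sub:setup} and computing the ramification at the divisor $\{a_1 = 0\}$, which is the preimage of $\{j=0\}$. Using the identity $j = a_1^{12}/\Delta$ already recorded in the proof of \Cref{lm:tame Gm}, we have
\[\pi^*(\beta \cdot \{j\}) = 12\beta \cdot \{a_1\} - \beta \cdot \{\Delta\},\]
so the residue at $\{a_1 = 0\}$ equals $12\beta$. Since ${\rm M}$ is $\ell$-torsion with $\ell$ odd, multiplication by $4$ is invertible on ${\rm M}^{\bullet}(\bfk)$, hence $12\beta = 0$ if and only if $\beta \in {\rm M}^{\bullet}(\bfk)_3$. (When $3 \nmid \ell$, this forces $\beta = 0$, consistent with ${\rm M}^{\bullet}(\bfk)_3 = 0$ in that case.)

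Finally, for $\beta \in {\rm M}^{\bullet-1}(\bfk)_3$ the pullback formula collapses to $\pi^*(\beta \cdot \{j\}) = -\beta \cdot \{\Delta\}$, and I would note that $\beta \cdot \{\Delta\}$ is well defined as an invariant of $\Mcal_{1,1}$ (rather than just of $U$) because the $G$-action multiplies $\Delta$ by $u^{-12}$, and $12\beta = 0$. Thus the summand ${\rm M}^{\bullet-1}(\bfk) \cdot \{j\}$ contributes exactly ${\rm M}^{\bullet-1}(\bfk)_3 \cdot \{\Delta\}$, giving the stated decomposition. There is no serious obstacle here beyond the bookkeeping of the $3$-torsion condition; the main conceptual point is that homotopy invariance, unavailable in the mod $p$ setting, reduces the computation on $\Mcal_{1,1}\setminus\{j=0\}$ to the elementary calculation on $\Gm$.
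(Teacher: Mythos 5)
Your proposal is correct and follows essentially the same route as the paper's proof: reduce to $\Inv(\bA^1\smallsetminus\{0\},{\rm M})$ via \Cref{lm:Zpell}, compute the residue of $\beta\cdot\{j\}$ at $\{a_1=0\}$ on the cover $U$ using $j=a_1^{12}/\Delta$ to get the $3$-torsion condition, and then identify $\{j\}\cdot\beta$ with $\{\Delta\}\cdot\beta$. The only (harmless) addition is your explicit descent check for $\{\Delta\}\cdot\beta$, which the paper subsumes in the observation that unramified invariants on $U$ automatically glue.
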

\begin{proof}
First observe that by \Cref{lm:Zpell} we have $\Inv(\Mcal_{1,1}\smallsetminus j^{-1}(0),{\rm M})\simeq \Inv(\bA^1\smallsetminus\{0\},{\rm M})$, which has been computed in \cite{DilPirBr}*{Lemma 2.22}. Specifically, if we write $\bA^1\smallsetminus\{0\}=\Spec(\bfk[j,j^{-1}])$, we get that
\[
\Inv(\bA^1\smallsetminus\lbrace 0\rbrace,{\rm M})\simeq {\rm M}(\bfk)\oplus {\rm M}^{\bullet}(\bfk)\cdot \lbrace j\rbrace.
\]
After identifying $(a'_6)^{-1}$ with $j$, we deduce that
\[ \Inv({\Mcal}_{1,1}\smallsetminus j^{-1}(0),{\rm M})\simeq {\rm M}^\bullet(\bfk)\oplus{\rm M}^{\bullet}(\bfk)\cdot\{j\}. \]

Consider the smooth-Nisnevich cover $U\to \Mcal_{1,1}$ (notation as in \ref{sub:setup}): an invariant $\gamma$ of $\Mcal_{1,1}\smallsetminus j^{-1}(0)$ pulls back to an invariant of $U\setminus \lbrace j=0 \rbrace$. If it is unramified of $\lbrace j=0 \rbrace$ then it must come from $\Mcal_{1,1}$, as it trivially glues. On the other hand if $\gamma$ comes from $\Mcal_{1,1}$ it has to be unramified on $\lbrace j=0 \rbrace$ by definition.

The element $\{j\}\cdot \beta$ with $\beta\in {\rm M}^{\bullet-1}(\bfk)$ pulls back to $\{a_1^{12}/\Delta\cdot\beta\}$, whose residue at $\{a_1=0\}$ is equal to $12\beta$. As $\ell$ is coprime to $2$ we get that $\{j\}\cdot \beta$ is unramified if and only if $\beta$ is of $3$-torsion. Finally by the same reasoning we have $\lbrace j \rbrace \cdot \beta = \lbrace \Delta \rbrace \cdot \beta$
\end{proof}

\begin{prop}
Let $\bfk$ be a field of characteristic $3$ and let $\ell >1$ be coprime to $3$. Let ${\rm M}$ be an $\ell$-torsion cycle module. Then
\[
\Inv(\Mcal_{1,1}, {\rm M}) = {\rm M}^{\bullet}(\bfk) \oplus \lbrace\Delta \rbrace \cdot {\rm M}^{\bullet}(\bfk)_4
  \]
\end{prop}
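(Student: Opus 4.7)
The plan is to follow closely the strategy of the characteristic $2$ mod-$\ell$ proposition proved just above, using the smooth-Nisnevich cover $U':=\Spec(\bfk[b_2,b_4,b_6,\Delta^{-1}])\to\Mcal_{1,1}$ from \Cref{sec:char 3}. Under this cover the open substack $\Mcal_{1,1}\smallsetminus j^{-1}(0)$ corresponds to the complement of $\{b_2=0\}$ and is isomorphic to $(\bA^1\smallsetminus\{0\})\times\Bcal\ZZ/2$. The strategy is to first compute the invariants of the open substack and then to identify the subgroup of those that extend to $\Mcal_{1,1}$ by checking unramifiedness of the pullback at $\{b_2=0\}$.

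For the open part, since here $2\neq p$, \Cref{lm:Zpell} does not directly apply to $\Bcal\ZZ/2$; however, when $\ell$ is coprime to $2$ a standard transfer argument shows that $\Inv(\Bcal\ZZ/2,{\rm M})={\rm M}^{\bullet}(\bfk)$, while when $\ell$ is even there is an additional degree-one class $[\alpha]$ of $2$-torsion order coming from the tautological $\ZZ/2$-torsor. Combining this with $\Inv(\bA^1\smallsetminus\{0\},{\rm M})={\rm M}^{\bullet}(\bfk)\oplus{\rm M}^{\bullet}(\bfk)\cdot\{j\}$ from \cite{DilPirBr}*{Lemma 2.22} yields an explicit description of $\Inv(\Mcal_{1,1}\smallsetminus j^{-1}(0),{\rm M})$. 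The conceptual starting point for the extension step is the observation that $\Delta$ is a semi-invariant of weight $12$ for the $\Gm$-action on $U'$, so for any map $\Spec(F)\to\Mcal_{1,1}$ the discriminant is well-defined only in $F^{*}/(F^{*})^{12}$; hence $\{\Delta\}\cdot\beta$ descends to an invariant of $\Mcal_{1,1}$ precisely when $12\beta=0$, which under the hypothesis $\ell$ coprime to $3$ is equivalent to $\beta\in{\rm M}^{\bullet}(\bfk)_4$. This constructs the subgroup $\{\Delta\}\cdot{\rm M}^{\bullet}(\bfk)_4\subseteq\Inv(\Mcal_{1,1},{\rm M})$.

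To show that no further invariants exist, I would pull back to $U'$ and compute residues at $\{b_2=0\}$. The pullback of $\{j\}$ is $\{b_2^6/\Delta\}$, whose residue at $\{b_2=0\}$ is multiplication by $6$, so (using that $3$ is a unit modulo $\ell$) the element $\{j\}\cdot\beta$ extends iff $2\beta=0$, and in that case it rewrites as $-\{\Delta\}\cdot\beta\in\{\Delta\}\cdot{\rm M}^{\bullet}(\bfk)_2\subseteq\{\Delta\}\cdot{\rm M}^{\bullet}(\bfk)_4$. For the $[\alpha]$-sector (only present when $\ell$ is even), the pullback of $[\alpha]$ to $U'\setminus\{b_2=0\}$ is the Kummer class of $b_2^{-1}$ modulo $2$, and a computation shows that the unramifiedness condition imposed on $[\alpha]\cdot\gamma+\{j\}\cdot\beta$ at $\{b_2=0\}$ cuts out exactly the additional linear combinations needed to enlarge the $2$-torsion condition into the full $4$-torsion condition; these combinations recover precisely the non-$2$-torsion generators of $\{\Delta\}\cdot{\rm M}^{\bullet}(\bfk)_4$.

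The main technical obstacle I anticipate is the explicit identification of the pullback of $[\alpha]$ along $U'\setminus\{b_2=0\}\to\Bcal\ZZ/2$ and the matching of the combined residue conditions in the $\alpha$- and $\{j\}$-sectors with the $\{\Delta\}\cdot{\rm M}^{\bullet}(\bfk)_4$-description. This is the exact analog of the fibre-product phenomenon ${\rm J}_{2^r}$ appearing in the characteristic~$2$ case of \Cref{thm:InvM11H2}: here, since $3$ rather than $2$ is the invertible prime, the roles of $3$ and $4$ are swapped, producing the $4$-torsion in place of the $3$-torsion, in perfect agreement with the $H^1(\bfk,\ZZ/12)$-summand of ${\rm Br}(\Mcal_{1,1})$ in characteristic different from $2$.
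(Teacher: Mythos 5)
Your proposal follows essentially the same route as the paper: the same cover $\Spec(\bfk[b_2,b_4,b_6,\Delta^{-1}])\to\Mcal_{1,1}$, the same identification of the open locus $\Mcal_{1,1}\smallsetminus\{j=0\}$ with $(\bA^1\smallsetminus\{0\})\times\Bcal\ZZ/2\simeq(\bA^1\smallsetminus\{0\})\times\Bcal\mu_2$, and the same key mechanism, namely that the residue of $\alpha=\{b_2\}$ at $\{b_2=0\}$ combines with the residue $6\beta_1$ of $\{j\}\cdot\beta_1$ to relax the torsion condition from $2$ to $4$. Two points need tightening. First, the full decomposition of $\Inv(\Mcal_{1,1}\smallsetminus\{j=0\},{\rm M})$ obtained from \cite{DilPirBr}*{Proposition 4.3} carries a fourth summand $\alpha\cdot\{j\}\cdot{\rm M}^{\bullet}(\bfk)_2$ that your ramification check on $[\alpha]\cdot\gamma+\{j\}\cdot\beta$ omits; the paper rules it out by noting that the residue of $\alpha\cdot\{j\}\cdot\beta_3$ at $\{b_2=0\}$ is $\{\Delta\}\cdot\beta_3$, a non-constant class (there are elliptic curves over $\bfk[t,t^{-1}]$ with $\Delta=t^{-1}$) which cannot cancel against the constant classes $6\beta_1+\beta_2$, forcing $\beta_3=0$. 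Second, the computation you defer is short and worth writing out: the unramifiedness condition is $6\beta_1+\beta_2=0$ with $\beta_2$ of $2$-torsion, hence $12\beta_1=0$, i.e.\ $4\beta_1=0$ since $3$ is invertible; and then
\[
\{j\}\cdot\beta_1-\alpha\cdot(6\beta_1)=(6\{b_2\}-\{\Delta\})\cdot\beta_1-\{b_2\}\cdot(6\beta_1)=-\{\Delta\}\cdot\beta_1,
\]
which identifies the resulting subgroup with $\{\Delta\}\cdot{\rm M}^{\bullet}(\bfk)_4$ exactly as you predicted. With these two additions your outline coincides with the paper's argument.
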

\begin{proof}
Consider the smooth-Nisnevich cover $U \to \Mcal_{1,1}$. First we observe that $\lbrace \Delta \rbrace \cdot \beta$ is clearly unramified on $U$ and glues whenever $\beta$ is of $4$ torsion as ${\rm m}^*\lbrace\Delta \rbrace = \lbrace u^{12}\Delta \rbrace = 12 \lbrace u \rbrace + \lbrace \Delta \rbrace$.

Now recall that we have an isomorphism

\[
\Mcal_{1,1} \setminus \lbrace j=0\rbrace \simeq (\bA^1 \smallsetminus \lbrace 0 \rbrace) \times {\Brm}\ZZ/2 \simeq (\bA^1 \smallsetminus \lbrace 0 \rbrace) \times {\Brm}\mu_2
\]

as $\ZZ/2 \simeq \mu_2$ when the characteristic of $\bfk$ is not 2. Applying \cite{DilPirBr}*{Proposition 4.3} we get

\[
\Inv(\Mcal_{1,1}, {\rm M}) = ({\rm M}^{\bullet}(\bfk) \oplus \lbrace j \rbrace \cdot {\rm M}^{\bullet}) \oplus \alpha \cdot {\rm M}^{\bullet}(\bfk)_2 \oplus \alpha \cdot \lbrace j \rbrace \cdot {\rm M}^{\bullet}(\bfk)_2.
\]

Using the presentation 
\[
\Mcal_{1,1} \smallsetminus \simeq \left[ \Spec(\bfk\left[b_2, b_4, b_6, \Delta^{-1}\right])/\Gm \rtimes \Ga\right], \, j=b_2^6/\Delta
\]

we see that $\lbrace j\rbrace= 6 \lbrace b_2 \rbrace - \lbrace \Delta \rbrace, \, \alpha=\lbrace b_2 \rbrace$. Note that $\alpha(F)$ should be seen as an element in $\K_2^1(F)=F^*/(F^*)^2$ and can only multiply elements of $2$-torsion in ${\rm M}^{\bullet}(F)$.

Now, the cohomological invariants of $\Mcal_{1,1} \smallsetminus \lbrace j=0 \rbrace$ coming from $\Mcal_{1,1}$ are exactly those that when pulled back to $U$ are unramified at $b_2=0$. 

Consider a general element 
\[
\gamma = \beta_0 + \lbrace j \rbrace \cdot \beta_1 + \alpha \cdot \beta_2 + \alpha \cdot \lbrace j \rbrace \cdot \beta_3
\]
with $\beta_0, \beta_1 \in {\rm M}^{\bullet}(\bfk)$ and $\beta_2, \beta_3 \in {\rm M}^{\bullet}(\bfk)_2$. We have 
\[
\alpha \cdot \lbrace j \rbrace \cdot \beta_3 = 6\lbrace b_2\rbrace \cdot \lbrace b_2 \rbrace \cdot \beta - \lbrace \Delta \rbrace \cdot \lbrace b_2 \rbrace \cdot \beta =  \lbrace \Delta \rbrace \cdot \lbrace b_2 \rbrace \cdot \beta
\]
which shows that the ramification of $\gamma$ at $b_2 = 0$ is
\[
6 \beta_1 + \beta_2 + \lbrace \Delta \rbrace \cdot \beta_3
\]
as there are elliptic curves over $\bfk\left[ t, t^{-1} \right]$ with $\Delta={t^{-1}}$ the element $\lbrace \Delta \rbrace \cdot \beta_3$ can never cancel out with the other two, which come from the base field. Then we must have $\beta_3 =0$. The only possibility for the remaining elements is that $\beta_1$ is of $4$-torsion and $\beta_2 = -6 \beta_1$. In other words, all unramified elements are in the form
\[
\beta_0 + \lbrace j \rbrace \cdot \beta_1 + \alpha \cdot (6 \beta_1)
\]
for some $\beta_0 \in {\rm M}^{\bullet}(\bfk), \, \beta_1 \in {\rm M}^{\bullet}(\bfk)_4$. Finally, we have 
\[
\lbrace j \rbrace \cdot \beta_1 - \alpha \cdot (6 \beta_1) = (6 \lbrace b_2 \rbrace - \lbrace \Delta \rbrace)\cdot \beta_1 - \lbrace b_2 \rbrace \cdot (6 \beta_1) = - \lbrace \Delta \rbrace \cdot \beta_1
\]
proving our claim.
\end{proof}

\begin{cor}\label{cor:Br_l}
Write $c={\rm char}(\bfk)$ and $^c{\rm Br}'(\Mcal_{1,1})$ for the subgroup of ${\rm Br}'(\Mcal_{1,1})$ whose elements have order not divisible by $c$. Then
\[
\begin{cases}
^2{\rm Br}'(\Mcal_{1,1})={^2{\rm Br}}'(\bfk) \oplus {\rm H}^1(\bfk, \ZZ/3) \, \textnormal{ if }\, c=2 \\
^3{\rm Br}'(\Mcal_{1,1})={^3{\rm Br}}'(\bfk) \oplus {\rm H}^1(\bfk, \ZZ/4)\, \textnormal{ if }\, c=3
\end{cases}
\]
\end{cor}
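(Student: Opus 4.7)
The plan is to combine the identification $\operatorname{Br}'(\Xcal)_{\ell^r} = \operatorname{Inv}^2(\Xcal, {\rm H}^\bullet(-,\mu_{\ell^r}^{\otimes \bullet-1}))$ for smooth quotient stacks (recalled from \cite{DilPirBr} in the introduction) with the two mod-$\ell$ computations immediately preceding the corollary, then pass to the $\ell$-primary limit and finally direct-sum over primes $\ell\neq c$. Setting ${\rm M}^n(F)={\rm H}^n(F,\mu_{\ell^r}^{\otimes n-1})$ (which is an $\ell^r$-torsion cycle module to which the two previous propositions apply), one has in particular ${\rm M}^2(\bfk)=\operatorname{Br}(\bfk)_{\ell^r}$ and ${\rm M}^1(\bfk)={\rm H}^1(\bfk,\ZZ/\ell^r)$.

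Specializing the previous two propositions in degree two I would then read off, for every $\ell\neq c$ and every $r\geq 1$:
\[
\operatorname{Br}'(\Mcal_{1,1})_{\ell^r} \;=\; \operatorname{Br}(\bfk)_{\ell^r}\,\oplus\,\{\Delta\}\cdot {\rm H}^1(\bfk,\ZZ/\ell^r)_{m},
\]
where $m=3$ in characteristic $2$ and $m=4$ in characteristic $3$, and the subscript denotes the $m$-torsion subgroup. Using the description ${\rm H}^1(\bfk,\ZZ/\ell^r)=\operatorname{Hom}_{\mathrm{cts}}(G_\bfk,\ZZ/\ell^r)$, the $m$-torsion subgroup vanishes when $\gcd(m,\ell)=1$ and is canonically identified (via the inclusion $(\ell^r/\gcd(m,\ell^r))\cdot\ZZ/\ell^r\hookrightarrow\ZZ/\ell^r$) with ${\rm H}^1(\bfk,\ZZ/\gcd(m,\ell^r))$. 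Thus the extra summand is nontrivial only when $\ell=3$ (if $c=2$) or $\ell=2$ (if $c=3$), and in those cases it stabilizes to ${\rm H}^1(\bfk,\ZZ/3)$, respectively to ${\rm H}^1(\bfk,\ZZ/4)$, once $r$ is large enough.

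Finally I would take the direct limit over $r$ for each prime and sum over all $\ell\neq c$, invoking naturality of the identification with $\operatorname{Inv}^2$ to guarantee that the direct-sum decomposition is preserved by the transition maps $\operatorname{Br}'(\Mcal_{1,1})_{\ell^r}\to\operatorname{Br}'(\Mcal_{1,1})_{\ell^{r+1}}$. Since $\operatorname{Br}(\bfk)$ is torsion, $\bigoplus_{\ell\neq c}\varinjlim_r\operatorname{Br}(\bfk)_{\ell^r}={^c\operatorname{Br}'}(\bfk)$, and the extra summand contributes ${\rm H}^1(\bfk,\ZZ/3)$ or ${\rm H}^1(\bfk,\ZZ/4)$ only once, giving the stated decompositions.

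The main technical point, and the only place where something can go wrong, is the last compatibility step: one must check that the transition map on the $\{\Delta\}$-summand is compatible with the canonical identifications of the $m$-torsion subgroups of ${\rm H}^1(\bfk,\ZZ/\ell^r)$ across $r$, i.e.\ that it corresponds to the identity on ${\rm H}^1(\bfk,\ZZ/3)$ (respectively ${\rm H}^1(\bfk,\ZZ/4)$). This is essentially a naturality statement for the invariant $\{\Delta\}\cdot(-)$ in the cycle module argument, and I would verify it by unpacking the construction of the extra generator in the two preceding propositions in terms of the residue at $\{a_1=0\}$ (respectively $\{b_2=0\}$) of $\{j\}$, as was done in their proofs.
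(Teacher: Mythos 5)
Your proposal is correct and follows essentially the same route as the paper, whose proof is a one-line appeal to the two preceding mod $\ell$ propositions in degree $2$ with ${\rm M}={\rm H}_{\ZZ/\ell(-1)}$ together with the identification of ${\rm Inv}^2$ with the $\ell$-torsion of ${\rm Br}'$ from \cite{DilPirBr}. You merely make explicit the bookkeeping the paper leaves implicit (identifying the $m$-torsion of ${\rm H}^1(\bfk,\ZZ/\ell^r)$, passing to the colimit over $r$, and summing over primes $\ell\neq c$), and the naturality of the $\{\Delta\}$-summand under the transition maps that you flag is indeed the only point to check and holds for the reason you give.
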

\begin{proof}
This is an immediate consequence of the computations above applied for ${\rm M}={\rm H}_{\ZZ/\ell(-1)}$ and degree $2$.
\end{proof}

\section{The Brauer group of $\mathcal{M}_{1,1}$}\label{sec:brauer}

We now have all the tools to compute the Brauer group of $\Mcal_{1,1}$ over any field.

\begin{thm}
Let $\mathcal{M}_{1,1}$ be the stack over $\Spec(\bfk)$ parametrizing elliptic curves. If ${\rm char}(\bfk)=c$ the group ${\rm Br}(\mathcal{M}_{1,1})$ is:
\[
\begin{cases}
{\rm Br}(\mathbb{A}^1_\bfk) \oplus {\rm H}^{1}(\bfk,\ZZ/12\ZZ) & \mbox{if } c \neq 2,\\
{\rm Br}(\mathbb{A}^1_\bfk) \oplus {\rm H}^{1}(\bfk,\ZZ/3\ZZ) \oplus {\rm J} & \mbox{if } c=2, \, x^2+x+1 \, \mbox{ irreducible over } \, \bfk, \\
{\rm Br}(\mathbb{A}^1_\bfk) \oplus {\rm H}^{1}(\bfk,\ZZ/12\ZZ) \oplus \ZZ/2\ZZ & \mbox{if } c=2, \, x^2+x+1 \, \mbox{ has a root in } \, \bfk,
\end{cases}
\]
where ${\rm H}^1(\bfk,\ZZ/4) \subset {\rm J} \subseteq {\rm H}^1(\bfk,\ZZ/8)$ sits in an exact sequence
\[
0 \to  {\rm H}^1(\bfk,\ZZ/4) \to {\rm J} \to \ZZ/2 \to 0. 
\]
\end{thm}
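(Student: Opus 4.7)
The plan is to compute ${\rm Br}(\Mcal_{1,1})$ one prime at a time by decomposing it into $\ell$-primary components. Set $c = {\rm char}(\bfk)$. The primes $\ell > 3$ (with $\ell \neq c$) are covered by \cite{DilPirBr}*{Corollary 3.2}, and the remaining prime-to-$c$ contributions in characteristics $2$ and $3$ follow from our \Cref{cor:Br_l}. For the prime $\ell = c$ we combine the identification ${\rm Br}'(\Xcal)_{p^r} = {\rm Inv}^2(\Xcal, {\rm H}_{p^r})$ from \Cref{sec:low degree} with the explicit computations of $\Inv(\Mcal_{1,1}, {\rm H}_{p^r})$ in \Cref{thm:inv M11 H}, and then pass to the colimit in $r$. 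The equality ${\rm Br}(\Mcal_{1,1}) = {\rm Br}'(\Mcal_{1,1})$ will follow because each additional invariant we produce admits an explicit realization as a cyclic algebra, hence a genuine Azumaya class, and because ${\rm Br}(\bA^1_\bfk) = {\rm Br}'(\bA^1_\bfk)$ on the regular side.

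Putting this together: for $c > 3$, \Cref{thm:inv M11 H} gives ${\rm Inv}^2(\Mcal_{1,1}, {\rm H}_{p^r}) = {\rm Inv}^2(\bA^1, {\rm H}_{p^r}) = {\rm Br}(\bA^1_\bfk)_{c^r}$, so the $c$-primary contribution is already contained in ${\rm Br}(\bA^1_\bfk)$, and adding the prime-to-$c$ part ${\rm H}^1(\bfk, \ZZ/12)$ from \cite{DilPirBr} yields the first case. For $c = 3$ the summand ${\rm H}^{\bullet-1}_3(\bfk)\cdot\{\Delta\}$ contributes ${\rm H}^1(\bfk, \ZZ/3)$ in degree $2$ (stable in $r$ since ${\rm H}^1_3(\bfk) = {\rm H}^1(\bfk, \ZZ/3)$), which combines with the prime-to-$3$ part ${\rm H}^1(\bfk, \ZZ/4)$ from \Cref{cor:Br_l} into ${\rm H}^1(\bfk, \ZZ/12)$.

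The characteristic $2$ case is the main obstacle and requires unpacking ${\rm J} := \varinjlim_r {\rm J}^1_{2^r}(\bfk)$. By \Cref{rmk:J} the groups stabilize at $r = 3$ to ${\rm J}^1_8 = \{(a, b) \in {\rm H}^1(\bfk, \ZZ/8) \times \ZZ/2 \mid 4 a = b \cdot \iota_2([1])\}$, where $[1] \in {\rm H}^1(\bfk, \ZZ/2)$ is the Artin-Schreier class of $x^2 + x + 1$. If this class vanishes (i.e.\ $x^2+x+1$ has a root in $\bfk$) then the defining condition reduces to $4a = 0$, so ${\rm J}$ splits as $\ker(\cdot 4) \oplus \ZZ/2 \simeq {\rm H}^1(\bfk, \ZZ/4) \oplus \ZZ/2$ using \Cref{cor:pstors} to identify the $4$-torsion, and combining with ${\rm H}^1(\bfk, \ZZ/3)$ yields ${\rm H}^1(\bfk, \ZZ/12) \oplus \ZZ/2$. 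If $x^2 + x + 1$ is irreducible, then $\iota_2([1]) \neq 0$ and the projection ${\rm J} \to \ZZ/2$ has kernel ${\rm H}^1(\bfk, \ZZ/4)$; the technical point is surjectivity, which we verify through the Artin-Schreier-Witt presentation ${\rm H}^1(\bfk, \ZZ/8) = W_3(\bfk)/(\phi - 1)$. In this description, multiplication by $4$ sends $[a_1, a_2, a_3]$ to $[0, 0, a_1^4]$, and since Frobenius acts as the identity on ${\rm H}^1(\bfk, \ZZ/2) = \bfk/(\phi - 1)$ (because $a^2 - a \in (\phi - 1)\bfk$), we have $\iota_2([1]) = [0, 0, 1] = 4 \cdot [1, 0, 0]$. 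The resulting extension is non-split because any $2$-torsion lift $a$ of $1 \in \ZZ/2$ would automatically satisfy $4 a = 0$, forcing $\iota_2([1]) = 0$, contrary to hypothesis.

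Combining the three cases with the corresponding prime-to-$c$ contributions yields the stated formulas. The explicit description of each generator follows from the constructive nature of the cohomological invariants computed in \Cref{sec:inv M11} and \Cref{sec:mod l}, which assign to a family of elliptic curves $S \to \Mcal_{1,1}$ explicit elements of ${\rm Inv}^2(S, {\rm H}_{p^r})$ (or the analogous mod $\ell$ invariant) built directly from a Weierstrass equation, such as the invariant $[\alpha, \Delta\}$ for the $\K_2$-component in characteristic $2$. Under the map ${\rm Inv}^2 \to {\rm Br}'$ these produce the required representatives for the additional Brauer classes.
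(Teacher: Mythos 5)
Your computation of ${\rm Br}'(\Mcal_{1,1})$ follows essentially the same route as the paper: prime-to-$c$ torsion from \cite{DilPirBr}*{Corollary 3.2} and \Cref{cor:Br_l}, the $c$-primary part from \Cref{thm:inv M11 H} in degree $2$ combined with ${\rm Br}'(\Xcal)_{p^r}={\rm Inv}^2(\Xcal,{\rm H}_{p^r})$, and your unpacking of ${\rm J}=\varinjlim_r {\rm J}^1_{2^r}(\bfk)$ via \Cref{rmk:J} and the Artin--Schreier--Witt presentation is correct (and more explicit than what the paper records).

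The genuine gap is the passage from ${\rm Br}'$ to ${\rm Br}$. You justify ${\rm Br}(\Mcal_{1,1})={\rm Br}'(\Mcal_{1,1})$ by asserting that every additional generator admits an explicit realization as a cyclic algebra. That is false in characteristic $2$: the generator $[\alpha,\Delta\}$ of the extra $\ZZ/2$ summand is \emph{not} a cyclic algebra. The paper proves exactly this in the discussion following the theorem: since the class survives base change to $\overline{\bfk}$, a cyclic presentation would force $[\alpha,\Delta\}=h\cdot h'$ with $h\in{\rm H}^1_{\rm fl}(\Mcal_{1,1},\mu_2)$ and $h'\in{\rm H}^1(\Mcal_{1,1},\ZZ/2)$, and over $\overline{\bfk}$ both groups restrict into the corresponding groups of $\bA^1\smallsetminus\{0\}$, so $h\cdot h'$ would lie in ${\rm Br}'(\bA^1\smallsetminus\{0\})_2=0$, a contradiction. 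The cyclic-algebra mechanism does handle the generators $\{\Delta\}\cdot\beta$ (they lift to ${\rm H}^1_{\rm fl}(\Mcal_{1,1},\mu_{12})$ and one applies \cite{AntMeiEll}*{Lemma 2.10}), but it cannot produce an Azumaya representative for $[\alpha,\Delta\}$, so your argument leaves ${\rm Br}={\rm Br}'$ unproved precisely for the most interesting summand. The paper sidesteps this entirely by invoking \cite{Shi}*{Lemma 3.1} at the outset, which identifies ${\rm Br}(\Mcal_{1,1})$ with ${\rm Br}'(\Mcal_{1,1})$ by a general argument independent of any generator-by-generator analysis; you need either that input or some other construction of an Azumaya (non-cyclic) representative for $[\alpha,\Delta\}$.
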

\begin{proof}
First we note that by \cite{Shi}*{Lemma 3.1} the Brauer map ${\rm Br}(\Mcal_{1,1}) \to {\rm Br}'(\Mcal_{1,1})$ is surjective, so we only need to worry about computing the latter.

The $\ell$-torsion is computed in \cite{DilPirBr}*{Corollary 3.2} for $c \neq 2,3$ and in \Cref{cor:Br_l} for $c=2,3$. The $p$-torsion for $c$ equals to respectively $2$, $3$ and $p>3$ is obtained by restricting \Cref{thm:inv M11 H} to degree $2$.
\end{proof}

Now let us look at the generators. For $c \neq 2$ the group is generated by the elements coming from the base field and elements in the form $\lbrace \Delta \rbrace \cdot \beta$ with $\beta \in {\rm H}^1(\bfk,\ZZ/12)$. Write ${\rm H}^1(\bfk,\ZZ/12)={\rm H}^1(\bfk,\ZZ/4)\oplus{\rm H}^1(\bfk,\ZZ/3)$ and $\lbrace \Delta \rbrace = \lbrace \Delta \rbrace_4 +\lbrace \Delta \rbrace_3 $. Then by either \cite{DilPirBr}*{Lemma 2.18} or \Cref{prop:H1} we know that $\lbrace \Delta \rbrace_4$ and $\lbrace \Delta \rbrace_3$ come from ${\rm H}^1_{\rm fl}(\Mcal_{1,1},\mu_n),\, n=3,4$ (note that for $n$ not divisible by $c$ that's just the regular \'etale cohomology group) and consequently $\lbrace \Delta \rbrace$ comes from ${\rm H}^1_{\rm fl}(\Mcal_{1,1},\mu_{12})$. Thus we can conclude that the elements in the form $\lbrace \Delta \rbrace \cdot \beta$ are all cyclic algebras by \cite{AntMeiEll}*{Lemma 2.10}.

When $c=2$ the generators are in the form $\lbrace \Delta \rbrace \cdot \beta$, $\left[ \alpha, \Delta \right\rbrace$ (here we're identifying the generator with its pullback to $U$) or a sum of the two. Any element of the first type is an cyclic algebra by the same reasoning as above. 

We claim that $\left[ \alpha, \Delta \right\rbrace$ is not a cyclic algebra. First, note that $\left[ \alpha, \Delta \right\rbrace$ does not go to zero if we pass to $\overline{\bfk}$, so it suffices to show that it is not a cyclic algebra when the base field is algebraically closed. It is a $2$-torsion element, so saying it is a cyclic algebra is equivalent to saying there must exist $h \in {\rm H}^1_{\rm fl}(\Mcal_{1,1},\mu_2)$ and $h' \in {\rm H}^1(\Mcal_{1,1},\ZZ/2)$ with $h \cdot h' = \left[ \alpha, \Delta \right\rbrace$. When $\bfk = \overline{\bfk}$ we have
\[
{\rm H}^1_{\rm fl}(\Mcal_{1,1},\mu_2)=\lbrace \Delta \rbrace \cdot \ZZ/2, \quad {\rm H}^1(\Mcal_{1,1},\ZZ/2)={\rm H}^1(\bA^1_{\bfk},\ZZ/2).
\]
If we pull everything back to $\Mcal_{1,1} \smallsetminus \lbrace j=0\rbrace \simeq (\bA^1\smallsetminus\{0\}) \times {\Brm}\ZZ/2$ we immediately see that $\lbrace\Delta \rbrace = \lbrace j \rbrace \in {\rm H}^1_{\rm fl}(\Mcal_{1,1},\mu_2)$, which shows that 
\[
{\rm H}^1_{\rm fl}(\Mcal_{1,1},\mu_2)\subset {\rm H}^1_{\rm fl}(\bA^1\smallsetminus\{0\},\mu_2) \subset {\rm H}^1_{\rm fl}(\Mcal_{1,1}\smallsetminus \lbrace j=0\rbrace,\mu_2)
\]
and
\[
{\rm H}^1(\Mcal_{1,1},\ZZ/2)\subset {\rm H}^1(\bA^1\smallsetminus\{0\},\ZZ/2) \subset {\rm H}^1(\Mcal_{1,1}\smallsetminus \lbrace j=0\rbrace,\ZZ/2).
\]
But then we must have $h \cdot h' \in {\rm Br}'(\bA^1\smallsetminus\{0\})_2$ which is zero when $\bfk = \overline{\bfk}$ by \Cref{cor:inv A1} (note that $\Omega_{\bfk}=0$ as $\Omega_{\bfk, {\rm log}}$ is $2$-divisible), a contradiction.

\begin{rmk}
A natural question left open in Shin's results \cite{Shi} is, given a finite field $\bfk$ of characteristic $2$ not containing a third root of unit $\zeta$, to compute the restriction map 
\[
{\rm Br}(\Mcal_{1,1,\bfk}) \to {\rm Br}(\Mcal_{1,1,\bfk(\zeta)}).
\]
Our description makes the task quite simple: we just need to understand the restriction map 
\[{\rm H}^1(\bfk, \ZZ/8\ZZ) \to {\rm H}^1(\bfk(\zeta), \ZZ/8\ZZ).\]
Recall that ${\rm H}^1(F, \ZZ/8\ZZ)$ is equal to the quotient of $W_3(F)$ by the subgroup of elements in the form $\left[a_1^2,a_2^2,a_3^2\right]-\left[a_1,a_2,a_3\right]$. Using the definition of of Witt vectors we get the formula
\[
\begin{small} \left[x_1,x_2,x_3\right]+\left[y_1,y_2,y_3\right]=\left[x_1+y_1,x_2+y_2+x_1y_1, S_3(\underline{x},\underline{y})\right] \end{small},
\]
\[S_3(\underline{x},\underline{y})=x_3+y_3+x_2y_2 + x_2x_1y_1+y_2x_1y_1 + x_1^2y_1^2+x_1^3y_1+y_1^3x_1.
\]
We know by standard Galois theory that both groups are isomorphic to $\ZZ/8\ZZ$, and the first group is generated by $\left[1,0,0\right]$ as $4\left[1,0,0\right]=\left[0,0,1\right]\neq 0$. So to understand the map we just have to check the class of $\left[1,0,0\right]$ in ${\rm H}^1(\bfk(\zeta), \ZZ/8\ZZ)$. Now, observe that
\[
\left[\zeta^2,\zeta^2,\zeta^2\right] - \left[\zeta, \zeta, \zeta\right] = \left[\zeta+1,\zeta+1,\zeta+1\right] + \left[\zeta, 1, \zeta+1\right]=\left[1,0,0\right]
\]
which implies that the class of $\left[1,0,0\right]$ maps to zero, and as a degree two extension will induce an isomorphism on mod $3$ Galois cohomology we conclude that the map of Brauer groups maps $\ZZ/24\ZZ={\rm Br}(\Mcal_{1,1,\bfk})$ to $\ZZ/3\ZZ \subset {\rm Br}(\Mcal_{1,1,\bfk(\zeta)})=\ZZ/12\ZZ \times \ZZ/2\ZZ$.
\end{rmk}

\begin{bibdiv}
	\begin{biblist}
	    \bib{AizAvn}{article}{
	   author={Aizenbud, A.},
	   author={Avni, A.},
	   title={Pointwise surjective presentations of stacks},
	   journal={Communications in Algebra, ahead-of-print},
          year={2022},
          pages={1-19},
          DOI={https://doi.org/10.1080/00927872.2022.2082234},
       	   	    }
	    \bib{AntMeiEll}{article}{
	        author={Antieau, B.},
	        author={Meier, L.},
	        title={The Brauer group of the moduli stack of elliptic curves},
	        journal={Algebra \& Number Theory},
	        volume={14},
	        number={9},
	        year={2020},
	        pages={2295–2333}
	        }

		\bib{BlKaP}{article}{			
			author={Bloch, S.},
			author={Kato, K.},			
			title={$p$-adic \'etale cohomology},			
			volume={63},			
			journal={Publ. Math. Inst. Hautes {\'E}tudes Sci.},
			date={1986},			
			pages={107-152},	}	
		\bib{BeFa}{article}{			
			author={Berhuy, G.},
			author={Favi, G.},			
			title={Essential dimension: a functorial point of view (after A. Merkurjev)},			
			volume={8},			
			journal={Doc. Math.},
			date={2003},			
			pages={279–330},	}
			\bib{BlinMerk}{article}{
			author={Blinstein, S.},
			author={Merkurjev, A.},
			title={Cohomological invariants of algebraic tori},
			journal={Algebra Number Theory},
			volume={7},
			date={2013},
			pages={1643-1684}
			
		}	
		\bib{BrReVi}{article}{
	        author={Brosnan, P.},
	        author={Reichstein, Z.},
	        author={Vistoli, A.},
	        title={Essential dimension of moduli of curves and other algebraic stacks, with an appendix by {N}ajmuddin {F}akhruddin},
	        journal={J. Eur. Math. Soc. (JEMS)},
	        volume={13},
	        number={4},
	        year={2011},
	        pages={2295–2333},
	        }	
        \bib{BuRe}{article}{			
			author={Buhler, J.},
			author={Reichstein, Z.},			
			title={On the essential dimension of a finite group},			
			volume={106},			
			journal={Compositio Math.},
			date={1997},
			number={2},
			pages={159–179},	}	
		\bib{CTHK}{article}{			
			author={Colliot-{T}h\'el\`ene, J.L.},
			author={Hoobler, R.T.},
			author={Kahn, B.},
			title={The Bloch-Ogus-Gabber theorem},			
			volume={16},			
			journal={Algebraic K-theory},
			date={1996},			
			pages={31-94},	}
		\bib{CTO}{article}{			
			author={Colliot-{T}h\'el\`ene, J.L.},
			author={Ojanguren, M.},			
			title={Vari\'et\'es unirationnelles non rationnelles: au-del\'a de l’exemple d’Artin et Mumford,},			
			volume={97},			
			journal={Invent. Math.},
			date={1989},			
			pages={141-158},	}		

\bib{DilCohHypOdd}{article}{
   author={Di Lorenzo, A.},
   title={Cohomological invariants of the stack of hyperelliptic curves of odd genus},
   journal={Transform. Groups},
   volume={26},
   date={2021},
   number={1},
   pages={165–214}
}

		\bib{DilPir}{article}{
		    author={Di Lorenzo, A.},
		    author={Pirisi, R.},
		    title={A complete description of the cohomological invariants of even genus Hyperelliptic curves},
		   journal={Documenta Mathematica},
		   date={2021},
		   volume={26},
		   pages={199-230}
		}
		\bib{DilPirBr}{article}{
		    author={Di Lorenzo, A.},
		    author={Pirisi, R.},
		    title={Brauer groups of moduli of hyperelliptic curves via cohomological invariants},
		   journal={Forum of Mathematics, Sigma},
		   date={2021},
		   volume={9},
		}
		\bib{DilPirRS}{article}{
		    author={Di Lorenzo, A.},
		    author={Pirisi, R.},
		    title={Cohomological invariants of root stacks and admissible double coverings},
		  journal={Canadian Journal of Mathematics},
            volume={75},
            number={1},
		  date={2023},
		  doi={10.4153/S0008414X21000602},
		}

		\bib{EG}{article}{
			author={Edidin, D.},
			author={Graham, W.},
			title={Equivariant intersection theory (With an Appendix by Angelo Vistoli: The Chow ring of $\Mcal_2$)},
			journal={Invent. Math.},
			volume={131},
			date={1998},
			number={3},
			pages={595-634}
		}

        \bib{EKLV}{article}{
 		author={Esnault, H.},
 		author={Kahn, B.},
 		author={Levine, M.},
 		author={Viehweg, E.},
 		title={The Arason invariant and mod 2 algebraic cycles},
 		journal={J. Amer. Math. Soc.},
 		volume={11},
 		date={1998},
 		pages={73-118},
 		}

    \bib{Endl}{book}{
 		author={Endler, O.},
 		title={Valuation theory},
 		series={Proceedings of the {E}dinburgh mathematical society},
 		publisher={Springer-Verlag, Berlin–Heidelberg–New York},
 		year={1972},
 		}
	\bib{FulOl}{article}{
			author={Fulton, W.},
			author={Olsson, M.},
			title={The Picard group of $\mathscr{M}_{1,1}$},
			journal={Algebra Number Theory},
			volume={4},
			date={2010},
			number={1},
			pages={87-104},
		}

        \bib{GeisLev}{article}{
		    author={Geisser, T.},
		    author={Levine, M.},
		    title={The ${\rm K}$-theory of fields in characteristic $p$},
		    journal={Inventiones Mathematicae},
		    volume={139},
		    date={2000},
		    pages={459-493},
 		}
 	\bib{GilHir}{article}{
 		author={Gille, S.},
 		author={Hirsch, C.},
 		title={On the splitting principle for cohomological invariants of reflection groups},
 		journal={Transformation Groups, DOI https://doi.org/10.1007/s00031-020-09637-6},
 		year={2021},
 		}
        \bib{GilSza}{book}{
 		author={Gille, P.},
 		author={Szamuely, T.},
 		title={Central simple algebras and {G}alois cohomology},
 		series={Cambridge {S}tudies in {A}dvanced {M}athematics},
 		volume={101},
 		publisher={Cambridge {U}niversity {P}ress, {C}ambridge},
 		year={2006},
 		}
		\bib{GMS}{collection}{
			author={Garibaldi, S.},
			author={Merkurjev, A.},
			author={Serre, J.-P.},
			title={Cohomological invariants in Galois cohomology},
			series={University Lecture Series},
			volume={28},
			publisher={American Mathematical Society, Providence, RI},
			date={2003},
		}
		\bib{GrSu}{article}{
		author={Gros, M.},
		author={Suwa, N.},
		title={La conjecture de Gersten pour les faisceaux de Hodge-Witt logarithmique},
		journal={Duke Mathematical Journal},
		volume={57},
		year={1988},
		number={2},
		pages={615-628}
		}

		\bib{Ill}{article}{			
			author={Illusie, L.},
			title={Complexe de de Rham-Witt et cohomologie cristalline},		
			journal={Annales scientifiques de l’\'E.N.S.},
			series={4e s\'erie},
			volume={12},
			number={4},
			date={1979},
			pages={501-661},
		}
		\bib{Izh}{article}{			
			author={Izhboldin, O.},
			title={On the cohomology groups of the field of rational functions},			journal={Mathematics in St. Petersburg},
			series={American Math. Soc. Transl. Ser. 2},
			volume={174},			
			date={1996},
			pages={21-44},
		}
		\bib{IzhK}{article}{			
			author={Izhboldin, O.},
			title={On $p$-torsion in ${\rm K}^M_*$ for fields of characteristic $p$},
			journal={Adv. Soviet Math. },
			volume={4},			
			date={1991},
			pages={181-197},
		}
        \bib{Kato}{collection}{
 		author={Kato, K.},
 		title={Galois cohomology of complete discrete valuation fields, Algebraic K-theory, part II (Oberwolfach, 1980)},
 		series={Lecture Notes in Math.},
 		publisher={Springer},
 		volume={967},
 		date={1982},
 		pages={215-238},
 		}
 		\bib{KaKu}{article}{			
			author={Kato, K.},
			author={Kuzumaki, T.},			
			title={The dimension of fields and algebraic K-theory},			
			volume={24},			
			journal={J. Number Theory},
			date={1986},
			number={8},
			pages={229–244},	}	
 		\bib{Lou}{article}{
 		    author={Lourdeaux, A.},
 		    title={Degree 2 cohomological invariants of linear algebraic groups},
 		    journal={Jour. Pur. Appl. Alg.},
 		    volume={226},
 		    number={10},
 		    year={2022},
 		}
			\bib{VoeMot}{book}{
		author={Mazza, C.},
		author={Voevodsky, V.},
		author={Weibel, C.},
		title={Lectures on motivic cohomology},
		series={Clay Mathematical Monographs},
		publisher={Amer. Math. Soc.},
		volume={2},
		year={2006},
			}
		\bib{Mei}{article}{
		author={Meier,L.},
		title={Computing Brauer groups via coarse moduli},
		journal={available at \\ http://www.staff.science.uu.nl/~meier007/CoarseBrauer.pdf}
		}
		\bib{Mil}{collection}{
			author={Milne, J.},
			title={\'Etale cohomology (PMS-33)},
			series={Princeton MAthematical Series},
			publisher={Princeton University Press,  Princeton University},
			date={2016},
		}
		\bib{Mor}{article}{
   author={Morrow, M.},
   title={$K$-theory and logarithmic {H}odge-{W}itt sheaves of formal schemes in
   characteristic $p$},
   language={English, with English and French summaries},
   journal={Ann. Sci. \'{E}c. Norm. Sup\'{e}r. (4)},
   volume={52},
   date={2019},
   number={6},
   pages={1537-1601}
}
		\bib{PirCohHypEven}{article}{
			author={Pirisi, R.},
			title={Cohomological invariants of hyperelliptic curves of even genus},
			journal={Algebr. Geom.},
			volume={4},
			date={2017},
			number={4},
			pages={424-443}}
		\bib{PirAlgStack}{article}{			
			author={Pirisi, R.},
			title={Cohomological invariants of algebraic stacks},			
			journal={Trans. Amer. Math. Soc.},
			volume={370},			
			date={2018},			
			number={3},
			pages={1885-1906}}
		\bib{PirCohHypThree}{article}{
			author={Pirisi, R.},
			title={Cohomological invariants of hyperelliptic curves of genus 3},			
			journal={Doc. Math.},
			volume={23},
			date={2018},
			pages={969-996}}
		\bib{ReVi}{article}{			
			author={Reichstein, Z.},
			author={Vistoli, A.},
			title={Essential dimension of finite groups in prime characteristic},	journal={C. R. Math. Acad. Sci. Paris},
			volume={356},
			date={2018},			
			number={13},
			pages={463–467}}	
		\bib{Rost}{article}{			
			author={Rost, M.},
			title={Chow groups with coefficients},			
			journal={Doc. Math.},
			volume={1},			
			date={1996},			
			number={16},
			pages={319-393}}
		\bib{Salt}{article}{			
			author={Saltman, D.J.},
			title={Noether’s problem over an algebraically closed field},		
			journal={Invent. Math.},
			volume={77},			
			date={1984},			
			pages={71-84}	}	
        \bib{SerGal}{collection}{
			author={Serre, J.-P.},
			title={Galois Cohomology},
			publisher={Springer},
			date={2002},
			}
	    \bib{Shi}{article}{			
			author={Shin, M.},
			title={The Brauer group of the moduli stack of elliptic curves over algebraically closed fields of characteristic 2},	
			journal={J. Pure Appl. Algebra},
			volume={223},
			date={2019},			
			number={5},
			pages={1966-1999}}
		\bib{Sil}{book}{
		author={Silverman, J.H.},
		title={The arithmetic of elliptic curves},
		series={Graduate text in mathematics},
		number={106},
		publisher={Springer {N}ew {Y}ork, {NY}},
        year={2009},
		}	
		\bib{StPr}{article}{
		    label={Stacks},
		    title={{S}tacks {P}roject},
            author={The {S}tacks  {P}roject {A}uthors},
            eprint = {https://stacks.math.columbia.edu},
            date = {2022}
        }	
        \bib{Tot}{article}{
        author = {Totaro, B.},
        title = {The Chow Ring of a Classifying Space},
        journal={Algebraic K-Theory},
        booktitle = {Proc. Symposia in Pure Math. 67},
        year = {1999},
        pages = {249-281},
        }
        \bib{Totp}{article}{
        author = {Totaro, B.},
        title = {Cohomological invariants in positive characteristic},
        journal={International Mathematical Research Letters},
        volume={2022},
        number={9},
        year = {2022},
        pages = {7152-7201},
        }
        \bib{TotSp}{article}{
        author = {Totaro, B.},
        title = {Essential dimension of the spin groups in characteristic 2},
        journal={Commentarii Mathematici Helvetici},
        volume={94},
        year = {2019},
        pages = {1-20},
        }
		\bib{Wit}{article}{
 			author={Witt, E.},
 			title={Theorie der quadratischen Formen in beliebigen K\"orpern},
 			language={German},
 			journal={J. Reine Angew. Math.},
 			volume={176},
 			date={1937},
 			pages={31-44}
		
 		}

	\end{biblist}
\end{bibdiv}
\end{document}